\newtheorem{theorem}{Theorem}[section]
\newtheorem{conjtheorem}{Theorem?}[section]
\newtheorem{thm}[theorem]{Theorem}
\newtheorem{fact}[theorem]{Fact}
\newtheorem{proposition}[theorem]{Proposition}
\newtheorem{remark}[theorem]{Remark}
\newtheorem{prop}[theorem]{Proposition}
\newtheorem{claim}[theorem]{Claim}
\newtheorem{conjecture}[theorem]{Conjecture}
\newtheorem{lemma}[theorem]{Lemma}
\newtheorem{corollary}[theorem]{Corollary}
\newtheorem{cor}[theorem]{Corollary}
\newtheorem{question}[theorem]{Question}
\theoremstyle{definition}
\newtheorem{definition}[theorem]{Definition}
\newtheorem{df}[theorem]{Definition}
\newcommand{\CK}{\omega_1^{\mathrm{CK}}}
\newcommand{\KO}{\mathcal{O}}
\newcommand{\DII}{\Delta^0_2}
\newcommand{\NN}{{\mathbb{N}}}
\newcommand{\RR}{{\mathbb{R}}}
\newcommand{\R}{{\mathbb{R}}}
\newcommand{\QQ}{{\mathbb{Q}}}
\newcommand{\sub}{\subseteq}
\newcommand{\sN}[1]{_{#1\in \NN}}
\newcommand{\uhr}[1]{\! \upharpoonright_{#1}}
\newcommand{\ML}{Martin-L{\"o}f}
\newcommand{\SI}[1]{\Sigma^0_{#1}}
\newcommand{\PI}[1]{\Pi^0_{#1}}
\newcommand{\bi}{\begin{itemize}}
\newcommand{\ei}{\end{itemize}}
\newcommand{\bc}{\begin{center}}
\newcommand{\ec}{\end{center}}
\newcommand{\Halt}{{\ES'}}
\newcommand{\ES}{\emptyset}
\newcommand{\estring}{\emptyset}
\newcommand{\tp}[1]{2^{#1}}
\newcommand{\ex}{\exists}
\newcommand{\fa}{\forall}
\newcommand{\la}{\langle}
\newcommand{\ra}{\rangle}
\newcommand{\Kuc}{Ku{\v c}era}
\newcommand{\seqcantor}{2^{ \NN}}
\newcommand{\cs}{2^\omega}
\newcommand{\cantor}{\seqcantor}
\newcommand{\strcantor}{2^{ < \omega}}
\newcommand{\fs}{2^{<\omega}}
\newcommand{\strbaire}{\omega^{ < \omega}}
\newcommand{\Opcl}[1]{[#1]^\prec}
\newcommand{\leT}{\le_{\mathrm{T}}}
\newcommand{\MLR}{\mbox{\rm \textsf{MLR}}}
\newcommand{\Om}{\Omega}
\newcommand{\n}{\noindent}
\newcommand{\vsps}{\vspace{3pt}}
\newcommand{\verif}{\n {\it Verification.\ }}
\newcommand{\vsp}{\vspace{6pt}}
\newcommand{\leb}{\mathbf{\lambda}}
\newcommand{\lwtt}{\le_{\mathrm{wtt}}}
\newcommand{\sss}{\sigma}
\newcommand{\aaa}{\alpha}
\newcommand{\bbb}{\beta}
\newcommand{\w}{\omega}
\newcommand{\lland}{\, \land \, }
\newcommand \seq[1]{{\left\langle{#1}\right\rangle}}
\newcommand\+[1]{\mathcal{#1}}
\newcommand{\sC}{\+ C}
\newcommand{\ol}{\overline}
\newcommand{\ul}{\underline}
\newcommand{\ape}{\hat{\ }}
\newcommand{\LLR}{\Longleftrightarrow}
\newcommand{\lra}{\leftrightarrow}
\newcommand{\LR}{\Leftrightarrow}
\newcommand{\RA}{\Rightarrow}
\newcommand{\LA}{\Leftarrow}
\newcommand{\DA}{\downarrow}
\newcommand{\rapf}{\n $\RA:$\ }
\newcommand{\lapf}{\n $\LA:$\ }
\newcommand{\UM}{\mathbb{U}}
\newcommand{\range}{\ensuremath{\mathrm{range}}}
\newcommand{\dom}{\ensuremath{\mathrm{dom}}}
\def\uh{\upharpoonright}
\DeclareMathOperator \init{init}
\newcommand \andd{\,\,\,\&\,\,\,}
  \DeclareMathOperator{\High}{High}
  \newcommand{\Dem}{\mbox{\rm \textsf{Demuth}}}
  \newcommand{\SR}{\mbox{\rm \textsf{SR}}}
\begin{document}

\title{Logic Blog 2012}

 \author{Editor: Andr\'e Nies}

\maketitle

\begin{abstract}  The 2012  logic blog has focussed on the following:
Randomness and computable analysis/ergodic theory; Systematizing algorithmic randomness notions; 
Traceability;
Higher randomness;
Calibrating the complexity of equivalence relations from computability theory and algebra.
 \end{abstract}

%
%
%

\n The  \href{http://dx.doi.org/2292/9821}{Logic Blog 2010}. (Link: \texttt{http://dx.doi.org/2292/9821})  

\vsps

\n The    \href{http://dx.doi.org/2292/19205}{Logic Blog 2011}. (Link: \texttt{http://dx.doi.org/2292/19205})



%
%
%

\medskip

Postings can be cited.  An example of a citation is:

\medskip

\n  I.\  Kallimullin and A.\  Nies, \emph{Structures that are   computable almost surely}, Logic Blog, March 2010, available at 
\texttt{http://dx.doi.org/2292/9821}. 

%

\medskip

\tableofcontents


\part{Randomness and computable analysis}

\section{Computability of Ergodic Convergence (Towsner)}

Henry Towsner was at the Feb Oberwolfach meeting and discussed with  Nies and Bienvenu.

Let $\mu$ be a computable measure on $\Omega$, let $T$ be a computable, measure-preserving transformation.  Henry described  an example of an $L^1$ computable function where the   limit in the sense of Birkhoff is not $L^1$ computable. This was  in response to a question of Nies and Bienvenu.

\begin{definition} For any function $f$, we write $A_Nf$ for the function \[\frac{1}{N}\sum_{i<N}f(T^ix).\]
\end{definition}

\n Recall the following.
\bi \item If $f \in L^1(\mu) $ then by Birkhoff ergodic theorem,  \bc $\ol f(x):= \lim_N A_N f(x)$ \ec  exists a.e.\ and $\ol f$  is in $L^1(\mu)$. \item  If in fact $f \in L^2(\mu) $  then the $A_N f$ converge in the $L^2$ norm by the main ergodic theorem.
\ei

\begin{definition} {\rm We say the mean rate of convergence of $f$ is computable if the $A_N f$ converge computably, namely, there is a computable function $p$ such that for every $n$ and every $m\geq p(n)$, \[||A_{p(n)}f-A_mf||_{L^2}<1/n.\]
We say the pointwise rate of convergence of $f$ is computable if there is a computable function $p$ such that for every $n$ and every $m\geq p(n)$, \[\mu(\{x\mid \exists i\in[p(n),m]|A_{p(n)}f(x)-A_i(x)|>1/n\})<1/n.\] } \end{definition}

\begin{theorem} Let $\mu$ be a computable measure on $\Omega$, let $T$ be a computable, measure-preserving transformation, and let $f$ be $L^1$-computable with respect to $\mu$ such that $||f||_{L^2}$  exists (i.e., $f \in L^2(\mu)$).
Then the following are equivalent: 
\begin{enumerate} \item[(1)] $||\overline{f}||_{L^2}$ is computable, 
	\item[(2)] The mean rate of convergence of $f$ is computable,

 \item[(3)] The pointwise rate of convergence of $f$ is computable,

  \item[(4)] $\overline{f}$ is $L^1$-computable.  \end{enumerate} \end{theorem}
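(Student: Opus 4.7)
The plan is to establish the four-way equivalence by cycling through the implications $(1)\RA(2)\RA(4)\RA(1)$, linking $(3)$ via $(2)\LR(3)$, using as the central structural tool the orthogonal decomposition
\[
\|A_N f\|_{L^2}^2 \;=\; \|\overline{f}\|_{L^2}^2 + \|A_N f - \overline{f}\|_{L^2}^2,
\]
which holds because $\overline{f}=Pf$ is the $L^2$-projection of $f$ onto the $T$-invariant subspace, so that $A_N f - \overline{f} = A_N(f-\overline{f})$ lies in the orthogonal complement ($A_N$ preserves that subspace). This identity translates between the norm data in (1), the Cauchy/rate data in (2)--(3), and the $L^1$-computability of the limit in (4).

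For $(2)\RA(4)$, a computable mean rate realises $(A_N f)_N$ as a computable $L^2$-Cauchy sequence of $L^1$-computable functions; the $L^2$-limit is $\overline{f}$, and it inherits an effective $L^1$-approximation. For $(4)\RA(1)$, I use the Pythagorean identity $\|\overline{f}\|_{L^2}^2 = \langle f, \overline{f}\rangle = \int f\,\overline{f}\, d\mu$ and compute the integral by truncating $\overline{f}$ to $\overline{f}_M := \overline{f}\cdot \chi_{|\overline{f}|\leq M}$: each $\int f\,\overline{f}_M\, d\mu$ is a computable real (bounded $L^1$-computable $\overline{f}_M$ times $L^1$-computable $f$), and an effective rate as $M\to\infty$ comes from dominated convergence with the majorant $|f\overline{f}|\in L^1$ supplied by Cauchy--Schwarz. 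For $(2)\LR(3)$, Chebyshev combined with the maximal ergodic theorem promotes an $L^2$-deviation bound to a pointwise bound on the supremum over $i\in[p(n), m]$, while in the reverse direction uniform integrability of $\sup_N|A_N f|^2$ (Wiener's $L^2$-maximal inequality applied to $f\in L^2$) upgrades closeness on a large set to $L^2$-closeness.

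The main obstacle is $(1)\RA(2)$. Knowing $\|\overline{f}\|_{L^2}$ and $\|A_N f\|_{L^2}$ computably (the latter extracted by effective truncation of $f$, using $f\in L^2$) yields computable values of $\|A_N f - \overline{f}\|_{L^2}^2$ via the decomposition. Unfortunately this sequence is \emph{not} monotonically decreasing in $N$: a spectral calculation with the kernel $p_N(\lambda)=\tfrac{1}{N}\sum_{k<N}\lambda^k$ exhibits genuine oscillation, so finding $N$ with small current value does not certify small values at all $m\geq N$. The crucial tool to overcome this is the subsequence monotonicity
\[
\|A_{kN}(f-\overline{f})\|_{L^2} \;\leq\; \|A_N(f-\overline{f})\|_{L^2} \qquad (k\geq 1),
\]
obtained from the identity $A_{kN}g = \tfrac{1}{k}\sum_{j<k}(A_N g)\circ T^{jN}$ by the triangle inequality and $T$-invariance of the $L^2$-norm. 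I would search for $N$ at which $\|A_N(f-\overline{f})\|_{L^2}$ is small, propagate smallness to multiples $kN$ via the inequality above, and handle intermediate $m\in(kN,(k+1)N)$ by a telescoping bound (writing $A_m g$ as a convex combination of $A_{kN}g$ and a short average, each of which is controlled in $L^2$), yielding the required computable modulus $p$.
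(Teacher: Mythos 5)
Your route is genuinely different from the paper's, which is essentially a proof by citation: the implications $(1)\Rightarrow(2)$ and $(1)\Rightarrow(3)$ are delegated to Avigad--Gerhardy--Towsner, $(3)\Rightarrow(2)$, $(2)\Rightarrow(1)$, and $(3)\Rightarrow(4)$ are dismissed as trivial or obvious, and $(4)\Rightarrow(2)$ is handled by the single observation $||\overline f||_{L^2}\le||f||_{L^2}$. You instead attempt a self-contained cycle $(1)\Rightarrow(2)\Rightarrow(4)\Rightarrow(1)$ together with $(2)\Leftrightarrow(3)$, and your central structural tools --- the orthogonal decomposition $||A_N f||_{L^2}^2 = ||\overline f||_{L^2}^2 + ||A_N(f-\overline f)||_{L^2}^2$ and the subsequence monotonicity $||A_{kN}(f-\overline f)||_{L^2}\le||A_N(f-\overline f)||_{L^2}$ --- are indeed the right machinery: the latter is precisely what defeats the nonmonotonicity obstruction you correctly identify, and it is the heart of the cited result. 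Giving this argument rather than citing it is a legitimate and more illuminating alternative.

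However, there is a genuine effectivity gap in your $(1)\Rightarrow(2)$ argument. You propose to compute $||A_N(f-\overline f)||_{L^2}^2 = ||A_N f||_{L^2}^2 - ||\overline f||_{L^2}^2$ by extracting $||A_N f||_{L^2}$ ``by effective truncation of $f$, using $f\in L^2$.'' But $L^1$-computability of $f$ together with the mere fact that $f\in L^2$ does not give $||A_N f||_{L^2}$ (or even $||f||_{L^2}$) as a computable real: from $L^1$-fast rational step approximations one gets only \emph{lower} semi-computability of $||A_N f||_{L^2}^2$, and one has no way to certify that $||A_N f||_{L^2}^2 < ||\overline f||_{L^2}^2 + \varepsilon$ and hence no computable search for a good $N$. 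Indeed, whether an $L^1$-computable function that happens to lie in $L^2$ is computably approximable by $L^2$ functions is exactly the technicality the author flags immediately after the theorem when discussing the missing implication $(4)\Rightarrow(3)$ in the unbounded case, so this cannot be waved away. A parallel gap sits in your $(4)\Rightarrow(1)$: you invoke dominated convergence with the integrable majorant $|f\,\overline f|$ to obtain an effective rate for $\int f\,\overline f_M\, d\mu \to \int f\,\overline f\, d\mu$, but dominated convergence is not an effective statement --- an integrable majorant does not come with a computable modulus of uniform integrability --- so no computable rate is produced without a further argument there either.
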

	
	 \begin{proof} For the first three, the implications (1)$\Rightarrow$(2) and (1)$\Rightarrow$(3) are shown in \cite{Avigad.Gerhardy.ea:08} while the implications (3)$\Rightarrow$(2) and (2)$\Rightarrow$(1) are trivial.
For the equivalence with (4), it is obvious that (3) implies (4). For the converse, not that $||\overline{f}||_{L^2}\leq||f||_{L^2}$, and in particular $||\overline{f}||_{L^2}$ is bounded. Therefore (4) implies (2). \end{proof}
I don't know of any proof that (4) implies (3) in the absence of a bound on the $L^2$ norm. This is almost certainly still true, but a bit more work is required, and there are some techicalities in the proof which might create problems. (Specifically, it's not clear to me that every $L^1$ computable function is computably approximable by $L^2$ functions.)

An example where all four properties fail is given in \cite{Avigad.Gerhardy.ea:08}.
Divide $\Omega$ into countable many components. On the $i$-th component, $T$ is supposed to represent a ``rotation'' of $2^{-j}$ if the $i$-th Turing machine halts in exactly $j$ steps, and the identity if the the $i$-th Turing machine never
halts. (By a rotation, I mean adding $2^{-j}$ to each element in the component, wrapping around if we overflow out of that component.)
More precisely, consider some sequence $1^i0\sigma=\tau\in\Omega$. (That is, the sequence starts with an initial segment of exactly $i$ $1$'s.) If the $i$-th Turing machine halts at step $j$, \[T\tau=\left({2^{i+1}\sigma+2^{-j}\mod 1}\right)2^{-(i+1)}.\]
If the $i$-th Turing machine never halts, $T\tau=\tau$.
Consider the set $A=\bigcup_i [1^i01]$. On any element $\tau\in[1^i00]$, $\overline{\chi_A}(\tau)$ is $0$ if the $i$-th Turing machine never halts and $1/2$ if the $i$-th Turing machine does halt. In particular, given a computable $f$ such that $||f-\overline{\chi_A}||_{L^1}<2^{-(i+4)}$, we may computably find subintervals of $[1^i00]$ of collective measure $2^{-(i+3)}$ such that $f$ is defined on these subintervals, and the $i$-th Turing machine halts iff the average of $f$ over these subintervals is $>1/4$.

\section{Lebesgue Density and Lebesgue differentiation}
\label{s:LDT_and_LDiffT} \label{ss:LDT} 

The following is from  the preprint \cite{Bienvenu.Greenberg.ea:preprint}; a shortened version has been  submitted, but does   not contain the results below.

For measurable sets $P,A\subseteq \R$ with $A$ non-null, $\leb(P|A) = \leb(P\cap A)/\leb(A)$ is the conditional measure (probability) of $P$ given $A$.  Recall that the lower density of a measurable set $P\subseteq \R$ at a point $z\in \R$ is
\[ \rho (P|z) = \liminf_{h\to 0} \left\{  \leb(P|I) \,:\, I \text{ is an open interval, }z\in I \andd |I|< h   \right\} .\]
 Intuitively, $\rho(P|z)$ gauges the fraction of space filled by~$P$ around~$z$ if we ``zoom in'' arbitrarily close to $z$. 

Lebesgue's density theorem \cite[page 407]{Lebesgue:1910} says that for any measurable set $P$, for almost all $z\in P$ we have $\rho(P|z)=1$. An effective version of this theorem is given by identifying a collection of effectively presented sets $P$ and the collection of random point $z$ for which $\rho(P|z)=1$ for all sets $P$ in the collection containing $z$ as an element. Since the theorem is immediate for open sets, the simplest nontrivial effective version is obtained by choosing $P$ to range over the collection of effectively closed subsets of $\R$. We call a real number $z\in \R$ a \emph{density-one point} if for every effectively closed set $P$ containing $z$ we have $\rho(P|z)=1$.

\begin{definition} \label{def:intervalce}  
A non-decreasing, lower semicontinuous function $f\colon [0,1]\to \R$ is \emph{interval-c.e.}\ if $f(0)=0$, and $f(y)-f(x)$ is a left-c.e.\ real, uniformly in rationals $x<y$. 
\end{definition}

We investigate density-one points regardless of randomness. Our investigations yield more information about interval-c.e.\ functions. 

Recall that a measurable function $g\colon [0,1]\to \R$ is \emph{integrable} if $\int_{[0,1]} |g|\,\textup{d}\leb$ is finite. 

A  real $z \in [0,1]$ is called a \emph{Lebesgue point} of an integrable function $g$ if 
 \[ \lim_{  |Q| \to 0} \frac{1}{|Q|} \int_Q g\,\textup{d}\leb = g(z),  \tag{$*$} \]
 where $Q$ ranges over open intervals containing $z$. A real $z$ is called a \emph{weak Lebesgue point} if the limit in $(*)$ exists (but may be different from $g(z)$). 

The Lebesgue differentiation theorem states that for any integrable function $g$, almost every point $z \in [0,1]$ is a Lebesgue point of $g$.

If $g$ is the characteristic function $1_\sC $ of a measurable set $\sC$,  then the limit $(*)$ above equals the density $\rho(\sC| z)$. Thus, one can view the density theorem as a special case of  the differentiation theorem.


As with the density theorem, an effective version of the Lebesgue differentiation theorem is obtained by specifying a collection $\mathfrak F$ of effectively presented integrable functions. For example, Pathak, Rojas and Simpson~\cite{Pathak.Rojas.ea:12} and independently Freer et al.\ \cite{Freer.Kjos.ea:nd} studied the Lebesgue differentiation theorem for $L^1$-computable functions; Freer et al.\ also considered the collection of  $L^p$-computable functions for a computable real $p \ge 1$. They showed that  Schnorr randomness of $z$ is equivalent to being a weak Lebesgue point for each such function; the implication left to right is due to Pathak et al. Here we consider bounded lower semicomputable functions; we will see that every Oberwolfach random point is a Lebesgue point of any bounded lower semicomputable function, and a weak Lebesgue point of any integrable lower semicomputable function. Indeed, we observe that the effective versions of the Lebesgue density and differentiation theorems that we consider in this paper are equivalent for \emph{any} real number $z$.

\begin{prop}\label{prop:DensityLebesgueDiff}
	The following are equivalent for a real $z\in [0,1]$.	
	\begin{itemize}
	\item[(i)] $z$ is a density-one point.
	
	\item[(ii)] $z$ is a Lebesgue point of every bounded upper semi-computable function  $g\colon [0,1] \rightarrow \R$.
	
	\item[(iii)] $z$ is a Lebesgue point of every bounded lower semi-computable function  $g\colon [0,1] \rightarrow \R$.
	\end{itemize}
\end{prop}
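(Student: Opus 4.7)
The plan is to treat (ii)$\Leftrightarrow$(iii) as trivial---$g$ is bounded upper semi-computable iff $-g$ is bounded lower semi-computable, and being a Lebesgue point is invariant under negation---and to focus on (i)$\Leftrightarrow$(iii).

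For (iii)$\Rightarrow$(i), I would feed characteristic functions into (iii). Given an effectively closed $P\subseteq[0,1]$ with $z\in P$, set $U:=[0,1]\setminus P$ (effectively open) and $g:=1_U$, which is bounded and lower semi-computable. Since $z\in P$ we have $g(z)=0$, so (iii) yields $\leb(U\mid Q)\to 0$ as $|Q|\to 0$ through intervals $Q\ni z$; equivalently $\rho(P\mid z)=1$.

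The content is in (i)$\Rightarrow$(iii). Given bounded lower semi-computable $g$, I would shift so $0\le g\le M$, then use the layer-cake representation $g(x)=\int_0^M 1_{U_t}(x)\,\textup{d}t$, where $U_t:=\{x:g(x)>t\}$. By Fubini, for every open interval $Q\ni z$,
\[
\frac{1}{|Q|}\int_Q g\,\textup{d}\leb \;=\; \int_0^M \leb(U_t\mid Q)\,\textup{d}t.
\]
I would analyze $\leb(U_t\mid Q)$ pointwise in $t$: if $t<g(z)$ then $z$ lies in the open set $U_t$, so $Q\subseteq U_t$ once $|Q|$ is small enough, giving $\leb(U_t\mid Q)=1$; if $t>g(z)$, I would choose a rational $q$ with $g(z)<q<t$, so that $z$ lies in the effectively closed set $P_q:=[0,1]\setminus U_q$, and (i) gives $\leb(U_q\mid Q)\to 0$, whence $\leb(U_t\mid Q)\le\leb(U_q\mid Q)\to 0$ since $U_t\subseteq U_q$. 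Thus $\leb(U_t\mid Q)\to 1_{U_t}(z)$ for every $t\ne g(z)$, and dominated convergence (with bound $1$ on the compact interval $[0,M]$) delivers $\int_0^M\leb(U_t\mid Q)\,\textup{d}t\to g(z)$, i.e., $z$ is a Lebesgue point of $g$.

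The main subtlety I anticipate is reconciling DCT---stated for sequences---with the Lebesgue point limit, which runs over the neighborhood filter of $z$. I would sidestep this by fixing an arbitrary sequence $Q_n\ni z$ with $|Q_n|\to 0$, applying DCT along it, and noting that the limit $g(z)$ is independent of the chosen sequence, so the filter limit also equals $g(z)$. A secondary technicality---that $U_t$ need not be effectively open for irrational $t$, so (i) cannot be applied to $P_t$ directly---is dissolved by the rational-approximation step described above.
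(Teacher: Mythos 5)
Your proof is correct, and for the substantive implication it takes a genuinely different route from the paper. For (i)$\Rightarrow$(iii) the paper discretizes at scale $2^{-k}$: it approximates $g$ by the step function $f = 2^{-k}\sum_{i=1}^{2^k}1_{\sC_i}$, where $\sC_i = g^{-1}(-\infty, i\cdot 2^{-k}]$ are effectively closed, observes $\|f-g\|_\infty\le 2^{-k}$, verifies the Lebesgue-point property for finite linear combinations of $1_\sC$'s, and then passes to $g$ via the uniform bound (paying an explicit $3\cdot 2^{-k}$ error). You instead use the continuous layer-cake decomposition $g = \int_0^M 1_{U_t}\,\textup{d}t$, exchange integrals by Fubini, verify $\leb(U_t\mid Q)\to 1_{U_t}(z)$ for a.e.\ $t$ (handling irrational levels by squeezing against a rational $q$ so that $P_q = [0,1]\setminus U_q$ is genuinely effectively closed), and conclude by dominated convergence along an arbitrary sequence $|Q_n|\to 0$. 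The two approaches trade elementarity for conceptual economy: the paper's argument is purely finitary, produces an explicit modulus, and invokes nothing beyond linearity of the Lebesgue-point property; yours is shorter and exhibits the standard structure of density-theorem proofs, at the cost of invoking Fubini and DCT (whose domination hypothesis is exactly where boundedness is being used, which makes visible why neither approach extends cleanly to the unbounded integrable case raised in Question~\ref{question:integrable_lscomp_density-one}). Your (iii)$\Rightarrow$(i) also differs superficially, feeding $1_U$ for effectively open $U$ into (iii), where the paper feeds $1_\sC$ into (ii); these are equivalent by the (ii)$\Leftrightarrow$(iii) duality, which both arguments treat identically.
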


Since $[0,1]$ is compact, every lower semi-continuous function on $[0,1]$ is bounded from below, and every upper semi-continuous function on $[0,1]$ is bounded from above. Hence in (ii) we could merely require that the upper semi-computable function $g$ be bounded from below, and in (iii), that the function be bounded from above.

\begin{proof} \
\noindent (ii) $ \Rightarrow$  (i)  is immediate. Indeed, if $\sC$ is effectively closed then the density of $\sC$ at $z$ is precisely the limit in ($*$) for $g = 1_\sC$, the characteristic function of $\sC$. The function $1_\sC$ is upper semi-computable and is integrable.

\smallskip

\noindent  (iii) $\Rightarrow$ (ii).   If $g$ is upper semi-computable and integrable then $-g$ is lower semi-computable and integrable. 

\smallskip

\noindent  (i) $\Rightarrow$ (iii).  Let $z$ be a density-one point. We show, in three steps, that $z$ is a Lebesgue point of every integrable lower semi-computable function. 

First, let $g = 1_\sC$ for an effectively closed set $\sC$. If $z\in \sC$ then the equality ($*$) holds at $z$ because $z$ is a density-one point. If $z \not \in \sC$ then $z$ is a Lebesgue point of $g$ because the complement of $\sC$ is open. 

Second, the property of being a Lebesgue point is preserved under taking linear combinations of functions. We conclude that $z$ is a Lebesgue point for all linear combinations of characteristic functions of effectively closed sets. 

Finally, let $g$ be any bounded lower semi-computable function. By scaling and shifting, we may assume that $g$ is bounded between $0$ and $1$.

We approximate $g$ by a step-function. For $x\in [0,1]$, let $f(x)$ be the greatest integer multiple of $2^{-k}$ which is bounded by $g(x)$. For all $i\le 2^k$, let 
\[ \sC_i = g^{-1}(-\infty, i\cdot 2^{-k}].\]
Each set $\sC_i$ is effectively closed, and $f = 2^{-k} \sum_{i=1}^{2^k} 1_{\sC_i}$. Then $||f-g||_\infty \le 2^{-k}$, which implies that for any interval $Q$, 
\[ \left|  \frac{1}{|Q|} \int_{Q} g\, \textup{d}\leb - \frac{1}{|Q|} \int_{Q} f \, \textup{d}\leb \right| \le 2^{-k} .\]

Because $f$ is a linear combination of characteristic functions of effectively closed sets, we know that for sufficiently short intervals $Q$ containing $z$ we have 
\[ \left| f(z) - \frac{1}{|Q|}\int_Q f\,\textup{d}\leb \right| < 2^{-k} .\]
Because $|f(z)-g(z)| \le 2^{-k}$ we conclude that for sufficiently short intervals $Q$ containing $z$, we have 
\[ \left| g(z) - \frac{1}{|Q|}\int_Q g\,\textup{d}\leb \right| < 3\cdot 2^{-k} .  \qedhere\]
\end{proof}

\begin{question} \label{question:integrable_lscomp_density-one}
	If $z$ is a density-one point, is $z$ a Lebesgue point of every \emph{integrable} lower semicomputable function? 
\end{question}

Recall  that a non-decreasing, lower semicontinuous function $f\colon [0,1]\to \R$ with $f(0)=0$ corresponds to a measure $\mu_f$ on $[0,1)$, determined by $\mu([x,y)) = f(y)-f(x)$. The measure $\mu_f$ is absolutely continuous with respect to Lebesgue measure if and only if the function $f$ is an absolutely continuous function. In this case, the Radon-Nikodym theorem says that $\mu_f(A) = \int_A g\,\textup{d}\leb$ for some non-negative integrable function $g$. A real $z$ is a Lebesgue point of $g$ if and only if $f'(z)$ exists and equals $g(z)$, and a weak Lebesgue point if and only if $f'(z)$ exists. 

If $g$ is lower semicomputable then $f$ is interval-c.e.\  

\begin{corollary}\label{cor:OW_weak_Lebesgue_point}
	Every Oberwolfach random real is a weak Lebesgue point of every integrable lower semicomputable function. 
\end{corollary}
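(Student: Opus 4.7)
The plan is to reduce the statement about weak Lebesgue points to a statement about differentiability of interval-c.e.\ functions, and then to invoke the (presumably main) result of \cite{Bienvenu.Greenberg.ea:preprint} that every Oberwolfach random real is a point of differentiability of every interval-c.e.\ function.

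First I would normalize. Because $g$ is lower semicontinuous on the compact interval $[0,1]$, it attains its minimum and hence is bounded below by some real $c$, which can be chosen computable from the lower approximation of $g$. Replacing $g$ by $g-c$ (still lower semicomputable and integrable, and affecting neither the limit $(*)$ nor its existence at $z$) I may assume $g\ge 0$. Next I would form the primitive $f(x)=\int_0^x g\,\textup{d}\leb$. Since $g\ge 0$ one has $f(0)=0$ and $f$ non-decreasing; absolute continuity of the Lebesgue integral makes $f$ continuous, hence lower semicontinuous. Approximating $g$ from below by a non-decreasing computable sequence of non-negative rational-valued simple functions, the integral $f(y)-f(x)=\int_x^y g\,\textup{d}\leb$ is seen to be a left-c.e.\ real uniformly in rationals $0\le x<y\le 1$. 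Thus $f$ meets Definition~\ref{def:intervalce} and is interval-c.e., and by construction it is absolutely continuous with Radon--Nikodym derivative $g$. By the equivalence recalled in the paragraph preceding the corollary, $f'(z)$ exists if and only if $z$ is a weak Lebesgue point of $g$.

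Finally I would apply the theorem of \cite{Bienvenu.Greenberg.ea:preprint} asserting that every Oberwolfach random real $z$ is a differentiability point of every interval-c.e.\ function. Applied to the $f$ just constructed this gives $f'(z)\in\R$, and translating back through the Radon--Nikodym correspondence yields that $z$ is a weak Lebesgue point of $g$. The real content of the argument sits entirely in the cited differentiability theorem for Oberwolfach randomness; the potential obstacle here is verifying that the reduction $g\mapsto f$ lands in the \emph{interval-c.e.} class rather than some weaker class of lower semicomputable primitives, and this is why the normalization $g\ge 0$ and the uniform left-c.e.\ approximability of $\int_x^y g\,\textup{d}\leb$ were needed in the preceding step.
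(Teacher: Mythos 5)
Your proposal is correct and follows essentially the same route as the paper: normalize $g$ to be non-negative by adding a constant (justified because a lower semicontinuous function on a compact interval is bounded below), form the interval-c.e.\ primitive $f(x)=\int_0^x g\,d\lambda$, and apply the differentiability theorem for Oberwolfach random reals (the paper's Theorem~\ref{thm:OW_random_differentiable}) together with the Radon--Nikodym correspondence between weak Lebesgue points of $g$ and differentiability points of $f$. Your write-up simply spells out the steps that the paper's two-line proof leaves implicit; there is no gap.
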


\begin{proof}
	If $g$ is lower semi-computable, then it is bounded from below, and so by adding a constant we may assume it is positive. Then apply Theorem~\ref{thm:OW_random_differentiable}. 
\end{proof}

A weaker version of Question~\ref{question:integrable_lscomp_density-one} is:

\begin{question}
	Is every Oberwolfach random real a Lebesgue point of every integrable lower semicomputable function?
\end{question}

\

Finally, we see that the relation between non-negative, integrable lower semicomputable functions and absolutely continuous interval-c.e.\ functions is not a correspondence. The next result  shows that there is an interval-c.e.\ function $f$ which is not the distribution function $\int_0^x g\,\textup{d}\leb$ for any lower semicomputable function $g$, indeed not for any lower semicontinuous function $g$.

\begin{proposition}\label{pro:interval_c.e._AC_not_integral} There is nondecreasing  computable (hence, interval-c.e.) Lip\-schitz function $f$ that is not of the form $f(x) = \int_0^x g\,\textup{d}\leb$ for any lower semicontinuous function $g$.
\end{proposition}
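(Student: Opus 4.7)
The plan is to take $f$ to be the cumulative-measure function of a computable \emph{fat Cantor set}, i.e., a computable closed nowhere dense set $\sC \subseteq [0,1]$ with $\leb(\sC) > 0$. Such an $\sC$ is constructed in the standard way: starting from $[0,1]$, at stage $n$ remove from each surviving closed interval an open middle subinterval of length $4^{-n}$ times the length of that interval, obtaining a computable descending sequence of closed sets whose intersection $\sC$ has measure bounded below by, say, $1/2$. Define
\[ f(x) = \leb([0,x] \cap \sC). \]
Then $f(0)=0$, $f$ is non-decreasing, $1$-Lipschitz (since $|f(y)-f(x)| \le |y-x|$), and computable as a real-valued function, with $f(1) = \leb(\sC) > 0$.

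Suppose toward contradiction that $f(x) = \int_0^x g\,\textup{d}\leb$ for some lower semicontinuous $g\colon [0,1] \to \R$. I would argue in three short steps. First, since $f$ is non-decreasing, $g \ge 0$ almost everywhere; but $\{g < 0\}$ is open (as $g$ is lower semicontinuous) and of measure zero, hence empty, so $g \ge 0$ pointwise. Second, if $(a,b)$ is any connected component of the open set $\sC^c = [0,1] \setminus \sC$, then $f$ is constant on $[a,b]$ (since $\sC \cap (a,b) = \emptyset$), so $\int_a^b g\,\textup{d}\leb = 0$; combined with $g \ge 0$ this forces $g = 0$ almost everywhere on $(a,b)$, and since $\{g > 0\}$ is open this upgrades to $g \equiv 0$ on $(a,b)$.

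Third, because $\sC$ is nowhere dense the open set $\sC^c$ is dense in $[0,1]$, so $g$ vanishes on a dense subset of $[0,1]$. Given any $x \in [0,1]$, we can find $y \in \sC^c$ arbitrarily close to $x$ with $g(y) = 0$, whence $\liminf_{z \to x} g(z) \le 0$; lower semicontinuity then yields $g(x) \le \liminf_{z \to x} g(z) \le 0$, which together with $g \ge 0$ gives $g \equiv 0$ on $[0,1]$. But then $f(1) = \int_0^1 g\,\textup{d}\leb = 0$, contradicting $f(1) > 0$. I do not expect any serious technical obstacle; the only point requiring modest care is to arrange that $\sC$ is simultaneously nowhere dense (so the lower semicontinuity step forces $g \equiv 0$) and of positive measure (so the resulting $f \equiv 0$ is indeed a contradiction), while being computable enough that $f$ is computable uniformly in $x$.
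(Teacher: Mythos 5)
Your construction is genuinely different from the paper's, and considerably more elementary. The paper appeals to a computable martingale that succeeds on all sequences failing the law of large numbers, to a theorem of Freer, Kjos-Hanssen, Nies and Stephan connecting such martingale success to non-differentiability of a computable Lipschitz function, to the fact that a $1$-generic (relative to a representation of $g$) is a density-one point, and to Proposition~\ref{prop:DensityLebesgueDiff} in relativized form. Your direct fat-Cantor argument sidesteps all of that machinery. However, there is a genuine slip in your first step: lower semicontinuity of $g$ makes $\{g>c\}$ open for every $c$ (equivalently, $\{g\le c\}$ closed), but it does \emph{not} make $\{g<0\}$ open. A lower semicontinuous function can take values below $0$ on a non-open set; for example, $g(0)=-1$ and $g(x)=0$ for $x\neq 0$ is lower semicontinuous with $\{g<0\}=\{0\}$. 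So you cannot upgrade $g\ge 0$ a.e.\ to $g\ge 0$ pointwise this way, and as written your steps 2 and 3 only yield $g\le 0$ (from the genuine openness of $\{g>0\}$), not $g\equiv 0$.

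Fortunately, pointwise nonnegativity is never needed and the argument repairs easily. Since $f$ is non-decreasing, $g\ge 0$ almost everywhere (by Lebesgue differentiation, say). On each component $(a,b)$ of the complement of $\sC$, $\int_a^b g\,\textup{d}\leb=0$, and together with $g\ge 0$ a.e.\ this gives $g=0$ a.e.\ on $(a,b)$; openness of $\{g>0\}$ then forces $\{g>0\}\cap(a,b)=\emptyset$, i.e.\ $g\le 0$ on $(a,b)$. Thus $g\le 0$ on the dense open set $[0,1]\setminus\sC$, and lower semicontinuity propagates this to $g\le 0$ everywhere: for each $x$ pick $y_n\to x$ with $g(y_n)\le 0$, so $g(x)\le\liminf_{y\to x}g(y)\le 0$. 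Combining $g\le 0$ everywhere with $g\ge 0$ a.e.\ gives $g=0$ a.e., whence $f(1)=\int_0^1 g\,\textup{d}\leb=0$, contradicting $f(1)=\leb(\sC)>0$. With this correction your proof is sound and, I think, preferable as a stand-alone argument.
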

 \begin{proof} Let $M$ be a computable martingale that succeeds on any   $Z \in \cantor$ failing the law of large numbers. By Theorem 4.2 of \cite{Freer.Kjos.ea:nd} (and its proof) there is a computable Lipschitz function $f$ such that $f'(z)$ fails to exist whenever $M$ succeeds  on a binary expansion $Z$ of $z$. Adding a linear term, we may assume that  $f$ is nondecreasing. Now suppose $f(x) = \int_0^x g\,\textup{d}\leb$ for a lower semicontinuous function~$g$. If $C$ is a Lipschitz constant for $f$, then $\{x\colon \, g(x) > C\}$ is a null set. Since this set is also open, it is  empty. Hence $g$ is bounded. 
	
	 If $Z$ is $1$-generic relative to a representation of $g$ then $Z$ is a density-one point relative to this representation of $g$ by relativizing  the observation in \cite{Bienvenu.Hoelzl.ea:12a} mentioned earlier on. Hence by   Proposition~\ref{prop:DensityLebesgueDiff} in relativized form,  $z$ is a Lebesgue point of $g$. Then $f'(z)$ exists. 
	
	On the other hand, each 1-generic $Z$ fails the law of large numbers. So $M$ succeeds on $Z$, and $f'(z)$ does not exist.  Contradiction. \end{proof}

	\section{Randomness notions  and  their corresponding lowness classes}
\subsection{Summary of randomness notions} The following diagram gives an overview of the randomness notions we have discussed. They are  stronger than, but close to, ML-randomness. The diagram is a modification of a similar diagram in \cite{Bienvenu.Hoelzl.ea:12a}.	We consider properties of a given  ML-random real. 

\vsp

\begin{center}
\newcommand{\specialcell}[2][c]{%
  \begin{tabular}[#1]{@{}c@{}}#2\end{tabular}}

\begin{tabular}{rcccccccl}
\specialcell{\emph{Oberwolfach} \\ \emph{random}}  & \hspace*{-0.1cm}$\xrightarrow{}$ &  \specialcell{ all  interval-c.e. \\ functions are  \\ differentiable at \\ the real} &
	\hspace*{-0.1cm}$\xrightarrow{}$\hspace*{-0.1cm} &
\specialcell{density-one   \\  point} &
	\hspace*{-0.2cm}$\xrightarrow{\hspace*{0.5cm}}$\hspace*{-0.2cm} &
\specialcell{positive\\density point} \\
  \rule{0cm}{16px}$\Big\uparrow$\hspace*{.4cm}  & & 	& & & & \rule{0cm}{16px}$\Big\updownarrow$\hspace*{0cm} \\
 not LR-hard & & 	& \hspace*{-0.1cm}$\xrightarrow{}$  & & & \hspace*{-0.4cm}  \specialcell{Turing incomplete} & &
\end{tabular}
\end{center}

\vsp

The rightmost vertical double arrow refers to a result of Bienvenu et al.\  \cite{Bienvenu.Hoelzl.ea:12a}.   Random pseudo-jump inversion implies that the implication not LR-hard $\to $ Turing incomplete is proper.  In fact, by \cite{Day.Miller:nd}, the implication density-one point $\to$ positive density point is proper. 

One way to separate these notions when  viewed as operators on oracles would be to separate the corresponding lowness classes. Recall that an oracle $A$ is low for a randomness notion $\sC$ if $\sC^A= \sC$. More generally, $A$ is low for a pair of randomness notions $\sC \sub \+ D$  if $\sC \sub \+ D^A$.  Combining results in \cite{Downey.Nies.ea:06,Nies:AM} shows that for the pair $\sC= $ weak 2-randomness and $\+ D=$ ML-randomness, the double lowness class coincides with $K$-triviality. Thus, the lowness class for any of the notions above is contained in the $K$-trivials. 

Using the recent result of Day and Miller \cite{Day.Miller:12}, Franklin and Ng~\cite{Franklin.Ng:10} have shown that lowness for difference randomness coincides with $K$-triviality. We now obtain such a coincidence for two further notions in the diagram above: density-one points, and being not LR-hard.

\begin{prop} Let $A$ be $K$-trivial. 
	
\bi \item[(1)] $A$ is low for the notion ``density-one $\cap$ ML-random''.  

\item[(2)] $A$ is low for the notion ``non-LR-hard $\cap$ ML-random''. \ei \end{prop}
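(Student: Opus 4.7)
The plan rests on two central facts about $K$-trivial oracles $A$. First, Nies's theorem that every $K$-trivial $A$ is low for ML-randomness, i.e., $\mlr^A = \mlr$. Second, $K$-triviality implies lowness, so $\Halt^A \equiv_T \Halt$; hence $\mlr^{\Halt^A} = \mlr^{\Halt}$ and $2$-randomness relative to $A$ coincides with $2$-randomness.

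For (2), I would argue the equivalence ``$Z \in \mlr$ is not LR-hard $\Leftrightarrow$ $Z \in \mlr^A$ is not LR-hard relative to $A$''. By Nies's theorem, $\mlr = \mlr^A$. Unrelativized non-LR-hardness of $Z$ asks for a witness $W \in \mlr^Z \setminus \mlr^{\Halt}$; the relativized version asks for $W \in \mlr^{Z \oplus A} \setminus \mlr^{\Halt^A}$. Since $\mlr^{Z \oplus A} \subseteq \mlr^Z$ and $\mlr^{\Halt^A} = \mlr^{\Halt}$, the relativized version trivially implies the unrelativized one. For the converse, take $W \in \mlr^Z \setminus \mlr^{\Halt}$. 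By van Lambalgen, $Z \oplus W \in \mlr$; by Nies's theorem, $Z \oplus W \in \mlr^A$; by van Lambalgen again, $W \in \mlr^{Z \oplus A}$. Since $W \notin \mlr^{\Halt} = \mlr^{\Halt^A}$, the same $W$ witnesses non-LR-hardness relative to $A$.

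For (1), the easy direction (density-one rel $A$ $\Rightarrow$ density-one) is immediate since every $\Pi^0_1$ class is $\Pi^{0,A}_1$. For the hard direction I would argue by contradiction: assume $Z \in \mlr$ is density-one but $P$ is a $\Pi^{0,A}_1$ class with $Z \in P$ and $\rho(P|Z) < 1$, and build a $\Pi^0_1$ class $P'$ with $Z \in P'$ and $\rho(P'|Z) < 1$, contradicting density-one of $Z$. The mechanism is a golden-run / cost-function construction: approximate $A$ by a computable sequence $(A_s)$ and $P$ by $P_s$ using $A_s$, and enumerate a basic open cylinder $[\sigma]$ out of $P'$ only when it lies outside some $P_s$ whose ``cost'' is small, where the cost function is the benign one associated with the $K$-triviality of $A$. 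Since $A$ obeys this cost, the total measure of cylinders that must later be put back into $P'$ (because $A_t$ changes) is bounded, so $P'$ is a genuine $\Pi^0_1$ superset of $P$ whose complement is close in measure to that of $P$.

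The main obstacle is that density-one is a local condition at $Z$: one needs $\rho(P'|Z) < 1$, not merely that $P' \triangle P$ has small measure. The standard golden-run argument controls global measure but not per-scale behavior around $Z$. The plan is to calibrate the cost budget across scales, so that at each scale of intervals containing $Z$ the density defect surviving from $P$ is not swamped by the measure of cylinders ``mistakenly'' kept in $P'$. An alternative route worth exploring is to relativize Proposition~\ref{prop:DensityLebesgueDiff} and approximate each bounded lower semi-$A$-computable function in $L^1$ by bounded lower semi-computable functions via the same cost-function trick, so that Lebesgue points for the unrelativized functions remain Lebesgue points for their $A$-relative targets.
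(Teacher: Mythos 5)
Your argument for part (2) is essentially the paper's: it is the van Lambalgen argument of Hirschfeldt (\cite[8.5.15]{Nies:book}) combined with lowness of $A$ for ML-randomness, just phrased in the forward direction rather than by contraposition, and that part is fine.

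For part (1) there is a genuine gap, one you flag yourself. You propose to build a $\Pi^0_1$ class $P'$ with $P \subseteq P'$ whose extra measure is controlled by a cost-function argument. But density is monotone the wrong way for this: $P \subseteq P'$ gives $\rho(P'|Z) \ge \rho(P|Z)$, so knowing $\rho(P|Z) < 1$ tells you nothing about $\rho(P'|Z)$ unless you control the measure added back at \emph{every} scale of interval around $Z$. A global cost bound does not do this, and the plan to ``calibrate the cost budget across scales'' is not carried out; the alternative $L^1$-approximation route via Proposition~\ref{prop:DensityLebesgueDiff} has the same defect, since $L^1$-closeness does not transfer Lebesgue-point behaviour at a fixed $Z$. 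The paper takes the opposite containment: by the argument of Day and Miller \cite{Day.Miller:12}, since $A$ is $K$-trivial and $Z$ is ML-random there is a $\Pi^0_1$ class $\mathcal{Q}$ with $Z \in \mathcal{Q} \subseteq \mathcal{P}$, and then $\rho(\mathcal{Q}|Z) \le \rho(\mathcal{P}|Z) < 1$ is immediate from $\mathcal{Q} \subseteq \mathcal{P}$ — monotonicity of density is in the right direction and no per-scale control is required. So the missing idea is not a finer cost-function calibration but the Day--Miller inner approximation, which replaces the $\Pi^{0,A}_1$ class by a $\Pi^0_1$ \emph{subclass} rather than a $\Pi^0_1$ superclass.
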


\begin{proof} Each time we need to show that if $Z$ is not $A$-random in the given sense, then $Z$ is not random in that sense. Since $A$ is low for ML-randomness and our notions imply ML-randomness, we may assume that $Z$ is ML-random.
	
	\vsp

	\n (1).  Suppose    $Z \in \+ P$ for some $\PI 1 (A)$ class $\+P$  with $\rho(Z \mid P) <1 $.  \\
   Since  $A$ is  $K$-trivial and $Z$ random, by the argument of Day and Miller \cite{Day.Miller:12} there is a $\PI 1$  class $\+ Q$ with $\+ P \supseteq \+Q \ni Z$.   
 Then $\rho(Z\mid \+ Q)< 1$.   
 
\vsp

\n (2).  Let $\MLR$ denote the class of ML-randoms. We modify an argument of Hirschfeldt \cite[8.5.15]{Nies:book}. Suppose that $Z$ is LR-hard relative to $A$, namely,  $Z \oplus A \ge_{LR} A'\equiv_T \Halt$. We show  $Z \ge_{LR} \Halt$. Suppose that  $Y \in \MLR^Z$. Then $ Y\oplus Z \in \MLR$ by van Lambalgen's theorem, so $ Y\oplus Z \in \MLR^A$. This implies $Y \in \MLR^{Z\oplus A}$ by van Lambalgen's theorem relative to $A$. By our hypothesis on $Z$, this implies that   $Y$ is ML-random relative to $\Halt$.  Thus, $Z \ge_{LR} \Halt$. 
\end{proof}

%

\newpage

\part{Randomness, Kolmogorov complexity,  and computability}

\section{Cupping $\DII$ DNR sets (Joseph S.\ Miller, June 2012)}


Nies, Stephan and Terwijn \cite{Nies.Stephan.ea:05} proved that if $Z\leq_T\emptyset'$ is Martin-L\"of random and $C\in 2^\omega$ is a c.e.\ set, then either $Z\oplus C\geq_T \emptyset'$ or $Z$ is Martin-L\"of random relative to $C$. We prove an analogous result with ``is Martin-L\"of random'' replaced by ``has DNC degree''.

\begin{thm}\label{thm:DNC-join-ce}
Assume that $X\in 2^\omega$ has DNC degree and $C\in 2^\omega$ is a c.e.\ set. Either $X\oplus C\geq_T \emptyset'$ or $X$ has DNC degree relative to $C$.
\end{thm}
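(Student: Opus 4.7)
I would prove the contrapositive, following the template of the Nies--Stephan--Terwijn cupping theorem for Martin-L\"of randomness. Assume that $X$ does not have DNC degree relative to $C$; the goal is to exhibit a Turing reduction from $\emptyset'$ to $X \oplus C$. Fix an $X$-computable DNC function $f$ and a computable enumeration $\{C_s\}$ of $C$.

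The central device is the recursion theorem, used to ``absorb'' the $C$-oracle into ordinary partial computable functions via the approximations $\varphi_e^{C_s}(e)$. For each $n \in \omega$, I would construct by the $s$-$m$-$n$ theorem and Kleene's recursion theorem an $X$-computable function $g_n$ that agrees with $f$ outside of a controlled set of ``diagonal'' indices, where the modifications are driven by the enumeration $\emptyset'_s$ of $\emptyset'$. The point of the construction is to encode the single bit ``$n \in \emptyset'$?'' into the behaviour of $g_n$ at certain recursion-theoretically produced indices. A first naive version of $g_n$ just uses $g_n(e) = f(\pi(e))$ where $\varphi_{\pi(e)}(\pi(e))$ outputs the value of $\varphi_e^{C_s}(e)$ at the first stage $s$ at which this converges; the DNC property of $f$ then already gives a diagonalization against the correct value of $\varphi_e^C(e)$ whenever no later stage revises the computation.

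By the hypothesis, each $g_n$ must fail to be DNC relative to $C$: there is some $e_n$ with $g_n(e_n) = \varphi_{e_n}^C(e_n)$. The trick is to arrange the recursion-theoretic modifications inside $g_n$ so that the c.e.-detectable existence of such $e_n$ reveals whether $n \in \emptyset'$. Concretely, the approximations $\varphi_{e_n}^{C_s}(e_n)$ behave differently according to whether the $\emptyset'_s$-trigger for $n$ ever fires: when $n \in \emptyset'$, the trigger eventually fires and the computation stabilizes; when $n \notin \emptyset'$, the modifications never activate and no witness $e_n$ appears. Consequently, from $X \oplus C$ one can, for each $n$, search for $e_n$ by computing $g_n(e) \leq_T X$ and comparing to $\varphi_e^C(e) \leq_T C$; the outcome of this search decides $n \in \emptyset'$.

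The main obstacle I expect is the precise recursion-theoretic design of $g_n$ that achieves this encoding while keeping $g_n$ uniformly $X$-computable and keeping the detection of $e_n$ uniform in $n$ using only $X \oplus C$ oracle access. The delicate point is the handling of ``mind changes'' in the approximations $\varphi_e^{C_s}(e)$: elements of $C$ entering below the current use can invalidate a committed value, so one must iterate the construction over successive mind changes and verify that only the $\emptyset'$-trigger, not the spurious $C$-revisions, produces a genuine witness. The interplay between the DNC property of $f$, the $s$-$m$-$n$ recursion machinery, and the two c.e. enumerations (of $\emptyset'$ and of $C$) is the technical heart of the argument.
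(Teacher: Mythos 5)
Your plan has the right ingredients --- an $X$-computable DNC function, recursion-theorem control of diagonal positions, and $\emptyset'$-triggered modifications --- but the decision procedure you propose does not work, and reveals a confusion about what the hypothesis actually gives you. In the contrapositive the assumption is that \emph{no} $X$-computable function is DNC relative to $C$; therefore $g_n$ fails to be DNC relative to $C$ for every $n$, and a witness $e_n$ with $g_n(e_n) = \varphi_{e_n}^C(e_n)$ exists for \emph{every} $n$, not only those in $\emptyset'$. Your claim that when $n \notin \emptyset'$ ``no witness $e_n$ appears'' is therefore false, and the mere existence of a witness cannot distinguish membership in $\emptyset'$. There is also a conflation between the positions the recursion theorem puts under your control (your $\pi(e)$, the paper's $k_{n,m}$, whose plain diagonal values $\varphi_{k_{n,m}}(k_{n,m})$ you are free to define) and the witness index $e_n$, which is handed to you by the hypothesis and over which you have no control; you cannot ``arrange the recursion-theoretic modifications so that $\varphi_{e_n}^{C_s}(e_n)$ behave differently according to whether the trigger fires.''

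The idea you are missing is that the \emph{stage} at which the witness first appears acts as a modulus for $\emptyset'$; its bare existence is useless. In the paper's proof, with $g$ an $X$-computable DNC function and a single recursion-theorem application controlling a computable double array of positions $k_{n,m}$, the action is: when $n$ enters $\emptyset'$ at stage $s$, set $\phi_{k_{n,m}}(k_{n,m}) = \phi^{C_s}_{k_{n,m},s}(k_{n,m})$. Define $f(n)$ to be the least $s$ such that for some $m \le s$ there is a $C$-correct computation with $g(k_{n,m}) = \phi^{C_s}_{k_{n,m},s}(k_{n,m})$; by hypothesis $f$ is total, and $f \le_T X \oplus C$. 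If $n$ entered $\emptyset'$ at any stage $t \ge f(n)$, the $C$-correct value witnessed at stage $f(n)$ would be frozen into $\phi_{k_{n,m}}(k_{n,m})$ at stage $t$, making $\phi_{k_{n,m}}(k_{n,m}) = g(k_{n,m})$ and contradicting that $g$ is DNC; hence $n \in \emptyset' \iff n \in \emptyset'_{f(n)}$, which gives the reduction. Note further that your ``naive $g_n$'' commits $\varphi_{\pi(e)}(\pi(e))$ at the \emph{first} convergence of $\varphi^{C_s}_e(e)$, before any $\emptyset'$-trigger, whereas the commitment must wait until $n$ actually enters $\emptyset'$; and the single two-parameter recursion-theorem application sidesteps the per-$n$ uniformity issue you flag.
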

\begin{proof}
Let $g\colon\omega\to\omega$ be an $X$-computable DNC function. By the recursion theorem, we may assume that we control an infinite sequence of positions $\{k_{n,m}\}_{n,m\in\omega}$ of the diagonal function $e\mapsto\phi_e(e)$. If $n$ enters $\emptyset'$ at stage $s$, then define
\[
\phi_{k_{n,m}}(k_{n,m}) = \phi^{C_s}_{k_{n,m},s}(k_{n,m}),
\]
if the latter converges. If there is an $n$ such that $m\mapsto g(k_{n,m})$ is a DNC function relative to $C$, then we are done. If not, define $f\leq_T X\oplus C$ such that $f(n)$ is the least $s$ such that $(\exists m\leq s)\; g(k_{n,m})=\phi^{C_s}_{k_{n,m},s}(k_{n,m})$ for a $C$-correct computation. By assumption, $f$ is total. If $n$ enters $\emptyset'$, then it must happen at a stage $s<f(n)$. Otherwise, we would contradict the fact that $g$ is DNC. Therefore, $\emptyset'\leq_T f\leq_T X\oplus C$.
\end{proof}

This result is quite similar to, and was motivated by, a beautiful theorem of Day and Reimann \cite[Corollary~8.2.1]{Day:11}. They proved that if $X\in 2^\omega$ has PA degree and $C\in 2^\omega$ is a c.e.\ set, then either $X\oplus C\geq_T \emptyset'$ or $X\geq_T C$. The conclusion can fairly easily be strengthened to highlight the similarity with Theorem~\ref{thm:DNC-join-ce} (Day, personal communication, October 2011).

\begin{thm}[Day and Reimann]
Assume that $X\in 2^\omega$ has PA degree and $C\in 2^\omega$ is a c.e.\ set. Either $X\oplus C\geq_T \emptyset'$ or $X$ has PA degree relative to $C$.
\end{thm}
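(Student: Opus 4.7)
The plan is to follow the proof of Theorem~\ref{thm:DNC-join-ce} essentially verbatim, exploiting the fact that $X$ has PA degree if and only if $X$ computes a $\{0,1\}$-valued DNC function, and noting that every manipulation in that proof preserves being $\{0,1\}$-valued. Fix such a $g \colon \omega \to \{0,1\}$ with $g \leq_T X$. By the recursion theorem, reserve an infinite array $\{k_{n,m}\}_{n,m \in \omega}$ of positions of the diagonal function $e\mapsto \phi_e(e)$, and run the same construction: when $n$ enters $\emptyset'$ at stage $s$, set $\phi_{k_{n,m}}(k_{n,m}) := \phi^{C_s}_{k_{n,m},s}(k_{n,m})$ for each $m$ where the right-hand side converges.

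Case~1: for some $n$, the $X$-computable function $h_n(m) := g(k_{n,m})$ satisfies $h_n(m) \neq \phi^C_{k_{n,m}}(k_{n,m})$ for every $m$ where the right-hand side converges. Since $g$ takes values in $\{0,1\}$, so does $h_n$; a routine $s^m_n$ padding (choosing for each $e$ an index $j(e) \in \{k_{n,m}\}_m$ with $\phi^C_{j(e)}(j(e)) = \phi^C_e(e)$, which is possible because the array $k_{n,m}$ can be set up via recursion theorem with enough redundancy) converts $h_n$ into an $X$-computable $\{0,1\}$-valued DNC$^C$ function in the usual sense, so $X$ has PA degree relative to $C$.

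Case~2: for every $n$ there exists $m$ with $g(k_{n,m}) = \phi^C_{k_{n,m}}(k_{n,m})$. As in the original argument, define $f(n)$ to be the least stage $s$ at which some $m \leq s$ certifies $g(k_{n,m}) = \phi^{C_s}_{k_{n,m},s}(k_{n,m})$ via a $C$-correct computation; this makes $f$ total and $f \leq_T X \oplus C$. If $n$ were to enter $\emptyset'$ at any stage $s \geq f(n)$, then $C$-correctness would force $\phi_{k_{n,m}}(k_{n,m}) = g(k_{n,m})$, contradicting that $g$ is DNC. Hence $n \in \emptyset' \iff n \in \emptyset'_{f(n)}$, so $\emptyset' \leq_T f \leq_T X\oplus C$.

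The only content beyond Theorem~\ref{thm:DNC-join-ce} is the padding observation in Case~1, which is the main (minor) obstacle: one must verify that DNC$^C$-ness along the fixed subsequence $\{k_{n,m}\}_m$ propagates to full DNC$^C$-ness while preserving the $\{0,1\}$-valued range. Once this is done, the engine of the proof — the recursion-theoretic handling of the two c.e.\ sets $\emptyset'$ and $C$ — carries over unchanged.
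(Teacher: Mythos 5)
Your proof is correct, but it takes a genuinely different route from the one the paper actually gives. The paper's proof is a short ``black-box'' derivation: it invokes the original Day--Reimann dichotomy (either $X\oplus C\geq_T \emptyset'$ or $X\geq_T C$), then, assuming the first alternative fails, interpolates a $Y\leq_T X$ of PA degree with $X$ PA relative to $Y$ by Simpson's density theorem for PA degrees, re-applies Day--Reimann to $Y$ and $C$ (legitimate since $Y\oplus C\leq_T X\not\geq_T\emptyset'$) to conclude $Y\geq_T C$, and finally observes that $X$ being PA relative to $Y$ and $Y\geq_T C$ gives $X$ PA relative to $C$. You instead carry out the direct adaptation of Theorem~\ref{thm:DNC-join-ce}, which the paper only mentions (``could be used, with only superficial modification'') without executing. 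The paper's route is shorter but imports Simpson's interpolation theorem and the original Day--Reimann result as external ingredients; yours is self-contained and reuses the same recursion-theoretic engine as Theorem~\ref{thm:DNC-join-ce}, with the $\{0,1\}$-valuedness of $g$ carried through for free. One point you should make explicit rather than wave at: the padding claim, that one can uniformly pick $j(e)\in\{k_{n,m}\}_m$ with $\phi^C_{j(e)}(j(e))=\phi^C_e(e)$, has to coexist on the \emph{same} indices $k_{n,m}$ with the $\emptyset'$-synchronization constraint $\phi_{k_{n,m}}(k_{n,m})=\phi^{C_s}_{k_{n,m},s}(k_{n,m})$. This is arrangeable by building the $k_{n,m}$ with the recursion theorem with parameters so that each index both tracks $\emptyset'$ and, at the relativized level, encodes an arbitrary $\phi^C_e(e)$ computation, but as written your phrase ``with enough redundancy'' is the only acknowledgment that two requirements compete for the same positions; since this is precisely the only non-superficial part of the adaptation, it warrants a sentence or two of its own.
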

\begin{proof}
Apply the result of Day and Reimann to $X$ and $C$. If $X\oplus C\geq_T \emptyset'$, we are done. Otherwise, $X\geq_T C$ and $X\ngeq_T\emptyset'$. Take $Y\in 2^\omega$ such that $Y$ has PA degree and $X$ has PA degree relative to $Y$, which is possible by Simpson~\cite[Theorem~6.5]{Simpson:77}. Note that $X\geq_T Y$, so $Y\oplus C\leq_T X\ngeq_T\emptyset'$. Applying the result of Day and Reimann to $Y$ and $C$ gives $Y\geq_T C$. Therefore, $X$ has PA degree relative to $C$.
\end{proof}

The proof of Theorem~\ref{thm:DNC-join-ce} could be used, with only superficial modification, to prove this result. Ku\v cera (2011) also gave a direct proof of Day and Reimann's result.

Note that the DNC version of the original Day and Reimann result is false. In other words, we cannot replace ``$X$ has DNC degree relative to $C$'' with ``$X\geq_T C$'' in Theorem~\ref{thm:DNC-join-ce}. To see this, let $C\in 2^\omega$ be a low c.e.\ set that is not $K$-trivial. By the low basis theorem relative to $C$, there is a Martin-L\"of random $Z\in 2^\omega$ such that $Z\oplus C$ is low. So $Z$ has DNC degree and $Z\oplus C\ngeq_T \emptyset'$. But $Z\geq_T C$ would imply that $C$ is $K$-trivial by Hirschfeldt, Nies and Stephan \cite{Hirschfeldt.Nies.ea:07}.

We now consider another theorem of Nies, Stephan and Terwijn \cite{Nies.Stephan.ea:05}. They proved that if $Z\leq_T\emptyset'$ is Martin-L\"of random relative to $A$, then $A$ is GL$_1$ (i.e., $A'\leq A\oplus\emptyset'$). Any $Z$ that is Martin-L\"of random relative to $A$ has DNC degree relative $A$, so the following theorem generalizes their result.

\begin{thm}\label{thm:GL1}
Assume that $X\in 2^\omega$ is $\Delta^0_2$ and has DNC degree relative to $A\in 2^\omega$. Then $A$ is GL$_1$. 
\end{thm}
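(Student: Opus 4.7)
The plan is to adapt the recursion-theoretic construction of Theorem~\ref{thm:DNC-join-ce} in order to extract, uniformly in $n$, an $\emptyset'$-computable upper bound on the halting stage of $\phi^A_n(n)$. Showing that $\phi^A_n(n)$ converges iff it converges by this bound will immediately yield $A' \leq_T A \oplus \emptyset'$. The intuition is that if $\phi^A_n(n)$ were to converge very late, then a DNC computation built to react to this convergence via the $\Delta^0_2$ approximation $X_s \to X$ would be forced to reproduce the true value of $g = \Phi^X$ at a diagonal index, contradicting the DNC property.

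Fix a Turing functional $\Phi$ with $g = \Phi^X$ total and DNC relative to $A$, together with a computable approximation $X_s \to X$ provided by $X \leq_T \emptyset'$. By the parameterised recursion theorem I would construct a computable function $n \mapsto k_n$ such that the partial $A$-computation $\phi^A_{k_n}(k_n)$ executes the following: wait for a stage $s_0$ at which $\phi^A_{n,s_0}(n)\DA$; then search for the least $s \geq s_0$ with $\Phi^{X_s}(k_n)\DA$, and output that value. Because $g$ is total, $\Phi^X(k_n)\DA$, so whenever $\phi^A_n(n)$ converges, the inner search terminates; thus $\phi^A_{k_n}(k_n)\DA$ exactly when $\phi^A_n(n)\DA$.

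Suppose now that $\phi^A_n(n)\DA_{s_0}$, and let $s^* \geq s_0$ be the stage at which the inner search succeeds, so $\phi^A_{k_n}(k_n) = \Phi^{X_{s^*}}(k_n)$. The DNC property forces
\[ \Phi^{X_{s^*}}(k_n) \neq \Phi^X(k_n), \]
which means $X_{s^*}$ disagrees with $X$ on some position below $u(k_n)$, the use of the computation $\Phi^X(k_n)$. Let $T(n)$ denote the settling time of $X_r \uh u(k_n)$, that is, the least $t$ with $X_r \uh u(k_n) = X \uh u(k_n)$ for every $r \geq t$. The preceding disagreement forces $s^* < T(n)$, and hence $s_0 < T(n)$. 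Now $u(k_n)$ is $\emptyset'$-computable (simulate $\Phi$ with the oracle $X \leq_T \emptyset'$), and so is $T(n)$ uniformly in $n$: given $\emptyset'$, read off $X \uh u(k_n)$, and for each $i < u(k_n)$ use the $\Sigma^0_1$ test ``is there some $s \geq t$ with $X_s(i) \neq X(i)$?'' to locate the per-coordinate settling time, then take the maximum.

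Combining these ingredients, the $A \oplus \emptyset'$-decision procedure for $A'$ is as follows: on input $n$, compute $k_n$ and then $T(n)$ using $\emptyset'$, and simulate $\phi^A_n(n)$ using $A$ for $T(n)$ stages; answer ``yes'' iff the computation halts within that bound. The main subtleties will be verifying that the recursion-theoretic construction of $k_n$ is uniform in $n$ and properly implements the prescribed search, and confirming that the per-coordinate moduli of the approximation $X_s \to X$ are $\emptyset'$-computable even though $X' \not\leq_T \emptyset'$ in general; but these are essentially bookkeeping, and no deeper obstacle appears to stand in the way.
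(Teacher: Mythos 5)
Your proof is correct and follows essentially the same strategy as the paper's: control a slave index $k_n$ via the recursion theorem, define $\phi^A_{k_n}(k_n)$ once $n$ enters $A'$ to equal a current guess at $g(k_n)$, use the DNC property to conclude that the guess must be wrong (so the approximation has not yet settled at the entry stage), and read off a $\emptyset'$-computable bound on the entry stage from the modulus of convergence. The only difference is cosmetic: the paper applies the Limit Lemma to get a $\Delta^0_2$ approximation $h(n,s)$ of $g$ itself, so $\phi^A_{k_n}(k_n) := h(k_n,s)$ is immediately defined at the entry stage $s$, whereas you push the approximation of $X$ through $\Phi$ and therefore need the extra search for $s^*\ge s_0$ with $\Phi^{X_{s^*}}(k_n)\DA$, and correspondingly bound by the settling time of $X$ below the use $u(k_n)$ rather than by the settling time of $h(k_n,\cdot)$.
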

\begin{proof}
Let $g\colon\omega\to\omega$ be an $X$-computable DNC function relative to $A$. Because $g$ is $\Delta^0_2$, there is a computable $h\colon\omega^2\to\omega$ such that $(\forall n)\;g(n) = \lim_{s\to\infty} h(n,s)$. By the relativized recursion theorem, we may assume that we $A$-computably control an infinite \emph{computable} sequence of positions $\{k_n\}_{n\in\omega}$ of the diagonal function relative to $A$, i.e., $e\mapsto\phi^A_e(e)$. If $n$ enters $A'$ at stage $s$, then let $\phi^A_{k_n}(k_n) = h(k_n,s)$. Define $f\leq_T \emptyset'$ such that $f(n)$ is the least $s$ such that $(\forall t\geq s)\; g(k_n) = h(k_n,t)$. If $n$ enters $A'$, then it must happen at a stage $s<f(n)$. Otherwise, we would contradict the fact that $g$ is DNC relative to $A$. Therefore, $A'\leq_T A\oplus f\leq_T A\oplus \emptyset'$.
\end{proof}

Ku\v cera and Slaman (1989) built an incomplete c.e.\ set that cups every $\Delta^0_2$ DNC degree to $\emptyset'$. Bienvenu, Greenberg, Ku\v cera, Nies and Turetsky (2012) showed that, in fact, every superhigh c.e.\ set has this property. Their proof uses Kolmogorov complexity; Ku\v cera gave an alternate and purely computability-theoretic proof. We improve their result further by showing that any non-low c.e.\ set cups every $\Delta^0_2$ DNC degree to $\emptyset'$.

\begin{cor}
If $C\in 2^\omega$ is a non-low c.e.\ set and $X\in 2^\omega$ has $\Delta^0_2$ DNC degree, then $X\oplus C\equiv_T\emptyset'$.
\end{cor}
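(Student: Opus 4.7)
My plan is to derive the corollary as an immediate consequence of the two preceding theorems, by contradiction.

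Assume for contradiction that $X \oplus C \not\geq_T \emptyset'$. Since $C$ is c.e.\ and $X$ has DNC degree, Theorem~\ref{thm:DNC-join-ce} applies and forces the other alternative: $X$ has DNC degree relative to $C$. Now $X$ is $\Delta^0_2$ by hypothesis, so Theorem~\ref{thm:GL1} (taking $A = C$) yields that $C$ is GL$_1$, i.e., $C' \leq_T C \oplus \emptyset'$.

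The finishing move is to observe that $C$ is c.e., hence $C \leq_T \emptyset'$, so $C \oplus \emptyset' \equiv_T \emptyset'$ and therefore $C' \leq_T \emptyset'$. This says exactly that $C$ is low, contradicting the hypothesis that $C$ is non-low. Hence $X \oplus C \geq_T \emptyset'$, and the reverse inequality $X \oplus C \leq_T \emptyset'$ is automatic since both $X$ and $C$ are $\Delta^0_2$.

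There is no real obstacle here: all the work has been done in Theorems~\ref{thm:DNC-join-ce} and~\ref{thm:GL1}. The only subtlety worth double-checking is that ``non-low'' for a c.e.\ set $C$ is literally the negation of $C' \leq_T \emptyset'$ (not merely of $C' \leq_T C \oplus \emptyset'$), but because $C$ is c.e.\ these two conditions coincide, so the GL$_1$ conclusion is enough.
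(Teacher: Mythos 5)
Your proof is correct and follows the paper's own argument essentially verbatim: apply Theorem~\ref{thm:DNC-join-ce} to reduce to the case where $X$ is DNC relative to $C$, then invoke Theorem~\ref{thm:GL1} with $A=C$ to conclude $C$ is GL$_1$, hence low (since $C$ is c.e.\ and thus $\Delta^0_2$), contradicting the hypothesis. Your extra remark unpacking why GL$_1$ coincides with lowness for c.e.\ sets is a helpful clarification but does not change the route.
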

\begin{proof}
By Theorem~\ref{thm:DNC-join-ce}, either $X\oplus C\geq_T\emptyset'$ or $X$ has DNC degree relative to $C$. In the latter case, Theorem~\ref{thm:GL1} implies that $C$ is GL$_1$, hence low. This is not true, so $X\oplus C\geq_T\emptyset'$. Clearly, $X\oplus C\leq_T\emptyset'$.
\end{proof}

Note that if $C\in 2^\omega$ is a low set, then the low basis theorem relativized to $C$ gives us an $X$ that has DNC (even PA) degree such that $X\oplus C$ is low. Therefore, the corollary is tight: no low (c.e.) set cups every $\Delta^0_2$ DNC degree to $\emptyset'$.

\section{Cupping $\DII$ DNR sets (Bienvenu, \Kuc, et al., Feb.\ 2012)}
The following was obtained during the Research in Pairs stay at MFO, of  Bienvenu,  Greenberg,  \Kuc, Nies and Turetsky. It also provides a short proof of a 1989 result by \Kuc\  and Slaman who built a c.e.\ incomplete set that cups all $\DII$ DNR sets above $\ES'$. The idea to use Kolmogorov complexity~$K$ is due to Bienvenu.
There also is  a new  proof not using $K$ but much shorter than the original construction; this is  due to \Kuc . See Subsection~\ref{ss:not_K}.

\subsection{A proof using Kolmogorov complexity}
\begin{lemma}\label{lem:busy-beaver}
Let $B$ be the function $B(n)=\min \{t \in \NN \mid \forall s > t, \  K(s)\geq n\}$. Any function dominating~$B$ computes $\emptyset'$. 
\end{lemma}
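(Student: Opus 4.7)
The plan is to exploit the fact that if $\phi_e(e)$ halts at stage $t$, then $t$ is partial computable in $e$, so $K(t) \leq K(e) + O(1)$. Given a function $f$ eventually dominating $B$, this lets us bound the halting time of $\phi_e(e)$ in terms of $f$ applied to a value computable from $e$, which is enough to decide $\emptyset'$ with oracle $f$.

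First I would fix a constant $c$ such that for every $e$ with $\phi_e(e) \downarrow$, writing $t(e)$ for its halting stage, one has $K(t(e)) \leq K(e) + c$. This follows from the invariance theorem applied to the partial computable map $e \mapsto t(e)$, and the constant $c$ depends only on the universal machine. Combining this with a computable upper bound $h(e) \geq K(e)$ (say $h(e) = 2\lceil \log(e+2) \rceil + c_1$, using the standard bound $K(e) \le 2\log e + O(1)$), I obtain a computable function $n(e) := h(e) + c + 1$ with the property that $K(t(e)) < n(e)$ whenever $\phi_e(e)\downarrow$.

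Next, fix $n_0$ large enough that $f(n) \geq B(n)$ for all $n \geq n_0$. By the definition of $B$, for every $s > f(n)$ we then have $K(s) \geq n$. This yields the following $f$-computable algorithm for $\emptyset'$: on input $e$ with $n(e) \geq n_0$, query $f$ to obtain $f(n(e))$, simulate $\phi_e(e)$ for $f(n(e))$ steps, and declare $e \in \emptyset'$ iff the simulation has halted. Correctness: if instead $\phi_e(e)$ first halted at some stage $t(e) > f(n(e)) \geq B(n(e))$, then by the choice of $n_0$ we would have $K(t(e)) \geq n(e)$, contradicting the bound $K(t(e)) < n(e)$ from the first step. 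The finitely many inputs with $n(e) < n_0$ are handled by a finite lookup table.

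I do not expect any real obstacle: once the key observation $K(t(e)) \le K(e) + O(1)$ is in place the rest is routine. The only points to be careful about are that the invariance constant $c$ is genuinely uniform in $e$ (it is, since the map $e \mapsto t(e)$ is a single partial computable function), and that all complexities are taken with respect to the same prefix-free universal machine used to define $B$.
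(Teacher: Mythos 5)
The paper states Lemma~\ref{lem:busy-beaver} without proof, so there is no in-text argument to compare against; your proposal supplies a correct and essentially the standard one. The key chain is exactly right: for $e$ with $\phi_e(e)\downarrow$, the halting stage $t(e)$ is a partial computable function of $e$, hence $K(t(e)) \le K(e) + c$; combined with a computable bound $h(e) \ge K(e)$ you get $K(t(e)) < n(e)$ for the computable $n(e)=h(e)+c+1$; and since for $s > f(n(e)) \ge B(n(e))$ one has $K(s) \ge n(e)$, the halting stage, if it exists, must be $\le f(n(e))$, which gives an $f$-computable bound on the simulation. Handling only eventual domination by a finite lookup table is also correct. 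One small presentational point: be explicit once that $n(e)\to\infty$ (so that the set of $e$ with $n(e) < n_0$ is indeed finite); this follows from your choice of $h$, but saying it avoids any ambiguity.
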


\begin{theorem}\label{thm:kolmo-criterion}
  Let $A$ be a c.e.\ set such that $K^A(\sigma) \leq^+ f(K^{\emptyset'}(\sigma))$ for some $\DII$ function $f$. Then $A$ joins every $\Delta^0_2$ DNR set above $\emptyset'$.
\end{theorem}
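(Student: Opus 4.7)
The strategy is to apply Lemma~\ref{lem:busy-beaver} relativized to $A$: since any function dominating $B^A(n) := \min\{t : \forall s > t,\ K^A(s) \geq n\}$ computes $A'$, and $A' \geq_T \emptyset'$ (because $A$ is c.e.), it suffices to produce some $d \leq_T X \oplus A$ with $d(n) \geq B^A(n)$ for almost all $n$. Let $X$ be an arbitrary $\Delta^0_2$ set of DNR degree.

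First I would extract a $K$-incompressible witness from $X$ via the Kjos--Hanssen--Merkle--Stephan characterization of DNR degree: there is $h \leq_T X$ with $K(h(n)) \geq n - O(1)$. Since $X \leq_T \emptyset'$, $h$ is $\Delta^0_2$, with a computable approximation $h_t \to h$. In particular $K^{\emptyset'}(h(n)) \leq^+ 2\log n$, and the hypothesis then yields
\[
K^A(h(n)) \leq^+ f\bigl(2\log n + O(1)\bigr).
\]
So each $h(n)$ is $K$-incompressible but $K^A$-highly-compressible (up to $f$).

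Next, define $d(n)$ to be the least stage $t$ at which $A$-enumeration produces some program $p$ with $U^{A_t}(p)\downarrow = h_t(n)$ in at most $t$ steps, the search being over programs of all lengths. Both $X$ and $A$ suffice to compute $d$, and the estimate above guarantees termination since a program of length $\leq^+ f(2\log n + O(1))$ eventually appears with an $A_t$-correct computation. For the verification that $d(n) \geq B^A(n)$ almost everywhere, I would argue by contradiction: if infinitely often $d(n) < B^A(n)$, then for such $n$ there is $s^* > d(n)$ with $K^A(s^*) < n$ whose certifying short $A$-program has not yet been witnessed at stage $d(n)$. Combining these leftover short $A$-descriptions with the short $A$-description of $h(n)$ extracted at stage $d(n)$, and using the hypothesis both as $K^A \leq^+ f\circ K^{\emptyset'}$ and together with the reverse inequality $K^{\emptyset'} \leq K^A + O(1)$, a Kraft--Chaitin style counting should yield a plain prefix-free machine producing $h(n)$ from fewer than $n - O(1)$ bits, contradicting $K(h(n)) \geq n - O(1)$.

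The main obstacle is that $f$ is only $\Delta^0_2$ and so not $X \oplus A$-computable in general; this is why the search in the definition of $d$ has to be left unbounded in program length, rather than driven by a target bound derived from $f$. The $f$-estimate then enters only in the pen-and-paper verification step, which is a non-constructive counting argument and so does not need to be carried out inside $X \oplus A$. I expect the delicate point to be aligning the approximations $h_t$ to the true $h$ closely enough that the short $A$-description found at stage $d(n)$ can actually be treated as an $A$-description of the limit value $h(n)$ when combined with the leftover descriptions of $s^*$.
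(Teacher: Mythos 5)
Your setup (extracting $h \leq_T X$ with $K(h(n)) \geq n - O(1)$ via Kjos--Hanssen--Merkle--Stephan, using $X \leq_T \emptyset'$ to get $K^{\emptyset'}(h(n)) \leq^+ 2\log n$, and then the hypothesis to get $K^A(h(n)) \leq^+ f(2\log n + O(1))$) is exactly the paper's opening move. But the paper then applies the busy-beaver lemma \emph{unrelativized}: it shows the $X\oplus A$-computable settling time $t_n$ of the short $A$-program for $h(n)$ dominates $B(n)$, the largest $s$ with $K(s)<n$. Your plan instead relativizes the lemma and aims to dominate $B^A$. That change of target is where the argument breaks, and it is not a technicality that can be patched later: for non-low c.e.\ $A$ satisfying the hypothesis (for instance $A=\emptyset'$ itself with $f=\mathrm{id}$, or any non-low LR-hard c.e.\ set), we have $X\oplus A \leq_T \emptyset' <_T A'$, so \emph{no} $d\leq_T X\oplus A$ can dominate $B^A$---otherwise $A'\leq_T A\oplus d\leq_T \emptyset'$ would force $A$ to be low. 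The sufficiency claim you open with therefore cannot be implemented by any $X\oplus A$-computable $d$ in the general case the theorem is meant to cover.

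The verification step exhibits the same mismatch: to show $d(n)\geq B^A(n)$ you would need $K^A(s^*)\geq n$ for all $s^*>d(n)$, but the only incompressibility you have is on $h(n)$ in the \emph{plain} sense, $K(h(n))\geq n$. The settling-time trick converts the short $A$-program $p$ for $h(n)$ plus a plain description of $s^*$ into a plain description of $h(n)$, giving a lower bound on $K(s^*)$---not on $K^A(s^*)$. Since $K^A\leq K+O(1)$, this is too weak for $B^A$. Worse, the quantity you define (a settling time for the computation $U^A(p)=h(n)$) has \emph{small} $K^A$: the settling time of $A\uhr u$ is computable from $A$ and $p$, so $K^A(d(n))\leq^+ |p|\leq^+ f(2\log n)$, and hence $d(n)\leq B^A(f(2\log n)+O(1))$, which for reasonable $f$ is much smaller than $B^A(n)$, not larger. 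The repair is simple and brings you back to the paper's route: keep $d$ as you intend (an $X\oplus A$-computable settling time of an $A$-correct short program for $h(n)$), but show it dominates the \emph{unrelativized} $B$. Then $K(h(n))\leq^+ |p|+K(s^*)$ for $s^*>d(n)$, so $K(s^*)\geq^+ n - |p|\geq n$ for large $n$ once you also ensure $|p|$ is sufficiently smaller than $n$, which is why the paper passes to the subsequence $\sigma_{2f(n)}$ and uses an index search $K^A(\sigma_i)\leq i-2n$ rather than working with $h(n)$ directly. (Separately, you should be careful that $d$ is built from the actual $h(n)$ computed via $X$ and from $A$-correct computations checked via $A$, not from the computable approximations $h_t,A_t$ alone---otherwise $d$ is outright computable and cannot do the job.)
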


Taking $f(x)=x+O(1)$ in the theorem, this shows that any LR-hard c.e. degree joins every $\Delta^0_2$ DNR set above $\emptyset'$, and in particular there exists an incomplete such c.e.\ set.

\begin{proof}
A first remark: the bigger the function $f$, the stronger the result, so we can assume that $f$ is increasing and $f(n)>4n$ for all~$n$. Let $A$ be such a set and $D$ be a set of DNR degree. We use a result of Kjos-Hanssen, Merkle and Stephan: $D$ having DNR degree is equivalent to $D$ computing a sequence $(\sigma_n)$ of strings such that $K(\sigma_n) \geq n$. $D$ being $\emptyset'$-computable, using $\emptyset'$, we can compute the sequence $\sigma_{2f(n)}$, thus $K^{\emptyset'}(\sigma_{2f(n)}) \leq^+ K^{\emptyset'}(n) \leq^+ 2 \log n$. By the assumption on~$A$, $K^A(\sigma_{f(2n)}) \leq^+ f(2 \log n) \leq^+ f(n)$. On the other hand $K(\sigma_{2f(n)}) \geq 2f(n)$. Informally this means that $A$ contains a lot of information about the $\sigma_{2f(n)}$ (it makes the Kolmogorov complexity of $\sigma_{2f(n)}$ drop from at least $2f(n)$ to at most $f(n)$). 

We now show how to use $A \oplus D$ to compute $\emptyset'$, using Lemma~\ref{lem:busy-beaver}. Given $n$, use $A \oplus D$ to do the following. First, using $D$, compute the sequence $\sigma_i$, and using $A$ look for an index $i$ such that $K^A(\sigma_i) \leq i-2n$. Such an $i$ exists as $K^A(\sigma_{2f(n)}) \leq^+ f(n) \leq^+ 2f(n)-2n$. Finding such an~$i$ means finding a program $p$ of length at most $i-2n$ for the $A$-universal machine $\UM^A$. Let $u$ be the use of $A$ in the computation $\UM(p)=\sigma_i$ and let $t_n$ be the settling time of $A \upharpoonright u$. We claim that for any sufficiently large~$n$, $t_n \geq B(n)$, which by Lemma~\ref{lem:busy-beaver} will prove the result. Let thus $s$ be any integer bigger that $t_n$. First, notice that
\begin{equation}
K(\sigma_i) \leq^+ |p|+K(s) 
\end{equation}

Indeed, if one knows $p$ and $s$, one can compute $U^{A_s}(p)=\UM^{A}(p)=\sigma_i$ (the first equality comes from the definition of $s$). Since $K(\sigma) \geq i$ and $|p| \leq n-i$, it follows that 
\begin{equation}
K(s) \geq^+ K(\sigma_i) - |p| \geq^+ i- (i-2n) \geq^+ 2n
\end{equation}

And thus for $n$ large enough $K(s) \geq n$.

\end{proof}
For the definition of JT-reducibility $\le_{JT}$ see \cite[8.4.13]{Nies:book}. We say that $A$ is JT-hard if $\Halt \le_{JT} A$. 

\begin{lemma}\label{lem:kolmo-jthard}
The following are equivalent for any set $A$.\\
(i) $A$ is JT-hard\\
(ii) There exists a computable order $h$ such that $K^A \leq^+ h(K^{\emptyset'})$
\end{lemma}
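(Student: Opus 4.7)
The plan is to prove both directions with standard counting / encoding arguments; (i) $\Rightarrow$ (ii) is the more substantive direction.

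For (ii) $\Rightarrow$ (i), suppose $K^A \le^+ h(K^{\Halt})$ with $h$ a computable order. Given any $\Halt$-partial-computable $\psi$, whenever $\psi(n)\downarrow$ we have $K^{\Halt}(\psi(n)) \le^+ K^{\Halt}(n) \le 2\log n + O(1)$, so $K^A(\psi(n)) \le h(2\log n + c_\psi) + c_\psi$ for a constant $c_\psi$ depending on $\psi$. The set
\[ V_n := \bigl\{\sigma : K^A(\sigma) \le h(2\log n + c_\psi) + c_\psi\bigr\} \]
is therefore an $A$-c.e.\ trace of $\psi$ (using that $K^A$ is approximable from above with oracle $A$), and $|V_n| \le 2^{h(2\log n + c_\psi) + c_\psi + 1}$, a computable order in $n$. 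Hence $\Halt \le_{JT} A$.

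For (i) $\Rightarrow$ (ii), I plan to apply JT-hardness to the $\Halt$-p.c.\ enumeration of low $K^{\Halt}$-complexity strings. Let $S_n := \{\sigma : K^{\Halt}(\sigma) \le n\}$; prefix-freeness of $K^{\Halt}$ gives $|S_n| \le 2^n$, and $S_n$ is uniformly $\Halt$-c.e. Define the $\Halt$-p.c.\ function $\psi(n,i)$ to return the $i$-th string enumerated into $S_n$ (defined only for $i \le 2^n$). By JT-hardness, $\psi$ has an $A$-c.e.\ trace $(T_{\langle n,i\rangle})$ with $|T_{\langle n,i\rangle}| \le h(\langle n,i\rangle)$ for some computable order $h$. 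Then $T_n^* := \bigcup_{i \le 2^n} T_{\langle n,i\rangle}$ is a uniformly $A$-c.e.\ cover of $S_n$ with $|T_n^*| \le B(n) := 2^n \cdot h(\langle n, 2^n\rangle)$, a computable function.

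Finally I convert these covers into the desired $K$-inequality via the KC theorem relative to $A$: for each $n$ and each $\sigma$ enumerated into $T_n^*$, submit a request of length $\lceil\log B(n)\rceil + 2\lceil\log n\rceil + c'$. The $2\log n$ term makes the total weight dominated by $\sum_n n^{-2}$, which is finite, so this is a valid bounded request set for an $A$-prefix-free machine. Consequently every $\sigma \in S_n$ satisfies $K^A(\sigma) \le g(n) + O(1)$ with the computable order $g(n) := \lceil\log B(n)\rceil + 2\lceil\log n\rceil$, and setting $n := K^{\Halt}(\sigma)$ gives (ii). The main obstacle is the choice of $\psi$ in (i) $\Rightarrow$ (ii); once one recognizes that tracing the enumeration of low-$K^{\Halt}$ strings delivers a uniform $A$-c.e.\ cover ready for KC, the rest is bookkeeping about orders and constants.
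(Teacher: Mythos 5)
Your proof is correct. In the (ii)~$\Rightarrow$~(i) direction you argue exactly as the paper does: for a $\Halt$-p.c.\ function $\psi$, the set of strings of $A$-complexity at most $h(2\log n + c)$ is an $A$-c.e.\ trace of $\psi(n)$ whose size is a computable order, so all $\Halt$-p.c.\ functions are $A$-traced with computable bound. In the (i)~$\Rightarrow$~(ii) direction you take a genuinely different route. The paper applies JT-hardness once, to the single $\Halt$-p.c.\ function $\sigma \mapsto \mathbb{U}^{\Halt}(\sigma)$: if $K^{\Halt}(x)=n$, pick a $\Halt$-program $\sigma$ of length $n$ for $x$, and describe $x$ relative to $A$ by the pair consisting of $\sigma$ and a self-delimiting index of $x$ in the $A$-enumeration of $T_\sigma$; this gives $K^A(x)\le^+ h(n)$ for a computable order $h$ built from the trace bound, with no appeal to the KC theorem. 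You instead trace the $\Halt$-enumeration of the level sets $S_n=\{\sigma : K^{\Halt}(\sigma)\le n\}$, assemble a uniformly $A$-c.e.\ cover $T_n^*\supseteq S_n$ of computably bounded cardinality $B(n)$, and then invoke the relativized KC (Kraft--Chaitin) theorem with weights padded by a convergent $\sum n^{-2}$ factor to turn that cover into the inequality $K^A(\sigma)\le^+ g(n)$ for $\sigma\in S_n$. Both arguments are sound; yours spends one extra standard tool (relativized KC) but delivers the final estimate as a one-line consequence of having a bounded $A$-c.e.\ cover of each $S_n$, whereas the paper avoids KC by encoding a string directly from its $\Halt$-program together with a trace index. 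The resulting orders differ in form but both are computable, which is all the statement requires.
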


\begin{proof}
First suppose that $A$ is JT-hard. Consider the universal oracle machine $\UM$ for prefix complexity and consider the $\emptyset'$-partial computable function $S:\sigma \mapsto \UM^{\emptyset'}(\sigma)$. By definition of JT-hardness, there exists a computable order~$g$ and a family $(T_\sigma)$ of uniformly $A$-c.e.\ finite sets such that $\UM^{\emptyset'}(\sigma) \in T_\sigma$ and $|T_\sigma| \leq g(|\sigma|)$. Let now $x$ be any string and set $K^{\emptyset'}(x)=n$. This by definition means that $x=\UM^{\emptyset'}(\sigma)$ for some $\sigma$ of length~$n$. Relative to $A$, $x$ can be described by $\sigma$, and its index in the $A$-enumeration of $T_\sigma$. Thus $K^A(x) \leq^+ 2|\sigma|+2|T_\sigma| \leq^+ 2n+2g(n) \leq^+ h(K^{\emptyset'}(x))$, where $h(n)=2n+2g(n)$, which is a computable order, as wanted.

Conversely, suppose $K^A \leq^+ h(K^{\emptyset'})$ for some computable order~$h$. Let $f$ be a given $\emptyset'$-partial computable function. By definition of $K$, we have $K^{\emptyset'}(f(n))\leq^+ K^{\emptyset'}(n) \leq^+ 2 \log n$ and thus by assumption $K^A(f(n)) \leq h(2 \log n+c)$ for some constant $c$. To get an $A$-trace for $f(n)$, it thus suffices to $A$-enumerate all $x$'s such that $K^A(x) \leq h(2 \log n+c)$, and we know that there are at most $g(n)=2^{h(2 \log n+c)}$. Thus all $\emptyset'$-partial computable functions have an $A$-traced with size bounded by $g+O(1)$, which precisely means that $A$ is JT-hard. 

\end{proof}

\begin{theorem}
If $A$ is a superhigh c.e.\ set, then for any $\Delta^0_2$ DNR set $X$, one has $A \oplus X \geq_T \emptyset'$.  
\end{theorem}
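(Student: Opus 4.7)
The plan is to derive the theorem as a short corollary of Theorem~\ref{thm:kolmo-criterion} and Lemma~\ref{lem:kolmo-jthard}. By Lemma~\ref{lem:kolmo-jthard}, JT-hardness of $A$ is equivalent to the existence of a computable order $h$ with $K^A\le^+ h(K^{\Halt})$. Since every computable function is in particular $\DII$, applying Theorem~\ref{thm:kolmo-criterion} with $f=h$ shows that every JT-hard c.e.\ set joins every $\DII$ DNR set above $\Halt$. Thus the whole task reduces to showing that every superhigh c.e.\ set is JT-hard.

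For this bridge step I would invoke the (for c.e.\ sets, standard) equivalence between superhighness and JT-hardness; see the relevant discussion in \cite{Nies:book}. The idea behind the direction we need is that the truth-table reduction $\Halt'\le_{tt} A'$ witnessing superhighness provides, uniformly in $n$, a computable bound on how much ``work'' the $A'$-approximation can require to settle the $\SI{2}$ question ``$\psi(n)\DA=m$'' for a fixed $\Halt$-partial computable function $\psi$ and varying $m$. Collecting all candidate values $m$ that are compatible with some $A$-approximation stage into a set $T_n$ yields an $A$-c.e.\ trace for $\psi$ whose size is bounded by a computable function of $n$, which is exactly a JT-trace; by Lemma~\ref{lem:kolmo-jthard} this in turn produces a computable order $h$ with $K^A\le^+h(K^{\Halt})$.

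The main obstacle is in the bridge: converting the tt-reduction $\Halt'\le_{tt} A'$ into a uniform computable bound on trace sizes requires some care, since one must pass from a $\DII$-approximation to $A'$ over to an $A$-c.e.\ enumeration while controlling the number of $m$ ever enumerated into $T_n$. Once that is in hand the argument closes in one line by chaining Lemma~\ref{lem:kolmo-jthard} into Theorem~\ref{thm:kolmo-criterion}. Note that the c.e.\ assumption on $A$ is used only in the bridge: the chain Lemma~\ref{lem:kolmo-jthard} $\to$ Theorem~\ref{thm:kolmo-criterion} already yields the stronger statement that every JT-hard c.e.\ set cups every $\DII$ DNR set to $\Halt$, of which the stated theorem is the special case obtained from superhigh $\Rightarrow$ JT-hard.
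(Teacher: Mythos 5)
Your proposal is correct and is essentially identical to the paper's own argument: it cites the equivalence of superhighness and JT-hardness for c.e.\ sets from \cite{Nies:book}, then uses Lemma~\ref{lem:kolmo-jthard} to obtain a computable order $h$ with $K^A \le^+ h(K^{\Halt})$, and concludes by applying Theorem~\ref{thm:kolmo-criterion} (noting $h$ is $\DII$). The extra detail you give about building JT-traces directly from the tt-reduction $\Halt' \le_{tt} A'$ is more than the paper records, but it is not needed since the equivalence is quoted as a known fact.
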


\begin{proof}
It is known \cite[Thm.\ 8.4.16 and Cor.\ 8.4.27]{Nies:book} that for c.e.\ sets, superhighness is equivalent to JT-hardness. Thus we can apply Lemma~\ref{lem:kolmo-jthard} to get a computable order $h$ such that $K^A \leq^+ h(K^{\emptyset'})$. Now, by  relativizing Theorem~\ref{thm:kolmo-criterion} to $A$, we immediately obtain that   $A \oplus X \geq_T \emptyset'$.
\end{proof}

In the following we analyze the hypothesis of Theorem~\ref{thm:kolmo-criterion} and show it is equivalent to a property we call fairly highness. 


\begin{prop} \label{prop:strange_tracing} The following are equivalent  for a $\DII$ set $A$. 

(i)  $A$ is high

(ii)  For each pc functional $\Gamma $,  we can trace $\Gamma^{\ES'}$  by an $A$-c.e.\ trace with a finite bound. 

\end{prop}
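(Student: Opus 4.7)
The plan is to relate finite traceability of $\ES'$-pc functions to the computability of their modulus of convergence, which lives at the level of $\ES''$, combined with the characterization that $A$ is high if and only if $\ES''\le_T A'$.

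For (i) $\Rightarrow$ (ii), fix a pc functional $\Gamma$ and set
\[ m(n)\;=\;\begin{cases} \mu\, t\,\bigl[\,\forall t'\ge t,\ \Gamma^{\ES'_{t'}}(n)\DA = \Gamma^{\ES'}(n)\,\bigr] & \text{if }\Gamma^{\ES'}(n)\DA,\\ 0 & \text{otherwise}.\end{cases}\]
The set $\dom(\Gamma^{\ES'})$ is $\SI 2$, hence $\ES''$-decidable, while on its domain $m$ is $\ES'$-computable; so $m\le_T\ES''$. Highness of $A$ yields $\ES''\le_T A'$, so $m\le_T A'$. The Shoenfield limit lemma relativized to $A$ then provides an $A$-computable approximation $(m(n,s))_s$ with $\lim_s m(n,s)=m(n)$, and I take
\[ T_n\;=\;\bigl\{\,\Gamma^{\ES'_t}(n)\;:\;(\exists s)\;t\le m(n,s)\ \text{and}\ \Gamma^{\ES'_t}(n)\DA\,\bigr\}.\]
Since $s\mapsto m(n,s)$ takes only finitely many values before stabilizing, $\sup_s m(n,s)<\infty$, so each $T_n$ is finite; the family $(T_n)$ is uniformly $A$-c.e.\ directly from the definition; and for $n\in\dom(\Gamma^{\ES'})$, any $s$ with $m(n,s)=m(n)$ witnesses $\Gamma^{\ES'}(n)\in T_n$.

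For (ii) $\Rightarrow$ (i), I would apply (ii) to the halting-time functional
\[ \Gamma^X(e)\;=\;\mu\, s\,\bigl[\,\phi_e^X(e)\DA \text{ in }s\text{ steps}\,\bigr],\]
so that $\Gamma^{\ES'}(e)\DA$ exactly when $e\in\ES''$, with value the true halting time of $\phi_e^{\ES'}(e)$. Let $(T_e)$ be the $A$-c.e.\ trace with finite sets supplied by the hypothesis. The condition ``no further element enters $T_e$ after stage $s$'' is $\PI 1(A)$, so the completed finite set $T_e$ is $A'$-computable; this yields an $A'$-computable function $h(e)=\max T_e$ that majorizes the halting time of $\phi_e^{\ES'}(e)$ whenever that halting time exists. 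Using $\ES'\le_T A'$, the oracle $A'$ can then simulate $\phi_e^{\ES'}(e)$ for $h(e)+1$ steps and accept $e$ into $\ES''$ precisely when the simulation halts. This decides $\ES''$ from $A'$, giving $\ES''\le_T A'$, so $A$ is high.

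The delicate point is in the forward direction at inputs $n\notin\dom(\Gamma^{\ES'})$: the naive enumeration $\{\Gamma^{\ES'_t}(n):t\in\w\}$ can easily be infinite there, so each $T_n$ needs a genuine cut-off. Highness is exactly what allows $A$ to $\DII$-approximate the total function $m$ (extended by $0$ off the domain), ensuring $\sup_s m(n,s)<\infty$ at every $n$ and forcing each $T_n$ to be finite as required.
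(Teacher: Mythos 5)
Your proof is correct and essentially reproduces the paper's argument: in (ii)$\Rightarrow$(i) you use the same convergence-time functional for $J^{\emptyset'}$ and extract $\max T_x$ from the finite $A$-c.e.\ trace using $A'$, while in (i)$\Rightarrow$(ii) you unpack the paper's terse observation that $\ES'$ is low relative to $A$ into an explicit $A$-approximation of the $A'$-computable settling modulus $m$, which yields the finite trace. The only cosmetic difference is that the paper speaks of ``an enumeration of $\ES'$ relative to $A$ under which $\Gamma^{\ES'}(x)$ becomes undefined only finitely often,'' which is the same device phrased via the oracle enumeration rather than via the modulus function.
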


(i)$\to$(ii):  $\ES'$ is low relative to $A$. Hence it has an enumeration relative to $A$ such that $\Gamma ^{\ES'}(x)$ becomes undefined only finitely often.

(ii) $\to $(i): Let $\Gamma^Z(x)$ be the number of steps it takes $J^Z(x)$ to converge (may be undefined). There is an $A$-c.e.\ trace $T_x$ for $\Gamma^{\ES'}$. Let $M(x)= \max T_x$, then $M \leT A'$. Hence  $\ES'' \leT A'$.

The  hypothesis of Theorem~\ref{thm:kolmo-criterion} is equivalent to the   following  condition on a set~$A$.

\begin{df}  \label{df:strange_tr}  $A$ is \emph{medium high} if  there exists a $\emptyset'$-computable order $h$ such  that $A$ c.e.--traces with bound $h$ every $\emptyset'$-partial computable function. 

\end{df}

 For a c.e.\ set $S$, being medium low (see   Section~\ref{s:fairly low}) is equivalent to that $J^S$ can be traced with a  bound computable in $S$. 
Hence, by pseudo jump inversion (letting $S \equiv_T \Halt$), if 
 $A$ is $\DII$, then  by Theorem~\ref{thm:proper_low},  $A$ is medium high iff $\Halt$ is medium low relative to $A$.  In particular  there  is a c.e.\ set $A$   that is medium high but  not superhigh.
Thus,  for $\Delta^0_2$ sets,  this property  lies properly  between superhighness and highness.

\subsection{An alternative recursion-theoretic proof}

\label{ss:not_K}
\newenvironment{changemargin}[2]{%
\list{}{\rightmargin#2\leftmargin#1
\parsep=0pt\topsep=0pt\partopsep=0pt}
\item[]}
{\endlist}
\newenvironment{indentmore}{\begin{changemargin}{0.5cm}{0cm}}{\end{changemargin}}


\n {\it Notation}: 
For an expression $E$ which is approximable during stages $s$, we denote by $E[s]$ its value by the end 
of stage $s$. 
Let  $m_x(n)$ be the modulus function of $A = W_x$, computable from $A$
Further, let  $A'[s]$ be the set of those $i$ for which $\Phi_{i,s}(A \uhr{s})(i)$ is convergent. Thus, $A'(y)[s] = 1$  iff $y \in A'[s]$.  
Note that $A'[s]$ is an approximation to $A'$ relative to $A$. 
We analogously denote  by $(\sigma)'[s]$ (for $\sigma \in 2^{< \omega}$) the set of those $i$ for which $\Phi_{i,s}(\sigma)(i)$ is convergent, 
and, consequently, $(\sigma)'(y)[s] =1$ iff $y \in (\sigma)'[s]$.

\begin{theorem} If $A$ is a nonlow$_1$ c.e.\ set then $A$ joins to $\Halt$ all $\Delta^0_2$ DNC functions.
\end{theorem}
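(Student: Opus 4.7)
The plan is to combine the two preceding results of Miller. Let $A$ be a c.e.\ nonlow$_1$ set and let $g$ be a $\Delta^0_2$ DNC function; since $g$ itself has DNC degree, Theorem~\ref{thm:DNC-join-ce} applied with $X := g$ and $C := A$ yields the dichotomy: either $g \oplus A \geq_T \Halt$, in which case we are done, or $g$ has DNC degree relative to $A$. Inspection of the proof of Theorem~\ref{thm:DNC-join-ce} shows that in the second case the witnessing DNC-relative-to-$A$ function has the form $m \mapsto g(k_{n,m})$, which is computable from $g$ and hence is $\Delta^0_2$.

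In the second case, Theorem~\ref{thm:GL1} applies (to this $\Delta^0_2$ function, which has DNC degree relative to $A$) and yields that $A$ is GL$_1$, i.e., $A' \leq_T A \oplus \Halt$. But $A$ is c.e., so $A \leq_T \Halt$ and therefore $A' \leq_T \Halt$; that is, $A$ is low, contradicting the nonlow$_1$ hypothesis. Thus only the first alternative holds, giving $A \oplus g \geq_T \Halt$, as required.

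A self-contained argument in the spirit suggested by the notation $m_x$, $A'[s]$, and $(\sigma)'[s]$ introduced just before the statement is also feasible: use the recursion theorem relative to $A$ to reserve positions $\{k_{n,m}\}$ of the $A$-diagonal function $e \mapsto \phi^A_e(e)$, and when $n$ enters $\Halt$ at stage $s$ set $\phi^A_{k_{n,m}}(k_{n,m}) := h(k_{n,m}, s)$ for each $m$, where $h$ computably approximates $g$. A case split then either produces a row $m \mapsto g(k_{n,m})$ that is DNC relative to $A$ (and the argument of Theorem~\ref{thm:GL1}, tracking $A$-correct stages via the modulus $m_x$, forces the non-$\Halt$-computable $m_x$ to be $\Halt$-computable, contradicting nonlowness), or it hands us an $A \oplus g$-computable bound on the enumeration of $\Halt$.

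There is no substantive obstacle: the key point is simply that, for a c.e.\ set, being GL$_1$ coincides with being low, which is exactly what the nonlow$_1$ hypothesis uses up to rule out the second alternative of Theorem~\ref{thm:DNC-join-ce}.
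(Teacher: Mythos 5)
Your proposal is correct, but it takes a genuinely different route from the proof the paper gives for this particular theorem. What you have written is essentially the proof of the unlabeled corollary appearing at the end of the preceding section (Miller's June 2012 contribution): apply Theorem~\ref{thm:DNC-join-ce} to get the dichotomy, then in the second branch apply Theorem~\ref{thm:GL1} to derive GL$_1$, hence lowness (since $A$ is c.e.), hence a contradiction with nonlowness. Note that your ``inspection of the proof'' step is superfluous: once $g$ itself is $\Delta^0_2$ and has DNC degree relative to $A$, Theorem~\ref{thm:GL1} applies directly to (the graph of) $g$; you do not need to track the explicit witnessing function $m\mapsto g(k_{n,m})$.

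The paper's own proof of this theorem is a self-contained, purely recursion-theoretic construction due to Ku\v{c}era. It uses the recursion theorem to reserve indices $a(e,i)$ for the ordinary (unrelativized) diagonal function $\varphi_e(e)$, tracks $A$-computable modulus information via $t(\sigma)$ and the filtration $(\sigma)'[s]$ of $A'$, and runs a permitting argument at the $\Halt$-level: when $e$ enters $\Halt$ it commits $J(a(e,i))$ to values designed to force the DNC function $f$ to disagree with the relevant approximations, and in the $e\notin\Halt$ case it invokes nonlowness of $A$ to show the $A$-modulus of $A'$ cannot be dominated by a $\Halt$-computable function. The trade-off is familiar: your route is shorter and more modular, but leans on two substantial prior theorems; Ku\v{c}era's is a single direct construction, historically the original purely computability-theoretic argument, which does not pass through the intermediate GL$_1$ characterization.
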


\begin{proof}  
The main idea is to use a permitting argument at $\Halt$-level. 
When $e \in \Halt$ we can eventually verify that at some step $s$
and simultaneously indicate that up to step  $s$ all relevant approximations to $f$ at some arguments (see later)  are still not stable and  equal to their final values, 
while when $e \notin \Halt$, 
since $A' >_T \Halt$,  $A'$ eventually has to permit a situation when approximations to $f$ at some argument are already stable and equal to a final value. 
We  substantially use DNC-ness of a given $\Delta^0_2$ function  $f$ to do that.

\n Let $f$ be a DNC $\Delta^0_2$ function,  $A = W_x$ a nonlow$_1$ c.e.\ set  
and $F$ a computable function such that $\lim F(y,s) = f(y)$. 

\n For $\sigma \preceq A[s]$  for some $s$, 
$t(\sigma)$ denotes the least $j$ for which $A[j] \uhr{|\sigma|} = \sigma $.
Note that if $\sigma \prec A$ then $t(\sigma) = m_x(|\sigma|)$.

\n We use Recursion Theorem to get for any $e$ (uniformly in $e$)
\begin{indentmore}
- indices of partial computable functions $a(e, i)$  such that  
$J(a(e, i)){\downarrow}$  if and only if $e \in \Halt$, and if $e \in \Halt$  properties described  below hold. 
\end{indentmore}  

\n {\it Case} $e \in \Halt$:  let $e \in \Halt[\texttt{at} \: s_0]$,  $\eta = {A \uhr{s_0}[s_0]}$.
For $0 \leq j \leq s_0$ let $\tau_j = {A \uhr{j}{[s_0]}}$.
Make first $J(a(e, i)){\downarrow}$ for all   $a(e, i)$ and, second, 
for those $i < s_0$ for which $(\tau_j)'(i)[t(\tau_j)] = 1$   for some $i < j \leq s_0$, take the least such $j$, say $j_0$, and
make $J(a(e, i)) = F(a(e, i), t(\tau_{j_0}))$  
(an output value of others $J(a(e, i))$ is not relevant).

\n {\it Claim.} $\Halt \leq_T A \oplus f$. 
Given $e$, using $A \oplus f$, search for the least $s$ such that

\n - either $e \in \Halt[s]$, so that $e \in \Halt$,    or\\
\n - for some $i < s$, $\sigma_0 \preceq A \uhr{s}$, $i < |\sigma_0|$ we have:
$\sigma_0$ is the shortest $\sigma \preceq A \uhr{s}$ for which $(\sigma)'(i)[t(\sigma)] = 1$,  $i < |\sigma|$ and  
$F(a(e, i), t(\sigma_0)) = f(a(e, i)) $, in which case $e \notin \Halt$.

\n {\it Verification}: 
\n Let $e \in \Halt[\texttt{at} \: s_0]$. 
We have to show that we cannot in steps $s < s_0$ mistakenly decide $e \notin \Halt$.  
Take all $i < s_0$ for which there is $\sigma_j = A \uhr{j}$ such that $i < j$ and $(\sigma_j)'(i)[t(\sigma_j)]  = 1$
and for those $i$'s let $k(i)$ denote the least such $j$.  We could mistakenly decide $e \notin \Halt$ only when 
$t(\sigma_{k(i)})  < s_0$. But for such $\sigma_{k(i)}$  necessarily $\sigma_{k(i)} \prec \eta = A \uhr{s_0}[s_0] $  
(i.e. $\sigma_{k(i)} = \tau_{k(i)}$)  
and we have prevented to do a mistake by making $J(a(e, i)) = F(a(e, i), t(\sigma_{k(i)}))  $  
thereby forcing $ f(a(e, i))  \neq J(a(e, i))  =  F(a(e, i), t(\sigma_{k(i)})) $.

\n If $e \notin \Halt$, then there is a step $s$ at which we can make a decision $e \notin \Halt$. 
To see it let $H(x)$ be the $\Halt$-computable modulus function of the $\lim F(x,s) = f(x)$.
Then there is an $i$ such that $i \in A'[\texttt{at} \: t]$ and  $t > H(a(e, i))$, since otherwise the function 
$H(a(e, . ))$ would dominate the $A$-modulus of $A'$, a contradiction with nonlowness of $A$.
This together with the fact that $F(a(e, i), k) = f(a(e, i)) $ for all $k \geq H(a(e, i))$ finishes the proof.

\end{proof}

\section{Medium lowness}

\label{s:fairly low}
 \Kuc\  and Nies  worked in Prague, May. Later their work was improved by Faizramonov.

Recall that  a set $A$ \emph{superlow} if there is a computable function $g$ such that $A'(x) = \lim_s g(x,s)$ with the number of changes for $x$ bounded by a computable function $h$.

We now partially relativize to $A$ the superlowness of $A$  itself.

\begin{definition} We call a set $A$ \emph{medium low} if there is a computable function $g$ such that $A'(x) = \lim_s g(x,s)$ with the number of changes for $x$ bounded by a function $h \leT A$. \end{definition}

Clearly, we have the implications
 
 \bc superlow $\RA$ medium low $\RA$ low. \ec
 
These are proper implications.
\begin{theorem}  \label{thm:proper_low} 

\
\n (i) There is a c.e.\ medium low set  $A$ that is not superlow. (Faizramonov)

\n (ii)  There is a c.e.\ low set that is not medium low.\end{theorem}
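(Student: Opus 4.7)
Both parts are finite-injury priority constructions of a c.e.\ set $A$ satisfying standard lowness requirements $L_e$ preserving $J^A(e)$ whenever it converges. The diagonalization rests on the recursion theorem: for each $e$ we obtain a witness $x_e$ with $J^A(x_e) = \Psi_e^A(x_e)$ for a pc functional $\Psi_e$ whose axioms we control. We may thus force $x_e \in A'$ by installing an axiom making $\Psi_e^A(x_e){\downarrow}$, and trigger a re-computation by enumerating an element into $A$ below the current use.

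For part (i), we exploit the characterization noted in Section~\ref{s:fairly low}: for c.e.\ $A$, medium lowness is equivalent to $J^A$ having a uniformly c.e.\ trace with $A$-computable size bound. The non-superlowness requirements $N_e$ diagonalize against truth-table reductions $\Phi_e^{\emptyset'}$ of $A'$ to $\emptyset'$: for each $e$, pick $x_e$ and, guessing the value of $\Phi_e^{\emptyset'}(x_e)$ via $\emptyset'$-outcomes during the construction (as in Mohrherr's classical construction of a c.e.\ low non-superlow set), set up $\Psi_e^A(x_e)$ so that $A'(x_e) \neq \Phi_e^{\emptyset'}(x_e)$. This yields $A' \not\leq_{tt} \emptyset'$, so $A'$ is not $\omega$-c.e., whence $A$ is not superlow. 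Medium lowness holds because each $N_e$ installs only finitely many axioms at its witness $x_e$: the trace $T_{x_e}$ consisting of the values used has size bounded by the number of installed axioms, an $A$-computable quantity, while inputs $x$ not controlled by any $N_e$ have trace size $\leq 1$.

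For part (ii), we use the trace characterization in the negative direction: it suffices to defeat every pair $(\{T^i_x\}_{x\in\omega}, \Phi_j)$, where $\{T^i_x\}$ is a uniformly c.e.\ family of finite sets and $\Phi_j^A$ is a candidate total $A$-computable bound. For requirement $N_e$ with $e = \langle i,j\rangle$: pick witness $x_e$, wait until $\Phi_j^A(x_e){\downarrow} = k$ with some use $u$, set a preservation restraint on $A \uh u$, and select $k + 1$ fresh pairwise distinct values $v_0, \ldots, v_k$. Install the axiom $\Psi_e^A(x_e) = v_0$ with use above $u$; whenever the current value $v_i$ appears in $T_{x_e}^i$, enumerate an element of $A$ strictly above $u$ but below the current use to destroy the axiom, and install a fresh axiom with value $v_{i+1}$. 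Either some $v_i$ is never enumerated into $T_{x_e}^i$, so we stabilize with $J^A(x_e) = v_i \notin T_{x_e}^i$, or all $k + 1$ values are eventually enumerated, forcing $|T_{x_e}^i| > k = \Phi_j^A(x_e)$; in either case $N_e$ is satisfied.

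The main obstacle in both parts is coordinating the $A$-enumerations demanded by the $N_e$'s with the preservation restraints imposed by the $L_e$'s. Since each $N_e$ issues only finitely many enumerations once its relevant bound has been determined, a standard finite-injury priority argument handles this. The additional technical point in part (i) is arranging the construction so that the $A$-computable trace-size bound is uniformly definable from the bookkeeping of axiom-installations per witness, which forces us to lay out witnesses in a transparent way relative to the oracle~$A$.
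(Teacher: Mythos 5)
The plan for part~(ii) follows the expected route and is sound: diagonalize against all pairs $(\{T^i_x\}, \Phi_j)$ of a uniformly c.e.\ trace and a candidate $A$-computable bound, using the recursion theorem to control $J^A(x_e)$, waiting for $\Phi_j^A(x_e){\downarrow}=k$, and then cycling through $k+1$ fresh values so that either the trace misses $J^A(x_e)$ or it exceeds the claimed size. This is standard finite injury and needs no further comment, provided you are allowed to invoke the equivalence (stated but not proved in the surrounding text) between medium lowness and $J^A$ being c.e.\ traceable with an $A$-computable bound for c.e.\ $A$; you should at least sketch why that equivalence holds, since it is doing work in both directions of the theorem.

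For part~(i), your proposed diagonalization target (truth-table reductions $\Phi_e^{\emptyset'}$ of $A'$) is a reasonable route to non-superlowness, but the argument as written does not close the central gap, which you yourself flag as ``the additional technical point'' and then leave unexplained. In a construction where the $N_e$-strategy re-acts at $x_e$ every time the $\emptyset'$-approximation changes below the relevant use, the number of re-actions at $x_e$ --- and hence the injury count inflicted on lower-priority $L$-requirements --- is naturally $\emptyset'$-computable, and since any low $A$ has $\emptyset' \equiv_T A'$, this only gives an $A'$-computable change bound. But the \emph{definition} of medium lowness requires the bound to be $A$-computable; an $A'$-computable bound is vacuous for a low set, so you would only have proved lowness. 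The device that repairs this (used in the paper's own construction, which diagonalizes directly against $\omega$-c.e.\ approximations $(g_e,h_e)$ rather than against tt-reductions) is to enumerate, at the moment the relevant partial bound converges at stage $v$, a marker into $A$ and redefine a controlled functional so that $\Gamma^A(x_e) = v$; then from the oracle $A$ alone one recovers the convergence stage $v$, hence the value of the bound, hence a budget for the remaining changes at $x_e$. Without spelling out such a mechanism, the claim that ``the number of installed axioms'' is $A$-computable is unjustified, and your plan for (i) does not yet constitute a proof. Once that mechanism is in place, your route does genuinely differ from the paper's (tt-functional targets plus $\emptyset'$-outcome guessing vs.\ $\omega$-c.e.\ approximation targets), but the $\Gamma^A$-style bookkeeping is the essential ingredient in either presentation and is what you need to supply.
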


The following was introduced  in \cite[Def.\ 27]{Figueira.Hirschfeldt.ea:nd}
\begin{definition} \label{def:omjpdom} {\rm A set $S$ is \emph{$\omega$-c.e.-jump dominated} if there is an $\omega$-c.e.\ function $g(x)$ such that $J^S(x) \le g(x)$ for every~$x$ such that $J^S(x)$ is defined.
 }
 \end{definition}
 \cite{Figueira.Hirschfeldt.ea:nd} showed that superlowness implies $\omega$-c.e.-jump dominated. The converse implication holds for r.e.\ sets but not in general, for instance because each Demuth random set is $\omega$-c.e.-jump dominated by the proof   of \cite[Thm.\ 3.6.26]{Nies:book}.
 We can also define a partial relativization of being $\omega$-c.e.-jump dominated, where $g$ is only $\omega$-c.e.\ by $S$. The same relationships hold   for superlow vs. $\omega$-c.e.-jump dominated.

\newpage

\section{Randomness Zoo (Antoine Taveneaux)}

 \begin{center}

\begin{tikzpicture}[node distance=9mm, xscale=.05 , yscale=.05]
  \tikzstyle{place}=[circle,thick,draw=black!89,fill=blue!00,minimum size=17mm ]

\node (pi11r) [place] {$\Pi^1_1 R$};
\node (pi11mlr) [place, right = of pi11r , yshift=-0mm, xshift=-4mm] {$\Pi_1^1 MLR$}; 
\node (delta11) [place, right = of pi11mlr , yshift=-0mm, xshift=-4mm] {$\Delta_1^1 R$}; 
\node (dots1) [ right = of delta11 , yshift=-0mm, xshift=-5mm] {$ \dots$};
\node (3mlr) [place, right = of dots1, yshift=-0mm, xshift=-5mm ] {$3MLR$}; 
\node (2mlr) [ place,  right = of 3mlr, yshift=-0mm, xshift=-4mm ] {$2MLR$}; 
\node (limr) [text width=12mm, place,  below left  = of 2mlr, yshift=1mm, xshift=-16mm ] {$~~SR'$ \tiny{$=LimitR$}}; 
\node (demr) [place,  right= of limr, yshift=-9mm, xshift=-3mm] {$DemR$};
\node (wdemr) [place, below left  = of demr, yshift=8mm, xshift=-5mm] {$WDemR$};
\node (w2r) [place,  left= of limr, yshift=-0mm ,  yshift=-9mm, xshift=2mm] {\large{$W2R$}};
\node (balancedr) [place, below left= of wdemr, yshift=10mm , xshift=-4.4mm] {\scriptsize{$BalancedR$}};
\node (diffr) [place, below right= of balancedr, yshift=9mm , xshift=5.5mm] {$DiffR$};
\node (mlr) [place, below  = of diffr,  yshift=15mm , xshift=30mm  ] {\huge{$MLR$}};
\node (klr) [place, below left= of mlr, yshift=8mm, xshift=0mm ] {$KLR$};
\node (pinjr) [place, left= of klr, yshift=-0mm , xshift=5mm ] {$PInjR$};
\node (injr) [place, left= of pinjr, yshift=-13mm , xshift=-1mm ] {$InjR$};
\node (permr) [place, below left= of pinjr, yshift=0.5mm , xshift=9mm ] {$PermR$};
\node (pcr) [place, below left = of permr , yshift=4mm , xshift=2mm ] {$PCR$};
\node (fboundr) [place, left= of mlr, yshift=-3mm, xshift=-33mm ] {\scriptsize{$\text{FBound}R$}};
\node (cboundr) [place, below left= of fboundr, yshift=-15mm, xshift=-15mm ] {\scriptsize{$\text{CBound}R$}};
\node (cr) [place, below left = of pcr, yshift=9mm, xshift=0mm  ] {\large{$CR$}};
\node (klstoch) [place, below right  = of klr, yshift=-0mm, xshift=5mm ] {$KLStoch$};
\node (pchstoch) [place, below  left = of klstoch, xshift=-6mm, yshift=3mm ] {\tiny{$\text{MWC}Stoch$}};
\node (chstoch) [place, below left = of pchstoch, xshift=-2mm, yshift=8mm ] {\scriptsize{$\text{Ch}Stoch$}};
\node (sr) [place, below  = of cr,xshift=-0mm,yshift=-0mm] {\large{$SR$}};
\node (wr) [place, below  = of sr, xshift=-10mm, yshift=5mm] {$WR$};
\node (polyr) [place, below right   = of cr , xshift=2mm, yshift=-7.5mm] {$PolyR$};
\node (dimcomp1) [place, below   = of chstoch,  yshift=6.5mm] {\footnotesize{$dim_{comp}^{1} R $}};
\node (cdim1) [place, below   = of pchstoch, xshift=14mm,  yshift=6.5mm] {\footnotesize{$Cdim ^{1}R $}};
\node (dimcomps) [place, below   = of dimcomp1,  yshift=6.5mm] {\footnotesize{$dim_{comp}^{s} R $}};
\node (cdims) [place, below   = of cdim1,  yshift=6.5mm] {\footnotesize{$Cdim ^{s}R $}};
\node (dimcomps') [place, below   = of dimcomps,  yshift=6.5mm] {\footnotesize{$dim_{comp}^{s'} R $}};
\node (cdims') [place, below   = of cdims,  yshift=6.5mm] {\footnotesize{$Cdim^{s'} R $}};


\path (pi11r) edge[->] (pi11mlr);
\path (pi11mlr) edge[->] (delta11);
\path (delta11) edge[->] (dots1);
\path (dots1) edge[->] (3mlr);
\path (3mlr) edge[->] (2mlr);
\path (2mlr) edge[->] (limr);
\path (limr) edge[->] (demr);
\path (limr) edge[->] (w2r);
\path (w2r) edge[->] (wdemr);
\path (wdemr) edge[->] (balancedr);
\path (balancedr) edge[->] (diffr);
\path (demr) edge[->] (wdemr);
\path (diffr) edge[->] (mlr);
\path (mlr) edge[->] (klr);
\path (mlr) edge[->] (fboundr);
\path (fboundr) edge[->] (cboundr);
\path (cboundr) edge[->] (wr);
\path (klr) edge[->] (pinjr);
\path (pinjr) edge[->] (injr);
\path (injr) edge[->] (cr);
\path (pinjr) edge[->] (permr);
\path (permr) edge[->] (pcr);
\path (pcr) edge[->] (cr);
\path (pcr) edge[->] (pchstoch);
\path (cr) edge[->] (sr);
\path (cr) edge[->] (polyr);
\path (sr) edge[->] (wr);
\path (cr) edge[->] (chstoch);
\path (klr) edge[->] (klstoch);
\path (klstoch) edge[->] (pchstoch);
\path (klstoch) edge[->] (cdim1);
\path (pchstoch) edge[->] (chstoch);
\path (chstoch) edge[->] (dimcomp1);
\path (cdim1) edge[->] (dimcomp1);
\path (cdim1) edge[->] node[anchor=east] {\footnotesize{$s<1$}}  (cdims);
\path (cdims) edge[->]node[anchor=east] {\footnotesize{$s'<s$}} (cdims');
\path (dimcomp1) edge[->] node[anchor=east] {\footnotesize{$s<1$}} (dimcomps) ;
\path (dimcomps) edge[->]node[anchor=east] {\footnotesize{$s'<s$}}   (dimcomps');
\path (cdims') edge[->] (dimcomps');
\path (cdims) edge[->] (dimcomps);


\end{tikzpicture}

\end{center}

A full version is available at  \href{http://calculabilite.fr/randomnesszoo.pdf}{\texttt{http://calculabilite.fr/randomnesszoo.pdf}}. 

\vsp

\scalebox{.12}{\includegraphics{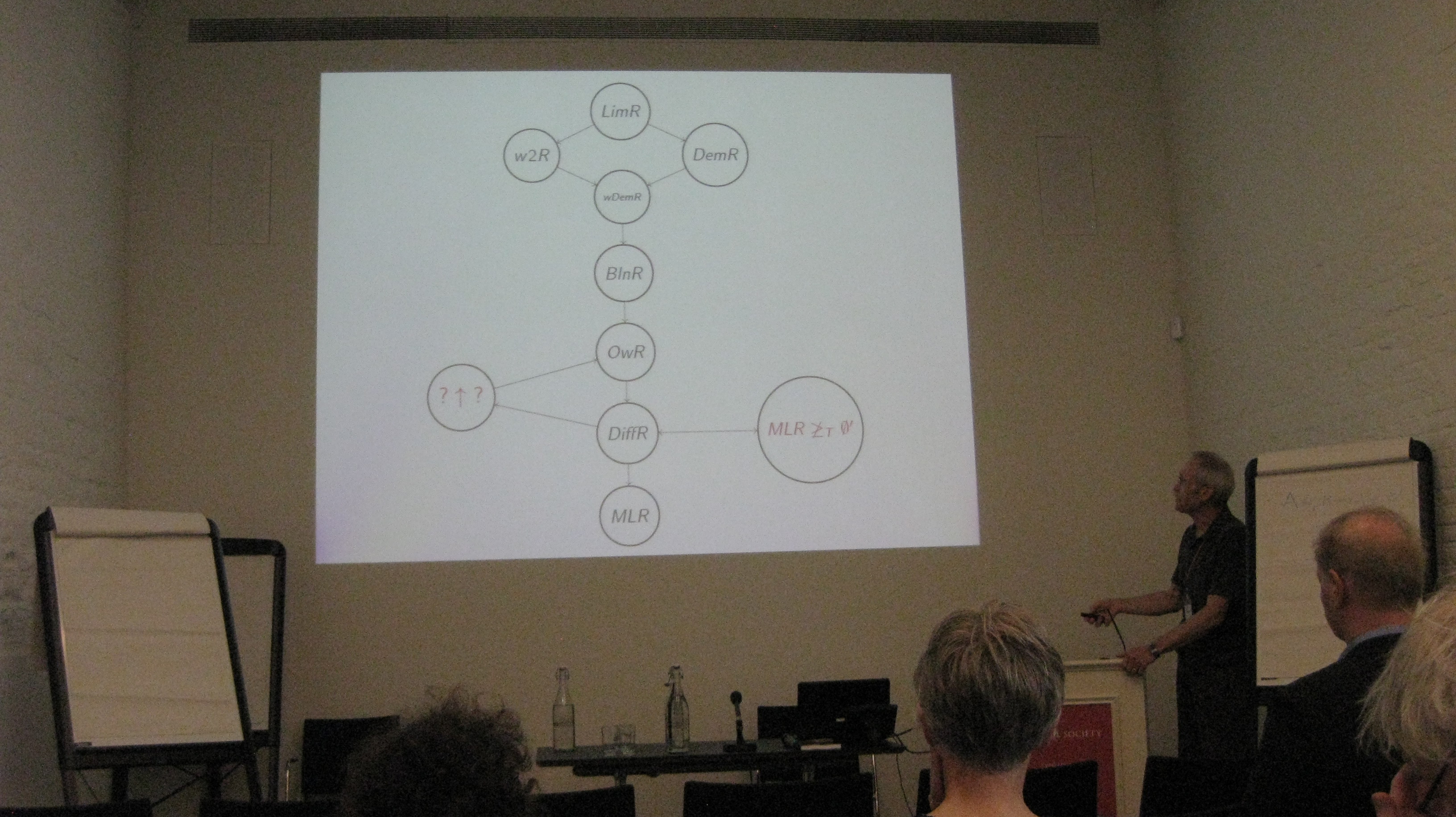}}

\bc Tonda \Kuc\ at the Incomputable conference in \\ Chicheley Hall, June 2012. \ec

\begin{df}[\textbf{MLR}: Martin-L\"of~Randomness (Definition 3.2.1 in \cite{Martin-Lof:66})]

\begin{enumerate}

\item
A  Martin-L\"of~test, a ML-test for short,  is a uniformly c.e. sequence $(G_m)_{m \in \NN }  $ of open sets such that $\forall n ~ \mu \left(  G_m \right)
 \leq 2^{-m}$. 

\item A set $R$ fails the test if $R \in \cap_{m \in \NN } G_m $ otherwise $R$ passes the test.

\item $R$ is Martin-L\"of~Random if $R$ passes each ML-test.

\end{enumerate}

\end{df}

\subsection{Weaker than MLR}

\begin{df}[Scan rule function]

For a  partial  function $ f : \fs \rightarrow \{\text{scan} , \text{select} \} \times \NN $  we denote $n: \fs \rightarrow  \NN$ and $\delta : \fs \rightarrow \{\text{scan} , \text{select} \}$. 

And we say that $f $ is a  scan rule  if for all $\sigma, \rho \in \fs $ such that $\sigma \prec \rho $ we have $n( \sigma) \not= n (\rho) $. 

The sequence of string observed by $f$ on $Z\in cs $ is defined by(if $f$ is well defined on this points): 
\[ V_f^A (0) = A (n(\varepsilon) )
\]
\[ V_f^A (k+1) = V_f^A (k). A (n(V_f^A (k)) )
\]
and  bits selected by $f$ are: 
\[ T_f^A (0) = \varepsilon \] 
\[
T_f^A (n+1) = \left\{
 \begin{array}{ll}
 T_f^A (n) & \mbox{if } \delta( V_f^A (n) )=\text{scan} \\
 T_f^A (n) . V_f^A (n+1) & \mbox{if } \delta( V_f^A (n) )=\text{select}
 \end{array}
\right.
\]
and we say that $f$ is well defined on $Z$ if $V_f^A (k)$ is well define for all $k$ and $(T_f^A (n))$ converge to an infinite string and we denote this infinite string by $T_f^A$.

\end{df}

\begin{df}[Martingale]

A martingale is a function $\mathcal{M } : \fs \rightarrow \RR^+ \cup {0}$ such that for all $\sigma \in \fs $
\[
\mathcal{M } ( \sigma ) = \frac{\mathcal{M }(\sigma 0 ) + \mathcal{M }(\sigma 1 ) }{2}
\]

We say that $\mathcal {M} $ succeed on $Z \in \cs $ if 
\[
\limsup_{n \rightarrow \infty } \mathcal {M} (Z  \upharpoonright n) =\infty
\]

\end{df}

\begin{df}[\textbf{KLR}: Kolmogorov–Loveland randomness (Definition  in \cite{Kolmogorov:63} and \cite{Loveland:66})]

$R\in \cs $ is KL-random if for any partial computable scan rule function  $f$ and any partial computable martingale $\mathcal{M}$ such that $f$ is well defined on $R$ the martingale martingale $\mathcal {M}$ does not succeed on $T_f^A$. 

\end{df}

\begin{df}[\textbf{KLStoch}: Kolmogorov–Loveland stochasticity (Definition  in \cite{Loveland:66})]
Let $\#0 : \fs \rightarrow \NN $  the function giving the number of ``0" in a string. 

A sequence $R$ is Kolmogorov–Loveland stochastic if for all partial computable scan rule $f $ such that $f$ is well defined on $Z$ we have:
\[
\lim_{n\rightarrow \infty } \frac{\#0  \left( T_f^A \upharpoonright n \right) }{n} = \frac{1}{2}
\]

\end{df}

\begin{df}[\textbf{MWCStoch}: Mises-Wald-Church stochasticity (Definition  in \cite{Mises:19} and \cite{Church:40})]

A sequence $R$ is   Mises-Wald-Church stochastic if for all  partial computable monotonic scan rule function $f$ is well defined on $Z$ we have:
\[
\lim_{n\rightarrow \infty } \frac{\#0  \left( T_f^A \upharpoonright n \right) }{n} = \frac{1}{2}
\]
\end{df}

\begin{df}[\textbf{ChStoch}: Church stochasticity (Definition  in \cite{Mises:19} and \cite{Church:40})]
A sequence $R$ is   Church stochastic if for all  total computable monotonic scan rule function $f$ we have:
\[
\lim_{n\rightarrow \infty } \frac{\#0  \left( R_{f(0)}R_{f(1)} \dots R_{f(n)}  \right) }{n} = \frac{1}{2}
\]

\end{df}

\begin{df}[\textbf{PInjR}: partial injective randomness (Definition  in \cite{Miller.Nies:06})]
A sequence $R$ is partial injective random if for any total computable injective function $g: \NN \rightarrow \NN$ and any partial computable martingale $\mathcal{M}$ this martingale is defined and does not succeed on the sequence $R_{f(1)} R_{f(2)} \dots R_{f(n)} R_{f(n+1)} \dots $. 

\end{df}

\begin{df}[\textbf{InjR}: injective randomness (Definition  in \cite{Miller.Nies:06,Bienvenu.Hoelzl.ea:09})]
A sequence $R$ is  injective random if for any total computable injective function $g: \NN \rightarrow \NN$ and any total computable martingale $\mathcal{M}$ this martingale does not succeed on the sequence $R_{f(1)} R_{f(2)} \dots R_{f(n)} R_{f(n+1)} \dots $. 
\end{df}

\begin{df}[\textbf{PermR}: partial permutation randomness (Definition  in \cite{Bienvenu.Hoelzl.ea:09})]
A sequence $R$ is partial permutation random  if for any total computable bijective function $g: \NN \rightarrow \NN$ and any partial computable martingale $\mathcal{M}$ this martingale is defined and does not succeed on the sequence $R_{f(1)} R_{f(2)} \dots R_{f(n)} R_{f(n+1)} \dots $. 
\end{df}

\begin{df}[\textbf{PCR}: partial computable randomness (Definition  in \cite{Ambos:97})]
A sequence $R$ is partial computable random  if for all  partial computable martingale $\mathcal{M} $ if $\mathcal {M } (R\upharpoonright n ) $  is define for all $n$ and $\mathcal {M }$ does not succeed on $R$. 

\end{df}

\begin{df}[\textbf{CR}: computable randomness (Definition  in \cite{Schnorr:75})]
A sequence $R$ is  computable random  if for all  total computable martingale $\mathcal{M} $ this martingale succeed on $R$. 
\end{df}

\begin{df}[\textbf{SR}: Schnorr randomness(Definition  in \cite{Schnorr:75})]

 A  Schnorr~test  is a uniformly c.e. sequence $(G_m)_{m \in \NN }  $ of open sets such that $\forall n ~ \mu (G_m) = 2^{-m}$.

 $R$  is Schnorr random if for any Schnorr~test $(G_m)_{m \in \NN }  $ $R \not\in \cap_{m \in \NN } G_m $
\end{df}

\begin{df}[\textbf{FBoundR}: finitely bounded randomness (Definition  in \cite{Brodhead.Downey.ea:12})]
$R $ is finitely bounded random if $R$ passes any Martin-L\"of~test $(U_n)$ such that for every $n$, $\# U_n < \infty $ (with $\# U_n$ the number of sting enumerated 
in $U_n$).  
\end{df}

\begin{df}[\textbf{CBoundR}: computably bounded randomness (Definition  in \cite{Brodhead.Downey.ea:12})]
A Martin-L\"of~test $(U_n) $ is computably bounded if there is some
total computable function $f$ such that $\# U_n \leq f(n) $ for every $n$.

$R$ is computably bounded random if $R$ passes every computably bounded Martin-L\"of~test. 
\end{df}

\begin{df}[\textbf{WR}: weakly randomness (Definition  in \cite{Kurtz:81})]
$R$ is weakly random if $R\in U$ for every   $\Sigma_1^0 $ set $U \subseteq \cs $ of measure 1. 
\end{df}

\begin{df}[\textbf{PolyR}: polynomial randomness (Definition  in \cite{Wang:96})]
A sequence $R$ is polynomially random if any martingale computable in polynomial time $\mathcal{M}$ do not succeed on $R$.  
\end{df}

\begin{df}[\textbf{$\text{dim}^s_{\text{comp}}   $R}: computable $s$-randomness (Definition  in \cite{Lutz:03} and \cite{Mayordomo:02})]
A  computable~$s$-test  is a uniformly computable sequence $(G_m)_{m \in \NN }  $ of computable open sets such that for all $n$
\[ 
\sum_{ x \in G_m }2^{-s| x|} \leq  2^{-n}.
\]

 $R$  is computably $s$-random if for all  $s' < s$   and computable~$s$-tests $(G_m)$ we have: 
 \[
 R \not\in \bigcap_{m \in \NN} G_m
 \]

\end{df}

\begin{df}[\textbf{$\text{Cdim}^s$R}: constructive $s$-randomness (Definition  in \cite{Lutz:03} and \cite{Mayordomo:02})]
A  constructive~$s$-test  is a uniformly computable sequence $(G_m)_{m \in \NN }  $ of computable enumerable open sets such that for all $n$
\[ 
\sum_{ x \in G_m }2^{-s| x|} \leq  2^{-n}.
\]

 $R$  is computably $s$-random if for all  $s' < s$   and computable~$s$-tests $(G_m)$ we have: 
 \[
 R \not\in \bigcap_{m \in \NN} G_m
 \]

\end{df}

\subsection{Stronger than MLR}

\begin{df}[\textbf{DiffR}: difference randomness (Definition  in \cite{Franklin.Ng:10})]

A difference test is given by a sequence $(V_m)_{m \in \NN } $ of uniformly c.e. sets and a $\Pi_1^0 $ set $P$ 
such that $\mu (P \bigcap V_m ) \leq 2^{-m}$ for every $m$.

A sequence $R$ is difference random if  for any difference test $(V_m)_{m \in \NN } , P$  we have  
\[
R \not\in P\bigcap \left( \cap_{m \in \NN } V_m  \right). 
\] 



\end{df}

\begin{df}[\textbf{BalancedR}: balanced randomness (Definition  in \cite{Figueira.Hirschfeldt.ea:nd})]
A balanced test is a sequence $(V_m)_{m \in \NN } $ of c.e.  sets such that  $V_i = W_{f(i)}$ for some $2^n$-c.e. function $f$ and $\mu (V_m ) \leq 2^{-m} $for every $m$. 

 A sequence $R$ is  balanced random if $R$ passes any balanced test.
\end{df}

\begin{df}[\textbf{WDemR}: weak Demuth randomness  (Definition  in \cite{Demuth:82a})]
 Demuth test is a sequence $(V_m)_{m \in \NN } $ of c.e.  sets such that  $V_i = W_{f(i)}$ for some $\omega$-c.e. function $f$ and $\mu (V_m ) \leq 2^{-m} $for every $m$. 
 
 A sequence $R$ is weak Demuth random if for any Demuth test $(V_m)$ we have $R \not\in \cap_{m \in \NN} V_m$. 
\end{df}

\begin{df}[\textbf{DemR}: Demuth randomness (Definition  in \cite{Demuth:82a})]
A Demuth test is a sequence $(V_m)_{m \in \NN } $ of c.e.  sets such that  $V_i = W_{f(i)}$ for some $\omega$-c.e. function $f$ and $\mu (V_m ) \leq 2^{-m} $for every $i$. 

A sequence $R$ is Demuth random if for any Demuth test $(V_m)$ we have $R \not\in V_m $ for almost all $m$. 

\end{df}

\begin{df}[\textbf{W2R}: weak 2-randomness (Definition  in \cite{Kautz:91})]
A sequence $R$ is weak 2-random if $R \not\in U $ for every $\Pi_2^0 $ set $U \subset \cs $ of measure 0.

\end{df}

\begin{df}[\textbf{LimitR}: limit randomness (Definition  in \cite{Kucera.Nies:11})]
A limit test is a sequence $(V_m)_{m \in \NN } $ of c.e.  sets such that  $V_i = W_{f(i)}$ for some $\Delta_2^0 $-computable function $f$ and $\mu (V_m ) \leq 2^{-m} $for every $m$. 

  A sequence $R$ is  limit random if for any limit test $(V_m)$ we have $R \not\in V_m $ for almost all $m$. 

\end{df}

\begin{df}[\textbf{$\Delta_1^1$R}: $\Delta_1^1$ randomness (Definition  in \cite{MartinLof:68})]
$R$ is $\Delta_1^1$-random if $R$ avoids each null $\Delta_1^1$-class. 
\end{df}

\begin{df}[\textbf{$\Pi_1^1$MLR}: $\Pi_1^1$-Martin-L\"of~Randomness (Definition  in \cite{Hjorth.Nies:07})]
A $\Pi_1^1 $-Martin-L\"of~test is a sequence $(G_m)_{m\in \NN }$ of open sets such that $\forall n~~ \mu (G_m ) \leq 2^{-m} $ and the relation $\{ \langle  m , \sigma \rangle |[\sigma ] \subseteq G_m  \}$ is $\Pi_1^1$

$R$ is $\Pi_1^1$-Martin-L\"of~Random if $R$ passes each $\Pi_1^1$-ML-test.
\end{df}

\begin{df}[\textbf{$\Pi_1^1$R}: $\Pi_1^1$-Randomness (Definition  in \cite{Hjorth.Nies:07})]
$R$ is $\Pi_1^1$-random if $R$ avoids each null $\Pi_1^1$-class. 
\end{df}

\section{Complexity of recursive splittings of random sets}

\noindent
Ng, Nies and Stephan investigated random sets which preserve as much
information as possible
when subjected to a recursive splitting. In each high Turing degree we
build a Schnorr random set such that one can preserve the Turing degree
in both halves of the splitting by any infinite and co-infinite 
recursive set. We build a Martin-L\"of random set so that one
half can still preserve most of the information because it is Turing 
above $\Omega$. Due to the Theorem of van Lambalgen, some information is
necessarily lost.

\subsection{Schnorr random sets}

\begin{thm} 
Every high Turing degree contains a Schnorr random set $Z$ such that
$Z \leq_T R \cap Z$ for every infinite
r.e.\ set $R$. Thus, if $R$ is recursive then $Z \equiv_T R \cap Z$.
\end{thm}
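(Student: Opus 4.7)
The plan is to leverage Martin's characterization of highness. Let $A$ be a set in the given high Turing degree, and fix $f \leT A$ dominating every total computable function. Using $f$, I would define a rapidly growing sequence of block endpoints $0 = a_0 < a_1 < a_2 < \cdots$ with blocks $I_n = [a_n, a_{n+1})$ chosen so that $|I_n| > f(n)$. Because the stage-enumeration function of an infinite r.e.\ set $W_e$ is total computable in $n$ when $W_e$ is infinite, $f$ dominates every such function; a suitable choice of $a_{n+1}$ then ensures that every infinite r.e.\ set $R$ meets $I_n$ for cofinitely many~$n$.

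Next, I would obtain an auxiliary Schnorr random set $W \leT A$ whose reduction to $A$ has use bounded by the identity, so that $W \uhr{N}$ is computable from $A \uhr{N}$. Such a $W$ exists because every high degree contains a Schnorr random set, and one can arrange a small-use reduction via a \Kuc-style coding. Then define
\[
 Z(i) = W(i) \oplus A(n) \qquad \text{for } i \in I_n,
\]
so that $Z \uhr{I_n}$ is either $W \uhr{I_n}$ or its bitwise complement, encoding the single bit $A(n)$.

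The degree equivalence $Z \equiv_T A$ is direct: $Z \leT A$ is immediate from $W,A \leT A$, and conversely $A(n)$ is recovered from $Z \uhr{I_n}$ by comparison with the $A$-computable $W \uhr{I_n}$. For the reduction $Z \leT R \cap Z$ with an infinite r.e.\ $R$, I would bootstrap inductively: having recovered $A \uhr{n}$, compute $a_{n+1}$ and $W \uhr{I_n}$, locate any $j \in R \cap I_n$ (guaranteed for all large $n$), read $Z(j) = (R \cap Z)(j)$, and decode $A(n) = W(j) \oplus Z(j)$, which then determines all of $Z \uhr{I_n}$.

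The main obstacle, and the bulk of the proof, is verifying that $Z$ is Schnorr random. The flip pattern $\beta(i) = A(n)$ on $I_n$ is $A$-computable but not computable, so Schnorr randomness of $W$ does not automatically transfer to $Z$. The strategy is to show that a computable martingale succeeding on $Z$ can, by averaging inside each long block $I_n$ over the two possible values of $A(n)$, be converted into a computable martingale that succeeds on $W$, contradicting the Schnorr randomness of $W$. The fast growth of $|I_n|$ dictated by $f$ is precisely what makes the per-block averaging accurate enough for this argument to go through.
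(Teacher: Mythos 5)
Your plan takes a genuinely different route from the paper -- the paper builds $Z$ directly by fighting a single dominating martingale $L \leq_T A$ bit-by-bit, whereas you transform a pre-existing Schnorr random $W$ by an $A$-controlled flip pattern -- but as written the transformational route has a fatal circularity in the decoding step, and the paper's construction is designed precisely to avoid it.

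The central gap is in the claim that $A \leq_T Z$ (and a fortiori $A \leq_T R \cap Z$). You want to read $A(n)$ off from $Z\uhr I_n$ ``by comparison with the $A$-computable $W\uhr I_n$,'' but to carry out that comparison the decoder must already be able to compute both the block endpoints $a_n, a_{n+1}$ and the reference string $W\uhr I_n$. Both of these are only $A$-computable: $a_{n+1}$ is defined from $f \leq_T A$, and $W\uhr I_n$ needs $A\uhr a_{n+1}$ even under your identity-use assumption, yet at stage $n$ the bootstrap has only recovered the $n$ bits $A\uhr n$. There is no mechanism in your coding by which the decoder learns where the next block is or what $W$ looks like there. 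The paper's $E_{n,k}$-scheme is engineered exactly to sidestep this: each coding block of width $2n+5$ does not merely carry a bit of $A$, it carries one bit of the graph of $g$, the markers saying which $\varphi_k(n+1)$ contribute to $f(n+1)$, and the markers saying which $E_{n+1,k}$ are empty. That redundant metadata is what makes the construction self-describing and lets the decoder, working only from $Z$ (or $Z\cap W_e$), iteratively reconstruct $g$ and hence the positions of all later coding blocks. Your construction is missing this layer entirely, and it is not a cosmetic omission -- if you force $a_n$ to be computable to make the blocks findable, you lose $|I_n| > f(n)$ (no computable function dominates $f$) and you lose the claim that every infinite r.e.\ set meets cofinitely many blocks (a computable block pattern is evaded by a recursive $R$ that lives only in even-indexed blocks).

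The Schnorr-randomness argument has a related problem. To contradict the Schnorr randomness of $W$ you need a \emph{computable} martingale that succeeds on $W$ with a computable rate, but the averaged martingale $M'$ you sketch requires knowledge of the block boundaries to know which bits to average over, and those boundaries are $A$-computable, not computable. So $M'$ as described is only an $A$-recursive martingale, which says nothing against the Schnorr randomness of $W$. (There is also a hidden assumption, asserted but not justified, that the high degree contains a Schnorr random $W$ with an identity-use reduction from $A$.) The paper avoids all of this by not invoking a pre-existing random set at all: it fixes a $g$-recursive martingale $L$ dominating all recursive martingales and defines $Z$ on the non-coding positions so that $L$ never increases there; the increase on the sparse coding positions is then bounded by $2^{(3n+2)^2}$ after $g(n)$ bets, which is slow enough that no recursive martingale can attain a computable success rate. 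That ``fight the dominant martingale'' argument replaces your averaging argument and does not need the block structure to be recursive.
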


\begin{proof}
Let $g$ be a function in the given Turing degree which dominates all
recursive functions. Now one can define a
further dominating function $f$ such that $f(n)$ is the sum of all
$\varphi_k(n)$ where $k \leq n$ and
$\varphi_k(n)$ is computed within $g(n)$ computation steps.
Furthermore, one can inductively define for each $n$ and
each $k=0,1,\ldots,n$ that $E_{n,k}$ is the set
of the first $2n+5$ elements of
$$
   A = W_{k,g(f(n))} - \{0,1,\ldots,f(n)\} -
   \bigcup_{(n',k'): (n',k') <_{lex} (n,k) \wedge k' \leq n'} E_{n',k'};
$$
whenever they exist; if they do not exist then $E_{n,k} = \emptyset$.
In the case that $E_{n,k}$ has $2n+5$ elements, the corresponding
entries are used as follows: Bit $0$ is used to code one
bit of $\{(x,y): g(x) = y\}$ in order to encode the graph of $g$;
bit $k+1$ is used to code whether $\varphi_k(n+1)$ contributes
to $f(n+1)$ or not (where $k=0,1,\ldots,n,n+1$),
bit $k+n+3$ is used to code whether $E_{n+1,k}$ is
empty (where $k=0,1,\ldots,n,n+1$). So in total
$2n+5$ coding bits are used. This determines how $Z$ is coded on these
entries. By \cite[Lemma 7.5.1]{Nies:book} there is a fixed 
$g$-recursive martingale $L$ with rational values which dominates
all recursive martingales up to a multiplicative constant.
The other entries of $Z$ are chosen so that $L$ does not increase.

The resulting set is Schnorr random, as $L$ has after up to $g(n)$ bets at
most the value $2^{(3n+2)^2}$; for being covered by this martingale in the
Schnorr sense, one could
require that some recursive martingale obtains this value infinitely
often already after $h(n)$ many
steps for some recursive function $h$. See Franklin and Stephan
\cite[Proposition 2.2]{Franklin.Stephan:10} for additional information.

Let $W_e$ be an infnite set. As $g$ dominates all recursive functions,
one can find for almost all $n$
more than $(2n+5)^3$ elements in $W_e$ above $g(n)$ in time $g(f(n))$.
Therefore, $E_{n,e}$ is non-empty for almost all $n$. So when starting
with a sufficiently large $n$, using $Z$ and the enumeration of $W_e$
one can find all the entries for $E_{m,e}$ with $m \geq n$, 
and therefore compute the function $g$ relative to $Z \cap W_e$. This
permits to compute $Z$. Thus $Z \leq_T Z \cap W_e$.
\end{proof}

\subsection{Martin-L\"of random sets}

\begin{thm} \label{th:ml}
There is a Martin-L\"of random set $Z$ such that $\Omega \leq_T Z \cap
R$ or $\Omega \leq_T Z - R$
for every recursive set $R$.
\end{thm}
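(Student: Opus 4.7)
My plan is to build $Z$ by a \Kuc--G\'acs style block construction relative to the oracle $\emptyset'$, so that for each pair $(e,n)$ the bit $\Omega(n)$ is encoded at two specific positions in $Z$: one in $W_e$ and one in $\omega\setminus W_e$ (whenever both are non-empty on the block). Fix a $\Pi^0_1$ class $\mathcal{P}\subseteq\mathrm{MLR}$ of positive measure, enumerate triples $(e,n,k)\in\omega^3$ as $(e_m,n_m,k_m)$, and partition $\omega$ into disjoint blocks $B_m=[s_m,s_m+L_m)$ with lengths $L_m\to\infty$. At stage $m$, with current prefix $\tau_m$, compute $W_{e_m}\cap B_m$ and $\Omega(n_m)$ using $\emptyset'$; impose the coding targets $Z(p_0)=Z(p_1)=\Omega(n_m)$ where $p_0=\min(W_{e_m}\cap B_m)$ and $p_1=\min(B_m\setminus W_{e_m})$ (whenever the respective sides are non-empty), then choose a coding-compatible extension $\sigma\in 2^{L_m}$ keeping $[\tau_m\sigma]\cap\mathcal{P}$ of positive measure.

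For the verification, let $R=W_e$ be recursive. If $R$ is infinite and co-infinite, then for every $n$ there is a smallest $m$ with $(e_m,n_m)=(e,n)$ (for some $k$) such that both $R\cap B_m$ and $B_m\setminus R$ are non-empty; this $m$ is computable from $R$, and the reader recovers $\Omega(n)=Z(\min(R\cap B_m))$ from $Z\cap R$, yielding $\Omega\leq_T Z\cap R$ (and symmetrically $\Omega\leq_T Z-R$). If $R$ is finite, then $Z-R\equiv_T Z$, and $\Omega\leq_T Z$ follows from applying the construction with an index $e'$ such that $W_{e'}=\emptyset$: every block $B_m$ with $e_m=e'$ codes $\Omega(n_m)$ at position $\min(B_m)$, which is recursively locatable. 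The co-finite case is symmetric. Since $Z\in\mathcal{P}$, the set $Z$ is ML-random.

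The principal technical obstacle is the \Kuc--G\'acs counting. The two-bit coding constraint restricts the compatible extensions to a $1/4$-fraction of $2^{L_m}$, and in the worst case this could drive the density of $\mathcal{P}$ inside the current cylinder to zero after enough stages. The standard way around this is to let $L_m$ grow quickly and exploit the fact that the Lebesgue density of $\mathcal{P}$ is $1$ at almost every point of $\mathcal{P}$: choosing $\sigma$ so that $\tau_m\sigma$ is an initial segment of a density-$1$ point of $\mathcal{P}$ ensures that the conditional measure of $\mathcal{P}$ stays close to $1$, so that coding-compatible extensions remain plentiful at every stage. A secondary bookkeeping point is ensuring that for every infinite recursive $R$ and every pair $(e,n)$ at least one (hence infinitely many) assigned block meets $R$ on both sides; enumerating each triple $(e,n,k)$ bijectively and letting $L_m$ grow fast enough handles this.
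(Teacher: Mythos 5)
Your construction aims to code each bit $\Omega(n)$ into \emph{both} halves of every recursive split: at both $\min(W_{e}\cap B_m)$ and $\min(B_m\setminus W_{e})$. That is asking for strictly more than the theorem asserts, and the extra demand is fatal. Fix one recursive infinite, co-infinite $R=W_e$ (say the even numbers). At every block $B_m$ assigned that index you impose $Z(p_0^m)=Z(p_1^m)$ with $p_0^m=\min(R\cap B_m)$ and $p_1^m=\min(B_m\setminus R)$, both computable since $R$ and the block partition are computable. The blocks are disjoint and there are infinitely many such $m$. So $Z$ lies in the $\Pi^0_1$ class of all $Y$ satisfying $Y(p_0^m)=Y(p_1^m)$ for every such $m$; this class is null (the clopen set enforcing the first $n$ equalities has measure $2^{-n}$, giving an ML-test). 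Hence $Z$ cannot be ML-random, which means the Ku{\v c}era--G\'acs measure invariant must already collapse at some finite stage: if $\mu(\mathcal P\cap[\tau_m])$ were positive for all $m$, then $Z\in\mathcal P$ by closedness, a contradiction. Invoking Lebesgue density cannot repair this, because there simply is no random point obeying all the constraints, so ``steering toward a density-one point'' is never available. In fact the stronger statement you are implicitly proving (both $\Omega\leq_T Z\cap R$ and $\Omega\leq_T Z-R$ for every infinite co-infinite recursive $R$) is false: by van Lambalgen, if $\Omega\leq_T Z-R$ then $Z\cap R$ is $2$-random relative to nothing less than $\Omega$, hence GL$_1$, hence cannot compute $\emptyset'$. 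The section of the paper even flags this with the remark that by van Lambalgen ``some information is necessarily lost.''

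The paper's proof never codes a bit twice. It fixes an r-maximal c.e.\ set $S$ and codes each $\Omega(k)$ \emph{once}, at the $k$-th element of the complement $E=\omega\setminus S$; r-maximality guarantees that for every recursive $R$, almost all of $E$ lies in $R$ or almost all lies outside $R$, and that is exactly where the disjunction in the theorem's statement comes from. The positions inside $S$ are then filled with a set that is random relative to $P\oplus\Omega$ for a PA-complete $P$ low for $\Omega$, and ML-randomness of the whole $Z$ is established by a two-case martingale argument combined with van Lambalgen. The structural idea your proposal is missing is the r-maximal set, which routes a single copy of $\Omega$ to one side of every recursive splitting rather than trying to duplicate it across both.
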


\begin{proof}
The set $Z$ is constructed in several steps:
\begin{itemize}
\item A construction of an r-maximal set $S$ with complement $E_0 \cup
E_1 \cup E_2 \cup \ldots$
         where the parts $E_0,E_1,E_2,\ldots$ are finite sets with
$\max E_n < \min E_{n+1}$ which each
         maximise their e-state; this ensures that $S$ is $r$-maximal;
\item Letting $Z(a_k) = \Omega(k)$ for the $k$-th bit $a_k$ in the
complement of $S$;
\item Taking a set $P$ which is PA-complete and low for $\Omega$;
\item Defining the bits in $S$ according to a set which is random
relative to $P \oplus \Omega$.
\end{itemize}
Once $Z$ is constructed, it is shown that the resulting set $Z$ is
Martin-L\"of random and that for every
recursive splitting, one half of it is Turing above $\Omega$.

First the construction of the r-maximal set $S$ is given.
One defines the $n$-th e-state of a set $D$ as the sum
$3^n a_0+3^{n-1}a_1+\ldots 3a_{n-1}+a_n$ where each
$a_k$ is $2$ if $\varphi_k$ is defined on $D$ and takes always values
from $\{1,2,\ldots\}$ and $a_k$ is $1$ if $\varphi_k$ is defined on
$D$ and takes each time the value $0$ and $a_k = 0$ otherwise.
Furthermore, one can also consider an approximation to the e-state
at some given time.
Now one will make sets $E_n$, approximated as $E_{n,t}$ such that
$E_{n,t+1} \neq E_{n,t}$ such that their e-state has the parameter
$n$ as well and that $E_{n,t+1} \neq E_{n,t}$ only when some
$E_m$ with $m \leq n$ increases its e-state. Furthermore, when
the e-state of $E_n$ is $p$ then $E_n$ has $n^2 \cdot 2^{3^{n+1}-p}$
elements and $\max(E_{n,t}) < \min(E_{n+1,t})$ for all $n,t$.
Initially each $E_{n,0}$ is an interval of length
$2^{3^n} \cdot n^2$ and its e-state is $0$. All elements
below $t$ which are not on an interval $E_{n,t}$ at stage $t$ are
enumerated into $S$.

Second, let $E = E_0 \cup E_1 \cup E_2 \cup \ldots$ and let $a_k$ be
the $k$-th element of $E$ (in ascending order).
Now let $Z(a_k) = \Omega(k)$. Note that whenever $a_k \in E_n$ then
$a_{k+1} \in E_n \cup E_{n+1}$.

Third, Miller \cite{Downey.Hirschfeldt.ea:05,Miller:10}
showed that there is a set $P$ which is low for $\Omega$ and PA-complete. 

Fourth, let $V$ be a set which is Martin-L\"of random relative to $P
\oplus \Omega$
and for $x \in S$, let $Z(x) = V(x)$. This completes the construction of $Z$.

For the further proof, note that one can compute the list of members
of $E_{n+1}$ if one knows the list
of members of $E_0 \cup E_1 \cup \ldots \cup E_n$ as well as the final
e-state of
$E_{n+1}$: this is just done by simulating the construction until the
values of $E_{m,t}$
have stabilised as $E_m$ for $m \leq n$ and the e-state of $E_{n+1,t}$
has stabilised at
the corresponding value. More generally, the e-states of
$E_0,E_1,\ldots,E_{n+1}$
permit to compute the sets $E_0,E_1,\ldots,E_{n+1}$ explicitly. Now,
the number of bits
contained in $E_0 \cup E_1 \cup \ldots \cup E_n$ is at least $n^3/8$
while the $e$-states
of $E_0,E_1,\ldots,E_n,E_{n+1}$ together can be described with
$\log(3) \cdot \frac{n(n+1)}{2}$
bits. Therefore, the positions of $E_0,E_1,\ldots,E_n,E_{n+1}$ are
reached at some stage $t$
before the left-r.e.\ approximation of $\Omega$ reaches on $E_0 \cup
E_1 \cup \ldots \cup E_n$
its final values. This fact an used to compute $\Omega$ from $Z$ in an
iterative manner:
Knowing the bits of $\Omega$ coded on $E_0 \cup E_1 \cup \ldots \cup
E_n$ permits to
compute the position of $E_{n+1}$ which then again permits to look up
the bits of $\Omega$
coded on $E_{n+1}$ from $Z$. So $\Omega \leq_T Z$ by only taking into
consideration
the positions in $E$. As $S$ is r-maximal, it holds for any recursive
set $R$ that almost all
elements of $E$ are either inside $R$ or outside $R$; depending on
which case holds,
one can compute $\Omega$ from either $Z \cap R$ or $Z - R$.

The last part of the proof is to show that $Z$ is Martin-L\"of random.
Assume that this is not the
case. Then there is a $P$-recursive martingale $M$ succeeding on $Z$,
as $P$ is PA-complete.

If one relaxes $M$ to being partial-recursive, then one can in
addition permit that $M$ - while processing
the input from $Z$, computes the set $E_{n+1}$ whenever it has
processed all members of $E_0 \cup 
E_1 \cup \ldots \cup E_n$ and found the corresponding values of
$\Omega$ and used the time the
left-approximation takes on them to get the position of $E_{n+1}$.
Therefore the martingale knows
whenever it is betting on an entry of $Z$ belonging to $S$ or
belonging to $E$; that is, there is a partial-recursive function
$\gamma$ such that $\gamma(Z(0)Z(1)\ldots Z(n)) = S(n+1)$ and
$\gamma$ might be wrong or undefined if the input is not a prefix of $Z$.
The martingale evaluates $\gamma$ prior to betting. Now let
\begin{eqnarray*}
  q_n & = & \frac{M(Z(0)Z(1)\ldots Z(n)Z(n+1))}{M(Z(0)Z(1)\ldots Z(n))}; \\
  s_n & = & \prod_{m \in \{0,1,\ldots,n\} \cap S} q_m; \\
  e_n & = & \prod_{m \in \{0,1,\ldots,n\} \cap E} q_m.
\end{eqnarray*}
As the limit superior of $s_n \cdot e_n$ is $\infty$, it follows
that (a) the limit superior of the $s_n$ is $\infty$ or
(b) the limit superior of the $e_n$ is $\infty$.
Now it is shown that in both cases (a) and (b) a contradiction
can be derived; for this contradiction, note that by construction $\Omega$ is
Martin-L\"of random relative to $P$, $V$ is Martin-L\"of random
relative to $\Omega \oplus P$ and then by the Theorem of van Lambalgen
$\Omega$ is Martin-L\"of random relative to $V \oplus P$.

Case (a): the limit superior of the $s_n$ is $\infty$.
In this case,
one could make another modification of $M$ such that the resulting
martingale $\tilde M^{P \oplus \Omega}$ abstains from betting at $n$ when
$n \in E$ (but uses $\Omega$ to retrieve the value) and bets on
$V(n)$ when $n \in S$ (where $V(n) = Z(n)$). This permits to show
that $V$ is not Martin-L\"of random relative to $P \oplus \Omega$,
a contradiction.

Case (b): the limit superior of the $e_n$ is $\infty$.
In this case,
one could make another modification of $M$ such that the resulting
partial $P \oplus V$-recursive martingale $\tilde M^{P \oplus V}$
bets on $\Omega(n+1)$ the value $q_{a_{n+1}} = e_{a_{n+1}}/e_{a_n}$
where this share is computed from $Z(0)Z(1)\ldots Z(a_{n+1}-1)$
where in turn this string can be retrieved using $\gamma$ and
the oracle $P \oplus V$ and $\Omega(0)\Omega(1)\ldots \Omega(n)$.
Hence $\tilde M^{P \oplus V}$ takes on $\Omega(0)\Omega(1)\ldots \Omega(n)$
the value $e_n = q_{a_0} q_{a_1} \ldots q_{a_n}$ and therefore $\Omega$
is not Martin-L\"of random relative to $P \oplus V$, a contradiction.

This case-distinction then shows that $Z$, other than assumed, is
indeed Martin-L\"of random.
\end{proof}

\begin{remark}
This proof can be generalised such that for given set $Y$ there is a
Martin-L\"of random $Z$ such
that either $Y \leq_T Z \cap R$ or $Y \leq_T Z - R$ for every recursive set $R$.
\end{remark}

\noindent
The next small result shows that one can split every complete Turing
degree into Turing incomplete ML-random degrees. Ng, Nies and Stephan would like to thank
Yu Liang for a simplification of the proof of Theorem~\ref{th:yuliang}.

\begin{thm} \label{th:yuliang}
Every Turing degree above that of $\Omega$ contains a Martin-L\"of
random set of the form
$X \oplus Y$ such that $X$ is low and $Y$ is Turing incomplete.
\end{thm}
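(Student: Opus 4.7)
The plan is to combine the Low Basis Theorem with a positive-measure version of the relativized Kučera–Gács coding, together with van Lambalgen's theorem and a measure-theoretic cone-avoidance step.

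First, I would apply the Low Basis Theorem to a $\Pi^0_1$ class of Martin-Löf randoms (for instance, the complement of the first level of a universal Martin-Löf test) to obtain a low Martin-Löf random $X$; this gives $X'\equiv_T\emptyset'$, and hence $\Omega^X\leq_T X'\equiv_T\emptyset'\leq_T A$ using $A\geq_T\Omega\equiv_T\emptyset'$.

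Second, I would carry out a ``slack'' relativized Kučera–Gács coding: build an $X$-computable tree of Martin-Löf random sequences (relative to $X$) along which the bits of $A$ are coded into designated positions of $Y$, while leaving many other positions free. The aim is to produce a class $\mathcal{Q}$ of paths such that every $Y\in\mathcal{Q}$ is Martin-Löf random relative to $X$, codes $A$ in the sense that $A\leq_T X\oplus Y$, and satisfies the bound $Y\leq_T A\oplus\Omega^X\leq_T A$. Then $X\oplus Y\equiv_T A$, and by van Lambalgen's theorem $X\oplus Y$ is Martin-Löf random. The slack should be tuned so that $\mathcal{Q}$ has positive Lebesgue measure.

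Third, I would invoke a standard measure-theoretic cone-avoidance argument: for each Turing functional $\Phi_e$, the set $\{Y:\Phi_e^Y=\emptyset'\}$ has Lebesgue measure zero (otherwise a Lebesgue density argument would force $\emptyset'$ to be computable), so the countable union $\{Y:Y\geq_T\emptyset'\}$ has measure zero. Since $\mathcal{Q}$ has positive measure, some $Y\in\mathcal{Q}$ satisfies $Y\not\geq_T\emptyset'$, and this $Y$ together with $X$ yields the desired splitting of $A$.

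The main obstacle is Step 2: the ordinary Kučera–Gács construction outputs a single $Y\equiv_T A$ per input $A$, which is Turing complete when $A\equiv_T\emptyset'$. Introducing enough slack to maintain a positive-measure class of valid $Y$'s while still coding $A$ and preserving Martin-Löf randomness relative to $X$ is the delicate technical point; this is presumably where the simplification attributed to Yu Liang streamlines the argument.
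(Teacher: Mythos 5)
You correctly identify the three main tools (a low Martin-L\"of random $X$, the Ku\v cera--G\'acs coding relativized to $X$, and van Lambalgen's theorem), but your Steps~2 and~3 overcomplicate the argument and, as you yourself note, leave a real gap: the ordinary Ku\v cera--G\'acs construction does not produce a positive-measure class of candidate $Y$'s, and there is no obvious ``slack'' variant to quote. Fortunately, no such class and no measure-theoretic cone-avoidance are needed. The incompleteness of $Y$ is automatic once you use van Lambalgen in the right direction.

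Here is the point you miss: applying the relativized Ku\v cera--G\'acs theorem to the given degree $\ge_T \Omega \equiv_T \emptyset' \ge_T X'$ yields a \emph{single} $Y$ that is Martin-L\"of random relative to $X$ with $X \oplus Y$ in that degree, and by van Lambalgen the roles of $X$ and $Y$ are symmetric, so $X$ is Martin-L\"of random relative to $Y$. A $Y$-random set cannot be $Y$-computable, so $X \not\le_T Y$. Since $X$ is low, $X \le_T \emptyset'$; therefore if $Y$ were Turing complete we would have $X \le_T \emptyset' \le_T Y$, a contradiction. Thus $Y$ is Turing incomplete with no further work, and in particular no Lebesgue-density or cone-avoidance argument over a tuned positive-measure class $\mathcal Q$ is required. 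This is precisely the simplification attributed to Yu Liang, and it is where your proposal, which tries to route incompleteness through measure, diverges from the correct (and much shorter) path.
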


\begin{proof}
Let $Z \geq_T \Omega$ be in the given Turing degree and let $X$ be a low
Martin-L\"of random set, for example a half
of $\Omega$. Relativising the Theorem of Ku\v cera and G\'acs
to $X$ gives that there is a set $Y$ which is Martin-L\"of random relative
to $X$ and which satisfies $Z \equiv_T X \oplus Y$.
Now $X$ is Martin-L\"of random relative to $Y$
by the Theorem of van Lambalgen and therefore $X \not\leq_T Y$;
hence $Y$ is Turing incomplete.
\end{proof}

%
%

\section{Indifference for weak $1$-genericity}

 Frank Stephan and  Jason Teutsch answered   in the affirmative the open question 8.4 in  Adam Day's  \href{http://sigmaone.files.wordpress.com/2011/11/day_genericity.pdf}{paper} (the printed url  is

\noindent \verb|http://sigmaone.files.wordpress.com/2011/11/day_genericity.pdf| ).

\begin{theorem}
No 2-generic set $X$ can compute a set which is indifferent for $X$
with respect to weak $1$-genericity.
\end{theorem}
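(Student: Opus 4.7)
The plan is to argue by contradiction. Suppose $X$ is $2$-generic and computes an infinite set $C$ indifferent for $X$ with respect to weak $1$-genericity; fix a Turing functional $\Gamma$ with $\Gamma^X = C$. I would build a $\Sigma^0_2$ set $\mathcal{S} \subseteq 2^{<\omega}$ that is dense, and such that any prefix of $X$ lying in $\mathcal{S}$ witnesses a failure of indifference. Since $X$ is $2$-generic, density forbids $X$ from avoiding $\mathcal{S}$, so $X$ would meet $\mathcal{S}$, yielding the contradiction.

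Membership $\tau \in \mathcal{S}$ is to assert the existence of a c.e.\ index $e(\tau)$, computed uniformly from $\tau$, such that $W_{e(\tau)}$ is dense, the enumeration of $W_{e(\tau)}$ commits bit values only at positions outside $\Gamma^{\tau'}$ for every $\tau' \succeq \tau$, and some modification of $\tau$ on $\Gamma^\tau$ has no prefix in $W_{e(\tau)}$. The second clause is the crux: because modifications of $X$ on $C$ differ from $X$ only at $\Gamma^X$-positions and agree with $X$ elsewhere, a commitment outside every $\Gamma^{\tau'}$ is respected by any modification of $X$ extending the one at $\tau$. Encoded with a $\Sigma^0_1$ existence over finite extensions of $\tau$ and a bounded verification of density-so-far of $W_{e(\tau)}$, the membership condition is $\Sigma^0_2$.

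To see $\mathcal{S}$ is dense, given $\sigma$ I would explicitly build $\tau \succeq \sigma$ together with $W_{e(\tau)}$ by, at each stage $s$, scanning each string $\rho$ of length $s$ and adding to $W_{e(\tau)}$ a canonical extension of $\rho$ that commits a value at a position certified (inductively) to lie outside $\Gamma^{\rho'}$ for all $\rho' \succeq \rho$ examined thereafter. Such safe positions exist because $\Gamma^{\rho'}$ is a finite set of natural numbers at each finite stage, leaving cofinitely many free positions at every length. Choosing the modification on $\Gamma^\tau$ to disagree with every commitment made at positions $< |\tau|$ then witnesses $\tau \in \mathcal{S}$.

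Given $\tau \prec X$ in $\mathcal{S}$, I would extend the witnessing modification to a full modification $Y$ of $X$ on $C$ (agreeing with $X$ on $\Gamma^X \setminus \Gamma^\tau$, say) and conclude that $Y$ has no prefix in the dense c.e.\ set $W_{e(\tau)}$, contradicting weak $1$-genericity of $Y$ and hence the indifference of $C$. The hard part will be arranging the \emph{safe position} certification so that density of $W_{e(\tau)}$ is achieved without ever committing at a position whose later admission into $\Gamma$ would invalidate the commitment; this is precisely where the full strength of $2$-genericity — as opposed to weak $2$-genericity — is expected to enter, since a weakly $2$-generic $X$ does admit an indifferent set and so any such construction must break down for it.
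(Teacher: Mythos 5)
Your approach has a genuine gap, and I think it is a structural one rather than a technical detail that can be patched.

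First, observe what kind of genericity your strategy actually invokes. You build a \emph{dense} $\Sigma^0_2$ set $\mathcal{S}$ and then argue that $X$ must meet $\mathcal{S}$. But meeting every dense $\Sigma^0_2$ set is exactly weak $2$-genericity; full $2$-genericity adds nothing at that step. You yourself note at the end that weakly $2$-generic sets \emph{do} admit indifferent sets for weak $1$-genericity, so a proof scheme of the form ``$\mathcal{S}$ dense $\Sigma^0_2$, $X$ meets $\mathcal{S}$, contradiction'' cannot possibly go through as stated: if it did, it would also apply to a weakly $2$-generic $X$. The extra strength of $2$-genericity must enter somewhere else, and in your write-up it is left hiding inside the unproved ``safe-position certification.''

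Second, that certification is in fact impossible in the form you describe. You want $W_{e(\tau)}$ to commit bit values only at positions $p$ lying outside $\Gamma^{\tau'}$ for \emph{every} $\tau' \succeq \tau$. But $\Gamma$ is an arbitrary Turing functional with $\Gamma^X = C$; for any position $p \geq |\tau|$ there will in general be some extension $\tau' \succeq \tau$ with $p \in \Gamma^{\tau'}$, so no $p \geq |\tau|$ is ever safe. Your density argument only verifies safety at a finite stage (``$\Gamma^{\rho'}$ is a finite set \ldots at each finite stage''), but the commitment has to survive all future stages. There is no computable (or even $\Sigma^0_2$) way to certify a position as permanently outside $\bigcup_{\tau' \succeq \tau} \Gamma^{\tau'}$, and without that the dense set $W_{e(\tau)}$ cannot be built. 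There is also a sign problem lurking: if $W_{e(\tau)}$ really were insensitive to the $\Gamma$-positions, then $Y$ (which agrees with $X$ off $\Gamma^X$) would hit $W_{e(\tau)}$ whenever $X$ does, which is the opposite of the conclusion you want.

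The paper's proof uses $2$-genericity in the \emph{strongly avoiding} direction rather than the meeting direction, and this is what lets it sidestep the issue entirely. Writing $I^\sigma$ for the elements enumerable into the indifferent set using only oracle queries inside $\dom(\sigma)$, one defines a partial computable extension function $\psi$ where $\psi(\sigma)$ searches for $\tau \succeq \sigma$ with a fresh element of $I^\tau$ strictly between $|\sigma|$ and $|\tau|$. Every prefix of $X$ lies in $\dom(\psi)$, so $X$ avoids the $\Pi^0_1$ set $\{\tau : \psi(\tau)\!\uparrow\}$; $2$-genericity then yields a prefix $\eta \prec X$ all of whose extensions lie in $\dom(\psi)$. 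This makes $\psi$, hence a total computable extension function $f$ (and then $g$), available above $\eta$, and $f$ lays out a fixed partition of $\NN$ into intervals $J_n$ with fixed patterns $\kappa_n$ such that any weakly $1$-generic extending $\eta$ must agree with $\kappa_n$ on $J_n$ for some $n$, and such that each interval $J_n$ contains a certified indifferent position $a_n$. One then flips $X$ precisely at those $a_n$ for which $X$ already matches $\kappa_n$; the resulting $Y$ matches no $\kappa_n$, hence is not weakly $1$-generic, yet differs from $X$ only on the indifferent set. Rather than trying to make the dense set robust to $\Gamma$-changes (which cannot be done), the paper builds the dense set first with a rigid interval structure and uses the indifferent positions as the sites of deliberate disruption.
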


\begin{proof}
First, assume by way of contradiction that $X$ is $2$-generic and
$I^X$ is an algorithm which produces an infinite set for $X$ which
is indifferent for $X$ with respect to weak 1-genericity.

One can extend this definition to $I^\sigma$ being the elements which can
be enumerated into $I$ relative to $\sigma$ without querying the oracle
outside the domain of $\sigma$. Note that $I^X$ is the union over all
$I^\sigma$ with $\sigma \preceq I$.

Furthermore, one can now define an extension function $\psi$ such that
$\psi(\sigma)$ is the first extension $\tau$ of $\sigma$ found for which
there is an $x \in I^\tau$ with $|\sigma| < x < |\tau|$. This is a
partial-recursive extension function. Note that every prefix $\sigma$
of $X$ satisfies that $\psi(\sigma)$ is defined. Hence, by 2-genericity
there is a prefix $\eta \preceq X$ such that every extension $\sigma$
of $\eta$ is in the domain of $\psi$. Now one defines for any string
of the form $\eta \vartheta$ a function $f(\eta \vartheta)$ which is defined
inductively with $\vartheta_0,\vartheta_1,\ldots,\vartheta_n$ being all
strings of length $|\vartheta|$ and $\tau_0,\tau_1,\ldots,\tau_n$ being
chosen such that the following equations hold:
\begin{eqnarray*}
   \eta \vartheta_0 \tau_0 & = & \psi(\eta \vartheta_0); \\
   \eta \vartheta_{m+1} \tau_0 \tau_1 \ldots \tau_m \tau_{m+1} & = &
   \psi(\eta \vartheta_{m+1} \tau_0 \tau_1 \ldots \tau_m)
      \mbox{ for all $m<n$;} \\
   f(\eta \vartheta) & = & \eta \vartheta \tau_0 \tau_1 \ldots \tau_n.
\end{eqnarray*}
Note that in this definition, the extending part of $f(\eta \vartheta)$
only depends on the length of $\vartheta$ and not of the actual bits;
furthermore, $I^{f(\eta \vartheta)}$ contains an element $x$ such that
$|\eta \vartheta| < x < |f(\eta \vartheta)|$.

Now one partitions the natural numbers in intervals
$\{x: x<|\eta|\}$ and $J_0,J_1,\ldots$ and there is on each interval
$J_n$ a string $\kappa_n$ of length $|J_n|$
so that $f(\eta \vartheta) = \eta \vartheta \kappa_n$
whenever $\vartheta$ has the length $\min(J_n)-\min(J_0)$.

Now let for any $\sigma$ extending $\eta$ the
function $g(\sigma)$ be $f(\sigma 0^k)$ for the first $k$ such that
there is an $n$ with $|\sigma 0^k| = \min(J_n)$; if $\sigma$ is a prefix
of $\eta$ then let $g(\sigma) = f(\eta)$; if $\sigma$ is incomparable to $\eta$
then let $g(\sigma) = \sigma 0$. Note that $g$ enforces that every
weakly $1$-generic set with prefix $\eta$ equals to $\kappa_n$
on $J_n$ for some $n$.

Let $\sigma_n$ be the prefix of $X$ of length $\min(J_n)$.
Let $H$ be the set of all $n$ such that $X$ restricted to $J_n$
equals $\kappa_n$; note that for each $n \in H$ there is an
element $a_n \in I^X \cap J_n$. Now let $Y$ be the symmetric
difference of $X$ and $\{a_n: n \in H\}$. Note that the
set $Y$ does not coincide with $\kappa_n$ on $J_n$ for
any $n$. As $\eta$ is a prefix of $Y$,
$Y$ is not weakly $1$-generic.

However, the symmetric difference of $X$ and $Y$ is a subset
of $I^X$. Hence $I^X$ cannot be indifferent for $X$ with
respect to weak $1$-genericity. This contradiction completes
the proof of the theorem.
\end{proof}

\newpage
\section{Turing degrees of computably enumerable $\w$-c.a.-tracing sets}

D.\ Diamondstone and A.\ Nies started discussions in  December 2011. This work now  includes Joe Zheng. For background on tracing see~\cite[Sections 8.2,8.4]{Nies:book}.

\subsection{Introduction}   The
following (somewhat weak) highness property was introduced by
Greenberg and Nies~\cite{Greenberg.Nies:11}; it coincides  with the
class $\mathcal G$ in~\cite[Proof of 8.5.17]{Nies:book}.
\begin{definition} \label{def:omcetr} {\rm A set $A$ is \emph{$\omega$-c.a.-tracing} if each function $f \lwtt \Halt$ has a $A$-c.e.\ trace $(T_x^A)\sN x$ such that $|T_x^A| \le 2^x$ for each $x$.} \end{definition}

  A   stronger condition is that every $\DII$ function $f$ must be traced:

  \begin{definition} \label{def:Deltacetr} {\rm A set $A$ is \emph{$\DII$-tracing} if each $\DII$ function $f$ has a $A$-c.e.\ trace $(T_x^A)\sN x$ such that $|T_x^A| \le 2^x$ for each $x$.} \end{definition}  
  One also says that $\ES'$ is  c.e.\ traceable \emph{by} $A$.

\subsubsection{The Tewijn-Zambella argument}  \label{sss:TZ}

By an argument of Terwijn and
Zambella (see~\cite[Thm.\ 8.2.3]{Nies:book}),  in both cases the bound $2^x$ can be
replaced by any order function without changing the tracing property. The two condition  on a  class $\+ C$ of total functions being traced 
 which makes this argument work is   the following.

\bi \item[($*$)] \n {\it  If $f \in \+ C$ and $r$ is a  computable function,   then the function $x \to f \uhr{ r(x)}$ (tuples suitably encoded by numbers) is also in $\+ C$. } \ei

\subsubsection{Double highness properties} 
 These two classes relate nicely to highness for pairs of randomness notions by results in~\cite{Figueira.Hirschfeldt.ea:nd, Barmpalias.Miller.ea:nd}.   Recall that for randomness notions $\+ C \supset \+ D$, we let $\High(\+ C, \+ D)$ be  the class of oracles $A$ such  $\+ C^A \sub \+ D$ ($A$ is strong enough to push $\+ C$ inside $\+ D$). The following is  obtained by combining~\cite{Figueira.Hirschfeldt.ea:12,Barmpalias.Miller.ea:nd}:

\begin{thm}  \label{thm:highness_wcatraceable}  Let $A$ be an oracle.

\bi \item[(a)]  $A\in\High(\MLR, \Dem)$ $\LR $    $A$ is $\omega$-c.a.\ tracing.

\item[(b)]  $A\in\High(\MLR, \SR[\Halt])$  $\LR$ $A$  is $\DII$  tracing. \ei \end{thm}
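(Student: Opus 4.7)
The plan is to handle parts (a) and (b) in parallel, since the arguments differ only by substituting the class of $\DII$ index functions for that of $\w$-c.e.\ index functions. Both classes satisfy the closure property $(*)$ of Subsection~\ref{sss:TZ}, so in each case the Terwijn--Zambella argument lets us strengthen the trace bound $2^x$ to the bound $x$, which is what the measure estimates below will require.

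For the forward direction of (a), I would proceed as follows. Given $Z \in \MLR^A \setminus \Dem$, fix a Demuth test $(V_m)$ capturing $Z$ infinitely often, with $V_m = W_{g(m)}$ for an $\w$-c.e.\ function $g$ and $\mu V_m \le 2^{-m}$. Since $g \lwtt \Halt$, the $\w$-c.a.-tracing hypothesis together with $(*)$ produces an $A$-c.e.\ trace $(T_m^A)$ for $g$ with $|T_m^A| \le m$. Write $[W_i]_{2^{-m}}$ for the $A$-c.e.\ enumeration of $W_i$ truncated once its measure exceeds $2^{-m}$, and set
\[ U_m^A \;=\; \bigcup_{i \in T_m^A} [W_i]_{2^{-m}} .\]
Then $\mu U_m^A \le m \cdot 2^{-m}$, and $V_m \subseteq U_m^A$ because $g(m) \in T_m^A$ and $\mu V_m \le 2^{-m}$. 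Since $\sum_m m\cdot 2^{-m} < \infty$, $(U_m^A)$ is a Solovay test relative to $A$ that $Z$ fails, contradicting $Z \in \MLR^A$. The forward direction of (b) is the same argument with $g$ now the $\DII$ index function of a Schnorr-$\Halt$ test and the trace supplied by the $\DII$-tracing hypothesis.

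For the reverse directions, I would contrapose: assuming $A$ fails the respective tracing property, I would build a witness to $\MLR^A \not\subseteq \Dem$ (respectively $\not\subseteq \SR[\Halt]$). The strategy is to take the untraceable $g \lwtt \Halt$ (respectively $\DII$) and use it to define a Demuth test (respectively Schnorr-$\Halt$ test) $(V_m)$, and then invoke the measure-theoretic machinery of Figueira--Hirschfeldt--Miller--Ng~\cite{Figueira.Hirschfeldt.ea:12} (respectively Barmpalias--Miller--Nies~\cite{Barmpalias.Miller.ea:nd}) to extract an $A$-ML-random $Z$ captured by $(V_m)$. The main obstacle in both reverse directions is precisely this extraction step: one must orchestrate the construction of $Z$ so that it avoids every $A$-c.e.\ null set (to stay in $\MLR^A$) while lying in the chosen Demuth or Schnorr-$\Halt$ null set (forcing the failure of the stronger randomness notion). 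This is exactly what the cited works establish, and the role of the present theorem is to repackage their results into the uniform language of tracing.
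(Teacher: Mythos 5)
Your proof is correct and matches the route the paper itself takes: the Solovay-test argument you give for the direction showing that tracing implies $\High(\MLR,\cdot)$ is the same idea the paper records separately as Lemma~\ref{lem:tracing_to_random}, and for the converse you defer to Figueira--Hirschfeldt--Miller--Ng and Barmpalias--Miller--Nies, exactly as the paper does for the whole theorem. Two small points: your ``forward''/``reverse'' labels invert the stated biconditional, and in part (b) the identification of $\SR[\Halt]$ with limit randomness (tests indexed by a $\Delta^0_2$ function) should be stated explicitly before the $\DII$-tracing hypothesis is applied.
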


  \subsubsection{Extension to randomness notions in between the two extremes}

It is known that  $\SR[\Halt])$ is the same as limit random (the set has to pass all Demuth-like tests where the number of version changes is merely finite; see \cite{Kucera.Nies:11} for the formal  definition). Let $\aaa$ be a computable limit ordinal and recall the definition of an $\aaa$-c.a.\ function;  $\aaa-\Dem$ is Demuth randomness extended to tests with    versions of   test components $\aaa$-c.a. That is, the tests have the form $\Opcl {W_{f(m)}}\sN m$ where $f$ is $\aaa$-c.a. This is  studied,  for instance,  in the last sections of  
\cite{Greenberg.Hirschfeldt.ea:nd}, and~\cite{Nies:11}.   The table  suggests an   extension to notions in between the two extremes, both on the randomness and the tracing side: 

\begin{conjecture}
Let $\aaa$ be a computable limit ordinal possibly with some additional  closure properties such as closure under $+$. Then \bc $A\in\High(\MLR, \aaa-\Dem)$ iff  $A$ is $\aaa$-c.a.\ tracing. \ec 
\end{conjecture}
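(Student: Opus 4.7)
The plan is to imitate the proofs of the two established endpoints, Theorem~\ref{thm:highness_wcatraceable}(a) and (b), and show that every step generalises smoothly to the class of $\aaa$-c.a.\ functions, provided $\aaa$ has the stated closure properties. I would treat the two directions separately.

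For the backward direction (tracing $\Rightarrow$ highness), suppose $A$ is $\aaa$-c.a.\ tracing and $Z\in\MLR^A$; I would show $Z$ is $\aaa$-Dem random. If not, fix an $\aaa$-c.a.\ function $f$ with $\mu(\Opcl{W_{f(m)}})\le 2^{-m}$ and $Z\in \Opcl{W_{f(m)}}$ infinitely often. First invoke the Terwijn--Zambella argument of Section~\ref{sss:TZ} for the class of $\aaa$-c.a.\ functions, allowing the trace bound $2^x$ to be replaced by any order function; this produces an $A$-c.e.\ trace $(T_m)$ of $f$ with $|T_m|\le m$. Then the sets
\[ U_n \;=\; \bigcup_{m\ge n}\ \bigcup_{k\in T_{2m},\ \mu(\Opcl{W_k})\le 2^{-2m}}\Opcl{W_k} \]
form a uniformly $A$-c.e.\ sequence with $\mu(U_n)\le \sum_{m\ge n} 2m\cdot 2^{-2m}\le 2^{-n+1}$, and $Z\in U_n$ for every $n$ because $f(2m)\in T_{2m}$ and $Z\in\Opcl{W_{f(2m)}}$ for cofinally many $m$. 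A standard shift turns $(U_n)$ into an $A$-ML test capturing $Z$, contradicting $Z\in\MLR^A$.

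For the forward direction (highness $\Rightarrow$ tracing), suppose $A\in\High(\MLR,\aaa\text{-}\Dem)$ and let $f$ be an arbitrary $\aaa$-c.a.\ function. Following the template of the proofs for the two endpoints, I would construct a ML-random $Z$ that visibly encodes a failure of $A$-traceability: arrange $Z=Y_0\oplus Y_1\oplus\cdots$ via a Ku\v cera--G\'acs style coding so that the $m$-th block $Y_m$ lies inside $\Opcl{W_{g(m)}}$ for a uniformly produced $\aaa$-c.a.\ index $g(m)$ whose definition mirrors the settling times of $f(m)$. Then $(\Opcl{W_{g(m)}})_m$ is an $\aaa$-Dem test; by the highness hypothesis, $Z$ must fail to be ML-random relative to $A$, and the resulting $A$-ML test can be decoded block-by-block to furnish an $A$-c.e.\ trace of $f$ with bound an order function. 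Terwijn--Zambella then upgrades this to the standard bound $2^x$.

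The principal obstacle is verifying property~$(*)$ of Section~\ref{sss:TZ} for the $\aaa$-c.a.\ class: when $f$ is $\aaa$-c.a., the combined function $x\mapsto\la f(0),\ldots,f(r(x)-1)\ra$ has mind-change count obtained by summing $r(x)$ many $\aaa$-c.a.\ ordinals, so we genuinely need $\aaa$ closed under (ordinal) addition in the relevant sense. This is exactly the reason for the hypothesis on $\aaa$, and the proof needs this closure both to pass from any order-function bound back to $2^x$ and to maintain the $\aaa$-c.a.\ character of the combined test components in the forward direction. A secondary technical point is measure bookkeeping in the block-coded $Z$: one must ensure each coding block shrinks fast enough that $Z$ is genuinely ML-random while remaining dense enough in each $\Opcl{W_{g(m)}}$ to let the derived $A$-ML test recover $f$ with a controllable trace size.
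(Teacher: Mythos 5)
The statement you have set out to prove is explicitly labeled a \emph{conjecture} in the paper, not a theorem. Immediately after stating it the authors write: ``in full this is only known if the class of $\aaa$-c.a.\ functions satisfies the condition $(*)$ in \ref{sss:TZ}; for instance, this is the case when $\aaa=\omega^n$ for some $n\in\omega$.'' Subsection~\ref{ss:noTZ} develops the two halves you describe: Lemma~\ref{lem:random_to_tracing} shows that $A\in\High(\MLR,\+C\text{-}\Dem)$ gives $\+C$-tracing at bound $2^n$, while Lemma~\ref{lem:tracing_to_random} needs a finer bound $g$ with $\sum g(n)2^{-n}<\infty$ to recover highness. In the absence of $(*)$ these two lemmas leave a gap between the bounds, and the paper flags this gap explicitly.

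You correctly locate the crux at condition $(*)$, and the two directions you sketch do match the structure of those lemmas. The genuine gap is that the paragraph you call ``the principal obstacle'' is exactly where the open problem lives, and what you supply there is an assertion, not an argument. To conclude that $x\mapsto\la f(0),\ldots,f(r(x)-1)\ra$ is $\aaa$-c.a.\ one needs a single computable ordinal counter for the tuple, with start value below $\aaa$, decreasing whenever any coordinate changes. A natural (Hessenberg) sum of the $r(x)$ individual counters at least stays below $\aaa$ when $\aaa$ is additively closed; but it is not automatic that a notation for this sum can be produced uniformly, in $x$ and $s$, inside the notation system realising the $\aaa$-c.a.\ class, nor that this yields a legitimate $\aaa$-c.a.\ approximation of the tuple function. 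The paper verifies $(*)$ only for $\aaa=\omega^n$, by modifying Lemma~5.2 of \cite{Nies:11} and invoking Zheng's thesis for the inductive extension — nontrivial even there — and it deliberately states the closure hypothesis of the conjecture vaguely (``possibly with some additional closure properties such as closure under $+$''), signalling that the precise sufficient condition is not known. Your proposal does not supply the missing verification of $(*)$, so it proves at most the implication already recorded in the two lemmas. A side remark: the forward direction never needs $(*)$ and is obtained in the paper by modifying \cite[Thm.~3.6]{Barmpalias.Miller.ea:nd} rather than a Ku\v cera--G\'acs block-coding; $(*)$ is relevant only for shrinking the $2^n$ trace bound to the summable bound demanded by Lemma~\ref{lem:tracing_to_random}, which is exactly the point at which your sketch stops short.
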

  In full this is only known if the class of $\aaa$-c.a.\ function satisfies the condition ($*$)   in \ref{sss:TZ}; for instance, this is the case when $\aaa = \omega^n$ for some $n \in \omega$.  See  Subsection~\ref{ss:noTZ} for more detail.

  We say $A$ is (weak) Demuth cuppable if there is a (weak) Demuth random set $Z$ such that $A \oplus Z \ge_T \Halt$. Diamondstone and Nies also noticed that for each  c.e.\ set $A$, 
  
 \bc  $K$-trivial  $\RA$  weak Demuth noncuppable  $\RA$   superlow,    \ec

  For the first implication, $K$-trivial even implies ML-noncuppable by a recent result of  Day/Miller \cite{Day.Miller:nd}. The second one is shown in Figueira et al.~\cite{Figueira.Hirschfeldt.ea:12}. They use that for c.e.\ sets $A$, superlow = $\w$-c.e.\ jump dominated in the sense of~\cite{Figueira.Hirschfeldt.ea:12}; the negation of this second property implies $\High(\MLR, \text{weak Demuth})$. Thus if a c.e.\ set $A$ is not superlow, $\Om^A$ is weakly Demuth random and cups $A$ above $\Halt$. 
  
Demuth traceability was introduced in  \cite{Bienvenu.Downey.ea:nd}.  For instance,  all  superlow c.e.\ sets   are Demuth traceable.  They also observed that  for \emph{any}  set, 
  \bc Demuth traceable  $\RA$ Demuth noncuppable  $\RA$ not  $\w$-c.a.\ tracing.  \ec
  
 For the first implication see \cite{Bienvenu.Downey.ea:nd}. The second implication follows from Theorem~\ref{thm:highness_wcatraceable} above by taking $\Om^A$.

  \begin{question} Characterize weak Demuth noncuppability, and Demuth noncuppability, in recursion theoretic terms.  \end{question}

  \subsection{Can an $\w$-c.a.\ tracing set be  close to computable?}
  
  Being $\w$-c.a.\ tracing was investigated in   \cite{Figueira.Hirschfeldt.ea:nd}.  They showed:
  \begin{fact} \label{suplowNOTomegac.e.tracing}  No superlow set is $\omega$-c.e.-tracing.\end{fact}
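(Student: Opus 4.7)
The plan is a direct diagonalization that exploits how tightly superlowness controls $A$-c.e.\ membership. If $A$ is superlow, then $A'$ is $\omega$-c.e., and hence $A' \lwtt \Halt$ with some computable use bound. Consequently, uniformly in $(e,y)$, the relation ``$y \in W_e^A$'' reduces via a fixed computable function to an instance of $A'$, so every $A$-c.e.\ bit is $wtt$-decidable from $\Halt$ with use bounded by a computable function of $(e,y)$. This is the leverage we need to build a diagonalizing function that lies $wtt$ below $\Halt$, rather than merely Turing below it.

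Concretely, I would enumerate all candidate $A$-c.e.\ traces by viewing each $W_e^A$ column-wise: set $T^{A,e}_x = \{y \colon \la x,y\ra \in W_e^A\}$, and define
\[ f(e) = \min\{y \le 2^e \colon y \notin T^{A,e}_e\}, \]
with the convention that $f(e)=0$ if no such $y$ exists. To compute $f(e)$ one needs only to determine the set $T^{A,e}_e \cap \{0,1,\dots,2^e\}$, which means answering at most $2^e+1$ questions of the form ``$\la e,y\ra \in W_e^A$?''. By the observation above, each such question is $wtt$-decidable from $\Halt$ with use bounded by a computable function of $(e,y)$; taking the maximum over $y \le 2^e$ gives a computable use bound for $f(e)$. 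Hence $f \lwtt \Halt$.

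For the diagonalization, fix $e$. If $(T^{A,e}_x)$ is a valid $A$-c.e.\ trace with $|T^{A,e}_x| \le 2^x$ for all $x$, then in particular $|T^{A,e}_e| \le 2^e$, so among the $2^e + 1$ candidates $\{0,1,\dots,2^e\}$ at least one is absent from $T^{A,e}_e$, and by construction $f(e) \notin T^{A,e}_e$. Otherwise $(T^{A,e}_x)$ already fails the bound at some $x$ and is not a valid trace. Either way the $e$-th candidate fails to trace $f$, so $A$ is not $\omega$-c.a.-tracing. The one point to watch is that superlowness is used essentially to secure the reduction $f \lwtt \Halt$ rather than merely $f \leT \Halt$: without the computable mind-change bound on $A'$, the use required to identify $T^{A,e}_e \cap \{0,\dots,2^e\}$ would not be bounded by a computable function of $e$, and one would only produce a $\DII$ function, which need not contradict $\omega$-c.a.-tracing.
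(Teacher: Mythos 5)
Your proof is correct and takes what I believe is the standard direct route: from superlowness one gets $A'\ltt\emptyset'$ (hence $\lwtt\emptyset'$), and since membership in $W_e^A$ reduces uniformly to an instance of $A'$ via the $s$-$m$-$n$ theorem, the family of $2^e+1$ column queries needed to compute $f(e)$ has a computably bounded use on $\emptyset'$; the pigeonhole diagonalization at column $e$ then defeats the $e$-th candidate trace. The paper gives no inline argument for this Fact, merely citing Figueira--Hirschfeldt--Miller--Ng--Nies, but your argument is the natural one and is sound; your closing remark correctly locates where superlowness (rather than plain lowness) is used, namely in guaranteeing $\lwtt$ rather than merely $\leT$, which matters because Proposition~\ref{prop: low_om_ca_trace} shows plain lowness is compatible with $\omega$-c.a.-tracing.
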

  
  On the other hand, lowness is possible.  By \cite[Journal version Cor. 2.5]{Figueira.Hirschfeldt.ea:nd} there is an $\omega$-c.e.-tracing low ML-random set. Here we build a c.e.\ such set.
  
  \begin{prop}  \label{prop: low_om_ca_trace} Some low c.e.\ set $A$ is $\w$-c.a.\ tracing.  \end{prop}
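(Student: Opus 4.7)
The plan is to build $A$ as a c.e.\ set by a priority construction meeting, for every $e \in \NN$, the usual lowness requirement
\[
L_e\colon\ (\exists^\infty s)\,J^A(e)[s]\!\DA \ \Longrightarrow\ J^A(e)\!\DA,
\]
together with, for every pair $(i,j)$, a tracing requirement $R_{\langle i,j\rangle}$: if $\Phi_i^{\Halt}$ is total with $\Phi_i^{\Halt}(x)$ computed from at most $\Psi_j(x)$ bits of $\Halt$ (with $\Psi_j$ total, computable, nondecreasing), then there is an $A$-c.e.\ trace $(T_x)_{x\in\NN}$ of $\Phi_i^{\Halt}$ with $|T_x|\le 2^{\Psi_j(x)}$. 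The index pair $(i,j)$ exhausts all wtt-reductions of functions to $\Halt$.

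The tracing strategy will be marker-based. At a stage $s$ where $\Phi_i^{\Halt_s}(x)\!\DA$ with use $\le \Psi_j(x)$ takes a value $v$ not previously observed at argument $x$, pick a fresh marker $a_{i,j,x,v}$ strictly larger than every active restraint $r_0(s),\dots,r_s(s)$ and every previously chosen marker, enumerate $a_{i,j,x,v}$ into $A$ at once, and wire the $A$-c.e.\ enumeration of $T_x$ so that $v$ enters $T_x$ as soon as $a_{i,j,x,v}$ is seen in the oracle. Since $\Phi_i^{\Halt}(x)$ depends only on $\Halt\uhr{\Psi_j(x)}$, the number of distinct values ever observed at argument $x$ is at most $2^{\Psi_j(x)}$, giving the required bound on $|T_x|$. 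For lowness, I would use the standard preservation strategy: let $r_e(s)$ be the use of $J^A(e)[s]$ when convergent, and $0$ otherwise.

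The hard part is reconciling the \emph{infinitary} activity of each $R_{\langle i,j\rangle}$ (it fires once per mind-change of $\Phi_i^{\Halt_s}(x)$, over all $x$ and $s$, so cofinally often) with the lowness requirements. The resolution is that the large-marker policy makes the construction effectively \emph{injury-free}: every marker entering $A$ at stage $s$ lies above every currently active restraint, so no computation $J^A(e)[s']\!\DA$ that has ever converged is later destroyed by the enumeration of a marker; the bit below its use is never touched. A routine induction on $e$ then shows that once $J^A(e)\!\DA$ at some stage, the computation persists, whence $\phi^A_e(e)\!\DA \Leftrightarrow \exists s\,\phi^A_e(e)[s]\!\DA$. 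This makes $A'$ c.e.\ and hence $A' \leq_T \Halt$, so $A$ is low. The finiteness at each stage of the maximum of the active restraints is immediate, since only indices $\le s$ are considered by stage~$s$, so the large marker can always be picked.

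Finally, I would invoke the Terwijn--Zambella argument from Subsection~\ref{sss:TZ}. The construction produces, for each $f=\Phi_i^{\Halt}$ computed with use $\Psi_j$, an $A$-c.e.\ trace with the computable bound $2^{\Psi_j(x)}$. Because the class of $\omega$-c.a.\ functions is closed under the operation $f\mapsto (x\mapsto f\uhr{r(x)})$ of condition~$(*)$, the Terwijn--Zambella argument applied to each such $f$ individually converts this trace into an $A$-c.e.\ trace of $f$ with the uniform bound $2^x$. Hence $A$ is $\omega$-c.a.-tracing, completing the proof of Proposition~\ref{prop: low_om_ca_trace}.
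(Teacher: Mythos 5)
Your construction does not prove the proposition, and the gap is fundamental, not a technicality. The issue is in the size bound on the trace. For each wtt-reduction pair $(i,j)$, your trace $T_x^{(i,j)}$ collects every value $\Phi_i^{\Halt_s}(x)$ ever observed with use $\le \Psi_j(x)$, and since markers are enumerated into $A$ at once and never withdrawn, none of these values are ever removed. The resulting bound on $|T_x^{(i,j)}|$ is on the order of $\Psi_j(x)$ (or $2^{\Psi_j(x)}$, as you write), which \emph{depends on the wtt-use function~$\Psi_j$}. For the definition of $\w$-c.a.\ tracing you need a \emph{single} computable bound --- the paper fixes $2^x$ --- that works for every $f \lwtt \Halt$, regardless of how fast the use bound of $f$ grows. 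If $\Psi_j(x) = 2^{2^x}$, your trace has size far exceeding $2^x$, and there is no way within your construction to shrink it.

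The Terwijn--Zambella argument does not rescue this: it converts a trace with one \emph{fixed} order-function bound into a trace with another \emph{fixed} order-function bound, by tracing the derived function $x \mapsto f\uhr r(x)$ and unpacking. If you try to apply it to your $f$ with trace bound $\Psi_j(x)+1$, the derived function has use bound $\max_{y<r(x)}\Psi_j(y)$, and unpacking still leaves you with a set of candidates of size at least $\Psi_j(y)+1$ for $f(y)$ --- the adaptive bound propagates. A clean sanity check: your construction makes no essential use of the oracle $A$ at all, since the markers enter $A$ immediately and so the traces are actually plain c.e.; if your argument were valid, \emph{every} set, including the empty set and hence every superlow set, would be $\w$-c.a.\ tracing, contradicting Fact~\ref{suplowNOTomegac.e.tracing}. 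The missing idea --- and the real content of the paper's proof --- is to put each new value into the trace via an axiom that is \emph{killed} by enumerating its marker into $A$, so that when $f_e(x)$ changes, the old value is \emph{removed} from $T_x^A$ as long as no higher-priority lowness restraint blocks the marker. Keeping the trace small is then a genuine combinatorial tension with the lowness requirements; it is exactly this tension that makes the set non-superlow, as Fact~\ref{suplowNOTomegac.e.tracing} demands, and it is where all the work of the proof lies.
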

  
  \begin{proof} (Idea) We obtain   $(f_e) \sN e$  a list of all $\w$-c.a.\ functions as follows.   Let $\la e\ra $ be the $e$-th wtt reduction procedure (namely, $e_0$ indicates a Turing functional and $e_1$ is a computable bound on the use).  At stage $s$ we have an approximation $f_e(x)[s] = \la e \ra^\Halt(x)[s] $, with value $0$ if this is undefined. The function $f_e$ is given by $f_e(x) = \lim_s \la e \ra^\Halt(x)[s]$. 
	         
	 We build a c.e. oracle trace $(T^Z_x) \sN x$ with a fixed computable bound $h(x)$. We  meet  \bc $P_{e,x} \colon   f_e(x) \in T_x^A$    $(e \le x)$.  \ec 
	We also meet the usual lowness requirements $L_i$. 
	
	Fix an effective  priority ordering of the requirements.
	
	\vsp

\n 	\emph{Strategy for $P_{e,x}$ at stage $s$:} when there is a new value $y= f_e(x)[s]$,  change $A$  to remove the previous value (if any), unless its $A$-use is  restrained by a  stronger priority $L$-type  requirement. Put $y \in T^A_x$ with large use on $A$.
	
	\vsps
	
\n 	\emph{Strategy for $L_i$ at stage $s$:} if $J^A(i)$ converges newly (by convention with use $\le s$), restrain all weaker $P$ requirements from changing  $A\uhr s$. 
	
	\begin{claim} There is a computable bound for $|T_x^A|$. \end{claim}

 	\end{proof}
We give  another formal explication of the idea that a c.e.\ set $A$ can be high in one sense and low in another.   This answers a question asked  in~\cite[Rmk.\ 31]{Figueira.Hirschfeldt.ea:12}  because for c.e.\ sets,  array rec.\ = c.e.\ traceable.   For detail see Zheng's master thesis. 

\begin{thm} Some  $\w$-c.a.\ tracing c.e.\ set $A$   is   c.e.\ traceable. \end{thm}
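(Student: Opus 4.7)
The plan is to modify the priority construction of Proposition~\ref{prop: low_om_ca_trace} by replacing its lowness requirements $L_i$ with new requirements aimed at c.e.\ traceability of $A$. We build a c.e.\ set $A$, an $A$-c.e.\ oracle trace $(T^A_x)\sN x$ witnessing that $A$ is $\omega$-c.a.\ tracing, and simultaneously, for each Turing functional $\Phi_e$, a c.e.\ trace $(S^e_x)\sN x$ witnessing traceability of $\Phi_e^A$. The requirements, in some effective priority ordering interleaving the two families, are: $P_{e,x}$ asserting that $f_e(x) \in T^A_x$ whenever $f_e(x){\DA}$, and $Q_e$ asserting that $\Phi_e^A(x) \in S^e_x$ for all $x$ whenever $\Phi_e^A$ is total, with $|S^e_x|$ bounded by a computable function of $(e, x)$.

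Each $P_{e,x}$ strategy is essentially the one from Proposition~\ref{prop: low_om_ca_trace}: when $f_e(x)[s]$ exhibits a new value $y$, extract the previously reserved $A$-use (if any and if not forbidden by a higher-priority $Q$-restraint), reserve a new fresh use $u > s$, and enumerate $y$ into $T^A_x$ conditional on $A{\uh}u$. The $Q_e$ strategy is reactive: whenever $\Phi_e^A(x)[s]{\DA} = z$ with $z \notin S^e_x$, enumerate $z$ into $S^e_x$ and install a restraint equal to the use of this computation, preventing any weaker-priority $P$-strategy from enumerating a number below it into $A$.

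The verification follows a standard finite-injury analysis. Each $P_{e,x}$-requirement is met after only finitely many injuries from higher-priority $Q$-strategies, yielding $|T^A_x| \le g(x)$ for a computable order $g$ after a first application of the Terwijn--Zambella tuple-coding argument recalled in \S\ref{sss:TZ}. For the $Q_e$-requirement, a new value is enumerated into $S^e_x$ only when $\Phi_e^A(x)$ changes, which requires an $A$-change below its current use; weaker-priority $P$-strategies are blocked by the restraint, so the only contributors are the finitely many higher-priority $P_{e',x'}$, each of which acts at most $b_{e'}(x')$ times (with $b_{e'}$ computable since $f_{e'}$ is $\omega$-c.a.). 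Hence $|S^e_x|$ is bounded by a computable function of $(e, x)$. A second application of the Terwijn--Zambella argument, using that the class of $A$-computable functions satisfies condition ($*$) of \S\ref{sss:TZ}, converts this $(e, x)$-computable bound into a single uniform computable order $h$ as required for c.e.\ traceability of $A$.

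The main obstacle is reconciling the $Q_e$-restraints with the $P$-strategies' need to inject previously reserved $A$-uses when cancelling obsolete trace values. The key observation, already present in Proposition~\ref{prop: low_om_ca_trace}, is that $P$-strategies always reserve fresh uses strictly above every currently active restraint; a $Q_e$-restraint that grows \emph{after} such a reservation can block at most finitely many cancellations, each of which forces an extra value to accumulate in $T^A_x$ but corresponds to only a single change of $\Phi_e^A(x)$ and is already accounted for in the bound on $|S^e_x|$. Together with the finitary activity of each strategy, this justifies the standard priority analysis and closes the argument.
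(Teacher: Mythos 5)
Your finite-injury plan does not work: the requirement $Q_e$ is intrinsically $\Pi^0_2$. If $\Phi_e^A$ is total, $Q_e$ must protect infinitely many computations $\Phi_e^A(0), \Phi_e^A(1), \ldots$, and its restraint necessarily tends to infinity; the usual finite-injury calculus, where each restraint is set once and only finitely often revised, does not apply. More importantly, the size bound you obtain on $S^e_x$ is not really ``a computable function of $(e,x)$''. The only strategies ever allowed to injure $\Phi_e^A(x)$ are the finitely many $P_{e',x'}$ of higher priority than $Q_e$, so $|S^e_x|\le 1+\sum_{\la e',x'\ra<\text{prio}(Q_e)} b_{e'}(x')=:C_e$, which is a \emph{constant in $x$}, depending only on $e$. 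But c.e.\ traceability demands a single computable order $h$ with $|S^e_x|\le h(x)$ for all $e,x$. The constants $C_e$ grow without bound as $e$ grows (since the approximations $f_{e'}$ can have arbitrarily large mind-change bounds), so for any fixed order $h$ there are $e$ with $C_e>h(0)$, and there is no way to shrink $S^e_0$. The Terwijn--Zambella argument does not help here: it converts one fixed order into another, under condition $(*)$; it does not convert a family of index-dependent constants into a single order.

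This is precisely why the paper's proof uses a $\emptyset''$ tree construction: each $S_i$ ($=Q_i$) strategy guesses via an infinitary outcome whether $\Phi_i^A$ is total, and a $P_{e,x}$ strategy placed below that infinitary outcome is activated only after $\Phi_i^A\uhr y$ has settled for $y$ large compared to the anticipated number $b_e(x)$ of $A$-changes that $P_{e,x}$ can make. Thus $P_{e,x}$ can only injure $\Phi_i^A(y')$ for $y'\ge y$, and for each $y'$ the total number of later injuries is controlled by a computable function of $y'$ alone. That yields $|V_{y'}|\le h(y')$ for a single computable order $h$, which is what is needed. The advance bound $b_e(x)$ coming from the $\omega$-c.a.\ hypothesis is crucial for placing $P_{e,x}$ at a high enough level of the tree. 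Your reactive-restraint, single-tree, finite-injury variant cannot achieve this uniform $x$-dependent bound; you would need to recast the argument as a genuine infinite-injury tree construction as the paper does.
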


	Note that  the class of c.e.\ traceable   sets contains all  the superlow c.e.\ sets, but not all low c.e.\ sets. Thus the two results are independent.
	
	Also note that $A$ cannot be $\DII$ tracing: by Barmpalias~\cite{Barmpalias:AC}, such a set is not weakly arrary recursive (for each $\DII$ function $g$ there is $h \leT A$ such that $\ex^\infty x h(x) > g(x)$), hence not c.e.\ traceable. 
	
\begin{proof} Use   a  $\ES''$ tree construction. To make $A$ c.e.\ traceable we meet requirements 
	\bc $S_i\colon \, \Phi_i^A $ total $ \RA $  build c.e. trace  $ (V_x)\sN x$ for $\Phi_i^A$, \ec
	
	where $|V_x|$ has fixed computable bound. Guess at $\Phi_i^A $ total on the tree via exp stages; $A$ becomes $low_2$. 
	
	 To make $A$   $\w$-c.a.\ tracing, meet requirements $P_{e,x}$ as above. Use same strategies as  above but  on the tree. A sty $\beta\colon P_{e,x}$ below the infinitary outcome of  a strategy $\aaa\colon S_i$ can only start once   $\Phi_i^A \uhr y$ has converged for large enough $y$, so that its $A$ changes don't make $|V_y|$ to large. For this we need the advance bound on the number of times $f_e(x)$ can change. \end{proof}

\subsection{Can a non-$\w$-c.a.\ tracing set be  close to $\Halt$?}

For any   $\DII$  set $A$, superhighness is equivalent to JT-hardness by \cite[8.4.27]{Nies:book}, which of course implies $\w$-c.a.\  tracing. Thus, every superhigh $\DII$ set is $\w$-c.a.\ tracing. The following is pretty sharp then.

\begin{thm} Some high c.e.\ set $A$ is not $\w$-c.a.\ tracing. \end{thm}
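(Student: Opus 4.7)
The plan is to combine any standard construction of a high c.e.\ set $A$ with a diagonalization that builds a function $f \le_{wtt}\Halt$ not traced by any uniformly $A$-c.e.\ family of bound $2^x$. The crucial observation is that the diagonalization only reads $A$ and imposes no restriction on what enters $A$, so it does not conflict with the highness construction; the two parts can simply be interleaved stage by stage.

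For the diagonalization, enumerate the candidate traces by setting $T^A_{e,x} := \{ n : \la x,n\ra \in W^A_e\}$; every uniformly $A$-c.e.\ family arises this way for some $e$. Requirement $N_e$ will ensure that $(T^A_{e,x})_x$ is not a trace of $f$ with bound $2^x$. Pick the witness $x_e := e$ and reserve $2^e+1$ distinct candidate values $y_0,\dots,y_{2^e}$ for $f(e)$. At stage $s$ let $j_{e,s}$ be the least $j\le 2^e$ with $y_j \notin T^{A_s}_{e,e}[s]$, or $\infty$ if no such $j$ exists, and set $f_s(e) := y_{j_{e,s}}$. Since $T^{A_s}_{e,e}[s]$ is monotone non-decreasing in $s$, the quantity $j_{e,s}$ is non-decreasing, bounded by $2^e+1$, and hence stabilizes at some $j^{*}$. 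If $j^{*} = \infty$, then $|T^A_{e,e}| \ge 2^e+1$, violating the bound; otherwise $j^{*} \le 2^e$ and $f(e) = y_{j^{*}} \notin T^A_{e,e}$, so the family misses $f$ at $x = e$. Either way $N_e$ is satisfied.

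The function $f$ has at most $2^e+1$ changes at input $e$ and is $0$ at all other inputs, so its modulus of approximation is bounded by the computable function $x \mapsto 2^x+1$; hence $f\le_{wtt}\Halt$, as required. Running any standard high c.e.\ construction for $A$ in parallel with the $N_e$ strategies then produces the desired $A$. The main technical ingredient is the highness construction itself (e.g.\ a Martin-style thickness argument making some $A$-computable function dominate every total computable function), but this is completely independent of the $N_e$ strategies---the latter put nothing into $A$ and only exploit the monotonicity of $W^{A_s}_{e,s}$ in $s$---so there is no genuine interaction between the two halves. The obstacle one might anticipate, a tension between $A$ being high and not being $\omega$-c.a.\ tracing, is illusory precisely because the tracing-diagonalization requirements are purely observational with respect to $A$.
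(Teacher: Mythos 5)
Your plan has a genuine gap: the monotonicity claim driving the diagonalization is false. You set $T^{A}_{e,x} := \{n : \la x,n\ra \in W^A_e\}$ and assert that $T^{A_s}_{e,e}[s]$ is monotone non-decreasing in $s$, so that $j_{e,s}$ is non-decreasing and stabilizes. But $W^{A_s}_e[s]$ is an oracle computation relative to an oracle that is \emph{changing} as $A$ is enumerated: a computation that puts $\la e, n\ra$ into $W^{A_s}_e[s]$ can query $A_s$ negatively, and when a new element below its use later enters $A$, that computation is destroyed and $n$ may leave the approximation. So $T^{A_s}_{e,e}[s]$ is not monotone, $j_{e,s}$ can oscillate, and neither the stabilization of $f(e)$ nor its $\omega$-c.a. bound is established. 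The statement that the $N_e$ strategies are ``purely observational'' overlooks that while they enumerate nothing into $A$, they are \emph{sensitive} to $A$-changes via the oracle; the interaction runs in the other direction, from the highness requirements to the tracing diagonalization.

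This is precisely the difficulty the paper's proof is organized around: the highness coding requirements $P_r : \ES''(r) = \lim_n A^{[r]}(n)$ force many $A$-changes, and these invalidate the $A$-computations that the $N_e$ requirements watch. The paper therefore uses a tree of strategies (as in the high-minimal-pair constructions in \cite{Soare:87}), guessing on the tree whether $\ES''(r)=0$, and introduces a notion of $\alpha$-correct $A$-computations. The $N_e$ strategy only reacts to $\alpha$-correct computations that put $g(x)$ into $T^A_{e,x}$, so that once $\alpha$ is on the true path and no longer initialized, the number of such events is bounded by the trace bound $x$; this is what yields the computable bound on the changes of $g(x)$ and hence $g \le_{wtt} \Halt$. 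Your proposal omits exactly this mechanism, and without it the construction does not terminate. Note also that the paper's remark preceding the theorem (that every $\Delta^0_2$ superhigh set \emph{is} $\omega$-c.a.\ tracing) is a warning that one cannot just run ``any standard high c.e.\ construction'' indifferently; one must ensure $A$ is high without inadvertently being superhigh, and the tree-based control is what makes the two families of requirements compatible.
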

\begin{proof}  Define an $\w$-c.a.\ function $g$.  We meet the requirements
	
	\bc $N_{e}\colon \ex x \, g(x) \not \in T_{e,x}^A$, \ec
	where $(T^Z_{e,x})_{e,x \in \w}$ is uniform  listing of all oracle c.e.\ traces with bound $x$. 

	We also meet the usual highness requirements 
	
	\bc $P_r\colon     \ES''(r)  = \lim_n A^{[r]}(n) $, \ec by the usual coding into the $r$-th column of $A$.
	
Use methods from the tree construction of a high minimal pair as in \cite{Soare:87}. Guess on the tree whether $ \ES''(r)=0$.  This yields  a notion of $\aaa$-correct $A$-computations, where $\aaa$ is a string.

\emph{	Strategy for $\aaa \colon N_e$.} 
	Pick large $x$. Define $g(x)$ large. Whenever $g(x)\in T_{e,x}^A$ via an $\aaa$-correct computation, then increase $g(x)$ and initialize weaker   requirements.

\n \emph{Verification.} 
Need to check that $g$ is indeed $\w$-c.a. As long as  $\aaa \colon N_e$ is not initialized, all relevant computations $y\in T_{e,x}^A$ are $A$-correct, so it will  increase $g(x)$ at most $x$ times.
  \end{proof} 
  
  For an alternative, full proof,  again see Zheng's master thesis.  There, the $
\alpha$ correct computations are replaced by a more informative tree of strategies.

\subsection{Traceability in absence of the condition of Terwijn and Zambella}
\label{ss:noTZ}

We generalize  Definitions \ref{def:omcetr} and \ref{def:Deltacetr}.
\begin{definition} \label{def:fullcetr} {\rm Let $\+ C$ be a class of total functions defined on $\NN$. Let $h \colon \NN \to \NN$. We say that  a set $A$ is \emph{$\+ C$-tracing with bound $h$ } if each function $f \in \+ C$ has a $A$-c.e.\ trace $(T_x^A)\sN x$ such that $|T_x^A| \le h(x)$ for each $x$.} \end{definition}

Note that by the Terwijn Zambella argument, the order function $h$ is immaterial if Condition ($*$) of Subsection~\ref{sss:TZ} holds for $\+ C$.

\begin{definition}\label{def:C-Demuth}
	 Let $\+ C$ be a class of total functions defined on $\NN$. \bi \item A \emph{$\+ C$-Demuth  test}  has  the form $\Opcl {W_{f(m)}}\sN m$ where $f$ is in $\+ C$. \item  We say that $Z$ is \emph{$\+ C$-Demuth random} if for each such test we have $Z \not \in \Opcl {W_{f(m)}} $ for almost all $m$. \ei
\end{definition}

We give a fine analysis  of Theorem~\ref{thm:highness_wcatraceable} in this more general setting. We show the double highness notion $A\in\High(\MLR, \+ C-\Dem)$  implies $A$ is $\+ C$-tracing at bound $2^m$. However, we need tracing at slightly better  bound, such as $2^m m^{-2}$, to reobtain the double highness notion.  Thus there is a   gap in the absence of condition ($*$).

The first part is a  straightforward modification of the proof of   \cite[Thm. 3.6]{Barmpalias.Miller.ea:nd}.
\begin{lemma}\label{lem:random_to_tracing}  
	$A\in\High(\MLR, \+ C-\Dem)$ $\RA $   
	
	\hfill  $A$ is $\+ C$-tracing with bound $2^n$. 
\end{lemma}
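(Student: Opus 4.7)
Plan: I would follow the scheme of Barmpalias--Miller's Theorem 3.6 (the $\omega$-c.a.\ case), rephrased throughout with $\+C$-Demuth tests in place of ordinary Demuth tests. The core idea is to package each $f\in\+C$ that we want to trace as a $\+C$-Demuth test built from $f$; the highness hypothesis then forces the failure set of this test into the universal $A$-ML test, and the trace is read off from that test.

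Concretely, I would fix a computable injection $v\mapsto\tau_v$ of $\NN$ into $\fs$ (prefix-free, padded suitably) and for each pair $(n,v)$ declare $C_{n,v}\subseteq\cantor$ to be a clopen cylinder associated to $\tau_v$, chosen so that at each fixed $n$ the family $\{C_{n,v}\}_v$ is pairwise disjoint, $\mu(C_{n,v})\le 2^{-n}$, and $\mu(C_{n,v})\ge 2^{-n-O(1)}$ for the values of $v$ that will actually be enumerated. Given $f\in\+C$, define $V_m=C_{m,f(m)}$. Since $f\in\+C$, the indexing map $m\mapsto V_m$ lies in $\+C$, and $\mu(V_m)\le 2^{-m}$, so $(V_m)$ is a $\+C$-Demuth test. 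By the hypothesis $A\in\High(\MLR,\+C\text{-}\Dem)$, every $Z$ belonging to infinitely many $V_m$ is not Martin-L\"of random relative to $A$, so the failure set $\bigcap_k\bigcup_{m\ge k}V_m$ is contained in $\bigcap_n U_n^A$, where $(U_n^A)_n$ is the universal $A$-ML test.

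To extract an actual trace I would pass from the limsup cover to an ordinary nested cover by working with the Solovay-style tails $W_k=\bigcup_{m\ge k}V_m$, and invoke the universality of $(U_n^A)$ to obtain a constant $c$ with $V_m\subseteq U_{m-c}^A$ for all sufficiently large $m$. Setting $T_n^A=\{v:C_{n,v}\subseteq U_{n-c}^A\}$ gives a uniformly $A$-c.e.\ trace with $f(n)\in T_n^A$, and the measure bound $\mu(U_{n-c}^A)\le 2^{-n+c}$ combined with disjointness of the $\{C_{n,v}\}_v$ and the calibrated lower bound on $\mu(C_{n,v})$ yields $|T_n^A|\le 2^n$ after the additive shift is absorbed by starting the trace at sufficiently large indices.

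The main obstacle is the Solovay-tail step: converting the limsup-style containment of the failure set in $\bigcap_n U_n^A$ into an individual cover $V_m\subseteq U_{m-c}^A$. In the $\omega$-c.a.\ Barmpalias--Miller version this uses the explicit mind-change structure of $\omega$-c.a.\ enumerations; for general $\+C$ the analogous step must be verified by showing that $\+C$-Demuth tests are uniformly absorbed by $A$-c.e.\ open sets of comparable measure, a property that holds once we have an $A$-c.e.\ enumeration of the $\+C$-indices. The reason the bound here is exactly $2^n$ (rather than an arbitrary order function) is precisely the absence of condition $(*)$ from Subsection~\ref{sss:TZ}: without the Terwijn--Zambella compression, the measure accounting at level $n$ saturates at $2^n$.
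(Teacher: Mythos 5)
Your high-level plan, to reuse the Barmpalias--Miller scheme with $\+C$-Demuth tests in place of ordinary Demuth tests, is exactly what the paper intends: its whole ``proof'' here is the sentence that this is a straightforward modification of \cite[Thm.~3.6]{Barmpalias.Miller.ea:nd}. But the sketch you give does not actually fill in that modification, and two of the concrete steps as written do not go through.

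First, the encoding. You ask for cylinders $C_{n,v}$, $v\in\NN$, that are pairwise disjoint at each level $n$ with $\mu(C_{n,v})\le 2^{-n}$ and also $\mu(C_{n,v})\ge 2^{-n-O(1)}$ ``for the values of $v$ that will actually be enumerated.'' At a fixed $n$ there can be at most $2^{n+O(1)}$ pairwise disjoint cylinders of measure $\ge 2^{-n-O(1)}$, whereas $f(n)$ ranges over all of $\NN$, so the lower bound cannot hold for the one value $v=f(n)$ you actually need it for. If instead you use a self-delimiting code and take $\mu(C_{n,v})\approx 2^{-n-|\tau_v|}$, then $\mu(V_m)\le 2^{-m}$ is fine, but the measure lower bound disappears and the disjointness-plus-$\mu(U^A_{n-c})\le 2^{-n+c}$ accounting no longer yields $|T^A_n|\le 2^n$; the count saturates at a quantity depending on how small the $C_{n,v}$ in $U^A_{n-c}$ are, which is unbounded.

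Second, and more fundamentally, the ``Solovay-tail step'' is not a technical inconvenience to be noted and deferred; it is the entire mathematical content of the lemma, and your justification for it is not correct. The hypothesis $A\in\High(\MLR,\+C\text{-}\Dem)$ gives you only a containment of $\Pi^0_2$ sets, $\bigcap_k\bigcup_{m\ge k}V_m\subseteq\bigcap_n U^A_n$. Deducing from this an \emph{open} absorption of the form $V_m\subseteq U^A_{m-c}$ for almost all $m$ is false for arbitrary sequences $(V_m)$ covered by $\bigcap_n U^A_n$: take $V_m=[Z_0\!\uh m]$ for a suitably chosen non-$A$-random $Z_0$ whose ``depth of capture'' in the universal $A$-test grows superlinearly, and the absorption fails while the limsup containment holds trivially. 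So the step can only be salvaged by using the specific structure of $\+C$ (in the $\omega$-c.a.\ case, the $\emptyset'$-computable mind-change bound), and the sentence you offer as a fix --- that the absorption ``holds once we have an $A$-c.e.\ enumeration of the $\+C$-indices'' --- is not an argument: the indices of a $\+C$-Demuth test need not be $A$-c.e., and if they were you would essentially already have the trace. As it stands, then, the proposal identifies the correct target lemma but leaves its only nontrivial step unproved and supplies in its place a plausibility heuristic that does not survive scrutiny.
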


The second part is a    modification of the proof of the corresponding result  \cite[Prop.\ 32]{Figueira.Hirschfeldt.ea:nd}; also see   \cite[Thm. 3.6]{Barmpalias.Miller.ea:nd}.
\begin{lemma}\label{lem:tracing_to_random}  
 $A$ is $\+ C$ tracing with a bound $g$  such that $\sum g(n) \tp{-n} < \infty$ $\RA $     
  	$A\in\High(\MLR, \+ C-\Dem)$. 
\end{lemma}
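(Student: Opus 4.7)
The plan is to imitate the standard argument that c.e.\ traceability of an oracle-Demuth-like test function yields an oracle ML-test, but being careful because the trace may contain ``wild'' indices whose corresponding c.e.\ open set is far too large. I assume $Z$ fails a $\+ C$-Demuth test, namely $Z \in \Opcl{W_{f(m)}}$ for almost every $m$, where $f \in \+ C$ and $\measure(\Opcl{W_{f(m)}}) \le 2^{-m}$ for all $m$. The goal is to produce an $A$-Solovay test capturing $Z$ infinitely often; since passing Solovay tests relative to $A$ is equivalent to $A$-ML-randomness, this suffices.

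By hypothesis, $A$ is $\+ C$-tracing with bound $g$, so there is an $A$-c.e.\ trace $(T^A_m)_{m \in \NN}$ with $f(m) \in T^A_m$ and $|T^A_m| \le g(m)$. For each $m$, I define a $\Sigma^0_1(A)$ class $V^A_m$ by enumerating, for every index $e$ that appears in $T^A_m$, the stage-by-stage approximation of $\Opcl{W_e}$, but \emph{truncated} as soon as the enumerated measure would exceed $2^{-m}$. Then
\[ \measure(V^A_m) \le |T^A_m| \cdot 2^{-m} \le g(m)\, 2^{-m}, \]
so $\sum_m \measure(V^A_m) < \infty$ by hypothesis on $g$. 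This is exactly a Solovay test relative to $A$.

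It remains to observe that $Z \in V^A_m$ for almost every $m$. Indeed, fix $m$ large enough so that $Z \in \Opcl{W_{f(m)}}$. Since $f(m) \in T^A_m$ and since the \emph{true} measure $\measure(\Opcl{W_{f(m)}})$ is bounded by $2^{-m}$, the truncation never kicks in for the index $e = f(m)$, so the entire class $\Opcl{W_{f(m)}}$ is enumerated into $V^A_m$. Hence $Z \in V^A_m$. Thus $(V^A_m)$ is a Solovay test relative to $A$ that captures $Z$ infinitely often (indeed cofinitely often), so $Z$ is not ML-random relative to $A$. This gives $\MLR^A \subseteq \+ C\text{-}\Dem$, i.e., $A \in \High(\MLR, \+ C\text{-}\Dem)$.

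The only subtle point — the one I would expect to need care — is the truncation step: the trace $(T^A_m)$ is not required to contain only ``legitimate'' Demuth indices (indices $e$ with $\measure(\Opcl{W_e}) \le 2^{-m}$), so one cannot simply take the union of the $\Opcl{W_e}$ over $e \in T^A_m$. The truncation ensures the measure bound regardless, and, crucially, does not discard anything when $e = f(m)$ because $f(m)$ really is a legitimate Demuth index by the Demuth test axioms. Summability of $g(m) 2^{-m}$ is what then turns the ``finite $\measure$ sum'' upgrade of the obvious ML-test-style bound $|T^A_m| 2^{-m}$ into a Solovay test, explaining why the hypothesis is slightly stronger than a pure $2^m$ bound.
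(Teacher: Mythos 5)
Your construction is essentially identical to the paper's: run an $A$-Solovay test by enumerating, for each candidate index in $T^A_m$, the corresponding c.e.\ open set truncated at measure $2^{-m}$, observe that the total measure is bounded by $\sum_m g(m)2^{-m}<\infty$, and note the true component $\Opcl{W_{f(m)}}$ is never truncated so $Z$ falls into infinitely many test components. One small slip: by the paper's definition, failing a $\+ C$-Demuth test means $Z\in\Opcl{W_{f(m)}}$ for \emph{infinitely many} $m$, not almost every $m$; the argument is unaffected since ``infinitely many'' is all a Solovay test needs.
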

\begin{proof} 
	Fix a $\+ C$ Demuth test $\Opcl {W_{f(m)}}\sN m$. Let $(T_m)\sN m $ be a $A$-c.e.\  trace for $f$ with bound $g$. Define an $A$ Solovay test $(\+ S^A_m)$ as follows: for each $k \in T_m$, enumerate the open set $\Opcl {W_{f(k)}}$ into $\+ S^A_m$ as long as its  measure is $\le \tp{-k}$. Clearly $\sum_m \leb \+ S^A_m \le \sum_m g(m) \tp{-m}  < \infty$. Thus no set that is in infinitely many $\+ S^A_m$ can be ML-random relative to $A$. 
\end{proof}

We   now look at  subclasses of the $\omega$-c.a.\ functions, and also classes $\+ C $ containing the $\omega$-c.a.\ functions.  

 
\subsubsection{Subclasses of $\omega$-c.a.}   For a computable order function $g$, let $\+ C$ be the class of functions $f \lwtt \Halt$ such that   some computable  approximation  for  $f(x)$ has at most  $g(x) $ changes. Then  $A$ is $\+ C$ tracing  with bound $2^m$ if each function in $\+ C$    has an $A$-c.e.\ trace $(T_x^A)\sN x$ such that $|T_x^A| \le 2^x$ for each $x$. We also say that $A$ is \emph{$g$-c.a.-tracing} with bound $2^m$.   More generally, we could have a class $\+ D$ of computable functions instead of a single $g$, and we say $A$ is $\+D$-c.a.\ tracing with the obvious meaning. 

Let $Z$ be ML-random. 
By~\cite[Thm 23]{Figueira.Hirschfeldt.ea:12}, $Z$ is $\w$-c.a.\ tracing iff $Z$ is $2^n h(n)$-c.a.\ tracing, where $h$ is an arbitrary (say, slowly growing) order function.  

In contrast,  for c.e.\ sets $A$, there is a proper hierarchy of being $g$-c.a.\ tracing, for faster and faster growing computable functions $g$:

\begin{theorem}\label{thm:}   Let $g$ be  computable. Then there is a computable function $h$ and a c.e.\ set $A$ that 
	is $g$-c.a.\ tracing, but not $h$-c.a.\ tracing.
\end{theorem}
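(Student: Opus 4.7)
The plan is to build a c.e.\ set $A$ and simultaneously an $h$-c.a.\ function $F$ by a finite-injury argument, where $h$ will be chosen computable and large enough. Without loss of generality, I assume $g$ is strictly increasing and $g(x) > 2^x$ for all $x$: if $g(x) \le 2^x$ for almost every $x$ then $A = \emptyset$ already traces every $g$-c.a.\ function by listing its at most $g(x)+1 \le 2^x$ approximation values, and the theorem reduces to a direct diagonalisation of a fresh $h$-c.a.\ function $F$ (with $h(x) = 2^x+1$) against all $\emptyset$-c.e.\ trace procedures of bound $2^x$.

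Uniformly effectively enumerate the $g$-c.a.\ functions $(\psi_e)\sN e$ by ``freezing'' each computable approximation the moment it would exceed $g(x)$ many changes at $x$, and uniformly effectively enumerate all candidate $A$-c.e.\ trace procedures $(V_e^A)\sN e$ of bound $2^x$ by truncating each oracle c.e.\ family at $2^x$ elements per input. The requirements are $R_e$: $\psi_e(x) \in T^A_{e,x}$ with $|T^A_{e,x}| \le 2^x$, for an $A$-c.e.\ trace $T^A_{e,x}$ that I build; and $N_e$: $F(x_e) \notin V^A_{e, x_e}$ at a fresh witness $x_e$.

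The strategy for $R_e$ will be a sliding window by recency. For each input $x$, I maintain a stage-of-last-appearance stamp for each value taken by $\psi_e(x, \cdot)$ over time, and keep $T^A_{e,x}$ equal to the (up to $2^x$) values of largest stamp. When a value $v$ enters the top-$2^x$, I enumerate an axiom ``$v \in T^A_{e,x}$ provided $A \cap I_v = \emptyset$'' with $I_v$ a fresh, pairwise-disjoint block of $\omega$; when $v$ leaves the top-$2^x$, I enumerate one element of $I_v$ into $A$, invalidating that axiom. Since the final value $\psi_e(x)$ has the maximal stamp, it lies in the final $T^A_{e,x}$; since $\psi_e$ is $g$-c.a., the strategy makes at most $g(x)+1-2^x$ invalidating $A$-enumerations at $x$. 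The strategy for $N_e$ will first step $F(x_e)$ through the values $0, 1, \ldots, g(x_e)$, thereby forcing more than $g(x_e)$ changes at $x_e$ so that $F$ is not $g$-c.a.\ and hence is exempt from the positive requirements (guaranteeing that the positive side does not itself trace $F$); then at each subsequent stage $s$ with $F(x_e)[s-1] \in V^A_{e, x_e}[s]$, I replace $F(x_e)$ by the least value $> g(x_e)$ outside $V^A_{e, x_e}[s]$.

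The requirements are ordered $N_0, R_0, N_1, R_1, \ldots$; witnesses $x_e$ and axiom blocks $I_v$ are allocated fresh and above the oracle use of every currently-active $V^A_{e'}$ at $x_{e'}$, so the construction is finite injury: once higher-priority $A$-changes settle, $V^A_{e, x_e}$ stabilises and the adaptive phase of $N_e$ terminates with $F(x_e)$ outside it. The main obstacle is the positive side: I must verify that the sliding-window strategy is realised as an honest $A$-c.e.\ set (the pairwise-disjoint blocks $I_v$ are crucial so that invalidating one axiom does not damage another), and that the total number of changes to $F(x_e)$ can be bounded by a computable function of $x_e$ once the oscillations of $V^A_{e, x_e}$ during construction are accounted for---this dictates how large $h$ must be, and a generous choice such as $h(x) = 2^x \cdot g(x) \cdot (x+1)$ more than suffices. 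The $\Pi_2$ character of the positive hypothesis (``$\hat\psi_e$ is $g$-c.a.'') is handled above by the freezing trick, but a cleaner exposition using a tree of strategies, along the lines of Zheng's master's thesis alluded to by the authors, is more transparent.
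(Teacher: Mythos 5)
Your proof proposal takes a genuinely different route from the paper's. The paper's sketch is indirect: it modifies Proposition~\ref{prop: low_om_ca_trace} to construct a \emph{superlow} c.e.\ set $A$ that is $g$-c.a.\ tracing (this is delicate but works because each per-pair tracing strategy $P_{e,x}$ contributes at most $g(x)+1$ injuries, and only a fixed, computably determined finite set of such pairs lies above any given lowness requirement in the priority list), and then invokes Fact~\ref{suplowNOTomegac.e.tracing}: no superlow set is $\omega$-c.a.\ tracing. Hence some $\omega$-c.a.\ function escapes every $A$-c.e.\ trace of bound $2^x$, and it is $h$-c.a.\ for some computable $h$ obtained completely nonuniformly. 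You instead build the escaping $h$-c.a.\ function $F$ directly inside the priority construction, which is legitimate, and the machinery you set up (freezing to uniformize the $g$-c.a.\ functions, disjoint axiom blocks, a sliding window of the $2^x$ most-recently-seen values) is the right machinery for the positive side.

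Where your write-up has a real gap is exactly the point you flag and then wave past: the computable bound on the adaptive phase of $N_e$. In your ordering $N_0, R_0, N_1, R_1, \ldots$ each $R_{e'}$ is a \emph{single} requirement handling all inputs $x$, so the $R_{e'}$-actions of higher priority than $N_e$ are unbounded in number. After $V^A_{e,x_e}$ first converges at some stage $t_e$ with use $u_e$, the injuries to $N_e$ come from $R_{e'}$-blocks ($e'<e$) already allocated below $u_e$ by that stage; their count depends on $t_e$ and $u_e$, neither of which is bounded by any computable function of $x_e$ alone (the functional $V_e$ may converge arbitrarily late and read arbitrarily far into $A$). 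So the claim that $h(x) = 2^x\, g(x)\,(x+1)$ ``more than suffices'' is not justified by the construction as stated; you only get that $F$ is $\Delta^0_2$, not visibly $h$-c.a.\ for a computable $h$. The repair is to stratify the positive side into one sub-requirement $P_{e',x'}$ per pair (as Proposition~\ref{prop: low_om_ca_trace} itself does), interleave these into a single priority list, and fix $x_e$ as a computable function of the position of $N_e$; then only a computably determined finite list of $P_{e',x'}$ precedes $N_e$, each capped at $g(x')+1$ actions, and the resulting injury sum is computable in $x_e$. With that change your direct argument goes through and yields an explicit $h$, arguably a cleaner and more self-contained proof than the paper's nonuniform route; as written, the crucial bound is asserted rather than established.
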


\begin{proof}[Sketch of proof] Modifying the proof of Proposition~\ref{prop: low_om_ca_trace}, we can build a   $g$-c.a.\ tracing c.e.\ set $A$ that is superlow. Hence this set is not $h$-c.a.\ tracing for an appropriate faster growing $h$.
\end{proof}

We give an example where there is no gap. Logarithms are in  base 2.
%
%
%
%
%
%
%
%
%

\begin{proposition}\label{pro:alpha n}
	Let $\+ D$ be a class of computable  bounds on the number of changes such that   for each   function $r \in \+D$, the function  \bc $  \sum_{k \le n + 3 \log  n} r(k)$  \ec is also in $\+ D$. Let $\+C $ be the class of  $\+ D$-c.a.\ functions. Then  
	\bc $A\in\High(\MLR, \+ C-\Dem)$  $\LR $  $A$ is $\+ C$-tracing with bound $\tp n$. \ec
\end{proposition}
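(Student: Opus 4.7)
The forward direction $(\RA)$ is immediate from Lemma~\ref{lem:random_to_tracing}, which uses no closure hypothesis on $\+ D$. The content is therefore the converse, and the obstacle to applying Lemma~\ref{lem:tracing_to_random} directly is that the bound $2^n$ is not summable against $2^{-n}$: we have $\sum_n 2^n \cdot 2^{-n} = \infty$. The plan is to use the closure property of $\+ D$ in the style of Terwijn--Zambella, reblocking roughly $3\log n$ consecutive values of a given $f \in \+ C$ into a single value of an auxiliary function $F$, arguing that $F \in \+ C$ via the closure hypothesis, tracing $F$, and then projecting to obtain a trace for $f$ with a bound of order $2^x/x^3$, which is summable.

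Concretely, given $f \in \+ C$ with change bound $r \in \+ D$, I would define
\[ F(n) = \la f(n), f(n+1), \ldots, f(n + \lceil 3\log n\rceil - 1) \ra \]
for $n$ large (and something trivial for small $n$). Its natural componentwise approximation has at most $\sum_{k=n}^{n + \lceil 3\log n\rceil - 1} r(k) \le \sum_{k \le n + 3\log n} r(k)$ changes at input $n$, so by the closure hypothesis on $\+ D$ we have $F \in \+ C$. The $\+ C$-tracing assumption then supplies an $A$-c.e.\ trace $(T^F_n)$ with $|T^F_n| \le 2^n$. For each $x$, let $y(x)$ be the least $y$ with $y + \lceil 3\log y\rceil > x$; then $f(x)$ is the $(x-y(x))$-th coordinate of $F(y(x))$, so the $A$-c.e.\ set $T^f_x$ obtained by projecting each tuple in $T^F_{y(x)}$ onto its $(x - y(x))$-th coordinate is a trace for $f(x)$. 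A routine logarithm estimate (using $\log(x - 3\log x) = \log x + o(1)$) gives $y(x) \le x - 3\log x + O(1)$, whence $|T^f_x| \le 2^{y(x)} = O(2^x/x^3)$.

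Since $\sum_x |T^f_x| \cdot 2^{-x} = O(\sum_x x^{-3}) < \infty$, Lemma~\ref{lem:tracing_to_random} then yields $A \in \High(\MLR, \+ C-\Dem)$, as required. The substantive step is checking that the reblocked function $F$ really lies in $\+ C$: this is precisely where the hypothesized closure of $\+ D$ under $r \mapsto \sum_{k \le n + 3\log n} r(k)$ is used, and it is the reason for the specific shape of that hypothesis — just enough room to convert $2^n$ into a bound of order $2^x/x^3$ after projection. The remaining logarithm estimates and the extraction of $T^f$ from $T^F$ are straightforward bookkeeping.
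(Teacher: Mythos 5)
Your proof is correct and takes essentially the same route as the paper: use Lemma~\ref{lem:random_to_tracing} for the forward direction, and for the converse apply a Terwijn--Zambella reblocking to upgrade the $2^n$ tracing bound to something summable against $2^{-n}$, feeding the result into Lemma~\ref{lem:tracing_to_random}. The only cosmetic difference is that you reblock into a sliding window $F(n)=\langle f(n),\dots,f(n+\lceil 3\log n\rceil-1)\rangle$ while the paper uses the initial segment $\hat f(i)=f\!\upharpoonright_{i+3\log i}$; both change counts are then majorized (somewhat wastefully in your case) by $\sum_{k\le n+3\log n}r(k)\in\+D$, so the closure hypothesis is used identically, and both yield a bound of the form $2^{q(x)+O(1)}$ with $q(x)\le x-3\log x+O(1)$, which is $O(2^x/x^3)$ and hence summable (the paper settles for the coarser estimate $q(n)\le n-2\log n$, which is also enough).
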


An example of such a $\+ D$ is the class of computable  functions bounded by   a function of type $2^n s(n)$ where $s$ is a polynomial.

\begin{proof}[Sketch of proof]    $\RA:$ by Lemma~\ref{lem:random_to_tracing}.

\n $\LA:$  By Lemma \ref{lem:tracing_to_random}  it suffices to show that $A$ is $\+ C$ tracing with a better bound $g$, namely,  $g$ is a function such that  $\sum g(n) \tp{-n} < \infty$. 

Let   $q(n) = \max \{ i \colon \, i  + 3 \log  i \le n \}$ and let $g(n) = \tp{q(n)+1}$.  Note  that by hypothesis on $\+ D$, for each $f \in \+ C$ the function $ \hat f (i) = f \uhr {i  + 3 \log  i}$ is also in $\+ C$. Since we can determine $f(n) $ from $\hat f(q(n)+1)$,  the value $  f(n)$ can be traced  by $A$ with bound $\tp {q(n)+1}$ as in  the Terwijn Zambella argument.    

 It is easy to check that for a.e.\ $n$, 
$$q(n) \le r(n) := n - 2 \log n,$$
using that $n - 2 \log n + 3 (n -  2 \log n) > n$.
Since   $\tp{r(n) -n } =  n^{-2}$, we have   $\sum g(n) \tp{-n} < \infty$ as required.

%
%
%
\end{proof}
\subsubsection{Classes $\+ C $ containing $\omega$-c.a.}

\begin{proposition}\label{pro:alpha n}
	The class of $\+ C$ of $\omega^n$-c.a.\ functions satisfies the  condition $(*)$. Hence  $A\in\High(\MLR, \+ C-\Dem)$  $\LR $  $A$ is $\+ C$-tracing.
\end{proposition}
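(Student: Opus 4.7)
The plan is to verify condition $(*)$ of Subsection~\ref{sss:TZ} for the class $\+ C$ of $\omega^n$-c.a.\ functions; once this is done, the equivalence $A\in\High(\MLR,\+C\textup{-Dem}) \LR A$ is $\+C$-tracing follows from Lemmas~\ref{lem:random_to_tracing} and~\ref{lem:tracing_to_random} via the Terwijn--Zambella argument, which uses $(*)$ to replace the trace bound $2^m$ by a smaller order bound such as $g(m)=2^m/m^2$ (satisfying $\sum_m g(m)2^{-m}<\infty$), thereby closing the gap between the two lemmas.

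Fix an $\omega^n$-c.a.\ function $f$, presented via a computable approximation $f_s(y)$ together with a computable, non-increasing ordinal assignment $\beta_f(y,s)<\omega^n$ such that $f_s(y)\neq f_{s+1}(y)$ forces $\beta_f(y,s+1)<\beta_f(y,s)$. Given a computable function $r$, put $g(x)=\la f(0),\ldots,f(r(x)-1)\ra$ with approximation $g_s(x)=\la f_s(0),\ldots,f_s(r(x)-1)\ra$. I would define the ordinal bound for $g$ by the Hessenberg (natural) sum
\[
\beta_g(x,s) \;=\; \beta_f(0,s)\oplus\beta_f(1,s)\oplus\cdots\oplus\beta_f(r(x)-1,s).
\]
Computability of $\beta_g$ is immediate from computability of $\beta_f$ and $r$.

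The two facts to verify are $\beta_g(x,s)<\omega^n$ and the strict-decrease condition on changes of $g_s(x)$. Writing each summand in Cantor normal form $\beta_f(y,s)=\omega^{n-1}c_{y,n-1}+\cdots+c_{y,0}$, the natural sum equals $\omega^{n-1}\sum_y c_{y,n-1}+\cdots+\sum_y c_{y,0}$, which is a finite natural combination of $\omega^0,\ldots,\omega^{n-1}$, hence bounded by $\omega^{n-1}\cdot k<\omega^n$ for some $k\in\omega$ depending on $x,s$. If $g_s(x)\neq g_{s+1}(x)$, then some coordinate $y<r(x)$ satisfies $\beta_f(y,s+1)<\beta_f(y,s)$, while the remaining coordinates are non-increasing; by the strict monotonicity of $\oplus$ in each coordinate, $\beta_g(x,s+1)<\beta_g(x,s)$. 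This establishes $(*)$.

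The main obstacle is purely bookkeeping: making certain the ambient ``computable ordinal assignment'' form of $\omega^n$-c.a.\ used here matches the form invoked in Definition~\ref{def:C-Demuth} (equivalently, one could use nested counter approximations and argue coordinatewise), and the routine check that Hessenberg sum is strictly coordinatewise monotone in the presence of non-increasing changes on the other coordinates. Once $(*)$ is in hand, Lemma~\ref{lem:random_to_tracing} gives the $\Rightarrow$ direction, and the Terwijn--Zambella upgrade of the trace bound from $2^m$ to $g(m)=2^m/m^2$ together with Lemma~\ref{lem:tracing_to_random} supplies the $\Leftarrow$ direction, completing the equivalence.
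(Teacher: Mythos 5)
Your proof is correct and follows what is essentially the standard route: you verify condition $(*)$ for $\omega^n$-c.a.\ functions by assigning to $g(x)=f\uhr r(x)$ the Hessenberg natural sum of the individual ordinal bounds $\beta_f(y,s)$ for $y<r(x)$, noting that natural sums of ordinals $<\omega^n$ stay below $\omega^n$ and strictly decrease when any coordinate does, and then invoke the Terwijn--Zambella upgrade plus Lemmas~\ref{lem:random_to_tracing} and~\ref{lem:tracing_to_random}. The paper's own proof is just a pointer to \cite[Lemma 5.2]{Nies:11} (the $n=2$ case, where $<\omega^2$ counters correspond to pairs of decreasing natural numbers) and to Zheng's thesis for general $n$; your natural-sum argument is the expected generalization of exactly that lemma.
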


\begin{proof}[Sketch of proof]  One modifies \cite[Lemma 5.2]{Nies:11} where $n=2$.  This can be expanded suitably to $\omega^n$. See Zhengs thesis.
\end{proof}

\newpage

\part{Higher randomness}

\newcommand{\QI}{\Pi^1_1}
\newcommand{\TI}{\Sigma^1_1}
\newcommand{\DI}{\Delta^1_1}
\newcommand{\fh}{\text{\rm fin-h}}
\newcommand{\OCK}{\omega_1^{CK}}

\section{Notions stronger than $\QI$-ML-randomness  \\
 (Chong, Nies and Yu)}

Chong, Nies and Yu worked in Singapore in April. They considered analogs or randomness notions stronger than \ML's  in the realm of effective descriptive set theory. For background see \cite[Ch.\ 9]{Nies:book}. As there, the higher analog of a concept is generally  obtained by replacing ``c.e.'' with ``$\QI$ '' everywhere. The notation for a higher analog is obtained by underlining the previous notation. For instance, $\ul \Om$ is the higher analog of Chaitin's $\Om$, and $\ul \MLR$ denotes $\QI$-\ML\ randomness.

The following can be seen as the higher analog of Turing reducibility. The number of stages of an oracle  computation is a computable ordinal, yet the use  on the oracle is finite.
\begin{definition}[Hjorth and Nies \cite{Hjorth.Nies:07}] \label{uc:finh}
 A {\it $\fh$ reduction procedure}   is a partial
function $\Phi\colon \, \strcantor\rightarrow \strbaire$  with~$\QI$ graph
such that $\dom(\Phi)$ is closed under prefixes
and, if $\Phi(x) \downarrow$ and  $ y \preceq x$, then  $ \Phi(y) \preceq \Phi(x)$.
We write  $f = \Phi^Z$ if $\fa n \ex m  \ \Phi(Z\uhr m)
\succeq f\uhr n$, and $f \le_{\fh} Z$ if $f = \Phi^Z$ for some $\fh$ reduction procedure~$\Phi$.
\end{definition}

Bienvenu, Greenberg and Monin (July) have observed that this isn't the right notion for most purposes. One shouldn't have the closure under prefixes in most results below.  They call the more general version $\le_{hT}$ (higher Turing).

They may add a summary  of their work here, which will appear independently.

 \begin{definition}[Bienvenu, Greenberg and Monin ] \label{hT}
 A {\it $hT$ reduction procedure}   is a partial
function $\Phi\colon \, \strcantor\rightarrow \strbaire$  with~$\QI$ graph
such that  if $\Phi(x) \downarrow$ and  $ y \preceq x$, then  $ \Phi(y) \preceq \Phi(x)$.
We write  $f = \Phi^Z$ if $\fa n \ex m  \ \Phi(Z\uhr m)
\succeq f\uhr n$, and $f \le_{hT} Z$ if $f = \Phi^Z$ for some $hT$ reduction procedure~$\Phi$.
\end{definition}

\subsection{Higher analog of weak 2-randomness}

The following was introduced in \cite[Problem 9.2.17]{Nies:book}.

\begin{df}  A  \emph{generalized~$\QI$-ML-test}
 is a  sequence $(G_m)\sN{m}$ of
uniformly~$\QI$ open classes such that  $\bigcap_m G_m$ is a null
class. $Z$ is  \emph{$\QI$-weakly 2-random}  if
$Z$ passes each generalized~$\QI$-ML-test. \end{df}
Clearly the class $\bigcap_m G_m$ is $\QI$, so $\QI$-randomness implies  $\QI$-weak 2 randomness.

\cite[Problem 9.2.17]{Nies:book}  asked   whether the new notion coincides with $\QI$-ML-randomness.
\begin{theorem}
If $x$ is the leftmost path of a $\Sigma^1_1$-closed set of reals, then $x$ is not $\QI$-weak 2 random.
\end{theorem}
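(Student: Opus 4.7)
The plan is to prove the stronger fact that the leftmost path $x$ of a nonempty $\Sigma^1_1$ closed set $C$ is actually $\Delta^1_1$; this immediately yields a $\Pi^1_1$-ML-test (a fortiori a generalized $\Pi^1_1$-ML-test) that $x$ fails.

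First I would describe $C$ by its canonical pruned tree
$T=\{\sigma\in 2^{<\omega}\,:\,[\sigma]\cap C\neq\emptyset\}$,
which is $\Sigma^1_1$ since ``$[\sigma]\cap C\neq\emptyset$'' is the existential statement $\exists y\,(\sigma\prec y\wedge y\in C)$ about the $\Sigma^1_1$ class $C$. I would then argue $x\leq_T\mathcal{O}$ (Kleene's $\Pi^1_1$-complete oracle, which decides every $\Sigma^1_1$ predicate) by the obvious inductive construction: starting from $x\upharpoonright 0=\emptyset\in T$, at stage $n$ query the $\Sigma^1_1$ predicate ``$x\upharpoonright n\cdot 0\in T$?'' and set $x(n)=0$ or $x(n)=1$ accordingly. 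The invariant $x\upharpoonright n\in T$ is preserved, because from $[x\upharpoonright n]\cap C\neq\emptyset$ at least one of the two length-$(n{+}1)$ extensions must lie in $T$, and a direct induction confirms the sequence so produced is the lex-least branch through $T$. Since $\mathcal{O}$ answers each query, $x$ is $\mathcal{O}$-computable, and hence $x\in\Delta^1_1$.

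Once $x$ is $\Delta^1_1$, the graph $\{(n,\sigma)\,:\,\sigma=x\upharpoonright n\}$ is $\Pi^1_1$, so the open sets $G_n:=[x\upharpoonright n]$ form a uniformly $\Pi^1_1$ sequence with $\mu(G_n)=2^{-n}$ and $\bigcap_n G_n=\{x\}$ null. This is already a $\Pi^1_1$-ML-test on which $x$ fails, and in particular it is a generalized $\Pi^1_1$-ML-test; so $x$ is not $\Pi^1_1$-weak 2 random. I foresee no serious obstacle: the only point requiring care is the routine verification that the $\Sigma^1_1$ queries in the inductive construction are uniformly $\Sigma^1_1$ in $x\upharpoonright n$ (and in a fixed index for $C$), so that they package into a single Turing reduction to $\mathcal{O}$.
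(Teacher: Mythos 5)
Your proposal fails at the crucial step. You argue that $x\leq_T\mathcal{O}$ and then conclude ``hence $x\in\Delta^1_1$''; this implication is false. Being computable from Kleene's $\mathcal{O}$ is strictly weaker than being hyperarithmetic: $\mathcal{O}$ itself is $\Pi^1_1$-complete and properly above every $\emptyset^{(\alpha)}$ ($\alpha<\CK$), whereas $\Delta^1_1$ reals are exactly those Turing below some $\emptyset^{(\alpha)}$. In fact the paper explicitly invokes the Gandy basis theorem, which produces a $\Pi^1_1$-ML-random set $Z\leq_T\mathcal{O}$, and such a $Z$ is certainly not $\Delta^1_1$; so there is a whole cone of $\mathcal{O}$-computable sets that are not hyperarithmetic.

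The claim you are trying to prove along the way is also false. The universal $\Pi^1_1$-ML-test $(G_m)$ yields closed $\Sigma^1_1$ sets $\overline{G_m}$ of positive measure containing only $\Pi^1_1$-ML-random reals, and the leftmost path of such a closed set is itself $\Pi^1_1$-ML-random (the standard argument for ML-randomness relativizes). Since no $\Delta^1_1$ real can be $\Pi^1_1$-ML-random, the leftmost path of a $\Sigma^1_1$ closed set need not be hyperarithmetic. This is why the theorem is not trivial: the intended conclusion is only the failure of generalized $\Pi^1_1$-ML-tests (weak 2-randomness), not failure of ordinary $\Pi^1_1$-ML-tests. The paper's proof works with the stagewise approximations $T[\alpha]$ ($\alpha<\CK$) of the $\Sigma^1_1$ tree and builds a generalized test from the leftmost-path approximations at each ordinal stage; the key point is a descending-ordinal argument showing that $\bigcap_n U_n$ is null even though it may contain non-$\Delta^1_1$ elements. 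You need this finer, stage-indexed test; the ``compute $x$ and cover the singleton $\{x\}$'' route is not available.
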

\begin{proof}

Let $T\subseteq 2^{<\omega}$ be a $\Sigma^1_1$-tree.

For any $n\in \omega$ and $\alpha<\CK$, let $$U_{n,\alpha}=\{\sigma\mid \exists z (z\mbox{ is the leftmost path in }T[\alpha] \wedge \sigma=z\uh n+1)\}.$$

Define $$U_{n,<\alpha}=\bigcup_{\beta<\alpha}U_{\beta}$$ and $$U_n=\bigcup_{\alpha<\CK}U_{n,\alpha}.$$

The following facts are obvious.
\begin{enumerate}
\item For any $n$ and $\alpha<\CK$,  $U_{n+1,\alpha}\subseteq U_{n,\alpha}$;
\item For any $n$ and $\alpha<\CK$, $\mu(U_{n,\alpha})<2^{-n}$;
\item $x\in \bigcap_{n\in \omega}U_n$;
\item For any $n$, $\alpha<\CK$ and real $z$, if $z\in U_{n,<\alpha}\setminus U_{n,\alpha}$, then $z\not\in U_{n,\beta}$ for any $\beta\geq \alpha$.
\end{enumerate}

Now suppose that $\mu(\bigcap_{n\in \omega}U_{n})>0$, then there must be some $\sigma_0$ so that $$\mu( \bigcap_{n\in \omega}U_{n} \cap [\sigma_0])>\frac{3}{4}\cdot 2^{-|\sigma_0|}.$$ Let $n_0=|\sigma_0|+2$. Then there must be some least $\alpha_0<\CK$ so that
$$\mu( U_{n_0,\leq \alpha_0} \cap [\sigma_0])>\frac{3}{4}\cdot 2^{-|\sigma_0|}.$$

By (2), $$\mu( U_{n_0,< \alpha_0} \cap [\sigma_0])>\frac{1}{2}\cdot 2^{-|\sigma_0|}.$$

By (1) and (4), $$\mu( \bigcap_{n>n_0}U_{n,<\alpha_0} \cap [\sigma_0])>\frac{1}{4}\cdot 2^{-|\sigma_0|}.$$

So there must be some $\sigma_1\succ \sigma_0$ so that $$\mu( \bigcap_{n>n_0}U_{n,<\alpha_0} \cap [\sigma_1])>\frac{3}{4}\cdot 2^{-|\sigma_1|}.$$

Let $n_1=|\sigma_1|+2$. Then there must be some least $\alpha_1<\alpha_0$ so that
$$\mu( U_{n_1,\leq \alpha_1} \cap [\sigma_1])>\frac{3}{4}\cdot 2^{-|\sigma_1|}.$$

Repeat the same method, we obtained a descending sequence $\alpha_0>\alpha_1>\cdots$, which is a contradiction.
\end{proof}

In the computability setting, let $Z$ be ML-random. Then  by   a result of Hirschfeldt and Miller (see \cite[5.3.16]{Nies:book}),
\bc $Z$ is weakly 2-random $\LR$ $Z, \Halt$ form a Turing minimal pair.   \ec
We cannot expect this to hold   in the higher setting, because by Gandy's basis theorem, there is a $\QI$-random set $Z \leT \+ O$.  However, the result of Hirschfeldt and Miller actually shows that if  $Z$ is not weakly 2-random, then  there is a c.e.\ incomputable set $A$ below $Z$. This carries over and yields a characterization of the $\QI$-weakly 2-random sets withing the ML-random sets.
%
%
%
%

\begin{thm} \label{thm: char W2R} Consider the following  for a $\QI$-ML-random set $Z$.

\bi
\item[(i)]  $Z$  is $\QI$-weakly 2-random

\item[(ii)] If $A \le_{hT} Z$ for a $\QI$ set $A$,   then $A$ is hyperarithmetical.
\ei

We have that  (ii)$\rightarrow$(i).
\end{thm}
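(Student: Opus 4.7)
The plan is to prove the contrapositive: assuming $Z$ is $\QI$-ML-random but captured by some generalized $\QI$-ML-test $(G_m)_{m\in\omega}$, construct a $\QI$ set $A \le_{hT} Z$ that is not hyperarithmetic. The construction is the higher analog of Hirschfeldt and Miller's theorem \cite[Thm.~5.3.16]{Nies:book}, which produces an incomputable c.e.\ set below any ML-random that is not weakly $2$-random, and follows the same template.

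First, replace $(G_m)$ by the decreasing sequence $G'_m := \bigcap_{k \le m} G_k$, which is again a generalized $\QI$-ML-test capturing $Z$, whence $\mu(G'_m) \to 0$. Fix $\QI$-approximations $G'_m = \bigcup_{\alpha < \OCK} G_m^\alpha$ by clopen classes monotone in $\alpha$. The main leverage is a \emph{non-effectiveness principle}: there is no hyperarithmetic function $h\colon\omega\to\omega$ with $\mu(G'_{h(k)}) < 2^{-k}$ for every $k$, since such an $h$ would make $(G'_{h(k)})_k$ a genuine $\QI$-ML-test capturing $Z$, contradicting the $\QI$-ML-randomness of $Z$.

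Second, define the $\QI$ set $A$ following the Hirschfeldt--Miller template, adapted to higher computability. The set $A$ is built from the stagewise enumeration of $(G_m^\alpha)$ alone (no direct use of $Z$), encoding a Solovay-style record of when short strings first enter each $G_m$. Since $Z \in G'_m$ for every $m$, at some ordinal stage $\tau(m) < \OCK$ some finite prefix of $Z$ becomes enumerated into $G_m^{\tau(m)}$; this makes membership in $A$ decidable by $Z$ via an $hT$-reduction that, on input a prefix $\sigma$ of $Z$, checks whether $[\sigma]\subseteq G_m^{|a|}$ (depending only on the finitely many basic cylinders in the approximation). Crucially, $A$ is to be $\QI$ \emph{simpliciter}, because its definition refers only to the $\QI$-enumeration of $(G_m^\alpha)$ at finitely many short string indices, rather than to $Z$.

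The heart and main obstacle is non-hyperarithmeticity of $A$. If $A$ were $\DI$, one would hyperarithmetically recover the first-entry ordinal $\tau(m)$ for each $m$ (as the least $\KO$-notation $a$ with the relevant pair in $A$). Then the clopen sets $V_m := G_m^{\tau(m)}$ have hyperarithmetic indices uniformly in $m$, contain $Z$, and satisfy $\mu(V_m) \le \mu(G'_m) \to 0$; moreover each $\mu(V_m)$ is a hyperarithmetic real since $V_m$ is clopen with a hyperarithmetic index. One would then hyperarithmetically select a subsequence $m_k$ with $\mu(V_{m_k}) < 2^{-k}$, yielding a genuine $\QI$-ML-test covering $Z$, contradicting the non-effectiveness principle. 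The delicate point in this last step is to arrange the Solovay-style definition so that its witnesses are given purely by the stagewise growth of $(G_m^\alpha)$ at prescribed short strings, and to check that the $hT$-reduction correctly reads off which witnesses apply to $Z$; the final measure-selection step is then routine.
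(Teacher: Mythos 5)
The paper's proof is a direct transplant of the classical cost-function machinery: define the monotonic cost function $c(x,\alpha)=\leb G_{x,\alpha}$, observe it has the limit condition (after nesting the test), invoke the higher analog of the Ku\v cera--Nies existence theorem \cite[5.3.5]{Nies:book} to produce a $\QI$ but non-$\DI$ set $A$ \emph{obeying} $c$, and then apply the higher analog of \cite[5.3.15]{Nies:book} to conclude $A\le_{\fh}Z$. Your proposal aims at the same Hirschfeldt--Miller template but tries to bypass the cost-function formalism with a direct ``Solovay-style record,'' and this is where it breaks.

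There is a genuine internal inconsistency: you define $A$ ``from the stagewise enumeration of $(G_m^\alpha)$ alone (no direct use of $Z$),'' yet the first-entry ordinals $\tau(m)$ you later recover from $A$ are specified as the stages at which \emph{prefixes of $Z$} enter $G_m$; these depend on $Z$, so a $Z$-free $A$ cannot encode them. If instead $\tau(m)$ is $Z$-independent (say, the stage at which the first short cylinder enters $G_m$), then the clopen classes $V_m=G_m^{\tau(m)}$ need not contain $Z$ and the contradiction with $\QI$-ML-randomness evaporates. Separately, the $hT$-reduction is asserted but not established: the only known mechanism for getting $A\le_{hT}Z$ from such data is the Solovay-test argument, which requires that the measures $\leb G_{x,\alpha_x}$ charged when $x$ enters $A$ (at stage $\alpha_x$) sum to a finite quantity --- precisely the statement that $A$ \emph{obeys} the cost function $c$. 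A bare ``record'' of first-entries has no such bound built in, and without it nothing prevents $Z$ from lying in infinitely many charged regions, in which case the attempted $Z$-computation of $A$ could be wrong infinitely often. Finally, your non-hyperarithmeticity argument is structurally different from the paper's: there, properness of $A$ is enforced \emph{during} the priority construction (the higher Ku\v cera--Nies theorem builds in simplicity-type requirements), whereas you try to derive it post hoc by reading a $\QI$-ML-test back out of $A$; this is a nice idea, but it places an encoding burden on $A$ that directly conflicts with the requirement $A\le_{hT}Z$, and you have not resolved that tension. The remedy is to cast your construction explicitly in the cost-function framework, as the paper does.
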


\begin{proof}

 We can adapt some of the theory of cost functions to the higher setting.
 Suppose $Z \in \bigcap_m G_m$ where $(G_m) \sN m$ is a generalized~$\QI$-ML-test. Let $c(x, \aaa) = \leb G_{x, \aaa}$. This is a higher cost function with the limit condition. Hence there is a $\QI$ but not $\DI$ set $A$ obeying~$c$ by the higher analog of \cite[5.3.5]{Nies:book}. By the higher analog of  the Hirschfeldt-Miller  method in the version \cite[5.3.15]{Nies:book}, we may conclude that $A \le_\fh Z$.

 (Maybe you, the reader, are puzzled why this doesn't show  that the   $\QI$ random set  $Z \leT \+ O$ obtained by Gandy's basis theorem we discussed earlier $\fh$ bounds a properly $\QI$ set? The answer is that, unlike the  case of $\DII$ sets in  the computability setting, not for every set $Z \leT \+O$  there is a generalized~$\QI$-ML-test $(G_m)$ with $\bigcap G_m = \{Z\}$. )

 \end{proof}

%
%
%
%
%
%
%
%

We now attempt a partial solution to \cite[Problem 9.2.17]{Nies:book}, showing that as operators sending oracles to classes, the two randomness notions differ.

\begin{conjtheorem}  $\QI$ weak 2-randomness and $\QI$ randomness differ relative to some low oracle. \end{conjtheorem}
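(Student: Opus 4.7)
The plan is to show that the separation already holds at the (trivially low) oracle $\emptyset$: $\ul{\Om}$ is $\QI$-random (standard) but is not $\QI$-weak 2-random, witnessed by an explicit generalized $\QI$-ML-test built from its canonical ordinal-stage approximation from below.

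Let $(\ul{\Om}_\aaa)_{\aaa<\CK}$ be the canonical non-decreasing left-approximation given by $\ul{\Om}_\aaa = \sum_{\sigma \in \dom(\ul{U})[\aaa]} 2^{-|\sigma|}$, where $\ul{U}$ is a universal higher prefix-free machine and $\dom(\ul{U})[\aaa]$ is the standard stage-$\aaa$ approximation of the $\QI$ set $\dom(\ul{U})$. For each $m \in \NN$, define
\[
G_m \;=\; \bigcup_{\aaa < \CK}\,\big\{ X \in \cantor \,:\, 0.X \in (\ul{\Om}_\aaa,\ \ul{\Om}_\aaa + 2^{-m}) \big\}.
\]
Each $G_m$ is a $\QI$-open subclass of $\cantor$, uniformly in $m$: for fixed $\aaa$ the inner class is an effectively presented open class determined by the rational $\ul{\Om}_\aaa$, and the outer union over ordinal stages is $\QI$. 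Clearly $\ul{\Om} \in G_m$ for every $m$: pick any $\aaa$ with $0 < \ul{\Om} - \ul{\Om}_\aaa < 2^{-m}$, available because $\ul{\Om}_\aaa \uparrow \ul{\Om}$.

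The key step is to show $\bigcap_m G_m$ is null. If $X \in \bigcap_m G_m$, then for each $m$ there is $\aaa$ with $|0.X - \ul{\Om}_\aaa| < 2^{-m}$, so $0.X$ lies in the closure in $[0,1]$ of the set $S = \{\ul{\Om}_\aaa : \aaa < \CK\}$. Limit-continuity of the canonical approximation (at a limit ordinal $\lambda$ one has $\ul{\Om}_\lambda = \sup_{\aaa<\lambda}\ul{\Om}_\aaa$ by monotone convergence in the defining sum) implies that the only additional accumulation point from above the sequence is $\ul{\Om}$ itself, so $\overline{S} = S \cup \{\ul{\Om}\}$. Since $\CK$ is a countable ordinal, $\overline{S}$ is countable, hence of Lebesgue measure zero. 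Thus $(G_m)$ is a valid generalized $\QI$-ML-test which $\ul{\Om}$ fails, giving the separation.

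The main point to verify carefully is precisely this nullity argument: it rests on the countability of $\CK$ (ensuring the transfinite sequence $(\ul{\Om}_\aaa)$ has only countably many distinct values) together with continuity of the canonical stage-by-stage approximation at limit ordinals (preventing additional limit points from below). Granted these, the argument relativizes to any low oracle $A$ (where $\omega_1^A = \CK$) by replacing $\ul{\Om}$ with $\ul{\Om}^A$ and using the approximation $(\ul{\Om}^A_\aaa)_{\aaa < \omega_1^A}$, so the conjecture follows.
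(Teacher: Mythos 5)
Your argument does not establish the statement, and the central premise is false: $\ul\Om$ is \emph{not} $\QI$-random. In the higher setting, $\QI$-randomness of a real $Z$ is equivalent to $Z$ being $\DI$-random together with $\omega_1^Z=\CK$; since $\ul\Om$ fin-h computes $\+O$, we have $\omega_1^{\ul\Om}>\CK$, so $\ul\Om$ lies in the largest $\QI$ null class. Thus $\ul\Om$ is $\QI$-ML-random but not $\QI$-random; the analogy with the classical $\Om$ (which is ML-random but not $2$-random, not weakly $2$-random, etc.\ because $\Om\equiv_T\Halt$) carries over.

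This error is not cosmetic: it makes the logic of the argument self-defeating. The paper notes that $\QI$-randomness implies $\QI$-weak-$2$-randomness (a countable intersection of $\QI$ open classes is $\QI$). So a real that is $\QI$-random but fails a generalized $\QI$-ML-test cannot exist. Your test $(G_m)$ is in fact fine (the intersection is contained in the countable closure of $\{\ul\Om_\aaa:\aaa<\CK\}$, hence null), but what it shows is that $\ul\Om$ is not $\QI$-W2R, and therefore \emph{a fortiori} not $\QI$-random---the opposite of what you assert. To separate the two notions one needs a real $Z$ that is $\QI$-W2R yet \emph{not} $\QI$-random, i.e., a witness on the other side of the inclusion. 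The paper's own (still incomplete) strategy goes this way: split $\ul\Om$ into its even-bit and odd-bit halves $\ul\Om_0,\ul\Om_1$, use the higher van Lambalgen theorem to conclude $\ul\Om_1$ is not $\QI$-random relative to $\ul\Om_0$, and then try to show $\ul\Om_1$ \emph{is} $\QI$-W2R relative to $\ul\Om_0$, which by Theorem~\ref{thm: char W2R} relativized reduces to $\+O^{\ul\Om_0}\not\le_{\fh}\ul\Om$---a step the paper explicitly leaves open (in the spirit of ``halves of $\Om$ are not superlow''). Your approach neither engages with this reduction nor produces a candidate on the correct side of the implication.
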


\begin{proof} Let $\ul \Om_0$ be the bits of $\ul \Om$ in the even positions, and let $\ul \Om_1$ be the bits of $\ul \Om$ in the odd positions. Clearly $\ul \Om_0$, $\ul \Om_1$ are both $\QI$-\ML\ random and $\fh$ incomparable.
By the van Lambalgen theorem for $\QI$-randomness \cite{Hjorth.Nies:07}, if $\ul \Om_0$ is $\QI$ random, then $\ul \Om_1$ is not $\QI$ random relative to $\ul \Om_0$. It now suffice to show that $\ul \Om_1$ is  $\QI$ weakly 2-random relative to $\ul \Om_0$. Since it is $\QI$ ML-random relative to  $\ul \Om_0$, by the theorem~\ref{thm: char W2R} relative to $\ul \Om_0$ this amounts to  showing that $\+ O ^{\ul \Om_0} \not \le_\fh \ul \Om$.
(Can someone show this? Seems to be analogous to the fact \cite{Nies:book}  3.4.15) that halves of $\Om$ are not superlow. ).
 \end{proof}

\subsection{The higher analog  of difference randomness}

 \begin{df}  A  \emph{$\QI$-difference-test}
 is a  sequence $(G_m)\sN{m}$ of
uniformly~$\QI$ open classes together with a  closed $\TI$ class $\+ C$ such that  $\leb (\+ C \cap  G_m) \le \tp{-m}$ for each $m$. We say that  $Z$ is  \emph{$\QI$-difference random}  if
$Z$ passes each such test in that $Z \not \in \+ C \cap \bigcap_m G_m$. \end{df}

 \begin{thm} \label{thm: char DiffR} Let $Z$ be a $\QI$-ML-random set. Then
\bc  $Z$ is $\QI$-difference random $\LR$ $\ul \Om  \not \le_\fh Z$. \ec
\end{thm}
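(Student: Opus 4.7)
The plan is to transfer the Franklin--Ng theorem (an ML-random real is difference random iff it is Turing incomplete) to the higher setting, with $\le_\fh$ replacing Turing reducibility and $\ul\Om$ playing the role of $\Halt$. I would prove both directions via contraposition, following the structure of the classical argument but using higher-effective tools in place of classical ones.

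For $(\LA)$, I would assume $Z$ is $\QI$-ML-random but fails a $\QI$-difference test $((G_m)_m, \+ C)$, so that $Z \in \+ C \cap \bigcap_m G_m$ with $\leb(\+ C \cap G_m) \le 2^{-m}$, and aim to produce an $\fh$-reduction computing $\ul\Om$ from $Z$. The key step is to establish a higher analog of the Bienvenu--Hoelzl--Miller--Nies density dichotomy: every $\QI$-ML-random $Z$ lying in a $\TI$ closed class $\+ P$ is either a density-one point of $\+ P$ in the appropriate higher sense, or else satisfies $\ul\Om \le_\fh Z$. Granting the dichotomy, if $Z$ were a density-one point of $\+ C$ then on intervals $I$ shrinking around $Z$ the relative measure $\leb(G_m \mid I)$ would be bounded by $\leb(\+ C^c \mid I) + \leb(G_m \cap \+ C)/\leb(I)$, which can be driven below any threshold. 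This recasts $(G_m)$ as a generalized $\QI$-ML-test for $Z$, and combined with Theorem~\ref{thm: char W2R} together with the higher cost-function technique alluded to there, eventually yields $\ul\Om \le_\fh Z$ via a relativized Solovay-style bootstrap.

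For $(\RA)$, I would assume $\ul\Om \le_\fh Z$ via an $\fh$-reduction $\Phi$ and explicitly construct a $\QI$-difference test capturing $Z$. Let $(\ul\Om_\alpha)_{\alpha < \OCK}$ denote the left-$\QI$-c.e.\ approximation to $\ul\Om$. Define
\[
\+ C \;=\; \{X \in \cantor : \text{no output of } \Phi^X \text{ is incompatible with } \ul\Om\}.
\]
This class is $\TI$ closed because ``$\tau$ incompatible with $\ul\Om$'' is a $\TI$ condition on $\tau$, and membership in $\+ C$ is the intersection over prefixes $\sigma$ of $X$ of the negations of such conditions; clearly $Z \in \+ C$. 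For the open components $G_m$, at each $\QI$-stage $\alpha$ I would enumerate into $G_m$ every fresh cylinder $[\sigma]$ sufficiently long that $\Phi(\sigma) \succeq \ul\Om_\alpha \uhr (m+c)$ for a suitable constant $c$. A Solovay-style count using the prefix-freeness of the universal higher machine underlying $\ul\Om$ ensures $\leb(\+ C \cap G_m) \le 2^{-m}$, while $Z \in G_m$ holds because $\Phi^Z = \ul\Om$ reaches past $\ul\Om_\alpha\uhr(m+c)$ at some $\QI$-stage $\alpha$.

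The main obstacle will be the higher density dichotomy invoked in $(\LA)$. In the computability setting the Bienvenu--Hoelzl--Miller--Nies proof rests on a delicate Ku\v cera-style injury-free argument combined with Lebesgue density estimates, and lifting it to $\fh$-reducibility requires verifying that the reduction produced from the failure of density genuinely has $\QI$ graph with finite use, not merely that it is $\DI$. Everything else should be a careful but formally parallel transfer of the classical Franklin--Ng argument.
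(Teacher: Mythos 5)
Your $(\RA)$ direction is close in spirit to the paper's: both build a $\TI$ closed class $\+C$ of reals whose $\Gamma$-outputs remain compatible with $\ul\Om$, together with $\QI$ open classes $G_m$ made up of reals whose outputs reach an initial segment of the current approximation to $\ul\Om$. The paper's version is more careful than your sketch---it writes $G_m$ stage by stage as $\bigcup_\alpha V_{m,=\alpha}$ with an explicit measure clause in each $V_{m,=\alpha}$, and carves out as $\+D$ exactly the ``wrong guesses'' (stages $\alpha$ whose approximation $\ul\Om_\alpha$ is later overtaken). The bound $\leb(\+C\cap G_m)\le 2^{-m}$ is obtained from a Levin--Schnorr-type estimate on the measure of $\Gamma$-preimages of prefixes of $\ul\Om$ (which relies on the $\QI$-ML-randomness of $\ul\Om$ and supplies your constant $c$); ``prefix-freeness of the universal higher machine'' alone does not provide it. Still, with those details filled in, this half is essentially the paper's argument.

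Your $(\LA)$ direction contains a genuine gap and is also not the route the paper takes. The paper argues directly and without any density machinery: from $Z\in\+C\cap\bigcap_m G_m$ it forms the $\fh(Z)$-computable function $g(m)=\mu\alpha\,[Z\in G_{m,\alpha}]$, builds a higher Solovay test $(\+D_m)$ from the $\+O$-enumeration (covering $\+C_\gamma\cap G_{m,\beta}$ by an open $\DI$ class of measure $\le 2^{-m+1}$ whenever $m$ enters $\+O$ at stage $\beta$), and uses $\QI$-ML-randomness of $Z$ to force $m\in\+O\iff m\in\+O_{g(m)}$ for almost all $m$, giving $\+O\le_\fh Z$. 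Your proposed detour through a higher density dichotomy plus Theorem~\ref{thm: char W2R} would not yield the theorem even if the dichotomy were available. First, your case analysis is inverted: if $Z$ is a density-one point of $\+C$, the dichotomy is asserting $\ul\Om\not\le_\fh Z$, so the work in that branch must be to \emph{contradict} density one using $Z\in\bigcap_m G_m$ (which is the content of Lemma~\ref{lem:density}), not to ``eventually yield $\ul\Om\le_\fh Z$.'' Second, the step ``recasts $(G_m)$ as a generalized $\QI$-ML-test'' is unjustified: only $\+C\cap\bigcap_m G_m$ is null, not $\bigcap_m G_m$, and the local estimate on $\leb(G_m\mid I)$ does not produce a test with globally small components. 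Third, Theorem~\ref{thm: char W2R} and the cost-function argument behind it produce only \emph{some} non-hyperarithmetical $\QI$ set $\fh$-below $Z$, not $\+O$ or $\ul\Om$; indeed the paper notes that there are $\QI$-random $Z\le_T\+O$ precisely to warn that failure of higher weak $2$-randomness cannot in general be bootstrapped to the hyperjump.
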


Note: again, $\le_\fh$ has to be replaced by $\le_{hT}$ to make this work (Bienvenu, Greenberg and Monin).
 \begin{proof}

 \lapf By contraposition. Suppose that $Z  \in \+ C \cap \bigcap_m G_m$ for a $\QI$-difference test $\+ C, (G_m)\sN{m}$.
 Define a higher Solovay test  $\+S$ as follows. When $m$ enters $\+O $ at stage $\beta$, we may within  $L_{\OCK}$ compute $\gamma \ge \beta$ such that  $\tp{-m} \ge  \leb   (\+ C_\gamma  \cap G_{m, \beta})$.  (More precisely, the function $h \colon \OCK \times \omega \to \OCK$ mapping $\beta, m$  to $\gamma$ if $m$ enters at stage $\beta$, and to $0$ otherwise, is    $\SI 1$ over $L_{\OCK}$.) By \cite[1.8.IV]{Sacks:90}, determine an open  $\DI$ class  $\+ D_m \supseteq (\+ C_\gamma  \cap G_{m, \beta})$ such that $\leb \+ D_m  \le \tp{-m+1}$, and enumerate $\+ D_m$  into $\+ S$.

 Let
  $g(m) = \leb \aaa. [ Z \in  G_{m, \aaa}]   $.
 Note that $g \le_\fh Z$. Since $Z$ is $\QI$ ML-random, we have $Z \not \in \+ D_m$ for almost every $m$. Hence we have $m \in \+ O  \lra m \in \+ O_{g(m)}$ for a.e.\ m, which shows that $\+ O \le_{\fh} Z$.

  \rapf  Suppose that $\ul \Om = \Gamma^Z$ for a $\fh$ reduction procedure $\Gamma$.
  Choose  $c\in \NN$ such that $\tp{-n} \ge \leb \{ Y \colon \ul \Om \uhr{n+c} \preceq \Gamma^Y\}$. Let
   \[ V_{n, = \aaa} = \Opcl { \{\sss\colon\,   [ \Gamma_\aaa^\sss \succ \ul \Om_\aaa \uhr {n+c} \lland \leb  \{ \tau \colon \, \Gamma_\aaa^\tau \succ \ul \Om_\aaa \uhr {n+c}\} \le \tp{-k}] \}}. \]
   Let $V_{n, \gamma} = \bigcup_{\aaa \le \gamma} V_{n, = \aaa}$, and
   $\+D = \bigcup_n \bigcup_\aaa \{ V_{n, =\aaa}\colon \, \ex \beta > \aaa [ \ul \Om_\aaa < \ul \Om_\beta]$.
   Note that $\+ D$ is $\QI$ open, and $V_n$ is $\QI$ open uniformly in $n$.  By definition we have $\tp{-n} \ge \leb V_n \setminus \+ D$.  Clearly $Z \in \bigcap_n V_n  \setminus \+ D$. Thus $Z$ is not higher difference random.
\end{proof}

 \subsection{Randomness and density}

  \begin{definition} {\rm
 We define the (lower) \emph{Lebesgue density}  of a set $\+ C \subseteq \RR$ at a point~$x$ to be the quantity $$\rho(x|\+ C):=\liminf_{\gamma,\delta \rightarrow 0^+} \frac{\lambda([x-\gamma,x+\delta] \cap \+ C)}{\lambda([x-\gamma,x+\delta])}.$$
}

For $x \in \RR$ and $m \in \w$ we denote by $[x\uhr m)$ the interval of the form $[k \tp{-m}, (k+1)\tp{-m})$ containing    $x$.
The \emph{dyadic   density}  of a set $\mathcal C \subseteq \RR
$ at a point~$x$  is

 $$\rho_2(x|\mathcal C):=\liminf_{n \rightarrow \infty} \frac{\lambda([x\uhr n ) \cap \mathcal C)}{\lambda([x\uhr n))}.$$
  \end{definition}

The following result it the higher  analog of a result on difference randomness due to \cite{Bienvenu.Hoelzl.ea:12a}. Only the usual notational changes to the proofs are necessary, which we omit.

\begin{theorem}\label{thm:density_Turing}
Let $x$ be a $\QI$ Martin-L\"of random real. Then $x\not \geq_{\fh} \+ O$ iff $x$ has positive density in  some closed $\TI$ class containing $x$.
\end{theorem}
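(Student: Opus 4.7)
The plan is to reduce the statement to the characterization of $\QI$-difference randomness (Theorem \ref{thm: char DiffR}) and then translate the Bienvenu--Hölzl--Nies--Miller argument from the computable to the higher setting, exactly as the authors hint. First I would record the auxiliary fact that $\+O \equiv_\fh \ul{\Omega}$: the direction $\ul\Omega \leq_\fh \+O$ is immediate from the $\QI$-enumeration of $\ul\Omega$, and conversely $\+O \leq_\fh \ul\Omega$ follows from the standard coding -- a $\QI$-description can be decoded from the initial segments of $\ul\Omega$ by using that short descriptions of membership stages force the left-c.e.\ approximation of $\ul\Omega$ to stabilize. Combining this with Theorem~\ref{thm: char DiffR} gives, for $\QI$-ML-random $x$, the equivalence $x\not\geq_\fh \+O \LR x$ is $\QI$-difference random. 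Thus it suffices to show that a $\QI$-ML-random real $x$ is $\QI$-difference random iff $x$ has positive (lower) density in every closed $\TI$ class containing it (so that the theorem's ``some''/``every'' formulation becomes equivalent via the contrapositive of the non-trivial implication).

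For the direction from density to difference randomness, I would argue by contrapositive: assume $\rho(x|\+C)=0$ for some closed $\TI$ class $\+C\ni x$ and manufacture a $\QI$-difference test witnessing failure. Define
\[
G_m \;=\; \bigcup\!\bigl\{[\sigma] \;:\; \leb(\+C\cap[\sigma]) < 2^{-m}|[\sigma]|\bigr\},
\]
which is a $\QI$-open set because the condition on $\sigma$ is $\QI$ in a $\QI$-approximation to $\+C$. Then $(\+C,(G_m))$ is a $\QI$-difference test: the estimate $\leb(\+C \cap G_m) \le 2^{-m}\leb(G_m) \leq 2^{-m}$ follows from a Vitali-style selection of a pairwise disjoint sub-collection of dyadic intervals in $G_m$, a purely combinatorial argument that does not refer to the arithmetic complexity of $\+C$ and so transfers verbatim. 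Since $\rho(x|\+C)=0$, $x$ lies in $G_m$ for every $m$, and by hypothesis $x\in \+C$; so $x$ fails the test.

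For the reverse direction, I would show its contrapositive: if $(\+D,(G_m))$ is a $\QI$-difference test with $x\in \+D\cap\bigcap_m G_m$, then $\rho(x|\+D)=0$. The argument is the standard effective Lebesgue-density computation: given $\varepsilon>0$, choose $m$ with $2^{-m}<\varepsilon$ and use the fact that $x\in G_m$ (open) to find an arbitrarily short dyadic interval $I\ni x$ with $I\subseteq G_m$; then apply the Vitali selection inside $I$ to obtain a dyadic sub-interval $J\ni x$ with $\leb(\+D \cap J) \leq 2^{-m}|J|$. Iterating, dyadic intervals around $x$ whose density of $\+D$ tends to zero are produced, giving $\rho(x|\+D)=0$. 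Hence the closed $\TI$ class $\+D$ witnesses density zero.

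The main obstacle -- or rather, the only point where care is needed -- is verifying that all the measure-theoretic estimates (Vitali covering of $\QI$-open sets, approximation of $\leb(\+C\cap [\sigma])$ from a $\QI$-approximation of $\+C$) go through at the higher level. These use only that (i) a $\QI$-open class is a monotone limit of $\DI$-open sub-classes indexed by $\aaa<\OCK$ whose measures converge from below, and (ii) the measure function is $\SI 1$ over $L_\OCK$, both of which are routine in higher computability (cf.\ \cite[1.8.IV]{Sacks:90}) and are already used implicitly in the proof of Theorem~\ref{thm: char DiffR}. With these in hand, the Bienvenu--Hölzl et al.\ argument translates with nothing more than the renaming ``$\Pi^0_1 \leadsto \TI$-closed, $\Sigma^0_1\leadsto \QI$-open, $\leq_T\leadsto \leq_\fh$ (or $\leq_{hT}$)'', as the authors assert.
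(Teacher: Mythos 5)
Your high-level decomposition is exactly the one the paper intends: reduce to Theorem~\ref{thm: char DiffR} (via $\+O \equiv_\fh \ul\Om$) and characterize $\QI$-difference randomness by density, which the paper records as Lemma~\ref{lem:density}. Your argument for the implication ``$\rho(x|\+C)=0$ implies $x$ fails a $\QI$-difference test $((G_m),\+C)$'' is essentially right: $G_m$ is $\QI$-open, and the antichain-of-maximal-strings bound gives $\leb(\+C\cap G_m)\le 2^{-m}\leb(G_m)\le 2^{-m}$ exactly as you say.

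The converse implication is where you have a genuine gap. You claim that, given $x\in G_m$ together with $\leb(\+D\cap G_m)\le 2^{-m}$, a ``Vitali selection inside $I$'' (where $I\subseteq G_m$, $x\in I$) yields a sub-interval $J\ni x$ with $\leb(\+D\cap J)\le 2^{-m}|J|$. That step is not valid. From $I\subseteq G_m$ one only gets $\leb(\+D\cap I)\le 2^{-m}$, which is useless once $|I|$ is itself of order $2^{-m}$, and the Vitali covering lemma does not produce a low-density cube \emph{around a prescribed point}. A concrete failure: take $\+D=\{z\colon z(0)=0\}$, $x=01^{\infty}$, and $G_m=[x\uhr{m}]$. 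Then $\leb(\+D\cap G_m)\le 2^{-m}$ and $x\in\+D\cap\bigcap_m G_m$, yet every interval around $x$ inside $G_m$ lies entirely in $\+D$, so no such $J$ exists and $\rho(x|\+D)=1$. What this shows is that the implication is \emph{false} without randomness of $x$, so any purely combinatorial argument must be wrong. The Bienvenu--H\"olzl--Miller--Nies proof (and hence the higher analog the paper refers to) instead uses the ML-randomness of $x$: supposing $\rho(x|\+D)\ge 2\delta>0$, one forms, from the antichain of minimal $\sigma$ with $[\sigma]\subseteq G_m$ and $\leb(\+D\cap[\sigma])\ge\delta\,|[\sigma]|$, an open class of measure $\le\delta^{-1}2^{-m}$; the collection over $m$ is a $\QI$-ML-test that $x$ fails, contradicting the hypothesis. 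Your sketch never invokes the randomness of $x$ in this direction, which is the telltale sign of the missing idea. As a minor remark, the theorem's ``some closed $\TI$ class'' is vacuous (take $\+C=\cantor$); what the lemma actually gives, as your outline in fact shows, is the \emph{universal} form: $x$ has positive density in every closed $\TI$ class containing it.
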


As in \cite{Bienvenu.Hoelzl.ea:12a}, the result is derived from a lemma which actually shows that the $\TI$ class is the same on both sides.
\begin{lemma}\label{lem:density}
Let $x$ be a $\QI$  Martin-L\"of random real. Let $\+C$ be a closed $\TI$ class  containing $x$. The following are equivalent:
\begin{enumerate}
\item[(i)] $x$ fails a $\QI$ difference test of the form $((U_n)\sN n,\+C)$.
\item[(ii)] $x$ has lower Lebesgue density zero in $\+C$, i.e., $\rho(x|\+C)=0$.
\end{enumerate}
\end{lemma}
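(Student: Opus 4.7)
The approach is to mirror the proof of the analogous computable-setting result by Bienvenu, H\"olzl, Miller, and Nies (``Denjoy, Demuth and density''), translating $\SI 1$-open to $\QI$-open, $\PI 1$-closed to $\TI$-closed (i.e.\ complement is $\QI$-open), and ML-randomness to $\QI$-ML-randomness. Throughout I would freely use that at ML-random points the Lebesgue and dyadic densities coincide (both for zero density and for positive density), a fact that transfers without essential change.

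For (ii) $\Rightarrow$ (i), the first observation is that since $\+C^c$ is $\QI$-open, $\lambda(\+C^c\cap I)$ is $\QI$-left-c.e.\ uniformly in the rational interval $I$, and therefore $\lambda(\+C\cap I)=\lambda(I)-\lambda(\+C^c\cap I)$ is $\QI$-right-c.e. In particular, the strict inequality ``$\lambda(\+C\cap I)<2^{-n}\lambda(I)$'' is a $\QI$ condition, so
\[
 U_n \;=\; \bigcup\bigl\{I \text{ rational interval}: \lambda(\+C\cap I)<2^{-n}\lambda(I)\bigr\}
\]
is a uniformly $\QI$-open sequence. A Vitali-type disjointification of the defining family yields $\lambda(\+C\cap U_n)\le 5\cdot 2^{-n}$, so after reindexing $((U_n),\+C)$ is a $\QI$-difference test, and $\rho(x|\+C)=0$ places $x$ in every $U_n$.

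For (i) $\Rightarrow$ (ii), suppose $x\in\+C\cap\bigcap_n V_n$ for a $\QI$-difference test $((V_n),\+C)$, and assume toward a contradiction that $\rho_2(x|\+C)>2^{-k}$. For each $n$ let $I_n=[x\uhr m_n)$ be the maximal dyadic subinterval of $V_n$ containing $x$. From $\lambda(\+C\cap I_n)\le\lambda(\+C\cap V_n)\le 2^{-n}$ and the density lower bound at $x$ one reads off $m_n\ge n-k$ eventually, so $\lambda(I_n)\le 2^{k-n}$. I would then build a higher Solovay test
\[
 S_n \;=\; \bigcup\bigl\{ I : I\subseteq V_n,\ I \text{ dyadic of depth}\ge n-k,\ \lambda(\+C\cap I)\ge 2^{-k-1}\lambda(I)\bigr\},
\]
bound $\lambda(S_n)\le 2^{k+1-n}$ by the standard counting on disjoint maximal dyadic subintervals (using $\lambda(\+C\cap V_n)\le 2^{-n}$), sum to get $\sum_n\lambda(S_n)<\infty$, and verify $x\in S_n$ cofinitely often via the maximality of $I_n$ and the density lower bound. $\QI$-ML-randomness of $x$ then provides the contradiction.

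The main technical obstacle is the $\QI$-enumerability of $S_n$: since $\lambda(\+C\cap I)$ is only $\QI$-right-c.e., the lower bound ``$\lambda(\+C\cap I)\ge 2^{-k-1}\lambda(I)$'' is \emph{a priori} $\TI$, not $\QI$. The workaround is to approximate $\+C$ at each stage $\alpha<\OCK$ by a clopen superset $\+C_\alpha$ (derived from the $\QI$-enumeration of $\+C^c$), and to enumerate $I$ into $S_n$ at the first stage $\alpha$ at which $I\subseteq V_{n,\alpha}$ and $\lambda(\+C_\alpha\cap I)\ge 2^{-k-1}\lambda(I)$. The delicate step is to check that this relaxed criterion only inflates the total Solovay mass by a bounded factor, while $x$ is still captured cofinitely often by the true density condition; this is where the countable-ordinal-indexed stages of the higher setting genuinely enter the argument.
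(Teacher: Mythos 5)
The paper itself gives no proof of this lemma --- it simply says that ``only the usual notational changes'' to the argument of Bienvenu, H\"olzl, Miller and Nies are needed --- so there is no ``paper's proof'' to compare line-by-line. Your proposal is a reasonable reconstruction of that argument, and the direction (ii) $\Rightarrow$ (i) is essentially correct (though if you use dyadic intervals rather than arbitrary rational ones you avoid the $5r$-covering lemma entirely: the union of dyadic intervals with $\lambda(\mathcal C\cap I)<2^{-n}\lambda(I)$ decomposes into disjoint \emph{maximal-in-the-defining-family} dyadic intervals, giving $\lambda(\mathcal C\cap U_n)\le 2^{-n}$ with no constant).

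The gap is in (i) $\Rightarrow$ (ii), and it is not merely ``delicate'' as you suggest --- the workaround you describe does not work. Enumerating $I$ into $S_n$ at the first stage $\alpha$ where $I\subseteq V_{n,\alpha}$ and $\lambda(\mathcal C_\alpha\cap I)\ge 2^{-k-1}\lambda(I)$ does \emph{not} inflate the mass by a bounded factor. At early stages $\alpha$, the clopen approximation $\mathcal C_\alpha$ can have measure arbitrarily close to $1$, in which case the approximate density condition $\lambda(\mathcal C_\alpha\cap I)\ge 2^{-k-1}\lambda(I)$ is satisfied by \emph{every} $I$ of the permitted depth inside $V_{n,\alpha}$. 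Since the difference test only bounds $\lambda(\mathcal C\cap V_n)$, not $\lambda(V_n)$ itself (which may be close to $1$), the mass of your relaxed $S_n$ can approach $\lambda(V_n)$ rather than $2^{k+1-n}$, and $\sum_n\lambda(S_n)$ need not converge. The assertion that the relaxation ``only inflates the total Solovay mass by a bounded factor'' is false as stated, and this is precisely the part of the argument that carries the content of the lemma. What is actually needed is a mechanism that prevents over-enumeration --- for instance, capping the accumulated mass of each test component at $\epsilon^{-1}2^{-n+1}$ during the construction (so the bound holds by fiat), and then arguing separately that the true interval $[x\!\uhr\! m_n)$ is enumerated before the cap is exhausted; establishing the latter is where the real work lies, both in the classical argument of Bienvenu et al.\ and in its higher-recursion-theoretic transplant.
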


 The following   was first  discussed in April in  Paris (Bienvenu, Monin and Nies). The proof below is due to   Yu. It does the weaker case of dyadic density, but could be adapted to full Lebesgue density.

 \begin{prop} Let $x \in \cantor$ be $\QI$-random. Suppose $ x \in \+ C$ for a $\TI$ class $\+ C$. Then
  $\underline{\lim}_{n\to\infty} \leb_{[x\uh n]}(\+ C) = 1$. \end{prop}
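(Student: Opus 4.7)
The plan is to derive a contradiction from the assumption $\underline{\lim}_{n\to\infty} \leb_{[x\uh n]}(\+C) < 1$ by exhibiting a $\QI$ null class containing $x$. First I would let $U = \cantor\setminus\+C$, a $\QI$-open class, and write $U = \bigcup_{\alpha<\OCK} U_\alpha$ via the canonical stagewise approximation, so that $\leb([\sigma]\cap U) = \sup_\alpha \leb([\sigma]\cap U_\alpha)$ is $\QI$-left c.e.\ uniformly in~$\sigma$, and therefore $\leb([\sigma]\cap\+C) = 2^{-|\sigma|} - \leb([\sigma]\cap U)$ is $\QI$-right c.e. Then for any rational $q<1$ the set
\[
V_q = \bigl\{ \sigma \in \fs : \leb([\sigma]\cap\+C) < q \cdot 2^{-|\sigma|} \bigr\}
\]
is $\QI$. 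Since $\QI$ is closed under countable unions and intersections, the class $A_q = \bigcap_m \bigcup_{\sigma\in V_q,\,|\sigma|\ge m}[\sigma]$ is $\QI$ as well.

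Picking rational $q$ with $\underline{\lim}_n \leb_{[x\uh n]}(\+C) < q < 1$ yields $x \in A_q$. The key observation will be that $\leb(A_q \cap \+C) = 0$: indeed, the dyadic conditional-expectation martingale $M(\sigma) = \leb([\sigma]\cap\+C)\cdot 2^{|\sigma|}$ associated with $1_{\+C}$ (bounded by $1$) converges $\leb$-a.e.\ to $1_{\+C}$ by Doob's theorem, so for $\leb$-a.e.\ $y\in\+C$ one has $M(y\uh n) \to 1$, hence only finitely many prefixes of $y$ lie in~$V_q$ and $y\notin A_q$.

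Finally I would convert the null set $A_q \cap \+C = A_q\setminus U$ into a uniformly $\QI$ null test around $x$. Writing
\[
A_q \cap \+C = \bigcap_{\alpha<\OCK}(A_q\setminus U_\alpha),
\]
each $B_\alpha := A_q\setminus U_\alpha$ is $\QI$ (the intersection of a $\QI$ class with a $\DI$ closed class) and $\leb(B_\alpha)\downarrow 0$ as $\alpha\uparrow\OCK$. The plan is, for each $k$, to find an ordinal notation $a_k\in\+O$ with $\leb(B_{|a_k|}) < 2^{-k}$ by an $\+O$-recursive search using the approximability of $\leb(A_q\cap U_\alpha)$; the sequence $(B_{|a_k|})_k$ then constitutes a uniformly $\QI$ null test containing $x$, contradicting $\QI$-randomness. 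The hard part will be this last step: since $\leb(B_\alpha)$ is only $\QI$-right c.e., selecting the notations $a_k$ uniformly (i.e., hyperarithmetically in~$k$) takes some care. A conceptually cleaner alternative, sidestepping the issue, is to invoke the higher Lusin--Sierpi\'nski phenomenon (that every null $\TI$ class is contained in a null $\QI$ class) applied to a $\TI$ hull of $A_q\cap\+C$.
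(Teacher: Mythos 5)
Your setup is sound: $V_q$ is $\QI$ because $\leb([\sigma]\cap\+C)$ is right-$\QI$-c.e., $A_q$ is a $\QI$ class containing~$x$, and the Doob/density argument correctly gives $\leb(A_q\cap\+C)=0$. The gap is the last step. The null set you have in hand, $A_q\cap\+C$, lives in $\QI\wedge\TI$ and is not itself a $\QI$ class, so it does not directly witness non-$\QI$-randomness. An $\+O$-recursive choice of notations $a_k$ does not repair this: if the sequence $(a_k)$ is merely $\+O$-recursive, then $\bigcap_k B_{|a_k|}$ is not a $\QI$ class, since its defining relation must consult $\+O$. And the fallback you offer does not apply as stated: $A_q\cap\+C$ is not $\TI$, and its obvious $\TI$ hull $\+C$ has positive measure, so a Lusin--Sierpi\'nski covering by a null $\QI$ class is not available for it.

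What actually closes the gap is $\Sigma_1$-boundedness over $L_{\CK}$. Writing $B_\alpha[\beta]$ for the $\beta$-th $\DI$ stage of the $\QI$ class $B_\alpha=A_q\setminus U_\alpha$, the map sending each $k$ to the least pair $(\alpha_k,\beta_k)$ with $\leb(B_{\alpha_k}[\beta_k])<2^{-k}$ is total and $\Sigma_1$ over $L_{\CK}$, hence bounded by some $\gamma<\CK$; then $B_\gamma$ is a single $\QI$ class of measure $0$ containing $x$, and you are done. This is precisely the engine powering the paper's proof, but the paper deploys it more economically: it bounds, in a single $\Sigma_1$ step over $L_{\omega_1^x}[x]$, the pairs $(m_k,\alpha_k)$ witnessing low density at stage $\alpha_k$, lands in one $\DI$ closed set $\+C[\gamma]$, and then invokes the effective Lebesgue density theorem for $\DI$ sets (whose density-exceptional set is a $\DI$ null class avoided by $x$). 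Your route first assembles the $\QI$ class $A_q$ and proves a global null statement via Doob before doing any ordinal bookkeeping; it can be made to work, but only after the $\+O$-search is replaced by the same boundedness argument, at which point the detour through $A_q$ buys little.
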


  \begin{proof} Suppose otherwise. Then
 for some rational $p<1$, there are infinitely many $n$ such that  $\leb_{[x\uh n]}(\+ C) < p$.  Define a  function $f$ that is $\Sigma_1$ over $L_{\omega_1^x}[x]$  as follows: for each $k$, $f(k)$ is the $<_{L[x]}$-least pair $(m_k,\alpha_k)$ so that $m_k>k$ and $ \leb_{[x\uh m_k]}\+C [\alpha_k]  ) <p$. Then $f$ is a total function. So there must be an ordinal $\gamma<\omega_1^x=\CK$ so that $ \leb_{[x\uh m_k]}\+C [\alpha_k]  <p$ for every $k$.  This implies $$\underline{\lim}_{n\to\infty} \leb_{[x\uh n]}\+C [\gamma]   <p.$$
  Since $\+C\subseteq \+C [\gamma]$,  we have $x\in \+C[\gamma]$. But $\+ C [\gamma]$ is a $\Delta^1_1$ set,   so $ x$ has density $1$ in $\+C [\gamma]$, a contradiction.
 \end{proof}

 \subsection{A higher version of  Demuth's Theorem}

 \begin{theorem}\label{theorem: higher demuth}
 If $x$ is $\Pi^1_1$-random and $y\leq_h x$ is not hyperarithmetic, then there is a $\Pi^1_1$ random real $z\equiv_h y$.
 \end{theorem}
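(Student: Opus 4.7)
The plan adapts Demuth's classical argument to the higher setting. Let $\Gamma$ be an $hT$-reduction procedure with $y = \Gamma^x$, and set $\+C = \{w \in 2^\omega : \Gamma^w = y\}$, a closed subset of $\cantor$ whose membership relation is $\Pi^1_1(y)$ and which contains $x$. Since $y \leq_h x$ and $x$ is $\Pi^1_1$-random (whence $\omega_1^x = \omega_1^{CK}$), we have $y \in \mathrm{HYP}(x)$ and so $\omega_1^y = \omega_1^{CK}$, allowing higher recursion arguments to be relativized to $y$ without loss.

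First I would apply the $y$-relativized form of the density proposition established earlier in this section to deduce that $x$ is a dyadic density-one point of $\+C$. The original proof uses only $\Sigma_1$-definability over $L_{\omega_1^{CK}}$, and since $\omega_1^y = \omega_1^{CK}$ the same proof works with $y$ as a parameter, handling the $\Pi^1_1(y)$ closed class $\+C$.

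Next, let $N$ be the largest $\Pi^1_1$-null class, so that a real $z$ is $\Pi^1_1$-random precisely when $z \notin N$. I would construct $z = \bigcup_k \sigma_k$ as the limit of a $\Sigma^1_1(y)$-uniform nested-cylinder recursion, arranging at each stage $k+1$ that
\[
\lambda(\+C \cap [\sigma_{k+1}]) > (1-2^{-k-2})\lambda([\sigma_{k+1}]) \quad\text{and}\quad \lambda(N \cap [\sigma_{k+1}]) < 2^{-k-2}\lambda([\sigma_{k+1}]).
\]
Such $\sigma_{k+1}$ exist because $x$ is a density-one point of $\+C$ and, being $\Pi^1_1$-random, is a density-zero point of the null class $N$. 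From $\Sigma^1_1$-bounding in $L_{\omega_1^{CK}}$ applied to the uniform construction we obtain $z \leq_h y$; density-one in the closed class $\+C$ forces $z \in \+C$, so $\Gamma^z = y$ and hence $y \leq_h z$; and the vanishing relative densities of $N$ along $(\sigma_k)$ force $z \notin N$. Consequently $z \equiv_h y$ and $z$ is $\Pi^1_1$-random.

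The main obstacle will be executing the inner cylinder search $\Sigma^1_1(y)$-uniformly: the measures $\lambda(\+C \cap [\sigma])$ and $\lambda(N \cap [\sigma])$ are only semi-computable in the higher sense, so one cannot test the density inequalities directly. Instead one would argue that for each $k$ there is a $\Sigma^1_1(y)$-enumerable family of cylinders satisfying approximate versions of the required bounds, and that this family is nonempty thanks to the density properties of $x$ recorded above. This parallels the corresponding delicacy in Ku\v cera's classical argument, where measure estimates for $\Pi^0_1$ classes must be managed via effective approximation.
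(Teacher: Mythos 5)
Your proposal has a fundamental gap at the very first step: you claim that $x$ is a (relativized) density-one point of $\+C = \{w : \Gamma^w = y\}$. But the dyadic density proposition you invoke requires $x$ to be $\Pi^1_1(y)$-random, not merely $\Pi^1_1$-random, and this relativized randomness does \emph{not} follow from the hypotheses. On the contrary, since $y$ is a non-hyperarithmetic $hT$-image of $x$, the real $y$ encodes nontrivial information about $x$, and $x$ will typically \emph{fail} to be $\Pi^1_1(y)$-random. Concretely, if $\Gamma$ is a ``compressing'' reduction (say $y$ is the even bits of $x$), then $\+C$ is a $\Pi^1_1(y)$ null class containing $x$; no point has positive density in a null set, and there is no hope of finding a $\Pi^1_1$-random $z \in \+C$. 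Your later steps (picking cylinders with $\lambda(\+C \cap [\sigma_{k+1}]) > (1-2^{-k-2})\lambda([\sigma_{k+1}])$) therefore cannot even start: the required cylinders need not exist.

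The paper's proof avoids this by not working inside the fiber $\{w : \Gamma^w = y\}$ at all. Instead it converts $y \leq_h x$ into the form $y(n) = \Psi^{x\upharpoonright f(n) \oplus \emptyset^{(\alpha)}\upharpoonright f(n)}(n)[f(n)]$ for a recursive ordinal $\alpha$, and then defines the output real $z$ as $\bigcap_n (l_n, r_n)$, where $l_n, r_n$ are the cumulative masses (under Lebesgue measure) of the cylinders $[\sigma]$ whose $\Psi$-image precedes, resp.\ extends, $y\upharpoonright n$. This is a pushforward-measure construction: $z$ is the position of $y$ in the image measure, not a point of $2^\omega$ mapped to $y$ by $\Gamma$. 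Non-hyperarithmeticity of $y$ is used precisely to show $\lim_n (r_n - l_n) = 0$, so $z$ is a single real. Both $z \leq_T y \oplus \emptyset^{(\alpha)}$ and $y \leq_T z \oplus \emptyset^{(\alpha)}$ then come directly from the $l_n, r_n$ data, and randomness of $z$ is proved by pulling any test covering $z$ back through $\Psi$ to a test covering $x$. None of this requires $x$ to be random relative to $y$, which is exactly the point your argument misses. A secondary (but real) issue with your proposal is the acknowledged effectivity gap: $\lambda(\+C \cap [\sigma])$ is only upper-semicontinuous along the $\Pi^1_1(y)$ stage approximations, so you cannot $\Sigma^1_1(y)$-detect the lower bound on density you need at each stage, and $\Sigma^1_1$-bounding does not by itself repair this.
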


\begin{proof}
Suppose that $x$ is $\Pi^1_1$-random and $y\leq_h x$ is not hyperaritmetic. Then there is a  there is some recursive ordinal $\alpha$, an nondcreasing function $f\leq_T \emptyset^{(\alpha)}$ and a recursive function $\Psi$ so that $\lim_{n\to \infty}f(n)=\infty$ and for every $n$, $$y(n)=\Psi^{x \uh f(n)\oplus \emptyset^{(\alpha)\uh f(n)}}(n)[f(n)].$$

For each $u\in 2^{<\omega}$, let
$$l(u)=\sum_{\tau \in 2^{|u|} \wedge \tau<u }(\sum \{2^{-|\sigma|} \colon \, \sigma\in 2^{f(|u|)}\wedge \Psi^{\sigma\oplus \emptyset^{(\alpha)}\uh f(|u|)}[f(|u|)]\uh |u|=\tau\})$$ and
 $$r(u)=l(u)+\sum\{ 2^{-|\sigma|}  \colon \, \sigma\in 2^{f(|u|)}\wedge \Psi^{\sigma\oplus \emptyset^{(\alpha)}\uh f(|u|)}[f(|u|)]\uh |u|=\tau\},$$  where $\tau<u$ means that  $\tau$ is in the left of $u$.

One may view $\sum_{\sigma\in 2^{f(|u|)}\wedge \Psi^{\sigma\oplus \emptyset^{(\alpha)}\uh f(|u|)}[f(|u|)]\uh |u|=\tau}2^{-|\sigma|}$ as a kind of  ``measure'' for $\tau$.

For each $n$, let $$l_n=l(y\uh n), \mbox{ and } r_n=r(y\uh n).$$ Then $l_n\leq l_{n+1}\leq r_{n+1}\leq r_n$ for every $n$.

Since $y$ is not hyperarithmetic, it is not difficult to see that $\lim_{n\to \infty}r_n=0$. So there is a unique real $$z=\bigcap_{n\in \omega}(l_n,r_n).$$

Obviously $z\leq_T y\oplus \emptyset^{(\alpha)}$.  We leave readers to check that $y\leq_T z\oplus \emptyset^{(\alpha)}.$ So $z\equiv_h y$.

Suppose that $z$ is not $\Delta^1_1$-random. Then there must be some recursive ordinal $\beta<\alpha$ and a $\emptyset^{(\beta)}$-ML-test $\{V_n\}_{n\in \omega}$ so that $z\in \bigcap_{n\in \omega}V_n$. Let \begin{multline*}\hat{V}_n=\{u\mid \exists \nu(\nu \mbox{ is the $k$-th string in }V_n \wedge \\ \exists p\in \mathbb{Q}\exists q\in \mathbb{Q}(l(u)\leq p<q \leq r(u)\wedge [p,q]\subseteq [\nu]\wedge q-p>r(u)-2^{-n-k-2})\}.\end{multline*}

Since $z\in V_n$, we have that $y\in \hat{V}_n$ for every $n$. Note that $\{\hat{V}_n\}_{n\in \omega}$ is  $\emptyset^{(\beta+1+\alpha)}$-r.e.

Let $$U_{n}=\{\sigma\mid \exists \tau\in \hat{V}_n(|\sigma|=f(|\tau|)\wedge \Phi^{\sigma\oplus \emptyset^{(\alpha)}\uh f(|\tau|)}[f(|\tau|)]\uh |\tau|=|\tau|)\}.$$

Then $\{U_n\}_{n\in \omega}$ is  $\emptyset^{(\beta+1+\alpha)}$-r.e and $x\in \bigcap_{n\in \omega}U_n$.  Note that for every $n$, $$\mu(U_n)\leq \mu(V_n)+\sum_{k\in\omega}2^{-n-k-2+1}<2^{-n}+2^{-n}=2^{-n+1}.$$ Then $\{U_{n+1}\}_{n\in \omega}$ is a $\emptyset^{(\beta+1+\alpha)}$-ML-test. So $x$ is not a $\Delta^1_1$-random, a contradiction.
\end{proof}

An immediate conclusion of the proof of Theorem \ref{theorem: higher demuth} is:
\begin{corollary}
For any $\Pi^1_1$-random real $z$,  if $x\leq_h z$ is not hyperarithmetic, then $x$ is $\Pi^1_1$-random relative to some measure $\mu$.
\end{corollary}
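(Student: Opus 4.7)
The plan is to take $\mu$ to be the pushforward of Lebesgue measure under the hyperarithmetic reduction computing $x$ from $z$, and to contradict the $\Pi^1_1$-randomness of $z$ by pulling back any hypothetical $\mu$-test for $x$ to a Lebesgue $\Pi^1_1$-ML-test for $z$.

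The setup repeats the opening of the proof of Theorem~\ref{theorem: higher demuth}: since $x \leq_h z$ is not hyperarithmetic, I can fix a recursive ordinal $\alpha$, a nondecreasing function $f \leq_T \emptyset^{(\alpha)}$ with $\lim_n f(n) = \infty$, and a recursive $\Psi$ such that
$$x(n) = \Psi^{z\uh f(n)\oplus \emptyset^{(\alpha)}\uh f(n)}(n)[f(n)]$$
for every $n$. I would then let $\mu$ be the pushforward under $w \mapsto \Psi^{w\oplus \emptyset^{(\alpha)}}$ of Lebesgue measure; explicitly, for $u \in 2^{<\omega}$,
$$\mu([u]) = \lambda\bigl(\{w : \Psi^{w\uh f(|u|)\oplus \emptyset^{(\alpha)}\uh f(|u|)}[f(|u|)]\uh |u| = u\}\bigr).$$
This is the same summation that appears in the proof of Theorem~\ref{theorem: higher demuth}, and $\mu$ is $\emptyset^{(\alpha+1)}$-computable, hence $\Delta^1_1$. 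Assume toward a contradiction that $x$ is not $\Pi^1_1$-random with respect to $\mu$, witnessed by a uniformly $\Pi^1_1$ sequence $(V_n)$ of open sets with $\mu(V_n)\leq 2^{-n}$ and $x \in \bigcap_n V_n$. Mimicking the pullback step in that proof, define
$$U_n = \{w : \exists \tau\,(\tau \in V_n \wedge \Psi^{w\uh f(|\tau|)\oplus \emptyset^{(\alpha)}\uh f(|\tau|)}[f(|\tau|)]\uh |\tau| = \tau)\}.$$
Each $U_n$ is $\Pi^1_1$ relative to $\emptyset^{(\alpha)}$ uniformly in $n$, the pushforward identity gives $\lambda(U_n) \leq \mu(V_n)\leq 2^{-n}$, and by the defining equation for $x$, $x \in V_n$ forces $z \in U_n$. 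Thus $z$ fails a $\Pi^1_1$-ML-test relative to $\emptyset^{(\alpha)}$; since $\alpha$ is recursive this is a bona fide $\Pi^1_1$-ML-test, contradicting $\Pi^1_1$-randomness of $z$.

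The main obstacle is making the notion ``$\Pi^1_1$-random with respect to the $\Delta^1_1$ measure $\mu$'' precise enough that the pullback correspondence behaves cleanly: one wants tests $(V_n)$ that are $\Pi^1_1$ in a $\Delta^1_1$ presentation of $\mu$, so that the pullback is genuinely $\Pi^1_1$. With the tacit convention implicit in the proof of Theorem~\ref{theorem: higher demuth}, this is a routine unfolding of definitions and no new higher-recursion-theoretic ingredient is needed beyond what is already present there.
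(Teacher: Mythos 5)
Your proposal is correct and is essentially the same argument the paper intends when it says the corollary is ``an immediate conclusion of the proof of Theorem~\ref{theorem: higher demuth}'': the measure $\mu$ you define is exactly the implicit pushforward appearing there (the quantities $r(u)-l(u)$ in that proof), and your set $U_n$ is the same pullback construction used to convert a test covering the represented real into a Lebesgue test covering the given $\Pi^1_1$-random. The only difference is that you streamline by avoiding the intermediate construction of the interval-representation $z'=\bigcap_n(l_n,r_n)$ and pull back a $\mu$-test for $x$ directly; the technical caveats you flag at the end (that $\mu$ as defined stagewise is only a super-additive premeasure unless one normalizes $\Psi$, and that the $\Pi^1_1$-ness of $U_n$ rests on closure of $\Pi^1_1$ under number quantification and on $\emptyset^{(\alpha)}$ being $\Delta^1_1$) are exactly the same ones tacitly assumed in the paper's proof.
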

\subsection{Separating lowness for higher randomness notions}\
In \cite{Yu11}, Yu gave a new proof of the the following theorem.
\begin{theorem}[Martin and Friedman]\label{theorem: martin and friedman theorem}
For any $\Sigma^1_1$ tree $T_1$ which has uncountably many infinite paths, $[T_1]$ has a member of each hyperdegree greater than or equal to the hyperjump.
\end{theorem}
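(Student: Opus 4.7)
Fix a real $a$ with $a \geq_h \+ O$. The goal is to build $x \in [T_1]$ with $x \equiv_h a$. The idea is to first extract, using $\+ O$, a ``nice'' perfect subtree $P$ of $T_1$, and then encode $a$ as a branch of $P$ by choosing sides at successive splits.

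\emph{Step 1 (perfect subtree).} Define the Cantor--Bendixson derivatives $T_1^{(\alpha)}$ for $\alpha < \CK$: delete from $T_1^{(\alpha)}$ those $\sigma$ lying on at most one extendible branch, and intersect at limits. Each $T_1^{(\alpha)}$ is uniformly $\Sigma^1_1$, and uncountability of $[T_1]$ forces the process to stabilize on a nonempty perfect tree $P$. By $\Sigma^1_1$-boundedness applied to the rank function $\sigma \mapsto \min\{\alpha : \sigma \notin T_1^{(\alpha)}\}$, I would argue that $P$ is $\Delta^1_1(\+ O)$, i.e., $\+ O$-computable as a subset of $2^{<\omega}$.

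\emph{Step 2 (encode and decode).} Let $\sigma_0 \prec \sigma_1 \prec \cdots$ be the $\+ O$-computable enumeration in order of length of the splitting nodes of $P$ (i.e., nodes $\sigma \in P$ with both $\sigma 0, \sigma 1 \in P$); this sequence exists and is infinite because $P$ is nonempty and perfect. Using $a$ as oracle (which computes $\+ O$ by hypothesis), define $x$ recursively so that $x$ extends $\sigma_n \ape a(n)$ for every $n$. Then $x \in [P] \subseteq [T_1]$ and $x \leq_h a$. Conversely, from $x$ together with $\+ O$ one uniformly reads off $a(n)$ as the bit of $x$ at position $|\sigma_n|$, yielding $a \leq_h x \oplus \+ O \leq_h x$, so $x \equiv_h a$.

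\emph{Main obstacle.} Step 1 is the technical heart. The Cantor--Bendixson rank of a $\Sigma^1_1$ tree can reach $\CK$, so the derivative process need not stabilize below a single recursive ordinal; one must argue more carefully that the stable kernel, though $\Pi^1_1$ in general, is in fact $\Delta^1_1(\+ O)$ by exploiting $\Sigma^1_1$-boundedness of the ranks of removed nodes together with Kondo--Addison uniformization. An alternative and possibly cleaner route is to quote a Gandy basis--type theorem asserting that every uncountable $\Sigma^1_1$ subset of $2^\omega$ contains a nonempty perfect $\Pi^0_1(\+ O)$ subtree, which directly yields Step 1 and lets Steps 2 and 3 proceed verbatim.
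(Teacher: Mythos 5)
Your plan reduces the problem to a $\leq_T \+O$ perfect subtree $P\subseteq T_1$ and then codes $a$ into a branch at the splitting nodes. This is structurally quite different from Yu's proof (the one the text refers to), which works directly with a $\Sigma_1$-over-$L_{\CK}$ approximation $\{T_1[\alpha]\}_{\alpha<\CK}$ of the tree and recovers $a$ from $x$ by a finite-injury decoding whose parameters stabilize because $x$ is never the leftmost branch of any approximating tree.

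The decisive gap in your proposal is the very last inequality: you write ``$a\leq_h x\oplus\+O\leq_h x$'', but the second reduction amounts to $\+O\leq_h x$, which has not been established and is, in fact, the whole content of the theorem. Your $x$ is $\phi_P(a)$, where $\phi_P\colon 2^\omega\to[P]$ is the canonical homeomorphism attached to the perfect tree $P\leq_T\+O$; to locate the splitting nodes of $x$ inside $P$ you must already have $P$, hence $\+O$, in hand. So the decoding only gives $a\leq_h x\oplus\+O$. Passing from this to $a\leq_h x$ is equivalent to showing $\omega_1^x>\CK$, and nothing in the construction forces this: $\+O$, though $\Pi^1_1$, is $\Delta^1_1(x)$ iff $\omega_1^x>\CK$, and $[T_1]$, being a nonempty $\Sigma^1_1$ class, always contains branches $y$ with $\omega_1^y=\CK$ by Gandy's basis theorem. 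Making the coding self-decoding—so that $\+O$ can be recovered from $x$ without already possessing it—is exactly what the injury argument over $L_{\CK}$ accomplishes and what your write-up is missing.

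Two secondary points. In Step~2, ``the enumeration in order of length of the splitting nodes'' does not yield a chain $\sigma_0\prec\sigma_1\prec\cdots$: distinct splitting nodes of the same length are incomparable, so one has to walk down the tree and use the splitting nodes met along the branch being built. In Step~1, the appeal to $\Sigma^1_1$-boundedness is circular as stated: the rank function lives on the set of eventually removed nodes, and this set is only known to be $\Sigma^1_1$ once one knows the kernel is $\Pi^1_1$, which is what boundedness was supposed to deliver. (Step~1 is repairable: by the effective perfect set theorem, ``$[T_1]\cap[\sigma]$ is uncountable'' is equivalent to ``$[T_1]\cap[\sigma]$ has a non-$\Delta^1_1$ member'', a $\Sigma^1_1$ condition, so the kernel is $\Sigma^1_1$ as a set of strings and hence $\leq_T\+O$. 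The irreparable problem is the decoding.)
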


\begin{lemma}\label{lemma: simga11 joint lemma}
Given any two uncountable $\Sigma^1_1$ sets of reals $A_0$ and $A_1$, for any real $z\geq_h \KO$, there are   reals $x_0\in A_0$ and $x_1\in A_1$ so that $x_0 \oplus x_1\equiv_h z$.
\end{lemma}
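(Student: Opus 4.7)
My plan is to reduce to the Martin--Friedman theorem (the result just cited from \cite{Yu11}) by building a single $\Sigma^1_1$ tree whose paths naturally decompose as joins $x_0\oplus x_1$ with $x_i\in A_i$.

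The first step is a perfect-set reduction. Since $A_0$ and $A_1$ are uncountable $\Sigma^1_1$ sets, the perfect set theorem for $\Sigma^1_1$ (Burgess--Mansfield, or a direct $\Sigma^1_1$ Cantor--Bendixson analysis running through $\omega_1^{\mathrm{CK}}$) produces, for each $i=0,1$, a $\Sigma^1_1$ tree $T_i\subseteq 2^{<\omega}$ such that $[T_i]\subseteq A_i$ is uncountable (indeed perfect). Crucially the tree $T_i$ itself can be taken to be $\Sigma^1_1$ (even $\Delta^1_1$ relative to a $\Sigma^1_1$ index for $A_i$), which is exactly the form of input Theorem \ref{theorem: martin and friedman theorem} requires.

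The second step is to combine the two trees. Using the standard bit-interleaving encoding $\sigma_0\oplus\sigma_1$ of two strings of equal length, define
\[
T \;=\; \bigl\{\sigma_0\oplus\sigma_1 \,:\, \sigma_0\in T_0,\ \sigma_1\in T_1,\ |\sigma_0|=|\sigma_1|\bigr\}\subseteq 2^{<\omega}.
\]
Then $T$ is a $\Sigma^1_1$ tree, and $[T]=\{x_0\oplus x_1:x_0\in[T_0],\ x_1\in[T_1]\}$ is uncountable because both $[T_0]$ and $[T_1]$ are. Apply Theorem \ref{theorem: martin and friedman theorem} to $T$ and the given $z\ge_h\mathcal{O}$: there is $w\in[T]$ with $w\equiv_h z$. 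Writing $w=x_0\oplus x_1$, we get $x_i\in[T_i]\subseteq A_i$ and $x_0\oplus x_1\equiv_h z$, which is exactly the conclusion.

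The main obstacle I anticipate is the preservation of $\Sigma^1_1$-ness in the perfect-set reduction, i.e.\ verifying that an uncountable $\Sigma^1_1$ set $A$ contains a nonempty perfect subset presented by a $\Sigma^1_1$ tree (rather than merely some closed subset). This is what allows step 2 to feed into Theorem \ref{theorem: martin and friedman theorem}, whose hypothesis is explicitly about $\Sigma^1_1$ trees. Provided this standard effective form of the $\Sigma^1_1$ perfect set theorem is in hand, the rest is a routine product-tree construction; in particular the interleaving encoding is computable, so the decomposition $w\mapsto(x_0,x_1)$ is Turing (hence $h$-) bi-reducible with $w$ and does not shift the hyperdegree.
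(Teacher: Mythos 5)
Your proposal takes a genuinely different route from the paper: you want to reduce to Theorem~\ref{theorem: martin and friedman theorem} by first passing from $A_0,A_1$ to closed subsets given by $\Sigma^1_1$ trees $T_0,T_1\subseteq 2^{<\omega}$, and then interleaving them into a single product tree. The paper, by contrast, does not do any perfect-set reduction at all. It works directly with \emph{recursive} trees $T_i\subseteq 2^{<\omega}\times\omega^{<\omega}$ that present $A_i$ as projections ($A_i=\{x\mid\exists f\,\forall n\,(x\uh n,f\uh n)\in T_i\}$), and redoes the mutual-coding argument of \cite{Yu11} simultaneously on the two trees: $\sigma_{0,j}$ codes $z(j)$, $f(j)$ (where $f$ is the leftmost path of a fixed recursive tree with no hyperarithmetic branch, so $f\equiv_h\KO$), and the ``witness'' string $\tau_{1,j-1}$, while $\sigma_{1,j}$ codes $\tau_{0,j}$; the decoding is a finite-injury argument over $L_{\CK}[x_0\oplus x_1]$ very close to the one in the proof of Theorem~\ref{theorem: martin and friedman theorem}. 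That the projection witnesses $\tau_{i,j}$ need to be coded back and forth between the two sides is precisely the content that your plan hopes to bury in the perfect-set reduction.

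The step you flag as a potential obstacle is in fact a genuine gap. You assert that the effective $\Sigma^1_1$ perfect set theorem produces, from an uncountable $\Sigma^1_1$ set $A$, a $\Sigma^1_1$ \emph{tree} $T\subseteq 2^{<\omega}$ with $[T]\subseteq A$ uncountable, and you treat this as standard. It is not. The Harrison/Mansfield perfect set analysis gives a perfect closed set inside $A$, but the natural form of that set is the body of a tree recursive in $\KO$ (roughly, a $\Pi^0_1(\KO)$ set), and a tree recursive in $\KO$ is not in general $\Sigma^1_1$ (since its characteristic function involves both positive and negative queries to the $\Pi^1_1$-complete oracle $\KO$). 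Theorem~\ref{theorem: martin and friedman theorem} as stated is specifically about $\Sigma^1_1$ trees, and its proof uses a $\Sigma_1$-over-$L_{\CK}$ enumeration $T[\alpha]$ of the tree with $T[\CK]$ having no dead node; that structure is exactly what does the work in the decoding, and it is not available for an arbitrary closed subset of $A$. So you would need to prove (or locate a reference for) the statement that an uncountable $\Sigma^1_1$ set contains an uncountable closed set of that specific $\Sigma^1_1$-tree form; nothing in the proposal does this, and the paper's own proof strongly suggests the authors did not regard it as available, since otherwise they too could have cited Theorem~\ref{theorem: martin and friedman theorem} on a product tree instead of rerunning the entire mutual-coding construction. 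Once that reduction is in hand, the interleaving and the appeal to Theorem~\ref{theorem: martin and friedman theorem} are unobjectionable; the gap is confined to step~1.
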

\begin{proof}

Fix a real $z\geq_h \KO$, two uncountable $\Sigma^1_1$-sets $A_0$ and $A_1$. So there are two recursive trees $T_0,T_1\subseteq 2^{<\omega}\times \omega^{<\omega}$ so that $A_i=\{x\mid \exists f\forall n (x\uh n,f\uh n)\in T_i\}$ for each $i\leq 1$. We may assume that neither $A_0$ nor $A_1$ contains a hyperarithmetic real. We also fix a recursive tree $T_2\subseteq \omega^{<\omega}$ so that $[T_2]$ is uncountable but does not contain a hyperarithmetic infinite path. Let $f$ be the leftmost path in $T_2$. Then $f\equiv_h \KO$.

Given a finite string $\sigma\in 2^{<\omega}$, we say that $\sigma$ is {\em splitting} on a tree $T\subseteq 2^{<\omega}\times \omega^{<\omega}$ if for any $j\leq 1$, $T_{\sigma^{\smallfrown}j}=\{(\sigma',\tau')\mid (\sigma'\succeq\sigma^{\smallfrown}j \vee \sigma'\prec\sigma^{\smallfrown}j )\wedge (\sigma',\tau')\in  T\}$ contains an infinite path. For any $i\leq 1$ and $(\sigma,\tau)\in T_i$, define $$T_{i,(\sigma,\tau)}=\{(\sigma',\tau')\in T_i\mid (\sigma',\tau')\succeq (\sigma,\tau)\vee (\sigma',\tau')\prec(\sigma,\tau)\}.$$

We shall construct a sequence $(\sigma_{i,0},\tau_{i,0})\prec (\sigma_{i,1},\tau_{i,1})\prec \cdots$ from $T_i$ for each $i\leq 1$. The idea is to apply a mutually coding argument. In other words, we will  use $\sigma_{0,j}$ to code $z(j)$, $f(j)$ and $\tau_{1,j-1}$,  and $\sigma_{1,j}$ to code $\tau_{0,j}$ for each $j\in \omega$.

\bigskip
At stage $0$, let $(\sigma_{i,0},\tau_{i,0})=(\emptyset,\emptyset)$ for $i\leq 0$. Without loss of generality, we may assume that $(\emptyset,\emptyset)$ is a splitting node in both $T_0$ and $T_1$.

At stage $s+1$:

Substage $1$, let $\sigma^0_{0,s+1}$ be the left-most finite splitting string extending  $ \sigma_{0,s}^{\smallfrown}z(s)$ on $T_{0,(\sigma_{0,s},\tau_{0,s})}$. So we code $z(s)$ here. Then we prepare to code $\tau_{1,s}$. Let $n^0_{s+1}=|\tau_{1,s}|-|\tau_{1,s-1}|$. Inductively, for any $k\in [1, n^0_{s+1}]$, let $\sigma_{0,s+1}^k$  be the left-most finite splitting string extending  $ (\sigma^{k-1}_{0,s+1})^{\smallfrown}1$ on $T_{0,(\sigma_{0,s},\tau_{0,s})}$ so that there are $\tau_{0,s+1}(k+|\tau_{0,s}|)$-many splitting nodes between $\sigma^{k-1}_{0,s+1}$ and $\sigma^k_{0,s+1}$.   Let $\sigma_{0,s+1}^{n^0_{s+1}+1}$  be the left-most finite splitting string extending  $ (\sigma^{n^0_{s+1}}_{0,s+1})^{\smallfrown}1$ on $T_{0,(\sigma_{0,s},\tau_{0,s})}$ so that there are $f(s)$-many splitting nodes between$\sigma_{0,s+1}^{n^0_{s}+1}$ and $\sigma_{0,s+1}^{n^0_{s+1}+1}$. So we code $f(s)$ here. Inductively, for $j\leq 1$, let $\sigma_{0,s+1}^{n^0_{s+1}+1+j+1}$ be the next splitting string in $T_{0,(\sigma_{0,s},\tau_{0,s})}$ extending $(\sigma_{0,s+1}^{n^0_{s+1}+1+j})^{\smallfrown}1$. This coding tells us that the action at this stage for $ (\sigma_{0,s+1},\tau_{0,s+1})$-part is finished. Define $\sigma_{0,s+1}=\sigma_{0,s+1}^{n^0_{s+1}+3}$. Let $\tau_{0,s+1}\in \omega^{|\sigma_{0,s+1}|}$ be the leftmost finite string so that the tree $T_{0, (\sigma_{0,s+1},\tau_{0,s+1})}$ has an infinite path.

Substage $2$. Let $\sigma_{1,s+1}^{0}=\sigma_{1,s}$ and $n^1_{s+1}=|\tau_{0,s+1}|-|\tau_{0,s}|$. Inductively, for any $k\in [1, n^1_{s+1}]$, let $\sigma_{1,s+1}^k$  be the left-most finite splitting string extending  $ (\sigma^{k-1}_{1,s+1})^{\smallfrown}1$ on $T_{1,(\sigma_{1,s},\tau_{1,s})}$ so that there are $\tau_{0,s+1}(k+|\tau_{0,s}|)$-many splitting nodes between $\sigma^{k-1}_{1,s+1}$ and $\sigma^k_{1,s+1}$. So  $\tau_{0,s+1}$ is coded.  Inductively, for $j\leq 1$, let $\sigma_{1,s+1}^{n^1_{s+1}+j+1}$ be the next splitting string in $T_{1,(\sigma_{1,s},\tau_{1,s})}$ extending $(\sigma_{1,s+1}^{n^1_{s+1}+j})^{\smallfrown}1$. This coding tells us that  the action at this stage for $ (\sigma_{1,s+1},\tau_{1,s+1})$-part is finished.   Define $\sigma_{1,s+1}=\sigma_{1,s+1}^{n^1_{s+1}+2}$. Let $\tau_{1,s+1}\in \omega^{|\sigma_{1,s+1}|}$ be the leftmost finite string so that the tree $T_{1,(\sigma_{1,s+1},\tau_{1,s+1})}$ has an infinite path. So we code $\tau_{0,s+1}$ into $\sigma_{1,s+1}$.

This finishes the construction at stage $s+1$.

\bigskip

Let $x_i=\bigcup_{s\in \omega}\sigma_{i,s}$  for $i\leq 1$. Obviously $z\geq_h x_0\oplus x_1$.

Now we use $x_0$ and $x_1$ to decode the coding construction. The decoding method is a finite injury which is quite similar to the new proof of Theorem \ref{theorem: martin and friedman theorem}. We want to construct a sequence ordinals $\{\alpha_s\}_{s\in \omega}$ $\Delta_1$-definable in $L_{\omega_1^{x_0\oplus x_1}}[x_0\oplus x_1]$ so that $\lim_{s\to \omega}\alpha_s=\CK$. Once this is done, then it is obvious to decode the construction and so $x_0\oplus x_1\geq_h z$.

As in the proof of Theorem \ref{theorem: martin and friedman theorem}, we may fix a $\Sigma_1$ enumeration of $\{T_i[\alpha]\}_{i\leq 2, \alpha<\CK}$ over $L_{\CK}$ so that for $i\leq 1$,
\begin{itemize}
\item $T_i[0]=T_i$; and
\item $T_{i}[\alpha]\subseteq T_i[\beta]$ for $\CK>\alpha\geq \beta$; and
\item $T_{i}[\CK]=\bigcap_{\alpha<\CK}T_i[\alpha]$; and
\item $T_{i}[\CK]$ has no dead node; and
\item $A_i=\{x\mid \exists f\forall n (x\uh n,f\uh n)\in T_i[\CK]\}$.
\end{itemize}
Since $[T_i]$ does not contain a hyperarithmetic infinite path, we have that $[T_i[\CK]]=[T_i]$ for $i\leq 2$.

We perform  almost the same decoding construction as in the proof of Theorem \ref{theorem: martin and friedman theorem}. At every stage $s$, we have a guess for the parameters defined in the coding construction up to stage $s$. Also we may need to correct our guess by searching a big ordinal $\alpha_s$ to redefine those parameters turning out to be incorrect at stage $\alpha_s$ (if necessary). Since neither $x_0$ nor $x_1$ is the leftmost real in $T_0$ or $T_1$, these parameters can only  be redefined at most finitely many times. So every parameter will be stable after stage $\alpha_s$ for large enough $s$. Then using the same method as in the proof of Theorem \ref{theorem: martin and friedman theorem}, we can find $f$ at stage $\alpha=\lim_{s\to \omega}\alpha_s$. The only extract effort is to decode $\tau_{i,s}$. But this is just like to decode $f(s)$ without any new insight.

So $\KO\equiv_h f\leq_h x_0\oplus x_1$.

\end{proof}
By \cite{cny07}, being low for $\QI$ randomness  is equivalent to being  low for $\DI$ randomness and being not cuppable above $\+ O$ by a $\QI$ random. So it suffices to define a low for $\DI$ random  that is cuppable.

\begin{lemma}\label{lemma: simpson low lemma}
There is an uncountable $\Sigma^1_1$ set $A$ in which every real is $\Delta^1_1$-traceable.
\end{lemma}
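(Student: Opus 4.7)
The plan is to adapt Terwijn's construction of a perfect $\Pi^0_1$ class of computably traceable reals to the higher setting. We build a $\Delta^1_1$ perfect tree $T \subseteq 2^{<\omega}$ such that (i) $[T]$ is uncountable, and (ii) every $x \in [T]$ is $\Delta^1_1$-traceable with a fixed computable bound $h$. Setting $A = [T]$ yields the desired set: since $T$ is $\Delta^1_1$, so is $[T]$ (by closure of $\Delta^1_1$ under number quantifiers), and hence $A$ is $\Sigma^1_1$ as required.

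Fix an enumeration $(\Psi_e)_{e \in \omega}$ of all partial $\Sigma^1_1$ (hyperarithmetic) functionals from $2^\omega$ to $\omega^\omega$. For $x \in 2^\omega$ with $\omega_1^x = \CK$, every total $f \leq_h x$ equals $\Psi_e^x$ for some $e$, uniformly. The construction proceeds by effective transfinite recursion on stages $\alpha < \CK$, maintaining a finite antichain $F_\alpha \subseteq 2^{<\omega}$ of ``live'' nodes and partial traces $(T_{e,n})_{e,n \in \omega}$. At a stage handling requirement $R_{e,n}$, for each $\sigma \in F_\alpha$ we search for an extension $\tau_\sigma \succeq \sigma$ such that either (a) $\Psi_e^{\tau_\sigma}(n){\downarrow}$, in which case its value is added to $T_{e,n}$, or (b) $\Psi_e^\rho(n){\uparrow}$ for every $\rho \succeq \tau_\sigma$, in which case no value need be added along that branch. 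The frontier is then updated by replacing each $\sigma$ with $\tau_\sigma$. Periodically, we also split some $\tau \in F_\alpha$ into two incompatible extensions $\tau_0, \tau_1 \in T$, ensuring that $[T]$ is perfect. The splitting rate is tuned against $h$ so that $|T_{e,n}| \leq h(n)$ for all $e,n$.

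The main obstacle is complexity control. Each transfinite stage uses $\Sigma^1_1$ information (namely, whether $\Psi_e^\tau(n)$ converges at some ordinal $\beta < \CK$), so the recursion iterates a $\KO$-effective operation through $\CK$ many stages. By Kleene's $\Sigma^1_1$-bounding, the ordinals needed to witness case (a) at each stage remain below $\CK$, so the entire construction lives inside $L_\CK$, yielding a $\Delta^1_1$ tree $T$ and $\Delta^1_1$ traces $(T_{e,n})$. A secondary subtlety is that a path $x \in [T]$ need not a priori satisfy $\omega_1^x = \CK$, so the universal enumeration $(\Psi_e^x)_{e \in \omega}$ might not capture every $f \leq_h x$. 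We handle this by interleaving a higher-genericity requirement into the construction, forcing each $x \in [T]$ to satisfy $\omega_1^x = \CK$, analogous to how Terwijn's slow-splitting tree avoids Turing-encoding $\emptyset'$. Granted this, the $\Delta^1_1$ traces $(T_{e,n})$ witness $\Delta^1_1$-traceability of every $x \in [T]$.
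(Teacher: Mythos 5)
The paper's own proof of this lemma is a one-line citation: it follows from the proof of Theorem~4.7 in Simpson's 1975 paper \emph{Minimal covers and hyperdegrees}. Your proposal is therefore an attempt at a new, self-contained argument by adapting Terwijn's construction of a perfect $\Pi^0_1$ class of computably traceable reals; that instinct is in the right spirit (Simpson's argument also runs a variant of perfect-set/Sacks forcing over $L_{\CK}$), but the complexity accounting in your write-up has a genuine gap that the $\Sigma^1_1$-bounding invocation does not repair.

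The problem is the dichotomy (a)/(b). At each stage, for each live node $\sigma$, you need to know whether some extension $\tau \succeq \sigma$ gives a convergent $\Psi_e^\tau(n)$ (case (a), a $\Sigma^1_1$ event) or whether every extension diverges (case (b), a $\Pi^1_1$ event, witnessed by $\tau = \sigma$ itself when (a) fails). $\Sigma^1_1$-bounding tells you that \emph{if} (a) holds then its witnessing ordinal lies below $\CK$; it does not tell you \emph{that} (a) holds, and if (b) holds there is simply no witnessing stage inside $L_{\CK}$ at which you could learn this. So the construction cannot run internally to $L_{\CK}$ stage-by-stage: at every step it genuinely requires answering a $\Pi^1_1$-complete question, and after $\omega$ such rounds the tree $T$ and traces $T_{e,n}$ are only $\Delta^1_1(\KO)$ rather than $\Delta^1_1$. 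Since $\Delta^1_1(\KO) \not\subseteq \Sigma^1_1$, you would overshoot the complexity target for $A$; and the traces would not be $\Delta^1_1$, so the paths would not be shown $\Delta^1_1$-traceable. Note that even in the lower setting Terwijn's object is a $\Pi^0_1$ class, not a computable tree, exactly because the analogous (a)/(b) split there is $\Sigma^0_1$ vs.\ $\Pi^0_1$ and cannot be decided during a computable construction; the higher analogue of this is a $\Sigma^1_1$ closed set whose \emph{membership condition} is phrased $\Sigma^1_1$ without ever deciding (a)/(b) internally, and the bounding argument is applied afterwards, path by path and requirement by requirement, to extract a $\Delta^1_1$ trace. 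You would need to rephrase the construction so that it avoids the oracle query at each stage.

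Two secondary points that your proposal leaves hand-waved and that also need to be addressed: (i) the claim that for $\omega_1^x = \CK$ every total $f \le_h x$ is $\Psi_e^x$ for some $e \in \omega$, uniformly, needs an argument---the natural parametrization of $\Delta^1_1(x)$ is indexed by pairs (notation $a \in \KO$, reduction index), not plainly by $\omega$, and compressing this to an $\omega$-indexed family of lightface $\Sigma^1_1$ functionals is a real step; and (ii) the ``interleaved higher-genericity requirement'' forcing $\omega_1^x = \CK$ on all paths must be shown compatible with your slow-splitting discipline, which is not obviously the case given the bounding issue above.
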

\begin{proof}
This follows directly  from the proof of Theorem 4.7 in \cite{Simpson:75}.
\end{proof}

By \cite{cny07}, each  $\Delta^1_1$-traceable real is  low for $\Delta^1_1$-random. By \cite{Hjorth.Nies:07}, the $\Pi^1_1$-random reals form a $\Sigma^1_1$ set. Then by Lemma \ref{lemma: simpson low lemma} and \ref{lemma: simga11 joint lemma}, there is a real $x$ which is low for $\Delta^1_1$-randomness  but $x\oplus y\geq_h \KO$ for some $\Pi^1_1$-random real $y$. We may conclude:
\begin{theorem}
Lowness for $\Delta^1_1$-randomness does not imply lowness for $\Pi^1_1$-randomness. And lowness for $\Delta^1_1$-Kurtz-randomness does not imply lowness for $\Pi^1_1$-Kurtz-randomness.\end{theorem}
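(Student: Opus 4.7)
The plan is to invoke the two preceding lemmas in tandem. Take $A_0$ to be the uncountable $\Sigma^1_1$ set of $\Delta^1_1$-traceable reals given by Lemma \ref{lemma: simpson low lemma}, and take $A_1$ to be the set of $\Pi^1_1$-random reals, which is $\Sigma^1_1$ by \cite{Hjorth.Nies:07} and is uncountable (indeed, comeager in the $\Pi^1_1$-random coordinate-space). Apply Lemma \ref{lemma: simga11 joint lemma} with $z = \KO$ to obtain $x \in A_0$ and $y \in A_1$ with $x \oplus y \equiv_h \KO$.

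By construction $x$ is $\Delta^1_1$-traceable, so by the results of \cite{cny07} it is low for $\Delta^1_1$-randomness, and the parallel argument in \cite{cny07} shows it is also low for $\Delta^1_1$-Kurtz-randomness. On the other hand, $y$ is $\Pi^1_1$-random and $x \oplus y \geq_h \KO$, so $x$ is cuppable above $\KO$ by a $\Pi^1_1$-random real. Invoking the characterization from \cite{cny07} that lowness for $\Pi^1_1$-randomness is equivalent to lowness for $\Delta^1_1$-randomness together with not being cuppable above $\KO$ by any $\Pi^1_1$-random, we conclude that $x$ witnesses the failure of the first implication. For the Kurtz case, observe that every $\Pi^1_1$-random real is in particular $\Pi^1_1$-Kurtz-random, so the same $y$ also witnesses $\Pi^1_1$-Kurtz cupping of $x$ above $\KO$; the analogous characterization then yields that $x$ is not low for $\Pi^1_1$-Kurtz-randomness.

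The main obstacle is the Kurtz variant: the argument requires the analogue of the \cite{cny07} splitting characterization for $\Pi^1_1$-Kurtz-randomness, namely that lowness for $\Pi^1_1$-Kurtz-randomness decomposes as lowness for $\Delta^1_1$-Kurtz-randomness plus the absence of $\Pi^1_1$-Kurtz-cupping above $\KO$. If this is not already in the literature, it would require a parallel adaptation of the cupping-avoidance arguments of \cite{cny07} in the Kurtz setting. A minor auxiliary point is checking that the $\Sigma^1_1$ set of $\Pi^1_1$-randoms is genuinely uncountable (so that Lemma \ref{lemma: simga11 joint lemma} applies), which follows because there are continuum many $\Pi^1_1$-random reals.
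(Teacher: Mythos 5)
Your proof follows essentially the same route as the paper: take $A_0$ to be the uncountable $\Sigma^1_1$ set of $\Delta^1_1$-traceable reals from Lemma~\ref{lemma: simpson low lemma}, take $A_1$ to be the $\Sigma^1_1$ set of $\Pi^1_1$-randoms, apply Lemma~\ref{lemma: simga11 joint lemma} to produce a low-for-$\Delta^1_1$-random $x$ that is cupped above $\KO$ by a $\Pi^1_1$-random $y$, and then invoke the cupping characterization of lowness for $\Pi^1_1$-randomness from~\cite{cny07}. The paper is equally terse about the Kurtz variant, leaving the analogous decomposition of lowness for $\Pi^1_1$-Kurtz-randomness to the reader, so the concern you flag is fair but is not a gap relative to the paper's own argument.
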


{\bf Remark:} Lemma \ref{lemma: simga11 joint lemma} can be used to answer Question 58 in \cite{Friedman:75*1} and Question 3 in \cite{Simpson:75}. Solutions were announced  by Friedman and Harrington, but never published. 

\section{Some notes on $\Delta^1_2$- and $\Sigma^1_2$-randomness}
Input by Yu. (October)

\subsection{Within $ZFC$}
The ground model for $\Sigma^1_2$-theory is $L_{\delta^1_2}$ where $\delta^1_2$ is the least ordinal which can not be $\Delta^1_2$-definable. We have Gandy-Spector theorem for $\Sigma^1_2$-sets over $L_{\delta^1_2}$. The generalized Turing jumps with $L_{\delta^1_2}$ are $\Pi^1_1$-singletons.

Within $ZFC$, very limited interesting results can be obtained for the randomness notions. The following result was proved by Kechris under $PD$ but it turns out to be a theorem under $ZFC$.
\begin{theorem}[Kechris \cite{Kechris73}]\label{theorem: kechris sigma12}
Given a measurable $\Sigma^1_2$ set $A\subseteq 2^{\omega}$, both the sets $\{p\in \mathbb{Q}\mid \mu(A)>p\}$ and $\{p\in \mathbb{Q}\mid \mu(A)\geq p\}$ are $\Sigma^1_2$.
\end{theorem}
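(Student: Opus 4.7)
The plan is to represent the $\Sigma^1_2$ set $A$ as an increasing transfinite union of $\omega_1$ analytic sets, compute $\mu(A)$ as a supremum, and then express the result by an existential quantification over countable-ordinal codes, which lands in $\Sigma^1_2$.

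First I would put $A$ in the standard normal form $A(x) \Leftrightarrow \exists y \, B(x,y)$ with $B \in \Pi^1_1$, and use the tree representation: there is a uniformly arithmetic family of trees $T_{x,y} \subseteq \omega^{<\omega}$ with $B(x,y) \Leftrightarrow T_{x,y}$ well-founded. For each $\alpha<\omega_1$ set $B_\alpha(x,y) \Leftrightarrow \mathrm{rank}(T_{x,y}) \le \alpha$; this is Borel uniformly in any real code $e$ for $\alpha$ (i.e., $e\in WO$, the $\Pi^1_1$ set of codes of well-orders of $\omega$). Then $A_\alpha(x) \Leftrightarrow \exists y\, B_\alpha(x,y)$ is analytic uniformly in $e$, and $A = \bigcup_{\alpha<\omega_1} A_\alpha$ is an increasing union of measurable sets. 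Since $A$ itself is measurable, continuity of measure from below yields $\mu(A)=\sup_\alpha \mu(A_\alpha)$, so
\[
 \mu(A)>p \ \Longleftrightarrow\ \exists e\in\omega^\omega\,\bigl(e\in WO\ \land\ \mu(A_{|e|})>p\bigr).
\]

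Next I would bound the complexity of the predicate ``$\mu(A_{|e|})>p$'' in $(e,p)$. By inner regularity of analytic sets, it is equivalent to: there is a Borel $C\subseteq A_{|e|}$ with $\mu(C)>p$. Quantifying over a Borel code $c$ (with $C = B_c$), the clause $\mu(B_c)>p$ is arithmetical in $(c,p)$, and $B_c \subseteq A_{|e|}$ unwinds to $\forall x\,(x\in B_c \rightarrow A_{|e|}(x))$, a universal implication whose antecedent is $\Delta^1_1$ and whose consequent is $\Sigma^1_1$ uniformly in $e$; so this clause is $\Pi^1_1$ in $(c,e)$. Thus ``$\mu(A_{|e|})>p$'' is of the form $\exists c\,(\Pi^1_1 \land \text{arithmetical})$, i.e., $\Sigma^1_2$. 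Substituting, $\mu(A)>p$ becomes $\exists e\,(\Pi^1_1 \land \Sigma^1_2)$, which after contracting real quantifiers is $\Sigma^1_2$. The ``$\ge$'' version then follows: $\mu(A)\ge p \Leftrightarrow \forall n\in\omega\,[\mu(A)>p-1/n]$ is a countable conjunction of $\Sigma^1_2$ statements, and absorbing $n$ into the inner existential via $(\omega^\omega)^\omega\cong\omega^\omega$ keeps it in $\Sigma^1_2$.

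The main obstacle is maintaining uniformity in the ordinal code $e$ throughout the chain: the Borel approximation $B_\alpha$, the $\Sigma^1_1$-index of $A_\alpha$, and the $\Sigma^1_2$ expression for its measure must all be uniformly effective in $e$ so that the outer $\exists e\in WO$ genuinely collapses to a single $\Sigma^1_2$ formula rather than leaking higher. This uniform Borel approximation of a $\Pi^1_1$ predicate by rank (via transfinite recursion on any code for $\alpha$) is a standard tool of effective descriptive set theory, but the quantifier-complexity bookkeeping has to be tracked carefully, and is what ultimately places the result at $\Sigma^1_2$ rather than lower.
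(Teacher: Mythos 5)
The paper states this result as a citation to Kechris without including a proof, so I can only evaluate your argument on its own terms; it has a serious gap at the central measure-theoretic step. You split $A$ into an increasing $\omega_1$-chain of analytic constituents $A_\alpha$ and then assert $\mu(A)=\sup_{\alpha<\omega_1}\mu(A_\alpha)$ ``by continuity of measure from below.'' But continuity from below is a statement about \emph{countably} indexed increasing unions; for $\omega_1$-indexed increasing unions of measurable sets it simply fails in general, and the measurability of $A$ alone does not rescue it (under CH one can write $2^\omega$ as an increasing $\omega_1$-union of countable, hence Borel and null, sets). More to the point, it fails for the very kind of decomposition you construct. Work in $V=L$ and let $B(x,y)$ say ``$y$ codes a well-founded model of $V=L$ containing $x$,'' a $\Pi^1_1$ predicate; then $A=\{x:\exists y\,B(x,y)\}$ is the set of constructible reals, which under $V=L$ is all of $2^\omega$, measure $1$, and is certainly measurable. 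However, for the canonical tree $T_{x,y}$, $\mathrm{rank}(T_{x,y})$ dominates the ordinal height of the model coded by $y$, so the constituent $A_\alpha=\{x:\exists y\,\mathrm{rank}(T_{x,y})\le\alpha\}$ is contained in $2^\omega\cap L_{\beta}$ for some countable $\beta$ depending on $\alpha$, hence is countable and null for every $\alpha<\omega_1$. Thus $\sup_\alpha\mu(A_\alpha)=0\ne 1=\mu(A)$, and the $\Sigma^1_2$ predicate you write down is extensionally wrong for this $A$.

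The lesson is that the equality $\mu(A)=\sup_\alpha\mu(A_\alpha)$, for a well-chosen representation, is not a piece of routine measure theory but is essentially the content of Kechris's theorem; establishing it requires descriptive-set-theoretic input specific to $\Sigma^1_2$ (its Suslin/Shoenfield tree structure, how measurability interacts with that structure, and some effective boundedness or absoluteness argument that your $L$-level example shows is not free). Your complexity bookkeeping downstream of that claim (the $\Pi^1_1$-ness of $B_c\subseteq A_{|e|}$, contraction of real quantifiers, closure of $\Sigma^1_2$ under number quantification for the $\ge$ version) is all fine, but it rests on the one step that has been assumed rather than proved, and that step is exactly where the real work lies.
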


Again the following result was proved by Kechris under $PD$ which is unnecessary.
\begin{proposition}[Kechris \cite{Kechris73}]\label{proposition: appoxmiating simga12 sets}
Suppose that $A\subseteq 2^{\omega}$ is a $\Sigma^1_2$ set with a positive measure, then for any $n$, there is a $\Delta^1_2$  perfect set $B_n\subseteq A$ so that $\mu(A)<\mu(B_n)+2^{-n}$. Moreover, if $\mu(A)$ is $\Delta^1_2$, then $\{(n,x)\mid x\in B_n\wedge n\in \omega\}$ is $\Delta^1_2$.
\end{proposition}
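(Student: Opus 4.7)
The plan is to combine Kechris's computation of the complexity of $\mu(A)$ (Theorem~\ref{theorem: kechris sigma12}) with the Suslin representation of $\Sigma^1_2$ sets, to effectivize the classical inner regularity of Lebesgue measure within the $\Delta^1_2$ pointclass. Since $A$ is the projection of a $\Pi^1_1$ set, the Kond\^o scale on $\Pi^1_1$ propagates to a $\Sigma^1_2$-scale on $A$, giving a tree $T\subseteq(2\times\omega_1)^{<\omega}$ with $A=p[T]$ and $\Sigma^1_2$-uniformization for relations on $A$.

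First I would set $A_{\alpha}=p[T\cap(2\times\alpha)^{<\omega}]$ for each countable ordinal $\alpha$; these are monotone in $\alpha$, each analytic (indeed $\Sigma^1_1$ in a code of $\alpha$), and $A=\bigcup_{\alpha<\omega_1}A_{\alpha}$. By $\sigma$-continuity of measure, $\mu(A_{\alpha})\nearrow\mu(A)$, so for each $n$ there is a least countable $\alpha_n$ with $\mu(A_{\alpha_n})>\mu(A)-2^{-n-1}$. For each such $\alpha_n$, effective inner regularity for analytic sets produces a closed $K_n\subseteq A_{\alpha_n}$ with $\mu(K_n)>\mu(A_{\alpha_n})-2^{-n-1}$, whose tree code is $\Delta^1_1$-uniform in a $\Sigma^1_1$-code of $A_{\alpha_n}$. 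Taking $B_n$ to be the perfect kernel of $K_n$ (removing the countably many isolated points, which contribute nothing to measure) yields a perfect closed $B_n\subseteq A$ with $\mu(B_n)>\mu(A)-2^{-n}$. The code of $B_n$ is extracted from a non-empty $\Sigma^1_2$ set of admissible codes via scale-uniformization, which makes $B_n$ a $\Sigma^1_2$-singleton, hence $\Delta^1_2$. This proves the first assertion.

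For the moreover clause, when $\mu(A)\in\Delta^1_2$ the relation ``$q<\mu(A)-2^{-n-1}$'' on rationals is $\Delta^1_2$-decidable uniformly in $n$, so all of the above selections become $\Delta^1_2$-uniform in $n$, giving that $\{(n,x):x\in B_n\}$ is $\Delta^1_2$. The bookkeeping that converts a $\Sigma^1_2$-valued uniformizing function $n\mapsto c_n$ into a $\Delta^1_2$ relation in $(n,x)$ goes through because membership ``$x\in[T_{c_n}]$'' is both $\Sigma^1_2$ (``$\exists c\,(f(n)=c\wedge x\in[T_{c}])$'') and $\Pi^1_2$ (``$\forall c\,(f(n)=c\to x\in[T_{c}])$''), using functionality of $f$. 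The main obstacle I anticipate is tracking complexity in the ordinal approximation step, specifically verifying that the function $\alpha\mapsto\mu(A_{\alpha})$ is $\Delta^1_2$-computable from a code of $\alpha$; this is where the hypothesis $\mu(A)\in\Delta^1_2$ genuinely enters, since one needs to compare $\mu(A_{\alpha})$ against $\mu(A)-2^{-n-1}$ in a $\Delta^1_2$ way to pin down $\alpha_n$ uniformly.
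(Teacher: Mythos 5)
The paper states this proposition without proof (it is attributed to Kechris and deferred to the forthcoming Chong--Yu monograph), so there is no in-paper argument to compare against; I therefore assess your sketch on its own. The scaffolding you propose -- the Shoenfield representation $A=p[T]$ with $T$ on $2\times\omega_1$, the stratification $A_\alpha=p[T\cap(2\times\alpha)^{<\omega}]$, effective inner regularity for analytic sets, passage to the perfect kernel, and $\Sigma^1_2$-uniformization to pin down a code -- is the classical Kechris route, and the ``functional graph is both $\Sigma^1_2$ and $\Pi^1_2$'' argument for the moreover clause is also the standard one.

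There is, however, a genuine gap at the very first measure-theoretic step. You invoke ``$\sigma$-continuity'' to get $\mu(A_\alpha)\nearrow\mu(A)$, but $A$ is an \emph{uncountable} increasing union $\bigcup_{\alpha<\omega_1}A_\alpha$, and $\sigma$-continuity of Lebesgue measure covers only countable increasing unions; in ZFC an increasing $\omega_1$-chain of null Borel sets can have a non-null union, so the claim does not follow from general measure theory. The fact you actually need -- that for a Lebesgue-measurable $\Sigma^1_2$ set $A$ and every $\varepsilon>0$ there is $\alpha<\omega_1$ with $\mu(A_\alpha)>\mu(A)-\varepsilon$ -- is a real theorem: one first uses inner regularity to find a compact $K\subseteq A$ with $\mu(K)>\mu(A)-\varepsilon$ and must then argue that such a $K$ is already contained in a single $A_\alpha$, which is a boundedness argument for the $\omega_1$-scale carried by $T$ when restricted to a compact subset of $A$ (equivalently, a Shoenfield-absoluteness argument). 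This is precisely the descriptive-set-theoretic content that made the proposition nontrivial and was originally derived from PD; your sketch elides it. A smaller but related slip is that the relation you would like to hand to $\Sigma^1_2$-uniformization -- ``$c$ codes a perfect tree with $[T_c]\subseteq A$ and $\mu([T_c])>\mu(A)-2^{-n}$'' -- has a $\Pi^1_2$ first conjunct when $A$ is $\Sigma^1_2$, so it is not directly in the domain of $\Sigma^1_2$-uniformization. The uniformization must instead be run over a $\Sigma^1_2$ set of \emph{pairs}: a wellorder code $w$ for $\alpha$ together with a tree code $c$ with $[T_c]\subseteq A_{|w|}$ (the latter now being $\Pi^1_1$ in $(w,c)$) and the measure threshold met. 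You correctly locate the role of the hypothesis $\mu(A)\in\Delta^1_2$ in making the threshold comparison $\Delta^1_2$; but without rephrasing the uniformization against the $A_\alpha$ filtration as above, the selection of the $B_n$ does not go through at the complexity you claim.
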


An immediate conclusion of Proposition \ref{proposition: appoxmiating simga12 sets} is:
$$\Delta^1_2\mbox{-randomness}=\Delta^1_2\mbox{-ML-randomness}=\Pi^1_2\mbox{-randomness}.$$

By a similar enumeration over $L_{\delta^1_2}$, we have a universal $\Sigma^1_2$-ML-test. 
By a forcing argument over $\Delta^1_2$-closed positive measure sets, we may show that the collection of $\Delta^1_2$-random reals is not $\mathbf{\Sigma}^0_3$. So $\Delta^1_2$-randomness is different than $\Sigma^1_2$-ML-randomness.

Fix a $\Pi^1_2$-tree $T$ presenting a $\Pi^1_2$-closed set only containing $\Sigma^1_2$-ML-random reals.  The leftmost path of $T$ can be covered by a generalized $\Sigma^1_2$-ML-test. So $\Sigma^1_2$-ML-randomness is different with strong $\Sigma^1_2$-ML-randomness.

\subsection{Outside $ZFC$}

If $V=L$, then for every real $x$, the set $\{y\mid x\not\leq_{\Delta^1_2}y\}$ is countable and so null. But if we believe that $L$ must be small, then we have regular results as following.
\begin{theorem}[Kechris \cite{Kechris73}]\label{proposition: delta12 ordering is null}
 Suppose that every $\Sigma^1_2$ set of reals is measurable. If $x$ is a real so that $\{y\mid x\leq_{\Delta^1_2}y\}$ has positive measure, then $x$ is $\Delta^1_2$;
\end{theorem}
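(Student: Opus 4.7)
The strategy is the classical Lebesgue-density majority-vote argument: if a positive-measure set of oracles $y$ computes $x$ via $\Delta^1_2$, then fix a single reduction, pass to a clopen on which its success set has density $>3/4$, and recover each bit of $x$ by comparing measures. The computability of the comparison is delivered by Theorem \ref{theorem: kechris sigma12}.

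First I would reduce to a single reduction procedure. A $\Delta^1_2$ reduction from $y$ to $x$ (without real parameters) is coded by a pair $(\varphi,\psi)$ with $\varphi$ a $\Sigma^1_2$-formula, $\psi$ a $\Pi^1_2$-formula, agreeing on every $(n,y)$; there are only countably many such pairs. Hence $A_x = \{y : x \leq_{\Delta^1_2} y\} = \bigcup_\Phi P_\Phi$, where $\Phi$ ranges over this countable list and $P_\Phi = \{y : \Phi^y = x\}$. Since $\mu(A_x) > 0$, some $P_\Phi$ has positive measure. For this fixed $\Phi$, the sets $Q_i^n := \{y : \Phi^y(n) = i\}$ ($i \in \{0,1\}$) are $\Delta^1_2$, and hence measurable by hypothesis; therefore $P_\Phi = \bigcap_n Q_{x(n)}^n$ is measurable as a countable intersection of measurable sets.

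Next I would apply Lebesgue density to $P_\Phi$: choose a basic clopen cylinder $[\sigma] \subseteq \cantor$ with $\mu(P_\Phi \cap [\sigma]) > \tfrac{3}{4}\, \mu([\sigma])$. For each $n$, $P_\Phi \cap [\sigma] \subseteq Q_{x(n)}^n \cap [\sigma]$ yields $\mu(Q_{x(n)}^n \cap [\sigma]) > \tfrac{3}{4}\, \mu([\sigma])$, while $Q_{1-x(n)}^n \cap P_\Phi = \emptyset$ yields $\mu(Q_{1-x(n)}^n \cap [\sigma]) < \tfrac{1}{4}\,\mu([\sigma])$. Consequently
\[
  x(n) = i \iff \mu(Q_i^n \cap [\sigma]) > \tfrac{1}{2}\,\mu([\sigma]).
\]

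Finally I would read off $\Delta^1_2$-definability of $x$ using Theorem \ref{theorem: kechris sigma12}. Uniformly in $n$, the set $Q_i^n \cap [\sigma]$ is $\Sigma^1_2$ (indeed $\Delta^1_2$), so by that theorem the condition $\mu(Q_i^n \cap [\sigma]) > q$ is $\Sigma^1_2$ uniformly in $n$ for fixed rational $q = \tfrac{1}{2}\mu([\sigma])$. Hence both $\{n : x(n) = 1\}$ and $\{n : x(n) = 0\}$ are $\Sigma^1_2$, and since they partition $\NN$, $x$ is $\Delta^1_2$. The only potential obstacle is subtler measurability of $P_\Phi$, but since $P_\Phi$ is a countable intersection of $\Delta^1_2$ (hence measurable) sets, no extra work is needed beyond the hypothesis on $\Sigma^1_2$-measurability.
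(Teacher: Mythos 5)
The paper does not actually prove this theorem; it quotes it directly from Kechris and then remarks only on the axiomatic strength (Stern's observation that $(\omega_1)^L<\omega_1$ suffices). So there is no in-paper proof to compare against. Your argument is the standard Sacks--Kechris ``majority vote via Lebesgue density'' proof, and its structure is correct: decompose $\{y : x\le_{\Delta^1_2} y\}$ into the countably many success sets $P_\Phi$ of fixed (lightface) $\Delta^1_2$ reduction procedures, find one of positive measure, verify it is measurable via the $\Sigma^1_2$-measurability hypothesis (noting that $\Pi^1_2$ sets are complements of $\Sigma^1_2$ sets and hence also measurable), pass to a dyadic cylinder $[\sigma]$ of relative density $>3/4$, and recover $x(n)$ by the measure-majority rule. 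The choice $q=\tfrac12\mu([\sigma])$ is a dyadic rational, so Theorem~\ref{theorem: kechris sigma12} applies.

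Two points of precision. First, the parenthetical ``$Q^n_i$ is $\Sigma^1_2$ (indeed $\Delta^1_2$)'' is not quite right and, more importantly, the definition of $Q^n_i$ has to be chosen with some care or one side of the argument fails. If the reduction is presented as a $\Sigma^1_2$ formula $\varphi$ together with a $\Pi^1_2$ formula $\psi$, take $Q^n_1=\{y:\varphi(n,y)\}$ and $Q^n_0=\{y:\neg\psi(n,y)\}$. These need not partition $2^\omega$ and are in general only $\Sigma^1_2$, not $\Delta^1_2$; but on $P_\Phi$ they agree with the graph of $\Phi^y$, which is all your density inequalities use. The crucial point is that \emph{both} $Q^n_1$ and $Q^n_0$ are $\Sigma^1_2$ with this choice, so Theorem~\ref{theorem: kechris sigma12} makes $\mu(Q^n_i\cap[\sigma])>q$ a $\Sigma^1_2$ relation for each $i$ uniformly in $n$, giving $\{n:x(n)=1\}$ and $\{n:x(n)=0\}$ both $\Sigma^1_2$ and hence $x\in\Delta^1_2$. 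Had you taken $Q^n_0=\{y:\neg\varphi(n,y)\}$, only $\{n:x(n)=1\}$ would come out $\Sigma^1_2$ and the argument would stall at $x\in\Sigma^1_2$. Second, Theorem~\ref{theorem: kechris sigma12} as quoted is about a single $\Sigma^1_2$ set, whereas you need the measure comparison to be $\Sigma^1_2$ uniformly in $n$; that uniformity is implicit in Kechris' proof over $L_{\delta^1_2}$ but should be stated as a hypothesis you are relying on.
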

Theorem \ref{proposition: delta12 ordering is null} was proved by Kechris under $PD$. Stern \cite{Stern75} observed that it can be proved under the much weaker assumption: by Solovay \cite{Solovay70}, the assumption in the Theorem is a consequence of ``$(\omega_1)^L<\omega_1$".

\begin{definition}\index{Randomness!$L$-}
A real $x$ is {\em $L$-random} if for any Martin-L\" of test $\{U_n\}_{n\in \omega}$ in $L$, $x\not\in \bigcap_{n\in \omega}U_n$.
\end{definition}

$L$-randomness was essentially introduced by Solovay \cite{Solovay70}. The following fact is  obvious.
\begin{proposition}\label{proposition: basic fact about lrandom}
The followings are equivalent:
\begin{itemize}
 \item[(i)] $x$ is $L$-random;
 \item[(ii)] $x$ does not belong to any Borel null set coded in $L$;
 \item[(iii)] $x$ is a $\mathbb{R}=(\mathbf{T},\leq)$-generic real over $L$, where $\mathbb{R}$ is random forcing.
\end{itemize}
\end{proposition}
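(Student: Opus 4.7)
I would prove the three equivalences in a short cycle (i) $\Leftrightarrow$ (ii) and (ii) $\Leftrightarrow$ (iii), since each pair is essentially a translation between two equivalent ways of measuring ``largeness'' inside $L$.

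For (ii) $\Rightarrow$ (i), the direction is immediate: if $(U_n)_{n\in\NN}$ is a Martin-L\"of test lying in $L$, then $\bigcap_n U_n$ is a $G_\delta$ null set whose Borel code lives in $L$, so a real satisfying (ii) automatically avoids it. For (i) $\Rightarrow$ (ii), I would show that every Borel null set $N$ with code in $L$ admits an $L$-Martin-L\"of cover: working inside $L$ and using regularity of Lebesgue measure on Borel sets (which $L$ can carry out from the Borel code of $N$), produce a uniformly c.e.\ sequence of open sets $U_n \supseteq N$ with $\leb(U_n) \le 2^{-n}$. Then $(U_n)\in L$ is an ML-test, and $L$-randomness of $x$ gives $x \notin \bigcap_n U_n \supseteq N$. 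The routine part here is checking that the recursion producing $(U_n)$ from the Borel code is absolute for $L$; nothing surprising arises.

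For the equivalence with random-forcing genericity, recall that a condition is (an equivalence class of) a positive measure Borel set, ordered by almost-inclusion. Given $x$ satisfying (ii), define $G_x = \{C \in L : C$ is a condition with $x\in C\}$. This is a filter: if $C_1,C_2 \in G_x$ then $C_1\cap C_2$ has positive measure, for otherwise $C_1 \cap C_2$ would be an $L$-coded Borel null set containing $x$. To see $G_x$ meets every dense set $D\in L$, use the c.c.c.\ of random forcing (provable in $L$) to pick a maximal antichain $A\subseteq D$ with $A\in L$; then $A$ is countable and $\bigcup A$ has full measure, so its complement is an $L$-coded null Borel set, which $x$ avoids. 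Hence $x\in C$ for some $C\in A\subseteq D$, giving $G_x \cap D \ne \emptyset$. Conversely, if $x$ is $\mathbb{R}$-generic over $L$ and $N$ is a Borel null set coded in $L$, then $D_N = \{C : C\cap N = \emptyset\}$ is dense in $L$ (remove $N$ from any positive-measure condition), so $G_x$ meets $D_N$, and hence $x\notin N$.

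The main subtlety, such as it is, lies in the absoluteness statements that make the argument go through inside $L$: namely that (a) Lebesgue measure of $L$-coded Borel sets, and the effective covering construction in (i) $\Rightarrow$ (ii), are computed the same way in $L$ and in $V$, and (b) the c.c.c.\ of random forcing and the maximal-antichain refinement used in (ii) $\Rightarrow$ (iii) are available in $L$. Both are standard consequences of $L \models \mathrm{ZFC}$ together with the absoluteness of Borel codes, and I would simply cite them rather than reprove them.
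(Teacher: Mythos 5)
Your proof is correct. The paper offers no argument here at all --- it simply labels the proposition ``obvious'' and moves on --- so there is no paper route to compare against; what you have written is precisely the standard folklore proof that a set-theorist would reconstruct when pressed. Your cycle (ii)~$\Leftrightarrow$~(i) via ML-tests and (ii)~$\Leftrightarrow$~(iii) via the c.c.c.\ and maximal antichains is the right decomposition, and the two places you flag as needing care (absoluteness of the Borel code to open-cover construction, and availability of the c.c.c.\ plus antichain refinement inside $L$) are indeed the only nontrivial ingredients. One small point worth being explicit about, since it is the crux of why $G_x$ is even a filter: upward closure of $G_x$ uses (ii) again --- if $C \in G_x$ and $C \subseteq C'$ modulo a null set, then $C \setminus C'$ is an $L$-coded null Borel set, so $x \notin C\setminus C'$ and hence $x \in C'$ --- you state compatibility of conditions but not closure under $\leq$, and a reader checking the argument will want that sentence. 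Likewise, ``ML-test in $L$'' should be read as ``ML-test relative to some oracle in $L \cap 2^\omega$,'' which makes your covering step literal rather than heuristic.
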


The following theorem is an analog of $\Pi^1_1$-randomness theory.
\begin{theorem}[Stern \cite{Stern75}]\label{theorem: stern theorem characterizing sigma12 random}
For any real $x$, $x$ is $\Delta^1_2$-random and $\delta^{1,x}_2=\delta^1_2$ if and only if $x$ is $L$-random;
\end{theorem}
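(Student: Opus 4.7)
The plan is to follow the template of characterizing higher randomness notions via an ordinal condition on the relevant ground model, paralleling the $\Pi^1_1$-setting in which $\Pi^1_1$-randomness coincides with the conjunction of $\Delta^1_1$-randomness and $\omega_1^x=\OCK$, with $L_{\delta^1_2}$ replacing $L_{\OCK}$ and $L$-randomness replacing $\Pi^1_1$-randomness. The two ingredients substituting for the $\Pi^1_1$ machinery are the Gandy-Spector enumeration of $\Sigma^1_2$ over $L_{\delta^1_2}$ (which gave the universal $\Sigma^1_2$-ML-test noted above) and the Kechris approximation, Proposition~\ref{proposition: appoxmiating simga12 sets}, permitting $\Delta^1_2$-perfect approximations to measurable $\Sigma^1_2$-sets from below.

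For $(\Leftarrow)$, assume $x$ is $L$-random. To see that $x$ is $\Delta^1_2$-random, fix any $\Sigma^1_2$-ML-test $\{U_n\}$. Its $\Pi^1_2$ intersection $N=\bigcap_n U_n$ has a $\Pi^1_2$-definition absolute between $V$ and $L$ by Shoenfield, and Theorem~\ref{theorem: kechris sigma12} makes the measures of the $U_n$ absolute, so $N$ is null also in $L$. Outer regularity inside $L$ produces a $G_\delta$ null cover $B\supseteq N$ with Borel code in $L$; $L$-randomness gives $x\notin B$, hence $x\notin N$. For $\delta^{1,x}_2=\delta^1_2$, I would use Proposition~\ref{proposition: basic fact about lrandom}(iii): $L$-randomness means random-forcing genericity over $L$. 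Random forcing preserves all ordinals and is $\omega^\omega$-bounding, and for each $\alpha$ with $\delta^1_2\leq \alpha<\omega_1^L$ the set of $y$ which $\Delta^1_2(y)$-defines $\alpha$ is null with Borel code in $L$; an $L$-random $x$ avoids all such sets, forcing $\delta^{1,x}_2=\delta^1_2$.

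For $(\Rightarrow)$, assume $x$ is $\Delta^1_2$-random with $\delta^{1,x}_2=\delta^1_2$ and, for contradiction, that $x\in B$ for a Borel null set $B$ with code $c\in L$. The crux is to reflect $c$ into $L_{\delta^1_2}$. I would take a $\Sigma_1$-elementary countable substructure of a sufficiently large $L_\alpha$ containing $\{c,\delta^1_2\}$ together with $\Sigma^1_2$-witnesses relative to $x$, Mostowski-collapse it, and use the hypothesis $\delta^{1,x}_2=\delta^1_2$ to argue that $\delta^1_2$ is fixed by the collapse; the collapsed $c'\in L_{\delta^1_2}$ is then $\Delta^1_2$ and still decodes a Borel null cover of $x$. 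Converting $c'$ via outer regularity into a $\Sigma^1_2$-ML-test produces a contradiction with $\Delta^1_2$-randomness of $x$. The main obstacle is precisely this reflection step: tracing the absolute Borel-decoding of $c$ down below $\delta^1_2$ using only the $\Sigma^1_2$-ordinal information of $x$ requires combining Shoenfield absoluteness with a careful condensation argument in the $L$-hierarchy, and it is here that the hypothesis $\delta^{1,x}_2=\delta^1_2$ does the essential work, mirroring the analogous use of $\omega_1^x=\OCK$ in the $\Pi^1_1$-case.
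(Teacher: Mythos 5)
The paper does not actually prove this theorem: it is stated with the attribution to Stern~\cite{Stern75}, and the section closes with the remark that ``all the proofs can be found in the forthcoming book~\cite{Chong.Yu:nd}.'' So there is no in-paper proof to compare against, and your attempt has to be judged on its own. Your conceptual framing is sound -- you correctly identify the $\Pi^1_1$-analog as the template, and you correctly single out the Gandy-Spector representation of $\Sigma^1_2$ over $L_{\delta^1_2}$ and the Kechris approximation (Proposition~\ref{proposition: appoxmiating simga12 sets}) as the tools that must replace the $\Pi^1_1$ machinery. But the detailed execution has genuine gaps in all three steps.

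For the $(\Leftarrow)$ direction, ``$L$-random implies $\Delta^1_2$-random,'' your outer-regularity detour through $L$ is both shakier and longer than necessary. The shakiness is that you invoke measurability of $N$ inside $L$ without justification; since $L$ has a $\Delta^1_2$ wellordering of the reals, general $\Pi^1_2$ sets need not be $L$-measurable, so one must use the test structure of $N$, not just its lightface complexity. The shorter correct route is to notice that a $\Sigma^1_2$-ML-test is automatically an $L$-coded Borel test: the relation $\{\langle n,\sigma\rangle : [\sigma]\subseteq U_n\}$ is a $\Sigma^1_2$ set of integers, and every $\Sigma^1_2$ set of integers is constructible by Shoenfield, so the whole test lies in $L$ and an $L$-random real avoids its intersection directly, without any appeal to outer regularity. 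For ``$L$-random implies $\delta^{1,x}_2=\delta^1_2$,'' your assertion that ``the set of $y$ which $\Delta^1_2(y)$-defines $\alpha$ is null with Borel code in $L$'' for each $\alpha\ge\delta^1_2$ is precisely what needs proving and cannot be quoted; as written it is circular. The standard way to get this inequality uses the forcing reformulation from Proposition~\ref{proposition: basic fact about lrandom}(iii) together with the fact that random forcing is ccc and $\omega^\omega$-bounding, which prevents $L[x]$ from adding any $\Sigma_1$-over-$L_{\delta^1_2}$ witnesses that were not already in $L$; the ``null Borel set per bad $\alpha$'' is then a consequence, not an input.

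For the $(\Rightarrow)$ direction, you have correctly located the crux -- pulling an $L$-Borel code of a null cover of $x$ down into $L_{\delta^1_2}$ -- and you honestly flag it as an obstacle rather than carry it out. That is the step where $\delta^{1,x}_2=\delta^1_2$ must be used, and it is the hard content of Stern's theorem; the Skolem/Mostowski sketch you give does not explain why the collapse fixes $\delta^1_2$, nor why a witness in $L_{\delta^1_2}[x]$ (which is all the $\Sigma^1_2(x)$ reflection gives directly) can be replaced by one in $L_{\delta^1_2}$. Until that is supplied, the proof is incomplete. In summary: correct orientation and correct toolbox, but a non-argument in $(\Leftarrow)$ part two and an acknowledged hole in $(\Rightarrow)$, neither of which the paper helps you fill since the paper contains no proof of its own.
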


If $L$ is small, then $\Sigma^1_2$-randomness is the same as $L$-randomness.
\begin{theorem}[Stern \cite{Stern75}]\label{theorem: largest sigma12 null set}
Assume that the set of  non-$L$-random reals is null. Then the set of non-$L$-random reals is the largest null $\Sigma^1_2$ set and so $\Sigma^1_2$-randomness is the same as $L$-randomness.
\end{theorem}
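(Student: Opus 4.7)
The plan is to establish the stronger assertion that the set of non-$L$-random reals, call it $N_L$, is itself a null $\Sigma^1_2$ set that contains every null $\Sigma^1_2$ set (hence is the largest such set). Granting this, a real is $\Sigma^1_2$-random iff it avoids every null $\Sigma^1_2$ set iff it avoids $N_L$ iff it is $L$-random, giving the second conclusion.

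First I would verify that $N_L\in\Sigma^1_2$: a real $x$ lies in $N_L$ iff there is a code $c\in L$ for some Martin-L\"of test $(U_n)_{n\in\NN}$ with $x\in\bigcap_n U_n$. The predicate ``$c\in L$'' is $\Sigma^1_2$ via the canonical $<_L$-ordering, a standard consequence of Shoenfield absoluteness, and the rest is arithmetic in $c$ and $x$. The hypothesis then says that $N_L$ is null.

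The main step is to show that every null $\Sigma^1_2$ set $N$ is contained in $N_L$, which I would prove contrapositively by a random-forcing argument over $L$. Let $\phi$ be a $\Sigma^1_2$ formula defining $N$ and let $x$ be $L$-random; by Proposition~\ref{proposition: basic fact about lrandom}, the filter $G_x=\{B\in\mathbb{B}^L:\,x\in B\}$ on the random algebra $\mathbb{B}^L$ of $L$ is $L$-generic. Assume toward a contradiction that $x\in N$. By Shoenfield absoluteness, $L[x]\models\phi(x)$, so by the forcing theorem some $B\in G_x$ forces $\phi(\dot{x})$ over $L$, where $\dot{x}$ is the canonical name for the generic real. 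Then for every $L$-random real $y\in B$ the filter $G_y$ is $L$-generic and contains $B$, so $L[y]\models\phi(y)$, and by Shoenfield absoluteness again $y\in N$. But the set of $L$-random reals in $B$ has Lebesgue measure $\mu(B)>0$, because its complement inside $B$ is contained in $N_L$, which is null by hypothesis. This forces $\mu(N)\ge\mu(B)>0$, contradicting $\mu(N)=0$; hence $x\notin N$.

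The main difficulty will be orchestrating the absoluteness steps with the forcing theorem. Shoenfield $\Sigma^1_2$-absoluteness is used in both directions: downward from $V$ to $L[x]$, so that $V\models\phi(x)$ yields the forcing statement; and upward from $L[y]$ to $V$, to convert the conclusion of the forcing theorem at another generic $y$ into $y\in N$. The null-$N_L$ hypothesis is invoked exactly once, in identifying the $L$-random reals inside $B$ as a full-measure subset of $B$, which is what turns the forcing statement into a genuine positive-measure subset of $N$ and produces the contradiction.
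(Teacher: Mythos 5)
The paper does not actually give a proof of this theorem; it is stated with a citation to Stern~\cite{Stern75}, and the role it plays in the section is simply to be quoted. So there is no in-paper argument to compare against. On its own merits, your reconstruction is correct and is essentially the standard proof: identify $N_L$ as a $\Sigma^1_2$ set (via the $\Sigma^1_2$-ness of ``$c\in L$'' for reals~$c$); invoke the hypothesis that $N_L$ is null; and show $N\subseteq N_L$ for every null $\Sigma^1_2$ set~$N$ by the random-forcing argument, combining the correspondence ``$x$ is $L$-random iff $G_x$ is $\mathbb{B}^L$-generic over $L$'' (Proposition~\ref{proposition: basic fact about lrandom}(iii)) with Shoenfield $\Sigma^1_2$-absoluteness in both directions and a positive-measure-of-generics count. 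One small attribution slip: the $\Sigma^1_2$-ness of the predicate ``$c\in L$'' comes from the G\"odel condensation argument (coding countable well-founded models of $V=L$), not from Shoenfield absoluteness; Shoenfield enters later, when you move $\phi$ between $V$ and $L[x]$ and between $L[y]$ and $V$. This does not affect the correctness of the argument.
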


Solovay \cite{Solovay70} proves that the set of  non-$L$-random reals is null if and only if every $\Sigma^1_2$-set is measurable. So if every $\Sigma^1_2$-set is measurable, then strong $\Sigma^1_2$-ML randomness is different than $\Sigma^1_2$-randomness. This sheds some light on the corresponded $\Pi^1_1$-randomness problem.

\subsection{Lowness}
Very little  is known.

If $V=L$, then lowness for $\Delta^1_2$-randomness=lowness for $\Sigma^1_2$-ML-randomness=$\Delta^1_2$-ness. But they should not be treated as ``regular results".

Note that every Sacks generic real is low for $L$-random. So if every $\mathbf{\Sigma}^1_2$-set is measurable, then every Sacks generic real is low for $\Sigma^1_2$-random. Actually, they are precisely those reals that are ``constructibly traceable" under certain assumptions. 

\subsection{Within $L$}
Within $L$, nothing is interesting for higher-up randomness notions. The main point is that there is a $\Delta^1_2$-well ordering over reals in $L$. So for $n\geq 2$, to study $\Sigma^1_n$-randomness, we have to appeal to some axioms to make the universe   ``regular".

All the proofs  can be found in the forthcoming book \cite{Chong.Yu:nd}. Draft available on Yu's web site.

\newpage

\part{Complexity of Equivalence relations}

\section{Complexity of arithmetical equivalence relations}

\def \Intalg {\text{\it Intalg }}

Sy Friedman, Katia Fokina, Andr\'e Nies worked  Vienna, Jan.\ 2012.  This research also  involves  discussions at the  Oberwolfach February meeting with  D. Cenzer, J.\ Knight, J.\  Liu, V. Harizanov, A.  Nies. More recently (April 2012) Russell Miller and  Selwyn Ng  joined in this line of research, along with Nies' MSc student Egor Ianovski.

For equivalence relations $E,F$ with domain $\w$, we write $E \le_1 F$ if there is a computable 1-1 function  $g$ such that $xEy \LR g(x) F g(y)$.

\subsection{There is a $1$-complete $\PI 1$ equivalence relation}

\mbox{}

We begin with some examples of $\PI 1$ equivalence relations. \bi \item  Elementary equivalence of automatic structures for the same finite signature (according to Khoussainov, known to be undecidable by some Russian result); the same with the extended language allowing~$\ex^\infty$. 

 \item Isomorphism of automatic equivalence structures/ trees of height 2. This is $\PI 1$ complete in the set sense by Kuske, Liu and Lohrey    (TAMS, to appear), but not known to be $\PI 1$ complete for eqrels.

\item If $f$ is a binary computable function then let $E_fxy \lra \fa i \, f(x,i) = f(y,i)$, which clearly  is $\PI 1$. Ianovski, Miller, Nies and Ng~\cite{Ianovski.Miller.etal:nd} have shown that every $\PI 1$ eqrel is of  this form. This  contrasts with    Marchenkov's result  (1970s) that there is no universal   negative enumeration (Reference?). \ei

\begin{thm}  There is a $\PI 1$ equivalence relation $E$ such that 

\n $G \le_1 E$  for each  $\PI 1$ equivalence relation $G$.
\end{thm}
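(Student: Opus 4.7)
The plan is to exploit the characterization stated just above the theorem---every $\Pi^0_1$ equivalence relation has the form $E_f$ for some total computable binary $f$, where $E_f(x,y) \iff \forall i\, f(x,i)=f(y,i)$---and to assemble countably many such $f$'s into a single total function whose ``columns'' 1-reduce every $\Pi^0_1$ equivalence relation.

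First I would extract from the proof of the cited result a uniform procedure $e \mapsto f_e$ that produces, from an index $e$ for a $\Pi^0_1$ binary relation $R_e$, an index of a \emph{total} computable binary function $f_e$ satisfying $E_{f_e} = R_e$ whenever $R_e$ is an equivalence relation. If the construction of Ianovski--Miller--Nies--Ng does not automatically yield a total $f_e$ when $R_e$ is not an equivalence relation, I would fix this by a clocked completion, setting the output to a fresh marker $\langle 1, e, x, i\rangle$ whenever the canonical computation overruns, and using padding (indexing by pairs $(e,n)$ with the clock loosening as $n$ grows) so that every $\Pi^0_1$ equivalence relation $G$ still has some index $e$ in our enumeration with $E_{f_e} = G$.

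Next, define $F \colon \omega \times \omega \to \omega$ by $F(\langle e, x\rangle, i) = \langle 0, e, f_e(x,i)\rangle$, and set $E := E_F$. By construction $F$ is total computable, so $E$ is $\Pi^0_1$; it is automatically an equivalence relation, being the intersection over $i$ of the kernels of the maps $u \mapsto F(u,i)$. Because the first two coordinates of $F(\langle e, x\rangle, i)$ are $(0, e)$, different columns $\{\langle e, x\rangle : x\in\omega\}$ are $E$-separated. For universality, given a $\Pi^0_1$ equivalence relation $G$, pick an index $e$ with $R_e = G$ as above; the map $\iota_e \colon x \mapsto \langle e, x\rangle$ is $1$-$1$ and computable, and
\[ \iota_e(x)\, E\, \iota_e(y) \iff \forall i\, f_e(x,i)=f_e(y,i) \iff x\, G\, y,\]
witnessing $G \le_1 E$.

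The main obstacle is the first step: making the enumeration $e \mapsto f_e$ simultaneously uniform, total, and faithful, in the sense that $E_{f_e} = R_e$ whenever $R_e$ is an equivalence relation. If the cited proof already supplies this directly, nothing further is needed; otherwise the padding-plus-clocking completion is delicate, since the ``default'' markers must not accidentally conflate elements within a column, while the padding must be rich enough so that every $\Pi^0_1$ equivalence relation has an index in the enumeration at which the clock is generous enough for the canonical computation to fit everywhere.
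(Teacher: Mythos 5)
Your plan shares the paper's high-level architecture---a disjoint sum of ``columns'' indexed by $e$---but the route through the $E_f$-characterization hits exactly the obstacle you flag, and the clocking-plus-padding fallback cannot close it. For $E_F$ to be defined you need every $f_e$ total, and for universality you need $E_{f_e}=R_e$ whenever $R_e$ is a $\Pi^0_1$ equivalence relation. The Ianovski--Miller--Nies--Ng characterization is stated one relation at a time; it does not automatically yield a single total computable $P$ with $P(e,x,i)=f_e(x,i)$ for \emph{all} $e$, because the natural constructions of $f$ from the $\Pi^0_1$ approximation involve unbounded searches for stages at which a finite restriction of $\omega^{[2]}-W_{e,s}$ looks transitive. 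For a bad index such a stage need never arrive; for a good index it arrives, but as late as $W_e$'s enumeration cares to delay it.

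Clocking handles the first problem; it does not handle the second, and padding cannot repair that. If the padded index $(e,n)$ uses a computable clock $c_n$, a good $e$ is rescued by some $n$ only if $c_n$ dominates the true running time $T_e$ pointwise. But no effective family $\{c_n\}_n$ dominates all total computable functions (the computable $g(k):=1+\max_{m\le k}c_m(k)$ beats each $c_n$ from $k=n$ on), and by delaying the enumeration of $W_e$---inserting temporary non-transitive configurations that are resolved only after $g$-many further stages---one builds a genuine $\Pi^0_1$ equivalence relation for which $T_e$ grows at least like $g$. Every padded version is then clipped on that $e$ and yields the wrong relation. The paper's own proof never converts to the $E_f$ form: it conditionally copies $\omega^{[2]}-W_p$ onto a $\Pi^0_1$-definable column of $E$, extending the column only at the stages $t^p_{i+1}$ where $[0,t^p_i)^{[2]}-W_{p,t}$ is transitive; an index that is not a $\Pi^0_1$ equivalence relation simply gets a finite column, so the totality question you are wrestling with never arises.
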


\begin{proof}
	 Let $X^{[2]}$ denote the unordered pairs  of elements of $X$. A set $R \sub X^{[2]}$ is \emph{transitive} if $\{u,v\}, \{v,w\} \in R$ and $u \neq w$ implies $\{u, w\} \in R$.  We view eqrels as transitive subsets of $\w^{[2]}$.   We view the $p$-th r.e.\ set $W_p$ as a subset of $\w^{[2]}$.
	
	 The idea is to copy $W_p$ as long as $\w^{[2]} - W_p$ looks transitive. The resulting partial copy  may have  finite or infinite domain. If $\w^{[2]} - W_p$ is indeed transitive then the domain is infinite.
	
	 In a sense $E$ is a uniform  disjoint sum of all these partial copies. 
	
	Uniformly in a given $p$ define a partial computable sequence of stages by $t^p_0 = 0$ and   
	
	\bc $t^{p}_{i+1} \simeq \mu t > t^p_i \, \big [  [0, t^p_i)^{[2]} - W_{p,t} \text{ is transitive }\big ]$. \ec
	
	Define a computable function  $g \colon \, \w \to \w \times \w$ by the following construction. At each stage the domain of $g$ is a finite initial segment of $\w$. At stage $t$, for each $i$ with $0< i \le t$, if $t = t^p_{i+1}$ then add an interval of numbers $L^p_i$ of length $\ell = t^p_i- t^p_{i-1}$ to the domain of $g$, and map it in an increasing fashion to the least  $\ell$ numbers in $\{ p \} \times \w$ that are not yet in the range of $g$. In this way, we extend the range of $g$ to contain $\{ p \} \times [0,t^p_i)$.  
	
	Now for $x<y$ declare  $\{x,y\} \in E$ if $g(x)_0 = g(y)_0= :p$, and for the unique $i$ such that $y\in L^p_i$, we have  \[ \fa t \ge t^p_i \, \fa k \ge i  \,  [ t= t^p_{k+1} \to \{g(x)_1, g(y)_1 \}\not \in W_{p,t}]. \] 
	Clearly $E$ is $\PI 1$. To check $E$ is transitive, suppose $\{x,y\} \in E$, $\{z,y\} \in E$, where $x<y$, and $z< y$.  We may  suppose that $x< z$. There are unique $p,a,b,c$ such that $g(x) = \la p,a\ra, g(y) = \la p,b \ra$, and $g(z) = \la p,c \ra $. We have  $y \in L^p_i$, and $z \in L^p_r$ for  some $r \le i$. Assume that there is $k \ge r$ such that for $t = t^p_{k+1} $ we have $ \{a, c \} \in W_{p,t}$. Let  $j = \max(i,k)$ and note that $t^p_{j+1}$ is defined.  Then  $[0, t^p_j)^{[2]} - W_{p,t^p_{j+1}}$ is not transitive, because it contains $\{a,b\}, \{b, c \}$, but not $\{a,c \}$. This contradicts the definition of $t^p_{j+1}$. Thus $\{x,z\} \in E$.

	Finally, given a  $\PI 1$ equivalence relation $G= \w^{[2]}-W_p$, the sequence of stages $(t^p_i)\sN i$ is infinite. Hence, the (total) computable map $a \to g^{-1}(\la p, a\ra)$ is the required $1$-reduction of $G$ to $E$.
\end{proof}

\subsection{Completeness  for $\SI 3$  equivalence relations  of  $\equiv_1$ on the  r.e.\ sets }

 It is trivial that for each there is  some $\SI n$ complete equivalence relation, because the  transitive closure of a $\SI n$ relation  is $\SI n$. We can make it unique by requiring it to be EUH (effective universal homogeneous) in the sense of Nerode and Remmel. 
 
 In the following we look for natural examples. For $\SI 3$ we don't have to look far.
\begin{thm} \label{thm:1-equiv-complete} For each $\SI 3$ equivalence relation $S$, there is a computable function $g$ such that 

\begin{eqnarray*}
   yS z   &   \RA &  W_{g(y) } \equiv_1 W_{g(z)}, \, \text{and} \\
     \lnot yS z   & \RA & W_{g(y) }, W_{g(z)} \, \text{are Turing incomparable.}  
\end{eqnarray*}
 \end{thm}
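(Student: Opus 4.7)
The plan is a tree-of-strategies priority construction uniformly in $y$, yielding r.e.\ sets $A_y := W_{g(y)}$. Write $S$ in a normal form $ySz \iff \exists k\, Q_k(y,z)$ where $(Q_k)_{k\in\omega}$ is a uniformly $\Pi^0_2$ sequence that may be taken monotone in $k$. Each $Q_k$ admits a computable approximation $q_k(y,z,s) \in \{0,1\}$ with $Q_k(y,z) \iff \liminf_s q_k(y,z,s) = 1$. Using these, I set up a tree $T \subseteq 2^{<\omega}$ whose nodes at level indexed by $(y,z,k)$ guess ``$Q_k(y,z)$ holds'' (leftmost outcome) or ``fails''. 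The true path $\pi$ is $\emptyset'''$-computable. For each pair $y<z$, I impose the coding requirement $C_{y,z}$: \emph{if $ySz$ then $A_y \equiv_1 A_z$}; and, for each index $e$, the diagonalization requirement $N_{y,z}^e$: \emph{if $\lnot ySz$ then $\Phi_e^{A_y} \neq A_z$ and $\Phi_e^{A_z} \neq A_y$}.

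To avoid inconsistent codings among many $S$-equivalent indices, for each $y$ I designate a representative $y^*$ -- the least index guessed to be $S$-equivalent to $y$ on the current node -- and only arrange the coding $A_y \equiv_1 A_{y^*}$; by transitivity of $\equiv_1$ this suffices throughout an $S$-class. The coding strategy below a true-path node that has committed to $y^*$ runs the effective back-and-forth for Myhill's theorem: it maintains partial computable injections $f: \omega \to \omega$ and $g: \omega \to \omega$ that respectively 1-reduce $A_y$ to $A_{y^*}$ and vice versa, extending them whenever new elements appear in either set by assigning fresh targets and enumerating appropriate auxiliary elements into the ``coding columns'' of $A_{y^*}$ and $A_y$. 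The diagonalization strategy for $N_{y,z}^e$ is the standard Friedberg choice: pick a follower $x$ in a reserved column of $A_z$, wait for $\Phi_e^{A_y}(x)\downarrow =0$ with use $u$, then enumerate $x$ into $A_z$ and impose restraint on $A_y \uh u$; the symmetric action handles the other direction. Strategies are initialized when a higher-priority node changes its guess, and followers are reselected above any current restraint. Because the construction is uniform in $y$, the indices $g(y)$ are obtained by a single application of the $s$-$m$-$n$ theorem.

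The main obstacle is reconciling the two kinds of requirements when the $\Pi^0_2$ guesses are unstable. On the true path each relevant guess eventually stabilizes, so below the stabilization point the coding strategy for $(y,y^*)$ acts without further injury, which is what makes the partial functions it builds actually total computable (not merely r.e.), and hence delivers a 1-equivalence by effective Myhill. Conversely, below a permanently ``no'' node for $(y,z,k)$ at all $k$, the incomparability strategies are eventually free of injury from coding and succeed in the usual Friedberg fashion. The verification that each requirement is satisfied on the true path then proceeds by the standard induction on levels of $T$, using that between transitions of a higher guess, only finitely many lower-priority actions can occur. The uniform construction yields $g$, and completeness of $S \le_1 \equiv_1$ follows, with $\lnot ySz$ forcing Turing incomparability of $A_y, A_z$ via the $N_{y,z}^e$'s.
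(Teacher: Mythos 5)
Your overall architecture (tree of strategies, coding requirements for $S$‑related pairs, Friedberg diagonalization for non‑related pairs, guessing at which index is the representative of its class) matches the paper's. But your coding mechanism differs from the paper's in a way that creates a genuine gap, and I don't see how to close it as stated.

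You propose to build \emph{two} partial computable injections $f,g$, forming a Myhill-style back-and-forth, and to enumerate auxiliary markers into both $A_y$ and $A_{y^*}$ as elements appear. There are two problems with this. First, a $1$-reduction $f$ must be \emph{total}: you need $f(x)$ defined, with $f(x)\notin A_{y^*}$, for every $x$ that never enters $A_y$ -- but your rule extends $f$ only ``whenever new elements appear in either set,'' so $f$ is a priori defined only on (a subset of) the members. Even if you define $f$ on all of $\omega$ at construction stages, you must then guarantee $f(x)\notin A_{y^*}$ for permanent non-members $x$, which means reserving an infinite computable set of permanent non-members of $A_{y^*}$; you never say how this is done, and it conflicts with the second problem. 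Second, the back-and-forth is \emph{bidirectional}: to keep $g$ a reduction you must enumerate into $A_y$ in response to elements entering $A_{y^*}$, and to keep $f$ a reduction you must enumerate into $A_{y^*}$ -- but $y^*$ is the representative, the very set your $N$-strategies are trying to control. So the coding strategy and the diagonalization strategies are now competing for membership in the representative's set, which is exactly the conflict the construction needs to avoid.

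The paper's proof sidesteps both issues by making the coding strictly \emph{unidirectional} and by pre-committing half the universe. All sets are declared to satisfy $A_x(4m+1)=0$ and $A_x(4m+3)=1$, so the construction only decides even bits and every $A_x$ has a computable infinite set of guaranteed members and guaranteed non-members among the odd numbers. A $G_{y,z,i}$-strategy then builds a single increasing map $h_\alpha$ on the evens with $A_y(k)=A_z(h_\alpha(k))$, enumerating \emph{only} into the non-representative $A_z$ (never into the representative $A_y$), while keeping $A_z\setminus\mathrm{range}(h_\alpha)$ computable; the pre-committed odd bits then let $h_\alpha$ extend to a computable permutation witnessing $A_y\equiv_1 A_z$. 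You would need some analogue of this device -- a computable reservoir of permanent members and non-members, plus a one-directional flow of information from the representative into the non-representative -- for your argument to go through. Minor point: with the standard $\Sigma^0_3$ normal form ($ySz\iff\exists i\,V_{y,z,i}=\omega$, each $V_{y,z,i}$ an initial segment of $\omega$), the true path is $\emptyset''$-computable, not $\emptyset'''$; this is not itself an error but suggests the bookkeeping has not been worked through.
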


\begin{corollary} \label{cor:m_Sigma_complete}  Many-one equivalence and 1-equivalence on indices of c.e.\ sets are $\SI 3$ complete for equivalence relations  under  computable reducibility. \end{corollary}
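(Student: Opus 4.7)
The plan is to build the c.e.\ sets $W_{g(y)}$ by a tree-of-strategies priority construction of complexity $\ES'''$, guessing along a $\Delta^0_3$ approximation of $S$. Fix a $\SI 3$ definition $ySz \LR \exists u\, \fa v\, \ex w\, R(y,z,u,v,w)$ with $R$ computable. At each stage $s$, for each pair $y,z \le s$ the approximation returns either a current candidate witness $u$ (provisional claim $ySz$) or $\infty$ (provisional claim $\lnot ySz$). The priority tree has three nested layers of outcomes per pair, matching the three alternating quantifiers, and the true path stabilizes at each level in the usual $\Delta^0_3$ sense.

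The requirements are the positive $P_{y,z}$: if $ySz$ then $W_{g(y)} \equiv_1 W_{g(z)}$, witnessed by a uniformly computable pair of $1$-$1$ functions $f_{y,z}, f_{z,y}$; and the negative $N_{y,z,e}$: if $\lnot ySz$ then $\Phi_e^{W_{g(y)}} \neq W_{g(z)}$, and symmetrically. To meet $P_{y,z}$, I would subdivide the potential elements of $W_{g(y)}$ into blocks $B_{y,z}^{(u)}$ indexed by candidate witnesses $u$, and fix in advance computable bijections $\beta_{y,z}^{(u)} \colon B_{y,z}^{(u)} \to B_{z,y}^{(u)}$. A $P_{y,z}$-strategy operating under guess $u$ enumerates elements of $W_{g(y)}$ only from $B_{y,z}^{(u)}$, mirroring each enumeration via $\beta_{y,z}^{(u)}$ into the matching block of $W_{g(z)}$; if the guess later retracts, the blocks used so far are abandoned and later strategies use fresh blocks tied to fresh witness guesses. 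For $N_{y,z,e}$ I would apply standard Sacks-style diagonalization: pick a follower in a block reserved for diagonalization, restrain $W_{g(z)}$ below the use of $\Phi_e^{W_{g(y)}}$ at the follower, and enumerate to force a disagreement once a computation with value $0$ appears.

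The main obstacle is the interaction between $P$- and $N$-strategies: a $P_{y,z}$-strategy may wish to put a $\beta_{y,z}^{(u)}$-image into $W_{g(z)}$ that would violate a restraint of a stronger-priority $N_{y',z,e}$-strategy. The resolution is the standard tree-based block assignment: blocks used by $P$-strategies below an $N$-node are declared disjoint from that node's followers, and the $\Delta^0_3$ true-path analysis bounds injury. Along the true path, for any pair with $ySz$ there is a least correct witness $u^*$ and one $P_{y,z}$-strategy which, after a finite stage, uses only $B_{y,z}^{(u^*)}$; then $\beta_{y,z}^{(u^*)}$ and its inverse supply the two computable $1$-reductions required, after patching finitely many errant earlier enumerations by the identity. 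For Corollary~\ref{cor:m_Sigma_complete}, observe that $\leq_1$ and $\leq_m$ on c.e.\ indices are each $\SI 3$ relations (totality of the reducing function and the reduction condition are both $\PI 2$), while Theorem~\ref{thm:1-equiv-complete} supplies a computable reduction of any $\SI 3$ equivalence relation into both, since $W_{g(y)} \equiv_1 W_{g(z)}$ entails $W_{g(y)} \equiv_m W_{g(z)}$, and Turing incomparability forces $\not\equiv_m$ and hence $\not\equiv_1$.
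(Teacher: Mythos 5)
Your final paragraph — the derivation of the corollary from Theorem~\ref{thm:1-equiv-complete} — is correct and essentially identical to what the paper intends: $\equiv_1$ and $\equiv_m$ on c.e.\ indices are $\SI 3$ (totality and injectivity of the candidate reduction, plus the reduction condition, are all $\PI 2$, with an outer existential over indices), and the theorem provides the hard direction since $W_{g(y)}\equiv_1 W_{g(z)}$ implies $\equiv_m$ while Turing incomparability implies $\not\equiv_m$ and hence $\not\equiv_1$.

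The trouble is in your proposed re-proof of the theorem itself. Two concrete gaps. First, the priority tree: you set up a three-layer $\Delta^0_3$ tree keyed to the quantifier alternation $\exists u\,\fa v\,\ex w$, but the paper works on a $\PI 2$ tree by presenting $S$ as $ySz \LR \ex i\,[V_{y,z,i}=\w]$ with $(V_{y,z,i})$ uniformly c.e.\ initial segments of $\NN$; the outer $\ex i$ is distributed over countably many requirements $G_{y,z,i}$, each of which has a single infinitary/finitary $\PI 2$ guess. Your heavier tree is not automatically wrong, but you never say how the three layers interact with the $P$- and $N$-actions, and nothing in your description uses the third layer. Second, and more serious: the real conflict is not ``a $P$-strategy might violate an $N$-restraint'' but a \emph{circular} one --- if $y S z$ we need freedom to enumerate into $W_{g(z)}$ to mirror $W_{g(y)}$, while diagonalization requirements want to restrain $W_{g(z)}$; if both kinds of strategies act on the same column without an arbitration rule, neither can guarantee its outcome. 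The paper resolves this with the function $L(\alpha)$ recording, per node, which indices are believed least in their $S$-class: coding strategies only ever enumerate into $A_z$ for $z$ \emph{not} believed least (so $z>y$ and the coding is one-directional, passively reading $A_y$ and copying into $A_z$), and diagonalization strategies $N_{u,v,e}$ only act when \emph{both} $u$ and $v$ are believed least, so no $G$-strategy below them ever touches $A_u$ or $A_v$. Your symmetric block scheme with fixed bijections $\beta_{y,z}^{(u)}$, with $P_{y,z}$ actively enumerating into both $W_{g(y)}$ and $W_{g(z)}$, has no analogue of this arbitration, and ``patching finitely many errant earlier enumerations by the identity'' will not repair an infinite equivalence class where infinitely many pairs $P_{y,z}$, $P_{y,z'}$, $P_{z,z'}$, \dots\ each fold different bijections into the same column. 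Without a device like $L$, the verification claim that the coding map extends to a computable permutation does not go through.
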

According to S. Podzorov at the Sobolev Institute Novoskibirsk, the result for m-equivalence  was possibly known to the Russians by the end of the 1970s. No reference has been given   yet.  Sadly, Podzorov passed away in late 2012.

Note that this is significantly stronger than the mere  $\SI 3$ completeness of $\equiv_m$ as a set of pairs of c.e.\ indices, which follows for instance because the $m$-complete c.e.\ set have a $\SI 3$ complete index set.  
As a further consequence, Turing equivalence on indices of c.e.\ sets is a $\SI 3 $ hard equivalence relation for computable reducibility. However, this equivalence relation is only $\SI 4$. Ianovski et al.\ \cite{Ianovski.Miller.etal:nd}  have shown that in fact it is    $\SI 4$ complete in our sense.

\begin{proof}[Proof of theorem]

 \label{s:proof_main}

Since $S$ is $\SI 3$,  there is a uniformly c.e.\ triple sequence \bc  $(V_{y,z,i})_{y,z,i \in \w, y<z}$  \ec of initial segments of $\NN$ such that for each $y<z$, \bc $ySz \LR  \ex i \, V_{y,z,i} = \w$. \ec 

We build a uniformly c.e.\ sequence of sets $A_x = W_{g(x)}$ ($x \in \w$), $g$ computable. We meet  the following  coding requirements for all $y<z $ and $i \in \w$.

\vsps

 $G_{y,z,i} \colon \, V_{y,z,i} = \w \RA  A_y \equiv_1 A_z$. 

\vsps
 
\n  We meet    diagonalization requirements for $u\neq  v$,

\vsps


 $N_{u,v,e} \colon  u = \min [u]_S \lland v = \min [v]_S \RA$ 
   $ A_u \neq \Phi_e(A_v)$.

\vsps
\n where $\Phi_e$ is the $e$-th Turing functional,  and $[x]_S$ denotes the $S$-equivalence class of $x$.
Meeting these requirements suffices to establish the theorem.
 
The basic strategies to meet the requirements are as follows. If $V_{y,z,i} = \w$, a strategy for $G_{y,z,i} $     ``finds out'' that $z$ is $S$-related to the smaller $y$. Hence it  builds a computable permutation~$h$ such that  $  A_y \equiv_1 A_z$ via~$h$.  

  A strategy for  $N_{u,v,e} $ picks a witness~$n$, and waits for $\Phi_e(A_v; n)$ to converge. Thereafter, it  ensures that   this computation is stable and  $A_u(n) $ does not equal its  output $\Phi_e(A_v; n)$ by enumerating $n$ into $A_u$ if this output is~$0$.

\vsp

\n \emph{The tree of strategies.} To avoid conflicts between strategies that enumerate into the same set $A_z$, we need to provide the strategies with a guess at whether $z$ is least  in its $S$-equivalence class $[z]_S$. An $N$-type strategy will only enumerates into $A_z$ if according to its guess,  $z$ is least in its $[z]_S$; a $G$-type strategy only enumerates into $A_z$ if according to its guess, $z$ is not least. 

 Fix an effective priority ordering of all requirements. We define a  tree $T$ of strategies, which is a computable subtree $T$ of $\strcantor$.  We write $\aaa: R$ if strategy $\aaa$ is associated with the requirement $R$.  By recursion on $|\aaa|$,  we define whether $\aaa \in T$,  and which is  the requirement  associated with $\aaa$.  We also define a function $L$ mapping $\aaa \in T$ to a cofinite set $L(\aaa)$ consisting of the numbers~$x$ such that according to  $\aaa$'s guesses, $x$ is least in its equivalence class. 

Let $L(\estring) = \w$. Assign to $\aaa$ the highest priority requirement $R$  not yet assigned to a proper prefix of $\aaa$ such that either  (a) or (b) hold.

\bi 
\item[(a)] $R$ is $G_{y,z,i}$ and $z  \in L(\aaa)$; in this case put both $\aaa 0 $ and $\aaa 1$ on $T$, and define $L(\aaa 0) = L(\aaa)- \{z\}$ while  $L(\aaa 1) = L(\aaa) $ (along $\aaa 0$ we know that $x$ is no longer the least in its equivalence class)

  \item[(b)]  $R$ is $N_{u,v,e}$ and $u,v \in L(\aaa)$; in this case put  only  $\aaa 0 $   on $T$, and define $L(\aaa 0) = L(\aaa) $. 
\ei

For strings $\aaa, \bbb \in \strcantor$, we write $\aaa <_L \bbb$ if there is $i$ such that $\aaa\uhr i = \bbb\uhr i $, $\aaa(i)=0$ and $\bbb(i)=1$. We let  $\aaa \preceq \bbb$ denote that $\aaa$ is a prefix of $\bbb$. We define a linear ordering on strings  by \bc  $\aaa \le  \bbb$ if $\aaa<_L \bbb$ or $\aaa \preceq \bbb$. \ec

\n \emph{Construction of a u.c.e.\  sequence of sets   $(A_x)\sN x$.} 
We   declare in advance that  $A_x(4m+1)= 0$ and $A_x(4m+3)= 1$ for each $x,m$.	 The construction then only determines membership of even numbers in the $A_x$.

We define a computable sequence $(\delta_s)\sN s$  of strings on $T$ of length $s$. Suppose inductively that $\delta_t$ has been defined for $t< s$. Suppose $k<s$ and that $\eta = \delta_s \uhr k$ has been defined. If $\eta: N_{u,v,e}$ let $\delta_s(k) = 0$. Otherwise $\eta: G_{y,z,i}$. Let $t< s$ be the largest stage such that $t=0$ or $\eta \preceq \delta_t$. Let $\delta_s(k)=0 $ if $V_{y,z,i,s}\neq V_{y,z,i,t}$, and otherwise $\delta_s(k)=1$. 

The \emph{true path} $TP$ is the lexicographically leftmost path $f \in \tp \w$ such that $\forall n \, \exists^\infty s \ge n \, [ \delta_s\uhr n \prec f]$.
 To \emph{initialize} a strategy $\aaa$ means to return it to its first instruction. If $\aaa: G_{y,z,i}$ we also make the partial computable function  $h_\aaa$ built by  the strategy $\aaa$   undefined on all inputs. At stage $s$, let $\init(\aaa,s)$ denote the largest stage $\le s$ at which $\aaa$ was initialized.

\vsps

\n \emph{An   $N_{u,v,e}$ strategy  $\aaa$.} 
At stages $s$:
\bi 
\item[(a)]  {Appoint} an unused  even number $n> \init(\aaa,s)$ as a  witness for diagonalization.     {Initialize} all the  strategies $\beta \succ \aaa$.  
\item[(b)] 
 {Wait} for  $\Phi_e(A_v;n) [s]$ to converge with output $r$.   If $r=0$ then  {put} $n$ into $A_u$.   {Initialize} all the strategies $\beta \succ \aaa$.   
\ei

  \vsp
  
  \n \emph{A  $G_{y,z,i}$ strategy  $\aaa$.}
    If $\aaa 0$ is on the true path then this strategy builds a    computable increasing map $h_\aaa$ from even numbers to even numbers such that  $A_y(k) = A_z(h_\aaa(k))$ for each~$k$. Furthermore, $A_z - \range  (h_\aaa)$ is computable. By our definitions  of $A_y$ and $A_z$ on the odd numbers, this implies that  $h_\aaa$ can be extended  to a computable permutation showing that $A_y \equiv_1 A_z$,  as required.

At stages $s$, if $\aaa0 \sub \delta_s$, let $t<s$ be greatest such that $t=0$ or $\aaa0 \sub \delta_t$, and do the following.
\bi 
\item[(a)]   For each even $k<s$ such that  $k \not \in   \dom (h_{\aaa, t})$ pick an unused even value $m= h_{\aaa, s}(k)>  \init(\aaa,s) $ in such a way that $h_\aaa$ remains increasing. 

\item[(b)]  From now on, unless $\aaa$ is initialized,   ensure that $A_z(m) =A_y(k)$.  (We will verify that this is possible.)

\ei

\n The stage-by-stage construction is as follows.   
At stage $s>0$ initialize all strategies $\aaa >_L \delta_s$.    Go through substages $i \le s$. Let $\aaa = \delta_s\uhr i$. Carry out  the  strategy~$\aaa$ at stage~$s$.

\vsp

\verif  
To show the requirements are met,  we first   check that  there is no conflict between different  strategies that enumerate into the same set $A_z$.
\begin{claim} Let $\aaa \colon G_{y,z,i}$. Then (b) in the strategy for $\aaa$ can be maintained as long as $\aaa$ is not initialized.   \end{claim}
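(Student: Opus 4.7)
The plan is to show that once $\alpha$ has set $h_\alpha(k) = m$ at some stage $s_1 > t_0 := \init(\alpha, s)$, the only strategy that can affect $A_z(m)$ through stage $s$ is $\alpha$ itself. Since the $A_x$ are c.e., maintaining $A_z(m) = A_y(k)$ then reduces to two tasks: $\alpha$'s step (b) freely enumerates $m$ into $A_z$ whenever it observes $k \in A_y[t]$, with no competing restraint; and no other strategy enumerates $m$ into $A_z$. The content of the claim is the second task.

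I would first identify the strategies $\gamma$ that can enumerate into $A_z$: these are exactly those assigned $N_{z,v,e}$ or $G_{y',z,i'}$, and all such $\gamma$ must satisfy $z \in L(\gamma)$ by the tree rules. Then I split by the position of $\gamma$ relative to $\alpha$ on the tree. Case 1: $\gamma \supseteq \alpha 0$; then $L(\gamma) \subseteq L(\alpha 0) = L(\alpha) \setminus \{z\}$, so no such $\gamma$ exists. Case 2: $\gamma <_L \alpha$; were $\gamma$ to act at some $s' \in (t_0, s]$, we would have $\delta_{s'} <_L \alpha$, which initializes $\alpha$ at stage $s'$ and contradicts $t_0 = \init(\alpha, s)$. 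Case 3: $\gamma \supseteq \alpha 1$ or $\gamma >_L \alpha$; such a $\gamma$ is initialized at every stage at which $\alpha 0 \subseteq \delta$, in particular at $s_1$, so when $\gamma$ next acts it picks a witness or $h_\gamma$-value that is unused, and since $m$ is still marked as used by the uninitialized $\alpha$, this pick differs from $m$. Case 4: $\gamma \subsetneq \alpha$ is a proper ancestor; if $\gamma$ is an $N$-strategy, then either substep of its action initializes $\alpha$, so $\gamma$'s enumerations into $A_z$ occur at stages $\le t_0$, and $\alpha$ avoids them when picking $m$ unused at $s_1 > t_0$; if $\gamma$ is a $G$-strategy, its $h_\gamma$-picks are chosen unused and thus distinct from $m$.

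The step requiring the most care is Case 3, where $\gamma$ can legitimately act during $(t_0, s]$ without initializing $\alpha$ --- namely at stages when $\alpha 1 \subseteq \delta_{s'}$. The key observation is that $\gamma$ is re-initialized each time $\alpha 0 \subseteq \delta_{s''}$ (in particular whenever $\alpha$ adds a new value to $h_\alpha$), so $\gamma$'s fresh picks must be unused with respect to $\alpha$'s stored values. Under the natural convention that ``unused'' persists across all currently non-initialized strategies, this forces $\gamma$'s choices away from $m$. Combining the four cases, $A_z(m)$ is controlled solely by $\alpha$ until $\alpha$'s next initialization, and (b) is maintained throughout.
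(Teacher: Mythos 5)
Your proof is correct and follows essentially the same route as the paper's: identify the strategies $\gamma \neq \alpha$ that can enumerate into $A_z$ (those assigned $N_{z,v,e}$ or $G_{y',z,i'}$, each necessarily with $z \in L(\gamma)$), and split on $\gamma$'s tree position relative to $\alpha$. Your Cases 1, 2, 3 match the paper's cases $\alpha 0 \preceq \beta$, $\beta <_L \alpha 0$, and $\alpha 0 <_L \beta$ respectively, with the same arguments.

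The one place the paper is sharper is Case 4 for proper ancestors $\gamma$ of type $G_{y',z,i'}$. You dispose of it with the single remark that $h_\gamma$-picks are chosen unused and hence miss $m$. The paper splits this further into $\gamma 1 \preceq \alpha$ (where $\alpha$ is re-initialized each time $\gamma$ extends $h_\gamma$, so $\gamma$'s picks predate $\alpha$'s choice of $m$) and $\gamma 0 \preceq \alpha$, and notes that the latter is outright \emph{impossible}: $z \notin L(\gamma 0) \supseteq L(\alpha)$, contradicting $z \in L(\alpha)$. That impossibility is exactly what the $L$-function is designed to enforce — the $0$-outcome of a $G$-strategy targeting $A_z$ removes $z$ from $L$, so no descendant below that outcome can target $A_z$ — and it is cleaner to invoke it than to lean once more on the global ``unused'' bookkeeping. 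Your argument is not wrong for omitting this, but you should state the structural fact explicitly rather than folding it into the freshness convention.
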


To prove the claim, suppose a strategy $\bbb\neq \aaa$ also enumerates numbers into~$A_z$. If $\aaa0 <_L \bbb$ then $\bbb$ is initialized when $\aaa$ extends its map $h_\aaa$, so the numbers enumerated by $\bbb$ are not in the range of $h_\aaa$. If $\bbb<_L \aaa0$ then $\aaa$ is initialized when $\bbb$ is active, so again   the numbers enumerated by $\bbb$ are not in the range of $h_\aaa$.  Now suppose  neither hypothesis holds, so $\aaa0 \preceq \bbb$ or $\beta \prec \aaa$. 

\vsps

\n \emph{Case $\bbb \colon N_{z,v,e}$.}   In this case  $\aaa0 \preceq \bbb$ is not possible  because $z \not \in L(\aaa 0)$. If $\bbb \prec \aaa$ then $\aaa$ is initialized when $\beta$ appoints a new diagonalization witness.

\n \emph{Case $\bbb \colon G_{y', z, i'}$.} In this case
 $\aaa0 \preceq \bbb$ is not possible because $z \not \in L(\aaa 0)$.  
If $\beta 1 \preceq \aaa$ then $\aaa$ is initialized each time $\beta$ extends its map $h_\beta$. Finally, 
$\beta0 \preceq \aaa$ is not possible because $z \not \in L(\beta 0)$. This proves the claim.

\begin{claim}  Let $\aaa$ be the $N_{u,v,e}$   strategy on the true path.   Suppose  $\aaa$ is not initialized after stage $s$. Then  $\aaa$  only acts finitely often, and  meets its requirement. \end{claim}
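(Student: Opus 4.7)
The plan is to observe that after stage $s$ the strategy $\aaa$ executes step (a) at most once (at some stage $s_1>s$, when it appoints a fresh even witness $n>s_1$) and step (b) at most once, so $\aaa$ acts at most twice. The requirement then holds automatically if $\Phi_e(A_v;n)$ never converges (for then $\Phi_e(A_v)$ is not total, while by correctness of the true-path guess $u=\min[u]_S$ and $v=\min[v]_S$, so the hypothesis of $N_{u,v,e}$ is met), and otherwise holds by direct diagonalization at~$n$.

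The substantive content is to show that if $\Phi_e(A_v;n)[s_2]\DA$ with value $r$ and use $u^*<s_2$ at some least stage $s_2>s_1$, then both $A_v\uhr u^*$ and $A_u(n)$ are frozen from $s_2$ onward (except for $\aaa$'s own single enumeration of $n$ into $A_u$ when $r=0$). The strategies that ever enumerate into $A_v$ are of two types: $N_{v,\ast,\ast}$-strategies $\beta$ with $v\in L(\beta)$, and $G_{y',v,i'}$-strategies $\beta$ on their $0$-outcome. For the first type, if $\beta\prec\aaa$ then $\beta$ acts at most once and any such action after $s$ would initialize $\aaa$, contradicting the hypothesis; if $\beta\succ\aaa$ then $\beta$ is initialized when $\aaa$ appoints $n$ at $s_1$ and again when $\aaa$ acts at $s_2$, so any witness it later picks exceeds $s_2>u^*$. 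For the second type, $\beta 0\preceq\aaa$ is impossible since $v\notin L(\beta 0)$ while $v\in L(\aaa)$; and any $\beta\succ\aaa$ of this type is likewise initialized at $s_2$, so its subsequent map values $h_\beta$ lie above $s_2>u^*$. The symmetric analysis using $u\in L(\aaa)$ shows that no strategy other than $\aaa$ itself alters $A_u(n)$ after $s_2$.

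Combining, $\Phi_e(A_v;n)=r$ is a permanent computation while $A_u(n)=1-r$ at the end of stage $s_2$ and forever after, so $A_u\neq\Phi_e(A_v)$; together with $u=\min[u]_S$ and $v=\min[v]_S$ this verifies $N_{u,v,e}$. The only real obstacle is the bookkeeping in the previous paragraph, but this is essentially the same case analysis already carried out in the preceding no-conflict claim (with $A_u$ and $A_v$ playing the role of $A_z$), so it can be invoked with only notational changes.
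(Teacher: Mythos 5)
Your argument is correct and follows essentially the same lines as the paper's own proof: you use $u,v\in L(\aaa)$ to rule out interference with $A_u(n)$ and with $A_v$ below the use from strategies above $\aaa$ on the tree, and the initializations $\aaa$ carries out when picking $n$ and when seeing the computation converge to handle strategies below or to the right of $\aaa$. One tiny inaccuracy worth flagging: being on the true path does \emph{not} entail that $u=\min[u]_S$ and $v=\min[v]_S$ (a lower-priority $G$-requirement with a still-unresolved witness $V_{y,u,i}=\omega$ could reveal that $u$ is not least, without ever having removed $u$ from $L(\aaa)$), so the parenthetical is unjustified; however it is also unnecessary, since when $\Phi_e(A_v;n)$ never converges $\Phi_e(A_v)$ is not total while $A_u$ is, so the consequent $A_u\neq\Phi_e(A_v)$ holds outright and the requirement is met regardless of whether its hypothesis holds.
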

   At some stage $\ge \init(\aaa,s)$ the strategy $\aaa$  picks a permanent witness  $n$. No    strategy $\beta \prec \aaa$ can put $n$ into $A_u$ because $u \in L(\aaa)$. No other  strategy can put  $n$ into $A_u$ because of the initialization $\aaa$ carries out when it picks $n$.  Suppose now  that at a later stage~$t$, a computation $ \Phi_e(A_v;n) [t] $ converges. Since $v \in L(\aaa)$, no $G$-type strategy $\beta \prec \aaa$ enumerates into $A_v$. Thus the initialization of strategies $\gamma \succ \aaa$ carried out by~$\aaa$ at that stage~$t$ will ensure that this computation is preserved with value different from $A_u(n)$. This proves the claim.

It is now clear by induction that each strategy $\aaa$  on the true path is initialized only finitely often. Thus the $N$-type requirements are met. Now suppose $\aaa \colon G_{y,z,i}$ and   $\aaa0 $ is on the true path. Then no strategy $\beta\succeq \aaa0$ enumerates into $A_z$. Thus by the initialization at stages $s$ such that $\aaa0 \preceq \delta_s$, the set $A_z - \range  (h_\aaa)$ is computable. As noted earlier, this implies that  $h_\aaa$ can be extended to a computable permutation showing that $A_y \equiv_1 A_z$. There is a  computable bijection $q$  between the set   of odd numbers and the set of numbers that are odd,  or even but not in the range of $h_\alpha$, so that  $m \in A_y \lra q(m) \in   A_z$. Now let the permutation be $q \cup h_\aaa$.

\end{proof}


\subsection{Computable isomorphism of trees}
We use the terminology  of Fokina  et al. \cite{Fokina.Friedman.etal:12}.  Thus,  a tree is a structure  in the language containing  the predecessor function as a single unary function symbol.  The root is its own predecessor.   A countable tree can be represented given by a nonempty  subset  $B$   of $\omega^{< \omega}$ closed under prefixes, where the predecessor function takes of the last entry of a non-empty tuple of natural numbers.

A tree  has a  computable presentation  iff  we can choose  $B$  r.e.\ For in that case $B $ is the range of a partial computable 1-1 function $\phi$ with domain an initial segment of $\omega$; the preimage of the predecessor function under  $\phi$ is the required computable atomic diagram. 

We let $B_e = \{ \sss\colon \, \ex \tau \succeq \sss \,[ \tau \in W_e]  \}$, where the $e$-th r.e.\ set $W_e$ is viewed as a subset of $\omega^{< \omega}$. Then $(B_e)\sN e$ is a uniform listing of all computable trees.  

We say a tree has height $k$ if every leaf has length at most $k$.  


\begin{cor} \label{cor:tree_comp_isom} Computable isomorphism of computable  trees of height $2$ where every node at level $1$ has out-degree at most $1$ is  $m$-complete for $\SI 3$ equivalence relations.  \end{cor}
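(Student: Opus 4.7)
The plan is to leverage the $\Sigma^0_3$-completeness of $\equiv_1$ established in Corollary~\ref{cor:m_Sigma_complete}. Given a $\Sigma^0_3$ equivalence relation $S$, I would first apply Theorem~\ref{thm:1-equiv-complete} to obtain a computable function $g$ such that $y S z$ implies $W_{g(y)}\equiv_1 W_{g(z)}$, while $\neg y S z$ implies that $W_{g(y)}$ and $W_{g(z)}$ are Turing incomparable. It then suffices to construct a computable index-map $e\mapsto T_e$ from c.e.\ indices to indices of computable trees of the prescribed shape (height $2$, every level-one node of out-degree $\le 1$), with the property that $W_a\equiv_1 W_b$ implies $T_a\cong_{\mathrm{comp}}T_b$ while $W_a\not\equiv_T W_b$ implies $T_a\not\cong_{\mathrm{comp}}T_b$; composing with $g$ will then yield the desired $m$-reduction of $S$ to computable isomorphism on our class of trees.

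For the coding step, I would let $T_e$ have a root $r$, level-one children $c_n$ for each $n\in\NN$, and a unique grandchild $d_n$ hanging off $c_n$ exactly when $n\in W_e$. A uniformly computable presentation is obtained by interleaving the enumeration of the $c_n$ (one per stage, independently of $W_e$) with that of the $d_n$ as elements of $W_e$ appear, assigning fresh natural-number codes in order; the predecessor function is then total computable, and the code of $c_n$ in the presentation is a computable function of $n$ and $e$. For the forward direction, $W_a\equiv_1 W_b$ gives, by Myhill's theorem, a computable permutation $\pi$ of $\NN$ with $n\in W_a\Leftrightarrow\pi(n)\in W_b$; setting $\phi(r)=r$, $\phi(c_n)=c_{\pi(n)}$, and $\phi(d_n)=d_{\pi(n)}$ (both sides simultaneously defined) yields a computable tree isomorphism $T_a\to T_b$.

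The backward direction is the heart of the argument. Suppose $\phi\colon T_a\to T_b$ is a computable isomorphism. Since the root is the unique self-predecessor, $\phi$ fixes the root; since the level-one nodes are exactly the predecessor-preimages of the root, $\phi$ restricts to a bijection on level-one nodes and must send $c_n$ to some $c_{\pi(n)}$. Using the computability of $n\mapsto\mathrm{code}(c_n)$ in each presentation, we can computably recover $\pi$ from $\phi$, and since $\phi$ preserves the existence of a grandchild we get $n\in W_a\Leftrightarrow\pi(n)\in W_b$. This forces $W_a\equiv_1 W_b$ (and in particular $W_a\equiv_T W_b$), contradicting Turing incomparability whenever $\neg y S z$. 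The main obstacle to get right is arranging the presentation so that ``the $n$-th level-one child'' is uniformly computable in $n$ and in the c.e.\ index; the interleaving enumeration above accomplishes this, keeping the codes of the $c_n$ recoverable even as the set of grandchildren grows dynamically.
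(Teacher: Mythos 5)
Your proposal is correct and takes essentially the same route as the paper: the paper sets $B_{h(e)} = \{\langle\rangle\}\cup\{\langle x\rangle : x\in\omega\}\cup\{\langle x,0\rangle : x\in W_e\}$, observes that $W_y\equiv_1 W_z$ iff $B_{h(y)}\cong_{\mathrm{comp}} B_{h(z)}$, and then invokes Theorem~\ref{thm:1-equiv-complete}. You build the same tree and spell out the Myhill direction and the recovery of the level-one permutation from a computable isomorphism, which the paper compresses into a single ``clearly''.
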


\begin{proof} Let $h$ be a computable function such  for each $e$,    $B_{h(e)}$ is the tree 
	\bc $\estring \cup \{\la x \ra \colon \, x\in \w \} \cup \{\la x, 0 \ra \colon \, x\in W_e\}$. \ec Clearly, $W_y \equiv_1 W_z$ iff $B_{h(y)}$ is computably isomorphic to $B_{h(z)}$. Now we apply Theorem~\ref{thm:1-equiv-complete}.   
\end{proof}

For background on computable metric spaces, see \cite{Brattka.Hertling.ea:08}.   A computable metric space is \emph{discrete} if every point is isolated. For such a space, necessarily every point is an ideal  point.

\begin{cor} Computable isometry  of discrete computable metric spaces is $m$-complete for $\SI 3$ equivalence relations. \end{cor}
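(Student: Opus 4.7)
The plan is to reduce from computable isomorphism of trees (Corollary~\ref{cor:tree_comp_isom}) by encoding each tree $B_{h(e)}$ as a discrete computable metric space $X_e$ whose distances rigidly encode the tree structure. Since that corollary already $m$-reduces an arbitrary $\SI 3$ equivalence relation to computable isomorphism of trees of the specified form, it suffices to give, uniformly in $e$, a discrete computable metric space $X_e$ such that $X_y$ is computably isometric to $X_z$ iff $B_{h(y)}$ is computably isomorphic to $B_{h(z)}$.

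I would take the point set of $X_e$ to be $\{*\} \cup \{p_x : x \in \w\} \cup \{q_x : x \in W_e\}$ (one point per tree node) and declare the distances
\[
d(*, p_x) = 2,\quad d(*, q_x) = 3,\quad d(p_x, q_x) = 1,
\]
\[
d(p_x, p_y) = 4,\quad d(p_x, q_y) = 5,\quad d(q_x, q_y) = 6 \qquad (x \ne y).
\]
A routine case-check verifies the triangle inequality, and since all positive distances are at least $1$ the space is discrete. To make it computable uniformly in $e$, enumerate ideal points in order of appearance: put $a_0 = *$ and $a_1 = p_0$ at stage $0$, and at each stage $s \ge 1$ add $p_s$ together with $q_k$ for every $k$ newly enumerated into $W_{e,s}$ (in some fixed order). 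The type of each $a_i^e$ (root, level-$1$, or level-$2$, and in the latter two cases which $p_x$ or $q_x$) is fixed at the stage of appearance, so $(i,j) \mapsto d(a_i^e, a_j^e)$ is uniformly computable in $e$.

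Correctness comes from an abstract rigidity of the chosen distances: the root is the unique point with infinitely many neighbours at distance $2$; the level-$1$ nodes are exactly the points at distance $2$ from the root, and the level-$2$ nodes are exactly those at distance $3$; and each $q_x$ has a unique distance-$1$ neighbour, which identifies its parent $p_x$. Hence any isometry $\phi\colon X_y \to X_z$ must fix $*$, send level-$1$ nodes bijectively to level-$1$ nodes, send level-$2$ nodes bijectively to level-$2$ nodes, and commute with the parent map. The permutation $\sigma$ of $\w$ determined by $\phi(p_x) = p_{\sigma(x)}$ then satisfies $x \in W_y \LR \sigma(x) \in W_z$, and is computable when $\phi$ is. Conversely, a computable permutation $\sigma$ witnessing $W_y \equiv_1 W_z$ lifts via $* \mapsto *$, $p_x \mapsto p_{\sigma(x)}$, and $q_x \mapsto q_{\sigma(x)}$ for $x \in W_y$ to a computable isometry.

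The only point requiring care is the index bookkeeping: because the $q_x$ appear at the (non-computable) stages when $x$ enters $W_e$, the position of $p_x$ or of $q_x$ within the enumeration $(a_i^e)$ is not a simple function of $x$. However, these positions are still uniformly computable from $x$ and $e$ (by running the enumeration of $W_e$ until the relevant point is added), so the translation between the abstract description of $\phi$ on points and a computable map on indices is straightforward. Combined with the $\SI 3$ upper bound --- which follows from a standard analysis of what it means for a c.e.\ index to witness an isometry between the two spaces --- this yields $m$-completeness for $\SI 3$ equivalence relations.
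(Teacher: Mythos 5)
Your proposal is correct and takes essentially the same route as the paper: reduce from the trees of Corollary~\ref{cor:tree_comp_isom} by assigning one point per tree node and choosing distances that make the level structure (and parenthood) isometry-invariant. The paper's version is slightly leaner: it omits the root point and uses only two positive distances ($1/4$ between $p_x$ and $q_x$, $1$ between all other pairs), which suffices because a bijective isometry must take distance-$1/4$ pairs to distance-$1/4$ pairs and singletons to singletons, inducing the required computable permutation of $\w$. Your extra root and the six-valued distance make each point's role (root, level-$1$, level-$2$, and which parent) uniquely recoverable from distances, which is a convenience rather than a necessity; both readings lift to and descend from computable permutations of $\w$ witnessing $W_y \equiv_1 W_z$ in the same way.
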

\begin{proof}  
	Given a  computable  tree $B$,  create a discrete computable metric $M_B$ space as follows: if a string $\la x \ra$ enters $B$, add a point $p_x$. If later $\la x,i \ra$ enters $B$ for the first  $i$, add a further point $q_x$. Declare $d(p_x,q_x)= 1/4$. Declare $d(p_x, p_y)=1$ and $d(q_x,p_y) =1 $ (if $q_x$ exists). Clearly for trees $B,C$ as in Cor.\ \ref{cor:tree_comp_isom}, $B$ is computably isomorphic to $C$ iff $M_B$ is computably isometric to $M_C$.   \end{proof}

\begin{cor} Computable isomorphism of recursive equivalence relations where every class has at most 2 members is $m$-complete for $\SI 3$ equivalence relations. \end{cor}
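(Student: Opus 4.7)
The plan is to apply Theorem~\ref{thm:1-equiv-complete} and reduce each $\SI 3$ equivalence relation $S$ to computable isomorphism of recursive eqrels with size-at-most-two classes by encoding $W_{g(y)}$ as such an eqrel $E_{g(y)}$, where $g$ is the computable function provided by that theorem.  The key leverage is that Theorem~\ref{thm:1-equiv-complete} delivers not merely $1$-inequivalence in the negative case but full Turing incomparability, so it will suffice to read off Turing-equivalence of $W_{g(y)}$ and $W_{g(z)}$ from any computable isomorphism of the encoded structures.

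For the encoding, I would designate for each $n \in \w$ an element $u_n = \la 0, n\ra$ together with potential partners $v_{n, s} = \la 1, n, s\ra$ for each $s \ge n$, and at stage~$s$ declare the single pair $(u_n, v_{n, s}) \in E_e$ whenever $n \in W_{e, s} \setminus W_{e, s-1}$; no other non-diagonal pairs enter~$E_e$.  Because each potential pair is decided at one definite stage, $E_e$ is a recursive subset of $\w \times \w$; and because each $u_n$ is paired with at most one $v_{n, s}$, every class has size at most two.  Letting $\Pi_e$ denote the pair-element set, a straightforward verification gives $\Pi_e \equiv_T W_e$: the map $n \mapsto u_n$ realizes $W_e \le_m \Pi_e$, and given $W_e$ as oracle one decides $\Pi_e$ by cases on whether $x = u_n$ (reducing to $n \in W_e$) or $x = v_{n, s}$ (checking the decidable condition $s = s_0^e(n)$) or neither.

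In the forward direction, assuming $ySz$, I would take a computable permutation $\phi$ of $\w$ with $\phi(W_{g(y)}) = W_{g(z)}$ and set $F(u_n) = u_{\phi(n)}$ and $F(v_{n, s}) = v_{\phi(n), \sigma_n(s)}$, where $\sigma_n$ transposes $s_0^{g(y)}(n)$ with $s_0^{g(z)}(\phi(n))$ when $n \in W_{g(y)}$ and is the identity otherwise.  Each of the two critical values is recognizable from finite data, and as soon as $s$ is matched with one of them the other can be computed by parallel enumeration of $W_{g(y)}$ and $W_{g(z)}$; this makes $F$ a total computable bijection that sends the pair in column $n$ precisely to the pair in column $\phi(n)$.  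In the reverse direction, from a computable isomorphism $F$ of $E_{g(y)}$ with $E_{g(z)}$, I would observe that $F$ and $F^{-1}$ are computable permutations of $\w$ witnessing $\Pi_{g(y)} \equiv_1 \Pi_{g(z)}$, and in particular $\Pi_{g(y)} \equiv_T \Pi_{g(z)}$.  Combined with $\Pi_e \equiv_T W_e$ this forces $W_{g(y)} \equiv_T W_{g(z)}$, which by Theorem~\ref{thm:1-equiv-complete} rules out $\neg ySz$.

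The main obstacle is the stage mismatch in the forward direction: the entry stage $s_0^e(n)$ is hard-coded into the index of the pair partner, yet is not invariant under $1$-equivalence of $W_e$, so the isomorphism cannot map $v_{n, s}$ to $v_{\phi(n), s}$ blindly.  The swap $\sigma_n$ repairs this mismatch, and its computability hinges on the equivalence $n \in W_{g(y)} \Leftrightarrow \phi(n) \in W_{g(z)}$, which lets a parallel enumeration reveal both entry stages the moment either critical position is queried.
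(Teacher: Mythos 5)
Your encoding of $W_e$ as a recursive equivalence relation by pairing each potential member with its entry stage, and then reading off $\equiv_T$ from any computable isomorphism so as to invoke the Turing-incomparability clause of Theorem~\ref{thm:1-equiv-complete}, is exactly the paper's argument: the paper's $R_A=\{\langle 2a,2t+1\rangle : a \text{ enters } A \text{ at stage } t\}$ is the same pairing with a leaner tag on the stage component. One small point: if the universe is literally restricted to $v_{n,s}$ with $s\ge n$, then $F(v_{n,s})=v_{\phi(n),\sigma_n(s)}$ can leave the universe when $\phi(n)>n$; simply allow all $s$ (or index the partner by stage alone, as the paper does) and this disappears.
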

\begin{proof}  Given r.e.\ set $A$, build a  {computable} equivalence relation $R_A$ such that \bc  $A \equiv_1 B$ iff $R_A \equiv_{comp} R_B$.  \ec

We may assume at most one element enters $A$ at each stage, and only at even stages.

Let $R_A= \{ \la 2a, 2t+1 \ra \colon a \ \text{enters $A$ at stage} \ t \}$.
\end{proof}

\subsection{Boolean algebras}

For a linear order $L$ with least   element, $\Intalg L$ denotes  the   subalgebra of the Boolean algebra $\+ P(L)$ generated by intervals $[a,b)$ of $L$ where $a \in L $ and $b \in L \cup \{\infty \}$. Here $\infty$ is a new element greater than any element of $L$, and $[a, \infty)$ is short for $\{ x \in L \colon \, x \ge a\}$. Note that $\Intalg L$ consists of all sets $S$  of the form 
\[ S = \bigcup_{r=1}^n [a_r, b_r)\]
where $a_0 < b_0 < a_1 \ldots < b_n \le \infty$.
From a computable presentation  of  $L$ as a   as a  linear order,    we may  canonically obtain a computable presentation of the Boolean algebra  $\Intalg L$.

\begin{theorem} Computable isomorphism of computable Boolean algebras is  complete for $\SI 3$ equivalence relations.\end{theorem}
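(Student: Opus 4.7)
The plan is to reduce from the $\Sigma^0_3$-complete equivalence relation of $1$-equivalence on c.e.\ set indices (Corollary~\ref{cor:m_Sigma_complete}). By the construction in Theorem~\ref{thm:1-equiv-complete} we may assume the c.e.\ sets $W_e$ involved are infinite and coinfinite, so by Myhill's theorem $W_y \equiv_1 W_z$ iff there is a computable permutation of $\omega$ taking $W_y$ to $W_z$.

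Given a c.e.\ set $W = W_e$ with a fixed enumeration, I will build a computable linear order $L_W$ of order type $\omega^2$ (when $W$ is infinite), whose presentation encodes $W$. A natural realization is $L_W = \sum_n L_{W,n}$ concatenated in order, where $L_{W,n}$ is an $\omega$-chain if $n \in W$ and a single point otherwise; the finite stretches of singletons between consecutive $\omega$-blocks get absorbed, so the order type is $\omega^2$. Set $B_W := \Intalg(L_W)$; the assignment $e \mapsto$ (index for) $B_e$ is computable.

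The backward direction ($\equiv_1 \Rightarrow \cong_{comp}$) is routine: given a computable permutation $\pi$ of $\omega$ with $\pi(W_y) = W_z$, I would define a computable linear-order isomorphism $\psi \colon L_y \to L_z$ by sending the $\omega$-block of $L_y$ at position $n \in W_y$ to the $\omega$-block of $L_z$ at position $\pi(n) \in W_z$, pairing the intervening singletons in a uniform way; then $\Intalg(\psi)$ is the desired computable BA isomorphism.

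The forward direction ($\cong_{comp} \Rightarrow \equiv_1$) is where the work lies and will be the main obstacle. Given a computable BA isomorphism $\phi \colon B_y \to B_z$, I plan to exploit the Cantor--Bendixson decomposition of $\Intalg(\omega^2)$: the rank-$1$ equivalence classes correspond to the columns of the underlying $\omega^2$, which in our encoding are in bijection with the enumerated elements of $W_y$ (resp.\ $W_z$). Any computable BA isomorphism must match rank-$1$ classes with rank-$1$ classes. For each $n \in W_y$ that has been enumerated by some stage, I can identify a canonical atom sitting in column $n$ (e.g., the first element enumerated into its $\omega$-block), apply $\phi$ to it, and decode the column of the image in $B_z$; combined with the coinfiniteness of $W_y$ and $W_z$, a back-and-forth carried out using both $\phi$ and $\phi^{-1}$ refines this into a total computable permutation of $\omega$ taking $W_y$ to $W_z$. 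The technical difficulty is that ``being a rank-$1$ class'' is only a $\Pi^0_2$ notion in an abstract BA, so extracting a genuinely computable (and not merely $\Delta^0_2$) matching requires exploiting the specific effective structure of $L_e$ to compute column indices on the fly from the stagewise enumeration of $W_e$.
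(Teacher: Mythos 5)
There is a genuine gap in your forward direction, and it is not an incidental technicality but the heart of the matter. Your building block is an $\omega$-chain. In $\omega^2$ \emph{every} element has an immediate successor, so \emph{every} singleton is an atom of $\Intalg(\omega^2)$. In particular the ``first element of an $\omega$-block'' is, as a Boolean-algebra element, indistinguishable from the unique element of a singleton column: a computable isomorphism $\phi$ is free to send your canonical atom $a_n$ (with $n\in W_y$) to an atom lying in a \emph{singleton} column of $L_z$, i.e.\ to a column indexed by some $m\notin W_z$. Nothing in the algebraic structure forbids this, because it is only the quotient by the ideal of finite elements that remembers which columns are $\omega$-blocks, and as you yourself note that distinction is $\Pi^0_2$. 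Your proposed back-and-forth using $\phi$ and $\phi^{-1}$ addresses injectivity and totality, but it does not address this core problem: there is simply no computable invariant of a single atom in $\Intalg(\omega^2)$ that reveals whether it sits in an $\omega$-column, so no amount of alternating between $\phi$ and $\phi^{-1}$ will let you read off a permutation of $\omega$ carrying $W_y$ onto $W_z$. (Your backward direction is also stated slightly too strongly: the column-matching bijection $\pi$ need not be order-preserving, so there is in general no linear-order isomorphism $\psi$ of $L_y$ and $L_z$ aligning the columns, but the induced Boolean-algebra isomorphism of the interval algebras exists anyway; that can be repaired, whereas the forward direction cannot, along your route.)

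The paper avoids exactly this obstruction by choosing a \emph{dense} block rather than $\omega$: the $x$-th component of $L^e$ is a single point $m^e_x$ until $x$ enters the c.e.\ set, after which it becomes a computable copy of $[0,1)_{\QQ}$. Two consequences are decisive. First, the atoms of $\Intalg L^e$ are \emph{exactly} the singleton columns $M^e_x$ with $x$ outside the set, so ``atom'' is no longer a homogeneous soup of singletons but precisely codes nonmembership. Second, each column $M^e_x=[m^e_x,m^e_{x+1})$ is always an element of the algebra with a fixed computable name, and given a computable isomorphism $\Phi$ one can compute $\Phi(M^e_x)$ as a finite union of intervals and simply \emph{inspect its shape}: either it is a single interval of the canonical form $[m^i_y,m^i_{y+1})$, in which case set $f(x)=y$ and atomhood transfers through $\Phi$, or it is not, in which case it cannot be an atom (atoms of $\Intalg L^i$ are only of that canonical form), whence $x$ is in the set and $f(x)$ may be sent to a fresh even index. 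This dichotomy is decidable from the presentation, which is what makes the extracted reduction a total computable function rather than merely $\Delta^0_2$. This is the idea missing from your proposal; the choice of dense versus discrete blocks is precisely what buys computability of the decoding.
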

	
	\begin{proof}  Let $(V^e)\sN e$ be an effective listing of the c.e.\ sets containing the even numbers. 
 The relation of  $1$-equivalence $\equiv_1$  of c.e.\ sets $V^e$  is $\SI 3 $ complete by Theorem~\ref{thm:1-equiv-complete} and its proof below. We 	will computably reduce it to computable isomorphism of computable Boolean algebras.
	We define the Boolean algebra $C^e $ to be the interval algebra of a computable linear order $L^e$. Informally, to define $L^e$, we begin with  the order type $\omega$. For each $x\in \omega$, when $x$ enters $V^k$ we replace $x$  by a computable copy of $[0,1)_\QQ$. More formally,  \bc $L^e = \bigoplus_{x \in \omega} M^e_x$,  \ec 
	where
	$M^e_x$ has one element $m^k_x = 2x $, until $x$ enters $V^e$; 
	if and when that happens, we expand $M^e_x$ to  a computable  copy of $[0,1)_\QQ$, using the odd numbers,    	 
	  while ensuring that $m^k_x = \min M^k_x$  holds in $L^k$.  Also note that the domain of $L^k$ is $\NN$ because $0 \in V^k$. 
	 
	\begin{claim} $V^e \equiv_1 V^i$ $\LR $ $C^e \cong_{comp} C^i$. \end{claim}

		\rapf  
	Suppose $V^e \equiv_1 V^i$ via a computable permutation $\pi$. We define a computable isomorphism $\Phi: C^e \cong C^i$.
	
	\n (a)  Let $\Phi(m^e_x) = m^i_{\pi(x)}$. Once $x$  enters $V^e$, we know that $\pi(x) \in V^i$. So we may always ensure that  $\Phi$ restricts to  a computable isomorphism of linear orders $M^e_x \cong M^i_{\pi(x)}$.

\n (b) Consider an  element $S$  of $C^e$. It is given in the form $S = \bigcup_{r=1}^n [a_r, b_r) $ where $a_0 < b_0 < a_1 \ldots < b_n $ for $a_r, b_r \in L^e \cup \{\infty\} $ as above.  If $b_n< \infty$, we can compute the  maximal $x\in \omega$  such that $M^e_x \cap S \neq \ES$. Define 
\[ \Phi(S) = \bigcup_{y \le x}\Phi(S \cap M^e_y).\]
Note that the set $\Phi(S \cap M^e_y)$ can  be determined by (a).

If $b_n = \infty$, then let $\Phi(S) $ be the complement in $L^i$ of $\Phi (L^e \setminus S)$.

\vsp

\lapf Now suppose that $  C^e \cong_{comp} C^i$ via  some computable isomorphism $\Phi$. We show that $V^e \le_1 V^i$ via some computable function~$f$. Suppose we have defined $f(y) $ for $y< x$. We have  $\Phi(M^e_x) = \bigcup_{r=1}^n [a_r, b_r) $ where    $a_r, b_r \in L^i \cup \{\infty\} $ as above.

If $n>1$ then $M^e_x$ is not an atom in $C^e$, whence $x \in V^e$. Thus let $f(x) $ be the least even number that does not equal  $f(y)$ for any  $y< x$. 

Now suppose $n=1$. If $a_1 = m^i_y, b_1 = m^i_{y+1}$ then let $f(x) = y$. Otherwise, again we know $M^e_x$ is not an atom in $C^e$, and define $f(x)$ as before.

By symmetry, we also have $V^i \le_1 V^e$, and hence $V^i \equiv_1 V^e$ by Myhill's theorem. 	\end{proof}

Now the reader might be ready to conclude that for every reasonably rich  class of structures the computable isomorphism problem is $\SI 3$ complete for eqrels. But this is not so. For instance, consider the class of computable permutations of order 2 (this class itself is $\PI 2$). Then the  computable isomorphism  relation   on this class is $\PI 2$. (And this is  the same as the classical isomorphism relation.) This is so because we only need to figure out whether for two given permutations, both have the same  number of  1 cycles, and the same number  of 2 cycles. 

\subsection{Almost inclusion of r.e.\ sets  is a $\SI 3$-complete preordering}
For $X, Y \sub \omega$, we write $X \sub^*Y$ if $X \setminus Y$ is finite. We write $X = ^* Y $ if $X \sub^* Y \sub^* X$. 
Let $W_e$ denote the $e$-th r.e.\ set. 

\begin{theorem}\label{thm:starequal} $\{\la e, i \ra \colon W_e \sub^* W_i\}$ is $m$-complete for  $\SI 3$ preorderings.
\end{theorem}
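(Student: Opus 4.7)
The upper bound is immediate: $W_e\sub^* W_i$ is the $\SI 3$ statement $\ex n\,\fa m\ge n\,[m\notin W_e \vee m\in W_i]$, and $\sub^*$ is clearly reflexive and transitive. So the task reduces to completeness. Let $\preceq$ be any $\SI 3$ preordering on $\w$, and write $x\preceq y \LR \ex k\,[V_{x,y,k}=\w]$ for a uniformly c.e.\ triple sequence $(V_{x,y,k})$ of initial segments of $\w$. The plan is to build, by a $\ES''$-tree construction in the style of Theorem~\ref{thm:1-equiv-complete}, a uniformly c.e.\ family $(A_x)$ with $A_x=W_{g(x)}$ for a computable $g$, satisfying
\bi
\item[$G_{x,y,k}$:] $V_{x,y,k}=\w \RA A_x\sub^* A_y$;
\item[$N_{x,y,j}$:] $\neg(x\preceq y) \RA A_x\setminus A_y$ contains some $m>j$.
\ei
Meeting all $G_{x,y,k}$'s gives the direction $x\preceq y \RA A_x\sub^* A_y$, while meeting every $N_{x,y,j}$ for a fixed pair $(x,y)$ with $\neg(x\preceq y)$ makes $A_x\setminus A_y$ infinite.

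On the tree $2^{<\w}$, each $G_{x,y,k}$-strategy has two outcomes, $0$ (``$V_{x,y,k}$ grew since the previous visit'') and $1$ (``no growth''), and each $N_{x,y,j}$-strategy has a single outcome. Requirements are assigned to nodes using a bookkeeping function $L(\aaa)$ analogous to the one in Theorem~\ref{thm:1-equiv-complete}, arranged so that for every triple $(x,y,j)$ every $G_{x,y,k}$ with $k\le j$ appears above $N_{x,y,j}$ on any branch through it. The basic modules are: $G_{x,y,k}$ at $\aaa$ checks at each visit whether $V_{x,y,k}$ has grown since the previous visit; if so it takes outcome $0$ and enumerates into $A_y$ every $u\in A_x[s]$ with $u>\init(\aaa,s)$, otherwise it takes outcome $1$. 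Strategy $N_{x,y,j}$ at $\beta$ stays idle if some $\aaa\colon G_{x,y,k}$ with $\aaa{}^\frown 0 \preceq \beta$ lies above $\beta$ in the tree (so that $x\preceq y$ is already being guessed true); otherwise $\beta$ maintains a current witness $m>j$, enumerated into $A_x$, and whenever this current witness appears in $A_y$ it picks a fresh replacement and initializes all $\gamma\supsetneq\beta$, thereby pushing their thresholds past $m$.

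For the verification, a standard induction shows that every strategy on the true path is initialized only finitely often. If $V_{x,y,k}=\w$ and $\aaa$ is the $G_{x,y,k}$-strategy on the true path, then outcome $0$ is taken infinitely often and, after $\aaa$'s final initialization, every element entering $A_x$ past the stable threshold is eventually enumerated into $A_y$, giving $A_x\sub^* A_y$. If instead $\neg(x\preceq y)$, every $V_{x,y,k}$ is finite: the $G$-strategies above $\beta\colon N_{x,y,j}$ on the true path have outcome $1$ there and so copy only finitely often; the $G_{x,y,k'}$-strategies strictly below $\beta$ on the true path are initialized by $\beta$ each time $\beta$ acts, keeping their thresholds above $\beta$'s current witness; and strategies to the right of $\beta$ act only finitely often. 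Hence $\beta$'s sequence of witnesses stabilises at some $m>j$ remaining in $A_x\setminus A_y$, and $N_{x,y,j}$ is met.

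The main technical obstacle is the interaction of $N_{x,y,j}$ with the $G_{x,y,k}$-strategies for $k>j$, which cannot all be placed above $\beta$ in a $2^{<\w}$ tree. The resolution, already implicit in the priority setup and the initialization policy above, is that a $G_{x,y,k}$-strategy with $k>j$ that threatens $\beta$'s witness either lies strictly below $\beta$ (in which case $\beta$'s initialization pushes its threshold past the current witness) or is incomparable with $\beta$ in the tree (in which case, after $\beta$ joins the true path permanently, it contributes only finitely many enumerations into $A_y$). Once this is packaged into the $L(\aaa)$-bookkeeping, the construction follows the $\ES''$-tree template of Theorem~\ref{thm:1-equiv-complete} and the verification is routine.
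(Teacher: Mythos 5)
Your outline takes a genuinely different route from the paper's. The paper's proof is algebraic: inside the lattice $\+ E$ of c.e.\ sets it fixes a noncomputable c.e.\ set $A$ with a small major subset $D$, freely generates a subalgebra of $\+ B_D(A)$ via iterated Friedberg splittings $(X_n)$, encodes the given $\SI 3$ preorder as a $\SI 3$ ideal of that subalgebra, and applies the Harrington--Nies ideal definability lemma to realise the ideal as $\{(Y\cup D)^*\colon Y\sub^* B\}$ for a single c.e.\ set $B$; the reduction is then $n\mapsto X_n\cup B$. Your direct $\ES''$-tree construction, modelled on Theorem~\ref{thm:1-equiv-complete}, avoids that lattice-theoretic machinery entirely and is self-contained.

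However, the verification you label routine has a genuine gap. Your idle rule for $N_{x,y,j}$ only checks whether some direct $G_{x,y,k}$-ancestor has outcome $0$ on the branch. That is not enough: the witness of $\beta\colon N_{x,y,j}$ can be pushed into $A_y$ by a chain $A_x\to A_{w_1}\to\cdots\to A_y$ of outcome-$0$ $G$-ancestors of $\beta$ through intermediate sets, even though no $G_{x,y,k}$ above $\beta$ takes outcome $0$. Such a chain already entails $x\preceq y$ by transitivity, but your rule does not see it, so $\beta$ is not idle; it then loses each witness to the chain, changes witness infinitely often, reinitialises every strategy below itself infinitely often, and the construction breaks. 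Your ``resolution'' paragraph addresses only $G_{x,y,k}$-strategies (which $\beta$ can either initialise or which get reset when $\beta$ is visited); it is silent about copying through intermediates via strategies sitting \emph{above} $\beta$, which $\beta$ cannot touch. The necessary repair is that the bookkeeping $L(\aaa)$ track the \emph{transitive closure} of the relation given by the outcome-$0$ $G$-guesses along the branch, and that $N_{x,y,j}$ be kept idle as soon as $(x,y)$ enters that closure. With that change the chain argument does close: after $\beta$'s last initialisation, any copying link with threshold below $\beta$'s current witness must lie strictly above $\beta$ (everything $\supseteq\beta$ or $>_L\beta$ has been reinitialised, and anything $<_L\beta$ acting would reinitialise $\beta$); if every such link took outcome $0$, transitivity would put $(x,y)$ in the closure and $\beta$ would be idle; so some link is an outcome-$1$ node that acts only finitely often, and $\beta$ settles.
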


\begin{proof}   All sets in this proof will  be  r.e. Fix  a non-recursive set $A$. By $X \sqsubseteq A$ we denote  that $X$ is a split of $A$, i.e., $A \setminus X$ is r.e. Let $X,Y$ range over splits of $A$.

Since $A$ is non-recursive, there is a small major subset $D \subset_{sm} A$ (see \cite[pg.\ 194]{Soare:87}). Then, for each  $X \sqsubseteq A$, we have
 \bc $X \sub^* D \lra X$ is recursive  \ec
 (see \cite[Lemma 4.1.2]{Nies:habil}). 
Consider the     Boolean algebra 
\bc $\+ B_D(A)=\{(X \cup D)^* \colon X \sqsubseteq A\}$, \ec
which has a  canonical $\SI 3$ presentation  in the sense of \cite[Section 2]{Nies:97*1}.
   The Friedberg splitting theorem implies that every nonrecursive set can be split into two nonrecursive sets obtained uniformlhy in an r.e.\ index for the given set. Iterating this, we obtain a   uniformly r.e.\ sequence of splittings $X_n \sqsubset A$ (given by r.e.\  indices for both the set and its complement in $A$) such that  the  sequence $(p_n)\sN n$ freely generates a subalgebra $\+ F$ of $\+ B_D(A)$, where  $p_n= (X_n \cup D)^*$. 

 Now suppose that $\preceq$ is an arbitrary  $\SI 3$ preordering. Let $\+ I_0 $ be the  ideal  of $\+ F$ generated by $\{ p_n - p_k \colon n \preceq k \}$. We claim that 
 
 \bc $ n \preceq k \lra  p_n - p_k  \in \+ I_0 $.\ec
The implication ``$\rightarrow$'' is clear by definition. For the implication ``$\leftarrow$, let $\+ B_\preceq$ be the Boolean algebra generated by the subsets of $\omega$  of the form  $\hat i = \{ r \colon \, r \preceq i\}$. The map $p_i \mapsto \hat i$ extends to a Boolean algebra homomorphism $g \colon \, \+ F \to \+ B_\preceq$ that sends $\+ I_0 $ to $0$. If $n \not \preceq k$ then $\hat n \not \subseteq \hat k$, and hence $p_n - p_k \not \in \+ I_0$. This proves the claim.

Now let $\+ I$ be the  ideal  of $\+ B_D(A)$ generated by $\+ I_0$. Clearly $\+ I_0 = \+ I \cap \+ F$. Since $p_n= (X_n \cup D)^*$, the claim now implies that 
 \bc $ n \preceq k \lra  ((X_n - X_k)\cup D)^*  \in \+ I $.\ec

Note that $\+ I$ is a $\SI 3$ ideal.  By the basic  $\SI 3 $ case of  the  ideal definability lemma in \cite{Harrington.Nies:98} (a~simpler proof of this case was given in \cite[Lemma 3.2]{Nies:97*1})  there is $B \in [D, A]$ such that  \bc $(Y\cup D)^* \in \+ I \lra  Y \sub^* B$ \ec
for each $Y \sqsubseteq A$.  
  Thus
   \bc $n \preceq k \lra ((X_n \setminus X_k)\cup D)^* \in \+ I  \lra X_n \sub^* X_k \cup B$. \ec
   Since the sequence of splittings  $(X_n)\sN n$ is uniform, this yields the desired $m$-reduction.
  \end{proof}

Each equivalence relation is   a preorder. Thus, as  an immediate consequence, we obtain:

\begin{cor}\label{cor:=*  Sigma 3}  $\{\la e, i \ra \colon W_e =^* W_i\}$ is $m$-complete for  $\SI 3$ equivalence relations.
\end{cor}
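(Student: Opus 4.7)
The plan is to derive Corollary~\ref{cor:=*  Sigma 3} essentially as a one-line consequence of Theorem~\ref{thm:starequal}, exploiting the symmetry that distinguishes an equivalence relation from a general preorder. First I would check that $=^*$ on r.e.\ indices really is a $\SI 3$ equivalence relation, so that the claim has content. Reflexivity, symmetry and transitivity are immediate. For the complexity bound, $W_e =^* W_i$ unfolds to
\[ \exists N \, \forall x > N \, [\, x \in W_e \leftrightarrow x \in W_i \,], \]
and the matrix $x \in W_e \leftrightarrow x \in W_i$ is a conjunction of two $\SI 1 \vee \Pi 1$ statements, hence $\Pi 2$; prefixing $\forall x > N$ keeps it $\Pi 2$, and the outer $\exists N$ yields a $\SI 3$ formula. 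So $=^*$ lies in the class for which we are claiming completeness.

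For hardness, let $E$ be an arbitrary $\SI 3$ equivalence relation on $\omega$. Viewing $E$ as a $\SI 3$ preorder, Theorem~\ref{thm:starequal} produces a computable function $g$ such that for all $n,k$,
\[ n \, E \, k \;\Longleftrightarrow\; W_{g(n)} \subseteq^* W_{g(k)}. \]
I claim that the same $g$ is already an $m$-reduction of $E$ to $=^*$. For the forward direction, if $n \, E \, k$ then also $k \, E \, n$ by symmetry of $E$, so applying the equivalence above in both directions yields $W_{g(n)} \subseteq^* W_{g(k)}$ and $W_{g(k)} \subseteq^* W_{g(n)}$, i.e.\ $W_{g(n)} =^* W_{g(k)}$. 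Conversely, $W_{g(n)} =^* W_{g(k)}$ implies $W_{g(n)} \subseteq^* W_{g(k)}$, and hence $n \, E \, k$ by the equivalence.

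There is no real obstacle here: all the genuine work lies in Theorem~\ref{thm:starequal}, where the small major subset $D$ and the Friedberg splitting sequence $(X_n)$ inside the $\SI 3$ presented Boolean algebra $\+ B_D(A)$ are used to encode the given preorder via ideal definability. The corollary is the automatic observation that an $m$-reduction for a $\SI 3$ preorder also serves as an $m$-reduction for the underlying equivalence relation whenever the codomain preorder happens to be antisymmetric modulo $=^*$, which is precisely what replacing $\subseteq^*$ by $=^*$ accomplishes.
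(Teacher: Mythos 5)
Your argument is correct, and it is exactly the paper's intended proof: the paper prefaces the corollary with the single remark "Each equivalence relation is a preorder" and leaves the rest implicit, which is precisely the symmetry observation you spell out — the $m$-reduction $g$ furnished by Theorem~\ref{thm:starequal} for $E$ viewed as a preorder automatically satisfies $n\,E\,k \Leftrightarrow W_{g(n)} =^* W_{g(k)}$, using symmetry of $E$ in one direction and $=^*\Rightarrow\subseteq^*$ in the other.
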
	
   
For a natural complete $\SI 3$ preordering, one can consider  {embeddability of subgroups of $(\QQ,+)$}.

\begin{cor}\label{cor:EmbedAbGroups} Computable embeddability among computable subgroups of $(\QQ, +)$ is $m$-complete for $\SI 3$ preorderings.
\end{cor}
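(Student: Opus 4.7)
The plan is to $m$-reduce the preordering $\sub^*$ on r.e.\ indices, which is $\SI 3$-complete for preorderings by Theorem~\ref{thm:starequal}, to computable embeddability of computable subgroups of $(\QQ,+)$. Fix an effective enumeration $p_0<p_1<\cdots$ of the primes, and for an r.e.\ set $W$ with index $e$ let
\[ G_W \ := \ \bigl\{a/b \in \QQ : \gcd(a,b)=1 \text{ and } b \text{ is a squarefree product of primes from } \{p_n : n \in W\}\bigr\}, \]
which is equivalently the subgroup of $\QQ$ generated by $\ZZ$ and $\{1/p_n : n \in W\}$. This is a rank-$1$ torsion-free abelian group whose $p_n$-height at $1$ is $1$ if $n \in W$ and $0$ otherwise. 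Because $W_e$ is uniformly r.e., so is $G_{W_e}$ as a subset of $\QQ$; indexing its elements by their first appearance in a repetition-free enumeration produces a uniformly computable presentation with decidable equality and computable group operations. Let $h(e)$ be its index.

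I next show $W_e \sub^* W_k$ iff $G_{W_e}$ computably embeds into $G_{W_k}$, which yields the required reduction $h$. If $W_e \sub^* W_k$, let $F := W_e \setminus W_k$ (finite) and put $a := \prod_{n \in F} p_n$. For any $c/d \in G_{W_e}$ split the squarefree denominator as $d = d_1 d_2$ with $d_1$ using primes from $W_e \cap W_k$ and $d_2$ using primes from $F$; since $d_2 \mid a$, the element $\phi(c/d) := ac/d$ reduces to $(ac/d_2)/d_1 \in G_{W_k}$, so multiplication by $a$ is an embedding $G_{W_e} \hookrightarrow G_{W_k}$. On the presentations at $h(e)$ and $h(k)$ this is visibly computable: decode an index to its rational value, multiply by the fixed integer $a$, then locate the product in the enumeration of $G_{W_k}$.

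Conversely let $\phi \colon G_{W_e} \to G_{W_k}$ be any injective group homomorphism and write $\phi(1) = a/b$ in lowest terms. Since $G_{W_k}$ is a $\ZZ$-module, $b\phi$ is again an embedding, now with $b\phi(1) = a \in \ZZ$, so without loss of generality $\phi(x) = ax$. For each $n \in W_e$, $\phi(1/p_n) = a/p_n$ must lie in $G_{W_k}$, which by definition forces $p_n \mid a$ or $n \in W_k$; as only finitely many primes divide $a$, this gives $W_e \sub^* W_k$.

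The underlying mathematical content is a special case of Baer's classification of rank-$1$ torsion-free abelian groups by type. The main obstacle I anticipate is not conceptual but presentational: one must verify that the uniform computable presentation $h$ is bookkept carefully enough that every abstract embedding among the $G_{W_e}$ can be witnessed by a Turing-computable map between the corresponding indices. This is routine because every embedding has the explicit form $x \mapsto ax$ for a specific integer $a$ and each $G_{W_e}$ is presented as an effective sublist of $\QQ$.
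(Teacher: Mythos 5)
Your reduction is exactly the one in the paper: both code $W$ into the subgroup of $\QQ$ generated by $\ZZ$ together with $\{1/p_n : n\in W\}$, realize the embedding for $W_e\subseteq^* W_k$ as multiplication by the product of primes indexed by $W_e\setminus W_k$, and for the converse argue from the divisibility constraints on the image of $1$ that only finitely many $p_n$ with $n\in W_e\setminus W_k$ can occur. Your routing through the classification of homomorphisms between rank-$1$ subgroups of $\QQ$ (every embedding is multiplication by a fixed rational) and your injective, equality-decidable presentation are clean packagings of what the paper does more by hand with a non-injective interpretation function, but the argument is essentially the same.
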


\begin{proof}
We will represent a computable group by a 4-tuple of computable functions, $(e,\oplus,\ominus,I)$, where $e(x)=1$ for the identity element and 0 elsewhere,
$\oplus$ is the binary group operation, $\ominus$ a unary function taking an element to its inverse and $I$ is the ``interpretation" function which maps natural numbers to $\QQ$.
Thereby, $\oplus(x,y)=z$ iff $I(x)+I(y)=I(z)$.
We do not require that $I$ be one to one.

Let $p_n$ be the $n$-th prime. For convenience, treat $p_0$ as 1. Code the $x$-th r.e.\ set $W_x$ into $G_x$: the group generated by  $1$  and  all $1/p_n$, $n \in W_x$.
For a computable presentation, let $x_r$ denote the last element to enter $W_{x,r}$. Use the odd numbers to encode all finite sequences of integers,
non-zero even numbers of the form $2r$ to encode $1/p_{x_r}$ and 0 to encode 0.
This immediately defines the behaviour of $I$ on even numbers, so to account for the odd map the
sequence $\la m_1,m_2,\dots,m_k\ra$ to $m_1/p_{x_1}+m_2/p_{x_2}\dots+m_k/p_{x_k}$.
Because $I$ so defined is totally computable, it implies computable $e,\oplus,\ominus$ via $I^{-1}$.
For instance, to compute $\oplus(x,y)$ first compute $I(x)$ and $I(y)$, and then find the least element of $I^{-1}(I(x)+I(y))$. In this vein for the rest of this proof we will abuse notation slightly and
interpret $I^{-1}(x)$ as the least element of $I^{-1}(x)$.

Now suppose $W_x$ is almost contained in $W_y$. We wish to show that $G_x$ is embeddable in $G_y$.
Let $P$ be the product of the finitely many primes $p_n$ with $n \in W_x \setminus W_y$, $I$ the interpretation function of $G_x$ and $J$ the interpretation function of $G_y$.

The desired embedding of $G_x$ in $G_y$ is given by $a\mapsto J^{-1}(PI(b))$.
Observe that since the group operations modulo $I$ or $J$ respectively correspond to addition of the rational numbers, it follows that
this mapping is one to one as $PI(a)=PI(b)$ if and only if $I(a)=I(b)$, and it preserves the group operation as $PI(a)+PI(b)=PI(\oplus(a,b))$.

On the other hand, suppose there are infinitely many elements in $W_x$ that are not in $W_y$.
Note that we can define a notion of divisibility in a group in the usual way: $a|b$ iff $I(b)=cI(a)$.
Any embedding of $G_x$ into $G_y$ must clearly preserve divisibility modulo the interpretation. That is, where $f$ is the embedding if $I(b)=cI(a)$ then $J(f(b))=cJ(f(a))$.
Observe that since $1=p_i/p_i$ for any $i$, for every $i\in W_x$, $2i|I^{-1}(1)$. We will show that this implies that no embedding is possible.

Let $i\in W_x\setminus W_y$. Let $f(I^{-1}(1))=J^{-1}(a/b)$. As $a/b=p_iI(f(1/p_i))$, and $p_i$ cannot appear in the denominator of any element in $G_y$, $p_i$ must appear in $a$.
However, as there are infinitely many such $i$, all of them coprime, no finite nominator can satisfy this requirement.\end{proof}

\subsection{Computable isomorphism versus classical non-isomorphism}

Note that for any two  infinite and co-infinite r.e.\ sets $W_y$ and $W_z$, the tree  $B_{h(y)}$ in Cor.\ \ref{cor:tree_comp_isom} is classically, but not computably,  isomorphic to $B_{h(z)}$. We now strengthen the result for trees: non-equivalence even turns into  classical non-isomorphism.
\begin{conjtheorem} For each $\SI 3$ equivalence relation $S$, there is a computable function $g$ such that  \bc $xS y \RA B_{g(x)} \cong_c B_{g(y)}$,   \ec 
and    \bc $ \lnot  xS y \RA B_{g(x)} \not \cong B_{g(y)}$ \ec\end{conjtheorem}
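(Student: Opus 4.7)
The plan is to strengthen the construction underlying Theorem~\ref{thm:1-equiv-complete} and Corollary~\ref{cor:tree_comp_isom} so that, when $\neg xSy$, the associated computable trees differ in a \emph{classical} invariant, not only in their computable-isomorphism relation. We enumerate for each~$x$ a computable tree $B_{g(x)}$ whose root has $\omega$-many level-$1$ children, each carrying a height-$2$ ``type'' subtree below it; the classical isomorphism type of $B_{g(x)}$ is then determined by the multiset of height-$2$ isomorphism types realised at level~$1$. Using the $\SI 3$ representation of~$S$ already in the proof of Theorem~\ref{thm:1-equiv-complete}, with uniformly c.e.\ sets $V_{y,z,i}$ such that $ySz\LR\exists i\,V_{y,z,i}=\omega$, the construction meets, for all $y<z$ and~$i$,
\[
G_{y,z,i}\colon\ V_{y,z,i}=\omega\ \RA\ B_{g(y)}\cong_c B_{g(z)},
\]
and, for all $u\ne v$,
\[
N_{u,v}\colon\ u=\min[u]_S\andd v=\min[v]_S\ \RA\ B_{g(u)}\not\cong B_{g(v)},
\]
on the tree of strategies and with the same $L$-function as in Theorem~\ref{thm:1-equiv-complete}.

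A strategy $\beta\colon G_{y,z,i}$ on the true path extends a partial injection $\phi_\beta\colon B_{g(y)}\to B_{g(z)}$ stage by stage: at each $\beta$-true stage it enumerates a fresh matched pair of level-$1$ subtrees of the same height-$2$ type into $B_{g(y)}$ and $B_{g(z)}$ and extends $\phi_\beta$ over them. A strategy $\alpha\colon N_{u,v}$ on the true path chooses a fresh marker type $\tau_\alpha$---a height-$2$ tree whose character sequence is disjoint from those of all background types used by any~$\beta$ and from every other $\tau_{\alpha'}$---enumerates a single level-$1$ subtree of type $\tau_\alpha$ into $B_{g(u)}$, and permanently refrains from enumerating any subtree of that type into $B_{g(v)}$. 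Marker types are indexed by the $S$-\emph{class pair} of $(u,v)$ rather than by the pair itself, so that whenever $xSy$ holds the same multiset of marker types appears in $B_{g(x)}$ and $B_{g(y)}$; this class-level indexing is arranged by the usual Recursion-Theorem-based administration of indices on the tree of strategies. Priorities and initializations are as in Theorem~\ref{thm:1-equiv-complete}, so every true-path strategy is injured only finitely often, the partial isomorphism built by a successful~$\beta$ eventually stabilises to a total computable isomorphism, and the marker placed by a successful~$\alpha$ is permanent past some stage.

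The main obstacle is to verify that a marker introduced by a successful~$N$-strategy yields \emph{classical}, rather than merely computable, non-isomorphism. The key point is combinatorial rigidity of the marker types: since $\tau_\alpha$'s character sequence is disjoint from those of every other type enumerated anywhere during the construction, the unique level-$1$ subtree of type $\tau_\alpha$ in $B_{g(u)}$ has no isomorphic counterpart at level~$1$ of $B_{g(v)}$. Any classical isomorphism $B_{g(u)}\to B_{g(v)}$ would be forced to map this subtree to an isomorphic level-$1$ subtree of $B_{g(v)}$, but no such subtree exists; this blocks every classical isomorphism, not only every computable one. Conversely, when $xSy$, the class-only indexing ensures that the marker multisets of $B_{g(x)}$ and $B_{g(y)}$ coincide, so that the partial isomorphism built by the governing $G$-strategy can be extended to match markers to markers and hence yields the required computable isomorphism.
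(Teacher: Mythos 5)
Your approach is a genuinely different route from the paper's, but it has a gap that is not merely cosmetic. The paper's sketch uses the \emph{number of infinite paths} as the classical invariant: when $u=\min[u]_S$, the tree $T_u$ is built to have exactly $u$ infinite paths (requirements $N_{u,e}$ extend finite approximations $f^u_0,\dots,f^u_{u-1}$ to length $e+1$), and distinct $S$-classes therefore get trees with distinct numbers of infinite paths. You instead propose the \emph{multiset of level-$1$ subtree types} as the invariant, with fresh ``marker'' types separating inequivalent classes. These are different decompositions of the problem.

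The gap is in the $xSy\Rightarrow B_{g(x)}\cong_c B_{g(y)}$ direction. In a priority construction, a strategy $\alpha\colon N_{u,v}$ must act, and in particular may place its marker subtree, \emph{before} the construction can know whether $\alpha$ is on the true path; the phrase ``a strategy on the true path chooses a fresh marker type'' glosses over this. If $u$ is in fact not minimal in $[u]_S$ (say $u'<u$ is minimal with $u'Su$), the marker $\alpha$ placed in $B_{g(u)}$ before initialization remains there permanently, since c.e.\ enumeration cannot be retracted. It is a level-$1$ subtree of a type that, by your freshness requirement, is never placed in $B_{g(u')}$. So $B_{g(u)}$ and $B_{g(u')}$ then have different multisets of level-$1$ types, defeating the very classical isomorphism you need. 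Your appeal to ``class-only indexing'' does not resolve this: the indexing governs which markers \emph{true-path} strategies introduce, not what stray markers wrong-guess strategies leave behind. By contrast, the paper's invariant is robust against this: the $G$-strategy simply declares stray branches ``unextendable,'' turning them into dead leaves that contribute nothing to the infinite-path count and can be absorbed into the isomorphism by padding.

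To salvage your construction you would have to show either that the marker types can be retroactively \emph{merged into} a background type (by growing a failed marker subtree until it matches a background type, in a way that the governing $G$-strategy can recognise and imitate in the partner tree), or that the coding on the tree of strategies can be arranged so that only one marker per target tree is ever placed and it is always compatible with the eventual true-path guess. As written, neither mechanism is present, so the plan as stated does not yield the implication $xSy\Rightarrow B_{g(x)}\cong_c B_{g(y)}$.
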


\begin{proof} (Sketch) 
As before we meet   coding requirements for $y<z$, 

\vsps

 $G_{y,z,i} \colon \, V_{y,z,i} = \w \RA  T_y \cong_c T_z$. 

\vsps

If $\lnot u S v$ we  ensure that $T_u $ is not  isomorphic to $T_v$. To do so, if  $u = \min [u]_S$ then $T_u$ will have  exactly $u$ infinite paths. They are   denoted \bc $f^u_0, \ldots, f^u_{u-1} $. \ec We meet the requirements

 $N_{u,e} \colon u = \min [u]_S \RA   f^u_0 \uhr {e+1}, \ldots, f^u_{u-1}\uhr {e+1}$ are defined.

As before there are     strategies $\aaa \colon N_{u,e}$.  At each stage $s$ we have approximations $f^u_{i,\aaa,s}$ of $f^u_i$ of length $e+1$, where $f^u_{i,\aaa,s} \prec f^u_{i,\bbb,s}$ for strategies $\aaa \prec \bbb$. 
 We let $f^u_{i,\ES,s} = \la 2i \ra$ for each~$s$, which makes the paths   distinct.

Fix an effective priority ordering of all requirements with $N_{u,e}< N_{u,e+1}$.  We define a tree $T$ of strategies and $L$ as before, with the only difference that the former  requirements  $N_{u,v,e}$ now 
become $N_{u,e}$.

 To \emph{initialize} a strategy $\aaa$ means to return it to its first instruction. If $\aaa: G_{y,z,i}$ we also make the partial computable function  $h_\aaa$ built by  the strategy $\aaa$   undefined on all inputs. If $\aaa: N_{u,e}$ we make all current  $f^u_{i,\aaa}$ undefined. As before,  at stage $s$, let $\init(\aaa,s)$ denote the largest stage $\le s$ at which $\aaa$ was initialized.

\n  \emph{Strategy} $\aaa\colon N_{u,e}$   at stage $s$.

 If $e>0$ let $\gamma \prec \aaa$ be the $N_{u,e-1}$ strategy, otherwise $\gamma = \ES$. If $f^u_{i,\aaa,s-1}$ is undefined, 
pick a large  even number $n$ so that for each $i<u$, $f^u_{i,\aaa,s}: = f^u_{i,\gamma ,s}\ape n > \init(\aaa,s)$. 
   {Initialize} all the $G$-type  strategies $\beta \succ \aaa$.

  \vsp

  \n \emph{A  $G_{y,z,i}$ strategy  $\aaa$ at stages $s$.}
    If $\aaa 0$ is on the true path then this strategy builds a    computable  1-1 map $h_\aaa$ from all   even strings in $T_y$    to even strings  in $T_z$ preserving the  length and the  prefix relation.

If $\aaa0 \sub \delta_s$, let $t<s$ be greatest such that $t=0$ or $\aaa0 \sub \delta_t$, and do the following.
 
Declare each $\eta \in T_{z,s-1}$, $\init(\aaa,s) < \eta $ unextendable (leaf). 
  For each even string  $\eta <s$ let $k$ be largest such that  $\eta\uhr k    \in   \dom (h_{\aaa, t})$. If $k< |\eta|$, pick a fresh even extension $\sss \succeq h_{\aaa,t}(\eta \uhr k)$, where $\sss   >  \init(\aaa,s) $ and  $|\sss| = |\eta|$, and let $h_\aaa(\eta \uhr j ) = \sss \uhr j$ for each $j$ with $k< j \le |\eta|$.
 
	The stage-by-stage construction follows the same scheme as before. In particular,    
	at stage $s>0$ initialize all strategies $\aaa >_L \delta_s$.
\end{proof}

 \newpage
 
 \part{Others}

\section{Bernstein v.s Vitali}
Input by Yu.

This is a result for fun concerning the question in: 

\href{http://mathoverflow.net/questions/71575/vitali-sets-vs-bernstein-sets}{http://mathoverflow.net/questions/71575/vitali-sets-vs-bernstein-sets}

I need to modify the definition of Vitali set to apply recursion theory.  

We call a set $V\subset 2^{\omega}$ to be {\em Vitali} if for any Turing degree $\mathbf{x}$, there is a unique real $x\in \mathbf{x}\cap V$.

We call a set    $V\subset 2^{\omega}$ to be {\em Bernstein} if neither $V$ nor $2^{\omega}\setminus V$  contains a perfect subset.

Both Vitali and Bernstein sets are used   construct nonmeasurable sets as in classical analysis books.

Now an interesting question is which way is stronger? More precisely, over $ZF$, does the existence either one implies the existence of another one? 

\begin{theorem}\label{theorem: bernstein not imply vitali}
There is a model $\mathcal{M}$ of $ZF$ in which there is a Bernstein set but no Vitali set.
\end{theorem}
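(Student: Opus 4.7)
The plan is to realise $\mathcal{M}$ as a symmetric extension of a Solovay-style base model, exploiting that in the Solovay model neither a Vitali nor a Bernstein set exists, and then adjoining a Bernstein set that is, by construction, a union of whole Turing degrees---so that no Vitali selector can be read off from it. Let $V\models\mathrm{ZFC}+$ ``$\kappa$ is inaccessible'', Levy-collapse $\kappa$ to $\omega_1$, and form the Solovay model $\mathcal{S}$ of sets hereditarily definable from a real and an ordinal. Inside $\mathcal{S}$ every set of reals has the perfect set property and is Lebesgue measurable; hence no Bernstein set exists, and no Vitali selector exists, since any selector for the countable Turing equivalence classes has inner Lebesgue measure zero and outer measure one, contradicting measurability.

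Now work in $\mathcal{S}$ and let $\mathbb{P}$ be the forcing whose conditions are pairs $(p,q)$ of disjoint subsets of the set $\mathcal{D}$ of Turing degrees, each of cardinality $<\omega_1$, ordered by coordinatewise reverse inclusion. Since $\mathrm{DC}$ holds in $\mathcal{S}$ and $\mathbb{P}$ is $\aleph_1$-closed, no new reals are added, so $\mathbb{R}^{\mathcal{S}[H]}=\mathbb{R}^{\mathcal{S}}$ and every perfect set of reals in $\mathcal{S}[H]$ already lies in $\mathcal{S}$. For a generic $H$, put $D=\bigcup\{p:\exists q\,(p,q)\in H\}\subseteq\mathcal{D}$ and set $B=\bigcup_{\mathbf{d}\in D}\mathbf{d}\subseteq\mathbb{R}$. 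Because every perfect set of reals represents $|\mathbb{R}|$-many distinct Turing degrees (by a Mycielski-style independence argument, available in $\mathcal{S}$), a straightforward density argument in $\mathbb{P}$ shows every perfect set meets both $B$ and $\mathbb{R}\setminus B$; thus $B$ is a Bernstein set in $\mathcal{M}:=\mathcal{S}[H]$.

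The heart of the proof---and the main obstacle---is showing $\mathcal{M}$ has no Vitali selector. Intuitively, $B$ is a union of entire Turing degrees and therefore carries no information distinguishing the reals within a single degree; so any putative Vitali $U\in\mathcal{M}$ would break a symmetry that $\mathbb{P}$ respects. To make this precise one sets up a symmetric submodel: let $\mathcal{G}$ be the group of permutations of $\mathbb{R}$ that fix each Turing degree setwise, and let $\mathcal{F}$ be the normal filter generated by pointwise stabilisers of countable sets of reals. Each $\sigma\in\mathcal{G}$ acts trivially on conditions of $\mathbb{P}$ (which see only degrees) and hence extends to an automorphism on $\mathbb{P}$-names; take $\mathcal{M}$ to be the resulting hereditarily symmetric model. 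A putative Vitali selector $U\in\mathcal{M}$ would then have a hereditarily symmetric name $\dot U$ with support some countable set $E$ of reals; choosing $\sigma\in\mathcal{G}$ that fixes $E$ pointwise and moves the unique element of $U\cap\mathbf{d}$ within some degree $\mathbf{d}$ disjoint from $E$ yields $\sigma(\dot U)=\dot U$ while $\sigma(U)\neq U$, a contradiction. Verifying that $B$ itself lies in the symmetric model and that the full symmetry-lemma machinery carries through is the technical step requiring care; the closest analogue is the classical Feferman--Levy symmetric construction.
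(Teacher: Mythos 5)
Your proposal takes a genuinely different route from the paper, and it contains a gap at the decisive step. The paper does not use a symmetric extension at all: it reuses the model $\mathcal{M}$ of Wang, Wu and Yu containing a cofinal $\omega_1$-chain $\{\mathbf{x}_\alpha\}_{\alpha<\omega_1}$ of Turing degrees but no well-ordering of the reals. The absence of a Vitali set is then a two-line observation (a degree selector plus a cofinal $\omega_1$-chain yields a well-ordering of $\mathbb{R}$), and the Bernstein set is built by an explicit transfinite recursion along the chain, at limit successor stages $\alpha+1$ throwing into $B$ a real on each perfect tree $T\leq_T\mathbf{x}_\alpha$ chosen so that $z\oplus T\equiv_T\mathbf{x}_{\alpha+1}$, and keeping out of $B$ the analogously chosen real realizing degree $\mathbf{x}_{\alpha+2}$. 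No forcing over a choiceless ground model is needed.

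The gap in your argument is the symmetry step. You let $\mathcal{G}$ be the group of permutations of $\mathbb{R}$ that fix every Turing degree setwise, and you correctly observe that $\mathcal{G}$ acts trivially on the conditions of $\mathbb{P}$. But this is exactly why the argument cannot get off the ground: for an element $\sigma$ to act on $\mathbb{P}$-names (and hence to generate a symmetric submodel via supports) it must either be a nontrivial automorphism of the poset $\mathbb{P}$, or be an $\in$-automorphism of the ground model $\mathcal{S}$ that one then lifts to names. An arbitrary permutation of $\mathbb{R}^{\mathcal{S}}$ is neither: $\mathcal{S}$ is a transitive model and is rigid, so $\sigma$ does not extend to an $\in$-automorphism of $\mathcal{S}$, and since $\sigma$ is the identity on $\mathbb{P}$ it induces only the identity on names. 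The line ``$\sigma(\dot U)=\dot U$ while $\sigma(U)\neq U$'' therefore has no meaning in the framework you set up; the Feferman--L\'evy analogy does not apply, because in that construction the permutations move actual forcing conditions. To salvage the plan one would have to build the renaming of reals within a Turing degree directly into a combined forcing (e.g.\ an automorphism-rich variant of the L\'evy collapse iterated with your $\mathbb{P}$), and verify that $B$ survives into the symmetric part; this is a substantial additional construction, not a technicality. As a secondary point, your justification that $\mathcal{S}$ has no Vitali set invokes the translation-invariance argument, which does not transfer to $\equiv_T$; what one actually needs is that $\equiv_T$ is a non-smooth countable Borel equivalence relation and therefore has no Lebesgue-measurable transversal.
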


In \cite{Wang.Wu.Yu:13}, a model  $\mathcal{M}$ of $ZF$ was constructed so that there is a cofinal chain of Turing degrees of order type $\omega_1$ but there is no well ordering of reals. We claim that the  $\mathcal{M}$ is exactly what we want.

Fix a cofinal chain $\{\mathbf{\mathbf{x}}_{\alpha}\}_{\alpha<\omega_1}$ of Turing degrees order type of $\omega_1$. 

\begin{lemma}
There is no a Vitali set in $\mathcal{M}$.
\end{lemma}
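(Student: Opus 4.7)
The plan is to assume, toward a contradiction, that a Vitali set $V$ exists in $\mathcal{M}$, and then to extract from $V$ a well-ordering of $2^\omega$, contradicting the construction of $\mathcal{M}$ in \cite{Wang.Wu.Yu:13}. The first step will be to observe that, by the defining property of $V$ (every Turing degree $\mathbf{x}$ has a unique representative in $V$), the intersection $V \cap \mathbf{x}$ is a singleton, so the map $v$ sending a Turing degree to its unique $V$-representative is a ZF-definable function with no appeal to choice. I would then evaluate $v$ along the fixed cofinal chain $\{\mathbf{x}_\alpha\}_{\alpha<\omega_1}$ to produce a canonical $\omega_1$-sequence $\{v(\mathbf{x}_\alpha)\}_{\alpha<\omega_1}$ of reals lying in $V$.

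The second step will exploit cofinality to reach every Turing degree from the chain. For any real $x \in 2^\omega$, since its Turing degree lies below some $\mathbf{x}_\alpha$, we have $x \leq_T v(\mathbf{x}_\alpha)$, so I can set $\alpha(x) = \min\{\alpha < \omega_1 \colon x \leq_T v(\mathbf{x}_\alpha)\}$, which exists because $\omega_1$ is well-ordered. Fixing the standard enumeration $(\Phi_e)_{e \in \omega}$ of Turing functionals, I would then define $e(x) = \min\{e \in \omega \colon \Phi_e(v(\mathbf{x}_{\alpha(x)})) = x\}$.

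The map $x \mapsto (\alpha(x), e(x))$ will be an injection from $2^\omega$ into $\omega_1 \times \omega$, since two reals with the same image arise by applying the same Turing functional to the same oracle and hence coincide. Composing with the canonical well-order on $\omega_1 \times \omega$ yields a well-ordering of $2^\omega$ inside $\mathcal{M}$, which is the desired contradiction. The main point I will need to verify is that the entire construction is carried out in ZF alone: the singleton property of $V \cap \mathbf{x}$ makes $v$ a genuine function without invoking choice, both minima are taken over the well-ordered sets $\omega_1$ and $\omega$, and the cofinality of $\{\mathbf{x}_\alpha\}_{\alpha<\omega_1}$ guarantees that $\alpha(x)$ is defined for every real $x$. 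The only delicate step is confirming that cofinality of the chain in $\mathcal{M}$ really does imply that every Turing degree is bounded by some $\mathbf{x}_\alpha$, which is immediate from the definition given in \cite{Wang.Wu.Yu:13}.
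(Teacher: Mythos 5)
Your proposal is correct and is essentially the paper's own argument: use the uniqueness of representatives to define, without choice, the $\omega_1$-sequence of $V$-representatives along the fixed cofinal chain, and then use cofinality to well-order the reals, contradicting that $\mathcal{M}$ has no well-ordering of the reals. You simply spell out the final step (the injection $x \mapsto (\alpha(x), e(x))$ into $\omega_1 \times \omega$) that the paper leaves implicit.
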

\begin{proof}
Assume otherwise. Then we may pick a unique real from each Turing degree $\mathbf{\mathbf{x}}_{\alpha}$. Since  $\{\mathbf{\mathbf{x}}_{\alpha}\}_{\alpha<\omega_1}$ is cofinal, we have a well ordering of reals in $\mathcal{M}$ which is a contradiction.
\end{proof}

We construct a Bernstein set in $\mathcal{M}$. We define a set $B=\bigcup_{\alpha<\omega_1}B_{\alpha}$.

We shall let $B_{\alpha}$ only contain reals which Turing below $\mathbf{x}_{\alpha}$ for limit ordinal $\alpha$.

Let $B_0=\emptyset$.

At limit stage $\alpha$, just let  $B_{\alpha}=\bigcup_{\gamma<\alpha}B_{\gamma}$.

At stage $\alpha+1$ but $\alpha$ is not limit,  just let  $B_{\alpha+1}=B_{\alpha}$. 

At stage $\alpha+1$ but $\alpha$ is limit. Let $\mathcal{A}_{\alpha}$ be the collection of perfect trees Turing below $\mathbf{x_{\alpha}}$. Let $$B_{\alpha+1}= B_{\alpha}\cup \{z\mid \exists T \in \mathcal{A}_{\alpha}(z\in T\wedge z\not\leq_T \mathbf{x}_{\alpha} \wedge z \leq_T \mathbf{x}_{\alpha+1})\}.$$ Note that for each $T\in \mathcal{A}_{\alpha}$, there is a real $x\in [T]$ so that $x\oplus T\equiv_T \mathbf{x}_{\alpha+1}$. So $B_{\alpha+1}\cap [T]$ is not empty for each $T\in\mathcal{A}_{\alpha}$. 

This finishes the construction.

Since  $\{\mathbf{\mathbf{x}}_{\alpha}\}_{\alpha<\omega_1}$ is cofinal, by the construction, $B\cap [T]$ is not empty for any perfect tree $T$. 

Moreover, for any recursive tree $T$, let $\alpha$ be the least limit ordinal so that $T\leq_T \mathbf{x}_{\alpha}$.  There must be some $z\in [T]$ so that $z\oplus T\equiv_T \mathbf{x}_{\alpha+2}$. Then for such a real $z$, $z\not\in B$. 

So $B$ is a Bernstein set. 

The following question seems unknown.
\begin{question}
Is there a model $\mathcal{M}$ of $ZF$ in which there is a Vitali set but no Bernstein set?
\end{question}

\section{Random linear orders and equivalence relations}
\newcommand{\LO}{\mbox{\rm \textsf{LO}}}
\newcommand{\EQ}{\mbox{\rm \textsf{EQ}}}

Fouch\'e and Nies discussed at the Wollic 2012 meeting in Buenos Aires.  They studied  how to define  randomness for infinite objects other than sets of natural numbers. They didn't want to use any general background such as computable probability spaces (e.g., \cite{Gacs:05}, \cite{Hoyrup.Rojas:09}), but rather restrict attention to a simple kind of relational structures with domain $\NN$, and define randomness for them in a direct way. 

$S_\infty$ denotes the group of permutations of $\NN$. Fix a  Borel class $\+ C \sub \+ P(\NN^k)$ closed under permutations $p \in S_\infty$, such as the  linear orders, or the equivalence relations. We say that a measure $\mu $ on $\+ C$ is \emph{invariant}  if   for each measurable $\+ D \sub \+ C$, 

\[ \fa p \in S_\infty  \, [ \mu  (p (\+ D)) = \mu (\+ D) ].\]

\subsection{Linear orders on $\NN$}
Let $\LO$ denote the class of reflexive linear orders on $\NN$. 
Using methods of topological dynamics, Glasner and Weiss \cite{Glasner.Weiss:02} showed that there is a unique invariant probability measure on $\LO$. The uniqueness is the important part, as it shows that this measure is canonical. 
  
Fouch\'e \cite{} showed that this ``Glasner-Weiss'' measure $\mu_{GW}$  is computable.   For any distinct $a_0, \ldots, a_{n-1} \in \NN$ let 

\[ [a_0, \ldots, a_{n-1}] = \{ L \in \LO \colon \, a_0 <_L \ldots <_L a_{n-1} \}. \]

Clearly, for any invariant measure  $\mu$ on $\LO$, we must have 

\[ \mu  [a_0, \ldots, a_{n-1}]  = 1/n!. \]

Another approach to obtain this measure is as follows. There is a natural correspondence between $\LO$ and  the class of functions $[T]$, where $T$ is the tree  
\[ \{ f\in \NN^\NN \colon \, \fa i  \, f(i) \le i \}. \]
Namely,  $f(i) = k$ means: If $k=0$  put $i$ as a new least, and if $k=i$ put $i$ as a new greatest element.
   Otherwise  we put $i$ between the $k-1$--th and the $k$--th  element of the linear order we already have on $\{0, \ldots, i-1\}$. 
   
   Now let $\mu$ be the product  measure $\Pi_ i \mu_i$ where $\mu_i$ is  the uniform probability measure on $\{0, \ldots, i\}$.
   
   We can now export the known randomness notions to  $[T]$ and thereby to $\LO$. For instance, every Kurtz-random linear order is of type $\QQ$. 
The following analog of  the Levin-Schnorr Theorem   yields a characterization of ML-random linear orders via the prefix-free initial segment complexity.

\begin{proposition}\label{pro:Schnorr for LO}
	Let $f \in [T]$. Then $f$ is ML-random $\LLR$ 
	
	\hfill  $\ex b \fa n \, K(f \uhr n) \ge \log_2 (n!) -b$. 
\end{proposition}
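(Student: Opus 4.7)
The plan is to adapt the standard proof of the Levin--Schnorr theorem to the computable probability space $([T], \mu)$, exploiting that every cylinder $[\sigma]$ for $\sigma \in T$ of length $n$ has the explicit measure $\mu([\sigma]) = \prod_{i<n} \frac{1}{i+1} = \frac{1}{n!}$. Thus $\log_2(n!) = -\log_2 \mu([\sigma])$, and the claim reads ``$f$ is ML-random iff $K(f\uhr n) \gep -\log_2 \mu([f\uhr n])$'', which is the correct analog. Throughout I fix a computable bijection between strings in $T$ (the allowed initial segments) and a computable subset of $\fs$, so that $K(\sigma)$ is well-defined for $\sigma \in T$.

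For the direction $(\LA)$, I would argue the contrapositive: assume $f$ fails some ML-test $(U_m)\sN m$ in $([T],\mu)$. Each $U_m$ can be written as a union of cylinders generated by an antichain of strings in $T$, with total $\mu$-measure at most $2^{-m}$. I build a bounded-request (Kraft--Chaitin) set: for each $\sigma$ appearing in the canonical antichain for $U_{2m}$, enumerate the request $(\log_2(|\sigma|!) - m, \sigma)$. The total weight is
\[ \sum_m \sum_{\sigma \in U_{2m}} 2^{m - \log_2(|\sigma|!)} \;=\; \sum_m 2^m \, \mu(U_{2m}) \;\le\; \sum_m 2^{-m} \;<\; \infty,\]
so the machine exists, giving $K(\sigma) \lep \log_2(|\sigma|!) - m$ for every $\sigma$ in the antichain of $U_{2m}$. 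Since $f \in U_{2m}$ for every $m$, each $U_{2m}$ contains some prefix of $f$, hence $K(f\uhr {n_m}) \lep \log_2(n_m!) - m$ for arbitrarily large $m$, contradicting the hypothesis.

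For the direction $(\RA)$, define, for each $m$,
\[ U_m \;=\; \bigcup \bigl\{[\sigma] \colon \sigma \in T \lland K(\sigma) < \log_2(|\sigma|!) - m \bigr\}.\]
Each $U_m$ is uniformly $\SI 1$ in $[T]$ since $K$ is upper semicomputable and membership in $T$ is decidable. To bound $\mu(U_m)$, replace the generating set by its subset of prefix-minimal elements, which is an antichain in $T$ (so the cylinders are disjoint). For $\sigma$ in this antichain, the defining inequality gives $\mu([\sigma]) = 1/|\sigma|! < 2^{-K(\sigma)-m}$, so
\[ \mu(U_m) \;=\; \sum_{\sigma \text{ min.}} \mu([\sigma]) \;<\; 2^{-m} \sum_{\sigma} 2^{-K(\sigma)} \;\le\; 2^{-m},\]
using Kraft's inequality for the prefix-free complexity $K$. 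Hence $(U_m)\sN m$ is a Martin--L\"of test in $([T],\mu)$. If $f$ is ML-random it lies outside $U_b$ for some $b$, which is exactly the conclusion $\fa n \, K(f\uhr n) \gep \log_2(n!) - b$.

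The main obstacle is mostly bookkeeping rather than mathematical: one must check that the computable probability space $([T],\mu)$ behaves well enough for the Solovay--Kraft--Chaitin machinery to transfer (uniform enumeration of $\SI 1$ sets as antichains in $T$, computability of $\mu([\sigma])$, etc.), and that the chosen computable coding of $T$-strings does not introduce spurious additive constants beyond those absorbed by $b$. Once this is set up, both directions are the classical arguments with Lebesgue measure $2^{-n}$ replaced by $1/n!$.
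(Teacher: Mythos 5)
Your proof is correct and follows exactly the route the paper intends: the paper's one-line proof just says to adapt Levin--Schnorr by showing the family $\+ U_b = \{f \in [T] : \exists n\, K(f\uhr n) < \log_2(n!) - b\}$ is a universal ML-test for $([T],\mu)$, using that $\mu([\sigma]) = 1/|\sigma|!$, which is precisely what your two directions establish (the $\LA$ direction giving universality via the KC-set, the $\RA$ direction giving the measure bound via Kraft's inequality over an antichain). The only detail worth spelling out is rounding the KC requests $\log_2(|\sigma|!)-m$ up to the nearest integer, which only shifts the additive constant $b$.
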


\begin{proof} Adapt the usual proof of Levin-Schnorr (for instance \cite[3.2.9]{Nies:book}). Thus, let  \[\+ U_b= \{f\in [T] \colon \ex   n \, K(f \uhr n) < \log_2 (n!) -b \}\]
and show that $\seq {\+ U_b}\sN b$	is a universal \ML\ test.
\end{proof}
 Melnikov et al.\  \cite{Melnikov.Nies:12} defined $K$-triviality for functions in Baire space. This now yields a notion of $K$-trivial linear orders. Lower c.e.\ functions are the generalizations of c.e.\ sets (what does this mean for linear orders?). We now have everything together and could study  whether in the setting of $\LO$, anything new happens in comparison with the interactions of lowness and randomness on the subsets of $\NN$.

Note that correspondence between $[T]$ and $\LO$ is useful to get the definitions right, but it does not cohere with the  permutation invariance of $\LO$. This suggests that new things will happen  not necessarily  for $[T]$, but    for $\LO$.  
   
\subsection{Equivalence relations on $\NN$}

Here there is an  invariant probability  measures, though it is not  unique. To define  it, pick a probability computable measure $\gamma$  on $\NN$, such as  the one given by \[\gamma (\{i\})= \frac 6 {\pi^2} i^{-2}.\]
Let $\mu^*$ be its power $  \gamma^\NN$ which is a computable measure on Baire space $\NN^\NN$.   Mapping a function  $f$ to its  kernel   yields an onto map $\Phi$  from  $\NN^\NN$ to the set $\EQ$ of equivalence relations on $\NN$. Let $\mu $ on $\EQ$  be the image measure of $\mu^*$. 

Since $\Phi$ respects permutations, clearly $\mu$ is invariant. Another $\gamma$ would yield a different invariant measure. 

We note that if $\gamma (\{i\}) \neq 0$ for each $i$, then a.e.\ [$\mu$] equivalence relation $E$ is of type $(\infty, \infty)$, that is,  has infinitely many classes that are  all infinite. This is so because for any fixed  $n,k$, the class of functions $f$ with $f^{-1}(n)\sub \{0, \ldots, k-1\}$   has measure $0$.

\subsection{Other examples}

Bakh Khoussainov has a draft (8 pages, late 2012) on random algebras in a finite signature. It's not clear whether there is an invariant measure leading to his definition. 

One could also consider random Polish spaces, where the structure is given by a countable dense set with distance relations $R_{< q}xy$ denoting that $d(x,y)< q$ (where $q \in \QQ^+$). 
\newpage


\begin{thebibliography}{10}

\bibitem{Ambos:97}
K.~Ambos-Spies.
\newblock Algorithmic randomness revisited.
\newblock In {\em Language, Logic and Formalization of Knowledge}, pages
  33--52, 1997.

\bibitem{Avigad.Gerhardy.ea:08}
Jeremy Avigad, Philipp Gerhardy, and Henry Towsner.
\newblock {Local stability of ergodic averages.}
\newblock {\em Trans. Am. Math. Soc.}, 362(1):261--288, 2010.

\bibitem{Barmpalias:AC}
G.~Barmpalias.
\newblock Tracing and domination in the {T}uring degrees.
\newblock {\em Ann. Pure Appl. Logic}, 2011.
\newblock {T}o appear.

\bibitem{Barmpalias.Miller.ea:nd}
G.~Barmpalias, J.~Miller, and A.~Nies.
\newblock Randomness notions and partial re\-lativization.
\newblock {\em Israel J. Math.}, 2011.
\newblock In press.

\bibitem{Bienvenu.Downey.ea:nd}
L.~Bienvenu, R.~Downey, N.~Greenberg, A.~Nies, and D.~Turetsky.
\newblock Lowness for {D}emuth randomness.
\newblock Unpublished, 20xx.

\bibitem{Bienvenu.Greenberg.ea:preprint}
L.~Bienvenu, N.~Greenberg, A.\ Ku{\v{c}}era, A.~Nies, and D.~Turetsky.
\newblock ${K}$-triviality, {O}berwolfach randomness, and differentiability.
\newblock Mathematisches Forschungsinstitut Oberwolfach, Preprint Series, 2012.

\bibitem{Bienvenu.Hoelzl.ea:12a}
L.~Bienvenu, R.~Hoelzl, J.~Miller, and A.~Nies.
\newblock Demuth, {D}enjoy, and {D}ensity.
\newblock 2012.

\bibitem{Bienvenu.Hoelzl.ea:09}
L.~Bienvenu, R.~H{\"o}lzl, T.~Kr{\"a}ling, and W.~Merkle.
\newblock Separations of non-monotonic randomness notions.
\newblock In {\em 6th Int'l Conf. on Computability and Complexity in Analysis},
  2009.

\bibitem{Brattka.Hertling.ea:08}
V.~Brattka, P.~Hertling, and K.~Weihrauch.
\newblock A tutorial on computable analysis.
\newblock In S.~Barry Cooper, Benedikt L\"owe, and Andrea Sorbi, editors, {\em
  New Computational Paradigms: Changing Conceptions of What is Computable},
  pages 425--491. Springer, New York, 2008.

\bibitem{Brodhead.Downey.ea:12}
P.~Brodhead, R.~Downey, and K.~M. Ng.
\newblock Bounded randomness.
\newblock In {\em Computation, Physics and Beyond}, pages 59--70, 2012.

\bibitem{cny07}
Chi~Tat Chong, Andr{\'e} Nies, and Liang Yu.
\newblock Lowness of higher randomness notions.
\newblock {\em Israel journal of mathematics}, 66(1):39--60, 2008.

\bibitem{Church:40}
A.~Church.
\newblock On the concept of a random sequence.
\newblock {\em Bull. Amer. Math. Soc}, 46:130--135, 1940.

\bibitem{Chong.Yu:nd}
Chong CT and Yu~L.
\newblock {\em Recursion Theory: The Syntax and Structure of Definability}.
\newblock De {G}ruyter, 20xx.

\bibitem{Day.Miller:12}
Adam~R. Day and Joseph~S. Miller.
\newblock Cupping with random sets.
\newblock To appear in Proc.AMS.

\bibitem{Day.Miller:nd}
Adam~R. Day and Joseph~S. Miller.
\newblock Density, forcing and the covering problem.
\newblock In preparation.

\bibitem{Day:11}
Adam~Richard Day.
\newblock {\em Randomness and Computability}.
\newblock Ph.{D.} {D}issertation, Victoria University of Wellington, 2011.

\bibitem{Demuth:82a}
O.~Demuth.
\newblock Some classes of arithmetical real numbers.
\newblock {\em Comment. Math. Univ. Carolin.}, 23(3):453--465, 1982.

\bibitem{Downey.Hirschfeldt.ea:05}
R.~Downey, D.~Hirschfeldt, J.~Miller, and A.~Nies.
\newblock Relativizing {C}haitin's halting probability.
\newblock {\em J. Math. Log.}, 5(2):167--192, 2005.

\bibitem{Downey.Nies.ea:06}
R.~Downey, A.~Nies, R.~Weber, and L.~Yu.
\newblock Lowness and {$\Pi\sp 0\sb 2$} nullsets.
\newblock {\em J. Symbolic Logic}, 71(3):1044--1052, 2006.

\bibitem{Figueira.Hirschfeldt.ea:nd}
S.~Figueira, D.~Hirschfeldt, J.~Miller, Selwyn Ng, and A~Nies.
\newblock Counting the changes of random {$\Delta^0_2$} sets.
\newblock In {\em CiE 2010}, pages 1--10, 2010.
\newblock Journal version to appear in J.Logic. Computation.

\bibitem{Figueira.Hirschfeldt.ea:12}
S.~Figueira, D.~Hirschfeldt, J.~Miller, Selwyn Ng, and A~Nies.
\newblock Counting the changes of random {$\Delta^0_2$} sets.
\newblock pages 1--10, 2012.
\newblock Journal version to appear in J.Logic. Computation.

\bibitem{Fokina.Friedman.etal:12}
Ekaterina~B. Fokina, Sy-David Friedman, Valentina~S. Harizanov, Julia~F.
  Knight, Charles F.~D. McCoy, and Antonio Montalb{\'a}n.
\newblock Isomorphism relations on computable structures.
\newblock {\em J. Symb. Log.}, 77(1):122--132, 2012.

\bibitem{Franklin.Ng:10}
Johanna N.~Y. Franklin and Keng~Meng Ng.
\newblock Difference randomness.
\newblock {\em Proc. Amer. Math. Soc.}, 139(1):345--360, 2011.

\bibitem{Franklin.Stephan:10}
Johanna N.~Y. Franklin and Frank Stephan.
\newblock Schnorr trivial sets and truth-table reducibility.
\newblock {\em J. Symbolic Logic}, 75(2):501--521, 2010.

\bibitem{Freer.Kjos.ea:nd}
C.~Freer, B.~Kjos-Hanssen, A.~Nies, and F.~Stephan.
\newblock Effective aspects of {L}ipschitz functions.
\newblock {S}ubmitted.

\bibitem{Friedman:75*1}
Harvey Friedman.
\newblock 102 problems in mathematical logic.
\newblock {\em J. Symbolic Logic}, 40:113--129, 1975.

\bibitem{Gacs:05}
Peter G{\'a}cs.
\newblock Uniform test of algorithmic randomness over a general space.
\newblock {\em Theoret. Comput. Sci.}, 341(1-3):91--137, 2005.

\bibitem{Glasner.Weiss:02}
E.~Glasner and B.~Weiss.
\newblock Minimal actions of the group {$\Bbb S(\Bbb Z)$} of permutations of
  the integers.
\newblock {\em Geom. Funct. Anal.}, 12(5):964--988, 2002.

\bibitem{Greenberg.Nies:11}
N.~Greenberg and A.\ Nies.
\newblock Benign cost functions and lowness properties.
\newblock {\em J. Symbolic Logic}, 76:289--312, 2011.

\bibitem{Harrington.Nies:98}
L.~A.\ Harrington and A.\ Nies.
\newblock Coding in the lattice of enumerable sets.
\newblock {\em Adv. in Math.}, 133:133--162, 1998.

\bibitem{Hirschfeldt.Nies.ea:07}
D.~Hirschfeldt, A.~Nies, and F.~Stephan.
\newblock Using random sets as oracles.
\newblock {\em J. Lond. Math. Soc. (2)}, 75(3):610--622, 2007.

\bibitem{Hjorth.Nies:07}
G.~Hjorth and A.~Nies.
\newblock Randomness via effective descriptive set theory.
\newblock {\em J. London Math. Soc.}, 75(2):495--508, 2007.

\bibitem{Hoyrup.Rojas:09}
M.~Hoyrup and C.~Rojas.
\newblock Computability of probability measures and {M}artin-{L}\"of randomness
  over metric spaces.
\newblock {\em Inform. and Comput.}, 207(7):830--847, 2009.

\bibitem{Ianovski.Miller.etal:nd}
E.~Ianovski, R.~Miller, S.~Ng., and A.~Nies.
\newblock Complexity of equivalence relations and preorders.
\newblock Preprint, 2012.

\bibitem{Kautz:91}
S.~Kautz.
\newblock {\em Degrees of random sets}.
\newblock Ph.{D.} {D}issertation, Cornell University, 1991.

\bibitem{Kechris73}
Alexander~S. Kechris.
\newblock Measure and category in effective descriptive set theory.
\newblock {\em Ann. Math. Logic}, 5:337--384, 1972/73.

\bibitem{Kolmogorov:63}
A.~N. Kolmogorov.
\newblock On tables of random numbers.
\newblock {\em Sankhy\=a Ser. A}, 25:369--376, 1963.
\newblock Reprinted in Theoret. Comput. Sci. 207(2) (1998) 387-395.

\bibitem{Kurtz:81}
S.~Kurtz.
\newblock {\em Randomness and genericity in the degrees of unsolvability}.
\newblock Ph.{D.} {D}issertation, University of Illinois, Urbana, 1981.

\bibitem{Kucera.Nies:11}
A.~Ku\v{c}era and A~Nies.
\newblock Demuth randomness and computational complexity.
\newblock {\em Ann. Pure Appl. Logic}, 162:504--513, 2011.

\bibitem{Lebesgue:1910}
Henri Lebesgue.
\newblock Sur l'int\'egration des fonctions discontinues.
\newblock {\em Ann. Sci. \'Ecole Norm. Sup. (3)}, 27:361--450, 1910.

\bibitem{Loveland:66}
D.~Loveland.
\newblock A new interpretation of the von {M}ises' concept of random sequence.
\newblock {\em Z. Math. Logik Grundlagen Math.}, 12:279--294, 1966.

\bibitem{Lutz:03}
J.~H. Lutz.
\newblock The dimensions of individual strings and sequences.
\newblock {\em Information and Computation}, 187, 2003.

\bibitem{Martin-Lof:66}
P.~Martin-L{\"o}f.
\newblock The definition of random sequences.
\newblock {\em Inform. and Control}, 9:602--619, 1966.

\bibitem{MartinLof:68}
P.~Martin-L{\"o}f.
\newblock On the notion of randomness.
\newblock In {\em Intuitionism and Proof Theory (Proc. Conf., Buffalo, N.Y.,
  1968)}, pages 73--78. North-Holland, Amsterdam, 1970.

\bibitem{Mayordomo:02}
Elvira Mayordomo.
\newblock A {K}olmogorov complexity characterization of constructive
  {H}ausdorff dimension.
\newblock {\em Inform. Process. Lett.}, 84(1):1--3, 2002.

\bibitem{Melnikov.Nies:12}
S.~Melnikov and A.~Nies.
\newblock K-triviality in computable metric spaces.
\newblock {\em Proc. Amer. Math. Soc.}, to appear.

\bibitem{Miller:10}
J.~Miller.
\newblock The {$K$}-degrees, low for {$K$}-degrees, and weakly low for {$K$}
  sets.
\newblock {\em Notre Dame J. Form. Log.}, 50(4):381--391 (2010), 2009.

\bibitem{Miller.Nies:06}
J.~Miller and A.~Nies.
\newblock Randomness and computability: {O}pen questions.
\newblock {\em Bull. Symbolic Logic}, 12(3):390--410, 2006.

\bibitem{Nies:97*1}
A.\ Nies.
\newblock Intervals of the lattice of computably enumerable sets and effective
  boolean algebras.
\newblock {\em Bull. Lond. Math. Soc.}, 29:683--92, 1997.

\bibitem{Nies:AM}
A.\ Nies.
\newblock Lowness properties and randomness.
\newblock {\em Adv. in Math.}, 197:274--305, 2005.

\bibitem{Nies:book}
A.~Nies.
\newblock {\em Computability and randomness}, volume~51 of {\em Oxford Logic
  Guides}.
\newblock Oxford University Press, Oxford, 2009.

\bibitem{Nies:11}
A.~Nies.
\newblock Computably enumerable sets below random sets.
\newblock {\em Ann. Pure Appl. Logic}, 163(11):1596--1610, 2012.

\bibitem{Nies.Stephan.ea:05}
A.~Nies, F.~Stephan, and S.~Terwijn.
\newblock Randomness, relativization and {T}uring degrees.
\newblock {\em J. Symbolic Logic}, 70(2):515--535, 2005.

\bibitem{Nies:habil}
Andr{\'e} Nies.
\newblock Coding methods in computability theory and complexity theory, 1998.
\newblock Habilitationsschrift, Universit\"at Heidelberg.

\bibitem{Pathak.Rojas.ea:12}
Noopur Pathak, Cristobal Rojas, and Stephen~G. Simpson.
\newblock {S}chnorr randomness and the {L}ebesgue {D}ifferentiation {T}heorem.
\newblock {\em Proceedings of the American Mathematical Society}, 2012.
\newblock in press.

\bibitem{Sacks:90}
Gerald Sacks.
\newblock {\em Higher Recursion Theory}.
\newblock Perspectives in Mathematical Logic. Springer--Verlag, Heidelberg,
  1990.

\bibitem{Schnorr:75}
C.P. Schnorr.
\newblock {\em Zuf\"alligkeit und {W}ahrscheinlichkeit. {E}ine algorithmische
  {B}egr\"undung der {W}ahrscheinlichkeitstheorie}.
\newblock Springer-Verlag, Berlin, 1971.
\newblock Lecture Notes in Mathematics, Vol. 218.

\bibitem{Simpson:75}
Stephen~G. Simpson.
\newblock Minimal covers and hyperdegrees.
\newblock {\em Trans. Amer. Math. Soc.}, 209:45--64, 1975.

\bibitem{Simpson:77}
Stephen~G. Simpson.
\newblock Degrees of unsolvability: a survey of results.
\newblock In Jon Barwise, editor, {\em Handbook of Mathematical Logic}, pages
  1133--1142. North-Holland, Amsterdam, 1977.

\bibitem{Soare:87}
Robert~I. Soare.
\newblock {\em Recursively Enumerable Sets and Degrees}.
\newblock Perspectives in Mathematical Logic, Omega Series. Springer--Verlag,
  Heidelberg, 1987.

\bibitem{Solovay70}
Robert~M. Solovay.
\newblock A model of set-theory in which every set of reals is {L}ebesgue
  measurable.
\newblock {\em Ann. of Math. (2)}, 92:1--56, 1970.

\bibitem{Stern75}
Jacques Stern.
\newblock Some measure theoretic results in effective descriptive set theory.
\newblock {\em Israel J. Math.}, 20(2):97--110, 1975.

\bibitem{Mises:19}
R.~von Mises.
\newblock Grundlagen der {W}ahrscheinlichkeitsrechnung.
\newblock {\em Math. Zeitschrift}, 5:52--99, 1919.

\bibitem{Wang.Wu.Yu:13}
Wei Wang, Liuzhen Wu, and Liang Yu.
\newblock Cofinal maximal chains in the turing degrees.
\newblock {\em Proc. Amer. Math. Soc.}, 2013.

\bibitem{Wang:96}
Yongge Wang.
\newblock {\em Randomness and Complexity}.
\newblock Ph{D} dissertation, University of Heidelberg, 1996.

\bibitem{Yu11}
Liang Yu.
\newblock A new proof of {F}riedman's conjecture.
\newblock {\em Bull. Symbolic Logic}, 17(3):455--461, 2011.

\end{thebibliography}

\def\cprime{$'$}

\end{document}